\theoremstyle{change}
\newtheorem{Theorem}{Theorem}[section]
\newtheorem{Def}[Theorem]{Definition}
\newtheorem{Lem}[Theorem]{Lemma}
\newtheorem{Prop}[Theorem]{Proposition}
\newtheorem{Cor}[Theorem]{Corollary}
\newtheorem{Not}[Theorem]{Notation}
\newtheorem{Exa}[Theorem]{Example}
\newtheorem{Def-Prop}[Theorem]{Definition-Proposition}
\date{}
\begin{document}

\hyphenation{Wo-ro-no-wicz}

\title{On projective representations for compact quantum groups}
\author{Kenny De Commer\footnote{Supported in part by the ERC Advanced Grant 227458
OACFT ``Operator Algebras and Conformal Field Theory" }\\ \small Dipartimento di Matematica,  Universit\`{a} degli Studi di Roma Tor Vergata\\
\small Via della Ricerca Scientifica 1, 00133 Roma, Italy\\ \\ \small e-mail: decommer@mat.uniroma2.it}
\maketitle

\newcommand{\acnabla}{\nabla\!\!\!{^\shortmid}}
\newcommand{\undersetmin}[2]{{#1}\underset{\textrm{min}}{\otimes}{#2}}
\newcommand{\otimesud}[2]{\overset{#2}{\underset{#1}{\otimes}}}
\newcommand{\qbin}[2]{\left[ \begin{array}{c} #1 \\ #2 \end{array}\right]_{q^2}}

\newcommand{\qortc}[4]{\,\;_1\varphi_1\left(\begin{array}{c} #1  \\#2 \end{array}\mid #3,#4\right)}
\newcommand{\qortPsi}[4]{\Psi\left(\begin{array}{c} #1  \\#2 \end{array}\mid #3,#4\right)}
\newcommand{\qorta}[5]{\,\;_2\varphi_1\left(\begin{array}{cc} #1 & #2 \\ & \!\!\!\!\!\!\!\!\!\!\!#3 \end{array}\mid #4,#5\right)}
\newcommand{\qortd}[6]{\,\;_2\varphi_2\left(\begin{array}{cc} #1 & #2 \\ #3 & #4 \end{array}\mid #5,#6\right)}
\newcommand{\qortb}[7]{\,\;_3\varphi_2\left(\begin{array}{ccc} #1 & #2 & #3 \\ & \!\!\!\!\!\!\!\!#4 & \!\!\!\!\!\!\!\!#5\end{array}\mid #6,#7\right)}

\newcommand{\otimesmin}{\underset{\textrm{min}}{\otimes}}
\newcommand{\bigback}{\!\!\!\!\!\!\!\!\!\!\!\!\!\!\!\!\!\!\!\!\!\!\!\!}

\abstract{\noindent We study actions of compact quantum groups on type I factors, which may be interpreted as projective representations of compact quantum groups. We generalize to this setting some of Woronowicz' results concerning Peter-Weyl theory for compact quantum groups. The main new phenomenon is that for general compact quantum groups (more precisely, those which are not of Kac type), not all irreducible projective representations have to be finite-dimensional. As applications, we consider the theory of projective representations for the compact quantum groups associated to group von Neumann algebras of discrete groups, and consider a certain non-trivial projective representation for quantum $SU(2)$.}\vspace{0.3cm}

\noindent \emph{Keywords}: compact quantum group; projective representation; Galois (co-)object\\

\noindent AMS 2010 \emph{Mathematics subject classification}: 17B37, 81R50, 16T15


\section*{Introduction}

\noindent It is well-known that for compact groups, one can easily extend the main theorems of the Peter-Weyl theory to cover also \emph{projective} representations. In this article, we will see that if one tries to do the same for Woronowicz' compact \emph{quantum} groups, one confronts at least one surprising novelty: not all irreducible projective representations of a compact quantum group have to be finite-dimensional. On the other hand, one will still be able to decompose any projective representation into a direct sum of irreducible ones, and to determine certain orthogonality relations between the matrix coefficients of irreducible projective representations.\\

\noindent The main tool we will use in this article are the \emph{Galois co-objects} which we introduced in \cite{DeC1}. Indeed, we showed there that when one quantizes the notion of a projective representation, this structure plays the role of a `generalized 2-cocycle function'.\\

\noindent In the \emph{first} section of this article, we will develop a structure theory for such Galois co-objects in the setting of compact quantum groups. A lot of the techniques we use are directly inspired by the theory of the compact quantum groups themselves.\\

\noindent In the \emph{second section}, we will show that such Galois co-objects can be dualized into Galois \emph{objects} for their dual discrete quantum groups, a concept which was introduced in \cite{DeC3}.\\

\noindent In the \emph{third section}, which, except for the last part, is independent from the more technical second section, we present a Peter-Weyl theory for projective representations of compact quantum groups. We also show how projective representations give rise to module categories over the tensor category of the (ordinary) representations, and introduce the notion of \emph{fusion rules} between irreducible projective representations and (ordinary) irreducible representations.\\

\noindent In the \emph{fourth section}, we will give some details on the `reflection technique' introduced in \cite{DeC1}. We showed there that from any Galois co-object for a given compact quantum group, one can create a (possibly) new \emph{locally compact} quantum group. We will show that the type of this quantum group (namely whether it is compact or not) is intimately tied up with the behavior of the Galois co-object itself.\\

\noindent In the \emph{fifth section}, we will consider the special case of compact \emph{Kac} algebras. We show that in this case, all irreducible projective representations will be finite-dimensional, and the theory becomes essentially algebraic.\\

\noindent In the \emph{sixth and seventh section}, we further specialize. We first quickly consider the case of \emph{finite quantum groups} (i.e.~ \emph{finite-dimensional} Kac algebras), for which we can mostly refer to the existing literature. Then we will treat \emph{co-commutative} compact Kac algebras, which correspond to group von Neumann algebras of discrete groups. In this case, the projective representations turn out to be classified by certain special 2-cohomology classes of \emph{finite subgroups} of the associated discrete group. In particular, we will be able to deduce that the group von Neumann algebra of a torsionless discrete group admits no non-trivial 2-cocycles. These results will be proven using only the material in the first section and the first part of the third section.\\

\noindent In the \emph{eighth section}, we give a concrete example of what can happen in the non-Kac case by considering a particular non-trivial Galois co-object for the compact quantum group $SU_q(2)$. We compute explicitly all its associated projective representations, provide the corresponding orthogonality relations and calculate the fusion rules.\\

\noindent \qquad \emph{Notations and conventions}\\

\noindent We will assume that
 all our Hilbert spaces are separable, and we take the inner product to be conjugate linear in the second argument. We also assume that all our von Neumann algebras have separable predual.\\

\noindent By $\iota$, we denote the identity map on a set.\\

\noindent We denote by $\odot$ the algebraic tensor product between vector spaces, by $\otimes$ the tensor product between Hilbert spaces, and by $\overline{\otimes}$ the spatial tensor product between von Neumann algebras.\\

\noindent By $\Sigma$ we denote the flip map between a tensor product of Hilbert spaces: \[\Sigma: \mathscr{H}_1\otimes \mathscr{H}_2\rightarrow \mathscr{H}_2\otimes \mathscr{H}_1: \xi_1\otimes \xi_2\rightarrow \xi_2\otimes \xi_1.\]

\noindent When $A\subseteq B(\mathscr{H}_1,\mathscr{H}_2)$ and $B\subseteq B(\mathscr{H}_2,\mathscr{H}_3)$ are linear spaces of maps between certain Hilbert spaces, we will denote $B\cdot A = \{\sum_{i=1}^n b_ia_i \mid n\in \mathbb{N}_0,b_i\in B,a_i\in A\}$.\\

\noindent We use the \emph{leg numbering notation} for operators on tensor products of Hilbert spaces. E.g., if $Z: \mathscr{H}^{\otimes 2}\rightarrow \mathscr{H}^{\otimes 2}$ is a certain operator, we denote by $Z_{13}$ the operator $\mathscr{H}^{\otimes 3}\rightarrow \mathscr{H}^{\otimes 3}$ acting as $Z$ on the first and third factor, and as the identity on the second factor.\\

\noindent At certain points, we will need the theory of weights on von Neumann algebras, which is treated in detail in the first chapters of \cite{Tak1}. When $M$ is a von Neumann algebra, and $\varphi: M^+\rightarrow \lbrack 0,\infty\rbrack$ is a normal semi-finite faithful (nsf) weight on $M$, we denote \[\mathscr{N}_{\varphi}=\{x\in M\mid \varphi(x^*x)<\infty\}\] for the space of square integrable elements, $\mathscr{M}_{\varphi}^+$ for the space of positive integrable elements, and $\mathscr{M}_{\varphi}$ for the linear span of $\mathscr{M}_{\varphi}^+$.\\

\section{Galois co-objects for compact quantum groups}

\noindent We begin with introducing the following concepts.

\begin{Def}\label{Defvnaqg}\label{DefCQG} A \emph{von Neumann bialgebra} $(M,\Delta_M)$ consists of a von Neumann algebra $M$ and a faithful normal unital $^*$-homomorphism $\Delta_M: M\rightarrow M\bar{\otimes} M$, satisfying the coassociativity condition \[(\Delta_M\otimes \iota)\Delta_M = (\iota\otimes \Delta_M)\Delta_M.\]

\noindent A von Neumann bialgebra $(M,\Delta_M)$ is called a \emph{compact Woronowicz algebra} (\cite{Mas1},\cite{Kus2}) if there exists a normal state $\varphi_M$ on $M$ which is \emph{$\Delta_M$-invariant}: \[(\varphi_M\otimes \iota)\Delta_M(x) = (\iota\otimes \varphi_M)\Delta_M(x) = \varphi_M(x)1\qquad \textrm{for all }x\in M.\]

\noindent A compact Woronowicz algebra is called a \emph{compact Kac algebra} if there exists a normal $\Delta_M$-invariant \emph{tracial} state $\tau_M$ on $M$.

\end{Def}

\noindent\emph{Remarks:} \begin{enumerate} \item von Neumann bialgebras are also referred to as Hopf-von Neumann algebras in the literature. However, we prefer the above terminology, as for example a finite-dimensional Hopf-von Neumann algebra is not necessarily a Hopf algebra. Admittedly, a finite-dimensional von Neumann bialgebra is also not necessarily a bialgebra, as there could be no co-unit, but this seems a lesser ambiguity.
\item It is easy to see that a normal $\Delta_M$-invariant state on a von Neumann bialgebra $(M,\Delta_M)$, when it exists, is unique. One can moreover show that this state will automatically be faithful. We will then always use the notation $\varphi_M$ for it in the general setting, but use the notation $\tau_M$ in the setting of compact Kac algebras to emphasize the traciality.
\item Compact Woronowicz algebras can be characterized as those von Neumann bialgebras arising from Woronowicz' compact quantum groups in the C$^*$-algebra setting (\cite{Wor1},\cite{Wor2}), by performing a GNS-type construction. However, we have decided to focus only on the von Neumann algebraic picture in this paper.
\end{enumerate}

\noindent Let us also introduce the following notations, which will be constantly used in the following.

\begin{Not} Let $(M,\Delta_M)$ be a compact Woronowicz algebra. We denote by $(\mathscr{L}^2(M),\,\,\,,\Lambda_{M})$ the GNS-construction of $M$ with respect to $\varphi_M$. That is, $\mathscr{L}^2(M)$ is the completion of $M$, considered as a pre-Hilbert space with respect to the inner product structure \[\langle x,y\rangle = \varphi_M(y^*x),\] and $\Lambda_M$ is the natural inclusion $M\hookrightarrow \mathscr{L}^2(M)$. We then identify $M$ as a von Neumann subalgebra of $B(\mathscr{L}^2(M))$ by letting $x\in M$ corresponding to the (bounded) closure of the operator \[ \Lambda_M(M) \rightarrow \mathscr{L}^2(M): \Lambda_M(y)\rightarrow \Lambda_M(xy),\qquad \textrm{for all }y\in M.\]

\noindent We will further denote by $\xi_M$ the cyclic and separating vector $\Lambda_M(1_M)$ in $\mathscr{L}^2(M)$, so that $x\xi_{M} = \Lambda_M(x)$ for all $x\in M$.\end{Not}

\noindent The following two unitaries are of fundamental importance.

\begin{Def} Let $(M,\Delta_M)$ be a compact Woronowicz algebra.\\

\noindent The \emph{right regular corepresentation} of $(M,\Delta_M)$ is defined to be the unitary $V \in B(\mathscr{L}^2(M))\bar{\otimes} M$ which is uniquely determined by the formula \[V \Lambda_M(x)\otimes \eta = \Delta_{M}(x) \xi_{M}\otimes \eta,\qquad \textrm{for all }x\in M, \eta\in \mathscr{L}^2(M).\]

\noindent The \emph{left regular corepresentation} of $(M,\Delta_M)$ is defined to be the unitary $W\in M\bar{\otimes} B(\mathscr{L}^2(M))$ which is uniquely determined by the fact that \[W^*\eta\otimes \Lambda_M(x) = \Delta_M(x)\eta\otimes \xi_{M},\qquad \textrm{for all }x\in M,\eta\in \mathscr{L}^2(M).\]

\end{Def}

\noindent We will in the following always use the above notations for these corepresentations. Note that establishing the unitarity of these maps requires some non-trivial work! An approach to compact quantum groups based on the properties of such unitaries can be found in \cite{Baa1}, section 4.\\

\noindent Let us now introduce the notion of a Galois co-object for a compact Woronowicz algebra (see \cite{DeC1}).

\begin{Def}\label{DefGalCo} Let $(M,\Delta_M)$ be a compact Woronowicz algebra. A right \emph{Galois co-object} for $(M,\Delta_M)$ consists of a Hilbert space $\mathscr{L}^2(N)$, a $\sigma$-weakly closed linear space $N\subseteq B(\mathscr{L}^2(M),\mathscr{L}^2(N))$ and a normal linear map $\Delta_N: N\rightarrow N\bar{\otimes}N$, such that the following properties hold: with $N^{\textrm{op}}$ denoting the set \[N^{\textrm{op}} := \{x^*\mid x\in N\} \subseteq B(\mathscr{L}^2(N),\mathscr{L}^2(M)),\] we should have
\begin{enumerate}
\item $N \cdot \mathscr{L}^2(M)$ is norm-dense in $\mathscr{L}^2(N)$, and $N^{\textrm{op}}\cdot \mathscr{L}^2(N)$ is norm-dense in $\mathscr{L}^2(M)$,
\item the space $N$ is a right $M$-module (by composition of operators),
\item for each $x,y\in N$, we have $x^*y \in M$,
\item $\Delta_N(xy)= \Delta_N(x)\Delta_M(y)$ for all $x\in N$ and $y\in M$,
\item $\Delta_N(x)^*\Delta_N(y) = \Delta_M(x^*y)$ for all $x,y\in N$,
\item $\Delta_N$ is coassociative: $(\Delta_N\otimes \iota)\Delta_N = (\iota\otimes \Delta_N)\Delta_N$, and
\item the linear span of the set $\{\Delta_N(x)(y\otimes z)\mid x\in N,y,z\in M\}$ is $\sigma$-weakly dense in $N\bar{\otimes}N$.\end{enumerate}

\noindent If $(N_1,\Delta_{N_1})$ and $(N_2,\Delta_{N_2})$ are two Galois co-objects for a von Neumann bialgebra $(M,\Delta_M)$, we call them \emph{isomorphic} if there exists a unitary $u:\mathscr{L}^2(N_1)\rightarrow \mathscr{L}^2(N_2)$ such that $uN_1 = N_2$ and \[\Delta_{N_2}(ux) = (u\otimes u)\Delta_{N_1}(x) \qquad \textrm{for all }x\in N_1.\]

\end{Def}

\noindent \emph{Remarks:} \begin{enumerate}\item The previous definition can be shown to be equivalent with the one presented in \cite{DeC1}, Definition 0.5. Also remark that the previous conditions can be grouped together as follows: a Galois co-object is a right \emph{Morita (or imprimitivity) Hilbert $M$-module} (conditions 1 to 3) with a $\Delta_M$-compatible coalgebra structure (conditions 4 and 5 and condition 6) which is `non-degenerate' (condition 7).
\item A trivial example of a right $(M,\Delta_M)$-Galois co-object is $(M,\Delta_M)$ itself. Indeed, the final condition even holds in a stronger form, as it can be shown that already $\{\Delta_M(x)(1\otimes y)\mid x,y\in M\}$ is $\sigma$-weakly dense in $M\bar{\otimes}M$ for compact Woronowicz algebras. It follows that this stronger condition is then in fact true for \emph{all} Galois co-objects for compact Woronowicz algebras.
\item A treatment of Galois co-objects in the setting of Hopf algebras can be found in \cite{Cae1}.
\end{enumerate}

\noindent One can similarly define the notion of a \emph{left} Galois co-object. Left Galois co-objects can be created from right ones in the following way.

\begin{Def}\label{DefCoop} Let $(N,\Delta_N)$ be a right Galois co-object for the compact Woronowicz algebra $(M,\Delta_M)$.\\

\noindent We call the couple $(N^{\textrm{op}},\Delta_{N^{\textrm{op}}})$, consisting of \[N^{\textrm{op}} = \{x^*\mid x\in N\}\subseteq B(\mathscr{L}^2(N),\mathscr{L}^2(M)),\] together with the coproduct \[\Delta_{N^{\textrm{op}}}(x) := \Delta_N(x^*)^*, \qquad x\in N^{\textrm{op}},\] the \emph{opposite (left) Galois co-object} of $(N,\Delta_N)$. It is a left Galois co-object for the compact Woronowicz algebra $(M,\Delta_M)$.\\

\noindent We call the couple $(N^{\textrm{cop}},\Delta_{N^{\textrm{cop}}})$, where \[N^{\textrm{cop}} = N \subseteq B(\mathscr{L}^2(M),\mathscr{L}^2(N))\] and \[\Delta_{N^{\textrm{cop}}}=\Delta_N^{\textrm{op}}:N\rightarrow N\bar{\otimes}N: x\rightarrow \Sigma \Delta_N(x)\Sigma,\qquad \textrm{for all }x\in N,\] the \emph{co-opposite (right) Galois co-object} of $(N,\Delta_N)$. It is a right Galois co-object for the compact Woronowicz algebra $(M,\Delta_M^{op})$.
\end{Def}

\noindent The following notation will be useful.

\begin{Not} Let $(M,\Delta_M)$ be a compact Woronowicz algebra, and $(N,\Delta_N)$ a right Galois co-object for $(M,\Delta_M)$. We denote \[\Lambda_N: N\rightarrow \mathscr{L}^2(N): x\rightarrow x\xi_{M}.\]
\end{Not}

\noindent \emph{Remark:} By the second condition in Definition \ref{DefGalCo}, we know that $N$ is a right $M$-module, and then we trivially have that \[\Lambda_N(xy) = x\Lambda_M(y), \qquad \textrm{for all }x\in N,y\in M.\] By the third condition in that definition, together with the faithfulness of $\varphi_M$, we see that $\Lambda_N$ is injective and that \[\langle \Lambda_N(x),\Lambda_N(y)\rangle = \varphi_M(y^*x),\qquad \textrm{for all }x,y\in N.\] And finally, by the first (and second) condition in that definition, we see that $\Lambda_N$ has norm-dense range.\\

\noindent One can construct for a Galois co-object $(N,\Delta_N)$ certain unitaries which are analogous to the regular corepresentations for a compact Woronowicz algebra (and coincide with them in case $(N,\Delta_N)=(M,\Delta_M)$).

\begin{Prop}\label{PropElProp} Let $(M,\Delta_M)$ be a compact Woronowicz algebra, and $(N,\Delta_N)$ a right Galois co-object for $(M,\Delta_M)$.

\begin{enumerate}\item There exists a unitary \[\widetilde{V}: \mathscr{L}^2(N) \otimes \mathscr{L}^2(M) \rightarrow \mathscr{L}^2(N)\otimes \mathscr{L}^2(N)\] which is uniquely determined by the property that for all $\eta \in \mathscr{L}^2(M)$ and $x\in N$, we have \[ \widetilde{V}\, \Lambda_N(x) \otimes \eta = \Delta_N(x) \xi_{M}\otimes \eta.\] Similarly, there exists a unitary \[\widetilde{W}: \mathscr{L}^2(N)\otimes \mathscr{L}^2(N)\rightarrow \mathscr{L}^2(M) \otimes \mathscr{L}^2(N) ,\] uniquely determined by the property that for all $\eta\in \mathscr{L}^2(M)$ and $x\in N$, we have \[\widetilde{W}^*\, \eta\otimes \Lambda_N(x) = \Delta_N(x) \eta\otimes \xi_{M}.\]
\item We have $\widetilde{V} \in B(\mathscr{L}^2(N))\bar{\otimes} N$ and $\widetilde{W}^* \in N\bar{\otimes} B(\mathscr{L}^2(N))$.
\item For $x\in N$, we have \[\Delta_N(x) =  \widetilde{V}(x\otimes 1)V^*=\widetilde{W}^*(1\otimes x)W.\]
\item The following `pentagonal identities' hold: \[\widetilde{V}_{12}\widetilde{V}_{13}V_{23} = \widetilde{V}_{23}\widetilde{V}_{12}\] as maps from $\mathscr{L}^2(N)\otimes \mathscr{L}^2(M)\otimes \mathscr{L}^2(M)$ to $\mathscr{L}^2(N)\otimes \mathscr{L}^2(N)\otimes \mathscr{L}^2(N)$, and  \[W_{12}\widetilde{W}_{13}\widetilde{W}_{23} = \widetilde{W}_{23}\widetilde{W}_{12}\]
    as maps from $\mathscr{L}^2(N)\otimes \mathscr{L}^2(N)\otimes \mathscr{L}^2(N)$ to $\mathscr{L}^2(M)\otimes \mathscr{L}^2(M)\otimes \mathscr{L}^2(N)$.
\item The following identities hold: \[(\iota\otimes \Delta_N)\widetilde{V} =\widetilde{V}_{12}\widetilde{V}_{13},\]\[(\Delta_N\otimes \iota)(\widetilde{W}^*) = \widetilde{W}_{23}^*\widetilde{W}_{13}^*.\]
\end{enumerate}

\end{Prop}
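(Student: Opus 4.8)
The plan is to construct $\widetilde{V}$ and $\widetilde{W}$ first as isometries on natural dense domains, then establish surjectivity, and finally to read off the membership and the algebraic identities from the defining formulas, the coassociativity of $\Delta_N$, and the known multiplicative-unitary properties of $V$ and $W$. For the isometry in part 1, I would define $\widetilde{V}$ on $\Lambda_N(N)\odot\mathscr{L}^2(M)$ by the stated formula and check it preserves inner products. For $x,x'\in N$ and $\eta,\eta'\in\mathscr{L}^2(M)$, condition 5 of Definition \ref{DefGalCo} gives $\Delta_N(x')^*\Delta_N(x)=\Delta_M((x')^*x)$ with $(x')^*x\in M$, so that
\[\langle\Delta_N(x)(\xi_M\otimes\eta),\Delta_N(x')(\xi_M\otimes\eta')\rangle=\langle\Delta_M((x')^*x)(\xi_M\otimes\eta),\xi_M\otimes\eta'\rangle.\]
Expanding the right-hand side and using the left invariance $(\varphi_M\otimes\iota)\Delta_M=\varphi_M(\cdot)1$, this collapses to $\varphi_M((x')^*x)\langle\eta,\eta'\rangle$, which equals $\langle\Lambda_N(x)\otimes\eta,\Lambda_N(x')\otimes\eta'\rangle$ by the remark on $\Lambda_N$. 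For surjectivity, the range is the closed span of the vectors $\Delta_N(x)(1\otimes y)(\xi_M\otimes\xi_M)$; a vector orthogonal to all of them defines a $\sigma$-weakly continuous functional on $N\bar{\otimes}N$ vanishing on $\mathrm{span}\{\Delta_N(x)(1\otimes y)\}$, which is $\sigma$-weakly dense by the strengthened form of condition 7 recorded in the second remark after Definition \ref{DefGalCo}; testing on elementary tensors $x_1\otimes x_2$ and using density of $\Lambda_N(N)$ then forces the vector to vanish. The unitary $\widetilde{W}$ is built identically, now using the right invariance $(\iota\otimes\varphi_M)\Delta_M=\varphi_M(\cdot)1$ and the density of $\mathrm{span}\{\Delta_N(x)(y\otimes1)\}$ (the co-opposite form from Definition \ref{DefCoop}).

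For part 3 I would evaluate both sides of $\Delta_N(x)=\widetilde{V}(x\otimes1)V^*$ on the dense set of vectors $V(\Lambda_M(a)\otimes\eta)=\Delta_M(a)(\xi_M\otimes\eta)$: since $V^*$ returns $\Lambda_M(a)\otimes\eta$, applying $x\otimes1$ gives $\Lambda_N(xa)\otimes\eta$, and $\widetilde{V}$ then yields $\Delta_N(xa)(\xi_M\otimes\eta)=\Delta_N(x)\Delta_M(a)(\xi_M\otimes\eta)$ by condition 4, which is $\Delta_N(x)$ on the original vector. The identity $\Delta_N(x)=\widetilde{W}^*(1\otimes x)W$ is obtained the same way from the defining formula for $W$. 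Part 2 I would then deduce by a slice-map argument: for the vector functionals $\omega=\omega_{\Lambda_N(x),\Lambda_N(x')}$, whose span is norm-dense in $B(\mathscr{L}^2(N))_*$, the defining formula gives $(\omega\otimes\iota)(\widetilde{V})=(\theta\otimes\iota)(\Delta_N(x))$ with $\theta=\omega_{\xi_M,\Lambda_N(x')}$, and the latter slice lies in $N$ precisely because $\Delta_N(x)\in N\bar{\otimes}N$; hence $\widetilde{V}\in B(\mathscr{L}^2(N))\bar{\otimes}N$, and symmetrically $\widetilde{W}^*\in N\bar{\otimes}B(\mathscr{L}^2(N))$.

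Parts 4 and 5 I would prove together, as they are equivalent once part 3 is available. Applying $\Delta_N(x)=\widetilde{V}(x\otimes1)V^*$ to the second leg of $\widetilde{V}$ gives at once $(\iota\otimes\Delta_N)\widetilde{V}=\widetilde{V}_{23}\widetilde{V}_{12}V_{23}^*$. On the other hand, applying $(\iota\otimes\Delta_N)$ to the identity $\widetilde{V}(x\otimes1)=\Delta_N(x)V$, distributing $\Delta_N$ over the product by condition 4, and then using coassociativity $(\iota\otimes\Delta_N)\Delta_N=(\Delta_N\otimes\iota)\Delta_N$ together with the multiplicative-unitary identity $(\iota\otimes\Delta_M)V=V_{12}V_{13}$ for the base unitary (from \cite{Baa1}), all the $V$-factors cancel and one obtains $[(\iota\otimes\Delta_N)\widetilde{V}](x\otimes1\otimes1)=\widetilde{V}_{12}\widetilde{V}_{13}(x\otimes1\otimes1)$. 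Because $N\cdot\mathscr{L}^2(M)$ is dense in $\mathscr{L}^2(N)$, the operators $x\otimes1\otimes1$ have total range and may be cancelled, yielding the first identity of part 5; comparing with the earlier expression for $(\iota\otimes\Delta_N)\widetilde{V}$ gives the first pentagonal identity of part 4. The $\widetilde{W}$-statements follow by the mirror computation, using $\Delta_N(x)=\widetilde{W}^*(1\otimes x)W$ and the corresponding identity for $W$.

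I expect the main obstacles to be twofold. The more conceptual one is the surjectivity step in part 1, where one must ensure that the $\sigma$-weak density of the spans in (the strengthened) condition 7 genuinely transfers to norm-density of the explicit family of vectors making up the range of the isometry; this is where the full force of the Galois (non-degeneracy) axiom is used. The more technical one is the leg bookkeeping in parts 4 and 5: one must import the precise corepresentation and pentagon identities for $V$ and $W$ in exactly the right leg-numbered form, and carefully track which factors carry $\mathscr{L}^2(M)$ and which carry $\mathscr{L}^2(N)$ through each composition, since the domains and codomains genuinely differ from the classical $N=M$ situation.
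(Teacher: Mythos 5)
Your proposal is correct, and it is in fact considerably more self-contained than the paper's own proof, which is essentially a reduction plus a citation: the paper derives all $\widetilde{W}$-statements from the $\widetilde{V}$-statements via the co-opposite Galois co-object of Definition \ref{DefCoop}, refers to \cite{DeC1} (Propositions 2.3 and 2.4) for statements 1--4, and obtains statement 5 by combining statements 2--4 --- i.e.\ exactly the equivalence you note, that part 3 lets one write $(\iota\otimes\Delta_N)\widetilde{V}=\widetilde{V}_{23}\widetilde{V}_{12}V_{23}^*$, so that the pentagon and the identity of part 5 are interchangeable. You run this equivalence in the opposite direction from the paper: you prove part 5 first, from part 3 together with coassociativity (condition 6) and the corepresentation identity $(\iota\otimes\Delta_M)V=V_{12}V_{13}$, and then read off the pentagon as a corollary, whereas the paper takes the pentagon as input and part 5 as output; both orders are legitimate, and your computation (the cancellation of $V_{13}^*V_{12}^*$ against $V_{12}V_{13}$, followed by cancelling $x\otimes 1\otimes 1$ using density of $N\cdot\mathscr{L}^2(M)$) is sound. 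Your surjectivity argument is a mild variant of the route hinted at in the remark after the proposition: instead of invoking surjectivity of $V$ directly, you use the strengthened form of condition 7 (density of $\mathrm{span}\{\Delta_N(x)(1\otimes y)\}$, recorded in the second remark after Definition \ref{DefGalCo}), and you correctly pass to the co-opposite to obtain the $(y\otimes 1)$-version needed for $\widetilde{W}$; the paper instead handles $\widetilde{W}$ purely formally through $(N^{\textrm{cop}},\Delta_N^{\textrm{op}})$. Only two spots would need one extra line each in a full write-up: in part 2, the passage from ``all slices $(\omega\otimes\iota)(\widetilde{V})$ lie in $N$'' to ``$\widetilde{V}\in B(\mathscr{L}^2(N))\bar{\otimes}N$'' requires the slice-map characterization of $B(\mathscr{H})\bar{\otimes}N$ for a $\sigma$-weakly closed corner $N$ (justified, e.g., by $\sigma$-strong approximation with truncations $(p_n\otimes 1)\widetilde{V}$ along finite-rank projections $p_n$); and in parts 4--5 the symbols $(\iota\otimes\Delta_N)$ and $(\Delta_N\otimes\iota)$ applied to operator products such as $\Delta_N(x)V$ should be interpreted through the implementing formulas of part 3, e.g.\ $(\iota\otimes\Delta_N)(X)=\widetilde{V}_{23}X_{12}V_{23}^*$, which is what makes your ``distributing $\Delta_N$ over the product by condition 4'' rigorous --- your text implicitly does this, but it is worth making explicit since $\Delta_N$ is a priori defined only on $N$.
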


\begin{proof} The statements for $\widetilde{W}$ follow immediately from the ones for $\widetilde{V}$, by considering the co-opposite Galois co-object.\\

\noindent We then refer to \cite{DeC1} for the proofs of the first four statements (Proposition 2.3 for the first and second assertion, Proposition 2.4 for the third and fourth). The fifth statement follows immediately from combining the three preceding ones.
\end{proof}

\noindent \emph{Remark:} Although we referred to \cite{DeC1}, we want to stress that these assertions are quite straightforward to prove. For example, the surjectivity of $\widetilde{V}$ follows quite immediately from the seventh condition in Definition \ref{DefGalCo}, combined with the surjectivity of $V$.\\

\begin{Def} We call the unitary $\widetilde{V}$ appearing in the previous proposition the \emph{right regular $(N,\Delta_N)$-corepresentation} of $(M,\Delta_M)$. We call the unitary $\widetilde{W}$ the \emph{left regular $(N^{\textrm{op}},\Delta_{N^{\textrm{op}}})$-corepresentation} of $(M,\Delta_M)$ (where we recall that $(N^{\textrm{op}},\Delta_{N^{\textrm{op}}})$ is the left Galois co-object opposite to $(N,\Delta_N)$, see Definition \ref{DefCoop}).
\end{Def}

\noindent\emph{Remark:} The general notion of an `$(N,\Delta_N)$-corepresentation' will be introduced in the third section.\\

\noindent \emph{For the rest of this section, we will fix a compact Woronowicz algebra $(M,\Delta_M)$ and a right Galois co-object $(N,\Delta_N)$ for $(M,\Delta_M)$}. We then further keep denoting by $V$ and $W$ the right and left regular corepresentations of $(M,\Delta_M)$, and by $\widetilde{V}$ and $\widetilde{W}$ the right regular $(N,\Delta_N)$- and left regular $(N^{\textrm{op}},\Delta_{N^{\textrm{op}}})$-corepresentation of $(M,\Delta_M)$.\\

\noindent Our following lemma improves the second assertion in Proposition \ref{PropElProp}.

\begin{Lem}\label{LemDen1} The following equalities hold: \begin{eqnarray*} N &=&\{(\omega\otimes \iota)(\widetilde{V}) \mid \omega\in B(\mathscr{L}^2(N))_*\}^{\sigma\textrm{-weak closure}} \\ &=& \{(\iota\otimes\omega)(\widetilde{W}^*)\mid \omega\in B(\mathscr{L}^2(N))_*\}^{\sigma\textrm{-weak closure}}.\end{eqnarray*}\end{Lem}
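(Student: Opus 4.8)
The plan is to prove the first equality and to read off the second one by applying the first to the co-opposite Galois co-object (recall from Definition \ref{DefCoop} that $\widetilde{W}$ for $(N,\Delta_N)$ plays the role of the $\widetilde{V}$ of $(N^{\textrm{op}},\Delta_{N^{\textrm{op}}})$). So I concentrate on showing $N=K$, where $K:=\{(\omega\otimes\iota)(\widetilde{V})\mid\omega\in B(\mathscr{L}^2(N))_*\}^{\sigma\textrm{-weak closure}}$. One inclusion is immediate: since $\widetilde{V}\in B(\mathscr{L}^2(N))\bar{\otimes}N$ by Proposition \ref{PropElProp} and $N$ is $\sigma$-weakly closed, every left slice $(\omega\otimes\iota)(\widetilde{V})$ already lies in $N$, whence $K\subseteq N$. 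As $K$ and $N$ are both $\sigma$-weakly closed subspaces of $B(\mathscr{L}^2(M),\mathscr{L}^2(N))$, to prove the reverse inclusion it suffices, by the usual duality, to show that any normal functional $\psi$ on $B(\mathscr{L}^2(M),\mathscr{L}^2(N))$ vanishing on $K$ vanishes on all of $N$.

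First I would make the slices explicit. Writing $\omega_{\zeta_1,\zeta_2}(T)=\langle T\zeta_1,\zeta_2\rangle$, the defining formula $\widetilde{V}(\Lambda_N(x)\otimes\eta)=\Delta_N(x)(\xi_M\otimes\eta)$ gives, for $x\in N$ and $\zeta_2\in\mathscr{L}^2(N)$,
\[(\omega_{\Lambda_N(x),\zeta_2}\otimes\iota)(\widetilde{V})=(\omega_{\xi_M,\zeta_2}\otimes\iota)(\Delta_N(x)),\]
a first-leg slice of $\Delta_N(x)$ in which the `row vector' is the cyclic vector $\xi_M$. Since vector functionals are norm-dense in the predual and $\Lambda_N(N)$ is dense in $\mathscr{L}^2(N)$, norm-continuity of the slice maps yields
\[K=\{(\omega_{\xi_M,\zeta_2}\otimes\iota)(\Delta_N(x))\mid x\in N,\ \zeta_2\in\mathscr{L}^2(N)\}^{\sigma\textrm{-weak closure}}.\]

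The main point — and the step I expect to be the real obstacle — is to turn the abstract non-degeneracy condition into something usable, i.e.\ to bring the strengthened form of condition 7 of Definition \ref{DefGalCo} (that $\{\Delta_N(x)(1\otimes y)\mid x\in N,y\in M\}$ is $\sigma$-weakly dense in $N\bar{\otimes}N$) to bear. For this I first claim that $K$ is a right $M$-module. Indeed, for $\omega\in B(\mathscr{L}^2(N))_*$ and $\rho\in B(\mathscr{L}^2(M))_*$ one computes, using the pentagonal identity $\widetilde{V}_{12}\widetilde{V}_{13}V_{23}=\widetilde{V}_{23}\widetilde{V}_{12}$ of Proposition \ref{PropElProp} in the rearranged form $\widetilde{V}_{13}V_{23}=\widetilde{V}_{12}^{*}\widetilde{V}_{23}\widetilde{V}_{12}$, that
\[(\omega\otimes\iota)(\widetilde{V})\,(\rho\otimes\iota)(V)=(\omega\otimes\rho\otimes\iota)(\widetilde{V}_{13}V_{23})=(\theta\otimes\iota)(\widetilde{V})\in K,\]
where $\theta(\,\cdot\,)=(\omega\otimes\rho)\big(\widetilde{V}_{12}^{*}(1\otimes\,\cdot\,)\widetilde{V}_{12}\big)$ is again a normal functional on $B(\mathscr{L}^2(N))$. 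Since $(\rho\otimes\iota)(V)$ runs over a $\sigma$-weakly dense subset of $M$ (this is the classical case $(N,\Delta_N)=(M,\Delta_M)$, for which I refer to \cite{Baa1}) and right composition by a fixed operator is $\sigma$-weakly continuous, it follows that $KM\subseteq K$. This is exactly where the pentagonal identity, and through it the regular corepresentation $V$ of $(M,\Delta_M)$, is indispensable; the rest is routine slice bookkeeping.

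Granting the module property, the duality argument closes quickly. Suppose $\psi$ vanishes on $K$. Given $x\in N$, $y\in M$ and $\zeta_2\in\mathscr{L}^2(N)$, the element $(\omega_{\xi_M,\zeta_2}\otimes\iota)(\Delta_N(x))$ lies in $K$, so $(\omega_{\xi_M,\zeta_2}\otimes\iota)(\Delta_N(x))\,y\in K$ by $KM\subseteq K$, and therefore
\[(\omega_{\xi_M,\zeta_2}\otimes\psi)\big(\Delta_N(x)(1\otimes y)\big)=\psi\big((\omega_{\xi_M,\zeta_2}\otimes\iota)(\Delta_N(x))\,y\big)=0.\]
Thus the normal functional $\omega_{\xi_M,\zeta_2}\otimes\psi$ on $N\bar{\otimes}N$ vanishes on the dense set $\{\Delta_N(x)(1\otimes y)\}$, hence on all of $N\bar{\otimes}N$. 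Evaluating on a simple tensor $a\otimes b$ with $a,b\in N$ gives $\langle\Lambda_N(a),\zeta_2\rangle\psi(b)=0$; choosing $a$ and $\zeta_2$ with $\langle\Lambda_N(a),\zeta_2\rangle\neq0$ forces $\psi(b)=0$ for every $b\in N$, i.e.\ $\psi=0$. This yields $N\subseteq K$, hence $N=K$, and the second equality follows by applying the result to the opposite/co-opposite Galois co-object.
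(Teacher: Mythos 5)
Your proof is correct: the slice formula $(\omega_{\Lambda_N(x),\zeta_2}\otimes\iota)(\widetilde{V})=(\omega_{\xi_M,\zeta_2}\otimes\iota)(\Delta_N(x))$, the rearranged pentagon $\widetilde{V}_{13}V_{23}=\widetilde{V}_{12}^{*}\widetilde{V}_{23}\widetilde{V}_{12}$ with the normal functional $\theta$, the resulting module property $KM\subseteq K$, and the closing Hahn--Banach computation all check out. But your route is genuinely different from the paper's. The paper also argues by duality, with the annihilating functional on the \emph{other} leg: it supposes a nonzero $\omega\in N_*$ kills all elements $(\varphi_M\otimes\iota)((y^*\otimes 1)\Delta_N(x))$, then makes the single decisive substitution $y:=(\iota\otimes\omega)(\Delta_N(x))\in N$, so that \emph{faithfulness of the Haar state} $\varphi_M$ forces $(\iota\otimes\omega)(\Delta_N(x))=0$ outright; it then multiplies by $m\otimes 1$ and invokes conditions 2, 4 and 7 of Definition \ref{DefGalCo} together with the density of the span of $\{\Delta_M(m_1)(m_2\otimes 1)\}$ in $M\bar{\otimes}M$ to conclude $\omega=0$. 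In particular the paper uses neither the pentagon identity nor any module property of the slice space, and its argument is uniform in $(N,\Delta_N)$: applied to $(M,\Delta_M)$ itself it \emph{proves} the density of the slices of $V$ in $M$ --- precisely the classical fact you import from \cite{Baa1}. Your proof trades the faithfulness trick for the structural statement $KM\subseteq K$ plus the known base case $N=M$, and accordingly needs the strengthened form of condition 7 (density of $\{\Delta_N(x)(1\otimes y)\}$), which is legitimately available via the second remark after Definition \ref{DefGalCo} (the paper itself uses it in proving Proposition \ref{PropDen2}). What this buys is a transparent final step and a module property of independent interest; what it costs is self-containedness, since your argument would be circular if used to establish the case $(N,\Delta_N)=(M,\Delta_M)$ itself. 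One cosmetic slip: by Lemma \ref{LemComp1}.1 it is the \emph{co-opposite} co-object $(N^{\textrm{cop}},\Delta_N^{\textrm{op}})$ over $(M,\Delta_M^{\textrm{op}})$ whose right regular corepresentation is $\Sigma\widetilde{W}^*\Sigma$, not the opposite $(N^{\textrm{op}},\Delta_{N^{\textrm{op}}})$; your concluding symmetry argument is nevertheless exactly the paper's ``the second one then follows by symmetry.''
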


\begin{proof} We will again only prove the first identity, as the second one then follows by symmetry.\\

\noindent For $\xi,\eta\in \mathscr{L}^2(N)$, denote by $\omega_{\xi,\eta}$ the normal functional on $B(\mathscr{L}^2(N))$ determined by $\omega_{\xi,\eta}(x) = \langle x\xi,\eta\rangle $ for $x\in B(\mathscr{L}^2(N))$. Then for $x,y\in N$, a straightforward computation shows that \[(\omega_{\Lambda_N(x),\Lambda_N(y)} \otimes \iota)(\widetilde{V})=(\varphi_{M}\otimes \iota)((y^*\otimes 1)\Delta_N(x)).\] It is thus enough to prove that the linear span of such elements is $\sigma$-weakly dense in $N$.\\

\noindent Suppose that this were not so. Then we could find a non-zero $\omega\in N_*$ such that \[\varphi_M(y^* (\iota\otimes \omega)(\Delta_N(x))) = 0 \qquad\textrm{for all }x,y\in N.\] Taking $y$ equal to $(\iota\otimes \omega)(\Delta_N(x))$, we would have $(\iota\otimes \omega)(\Delta_N(x)) =0$ for all $x\in N$ by faithfulness of $\varphi_{M}$. But then also \[(\iota\otimes \omega)(\Delta_N(x)(m\otimes 1)) = 0 \qquad \textrm{for all }x\in N,m\in M.\] Now the set $\{\Delta_M(m_1)(m_2\otimes 1)\mid m_1,m_2\in M\}$ has $\sigma$-weakly dense linear span in $M\bar{\otimes} M$. It then follows, by the conditions 2, 4 and 7 in Definition \ref{DefGalCo} that \[(\iota\otimes \omega)(z)=0 \qquad \textrm{for all }z\in N\bar{\otimes}N,\] and so necessarily $\omega=0$, a contradiction.

\end{proof}

\noindent The following result will allow us to obtain a decomposition for $\widetilde{W}$ and $\widetilde{V}$.

\begin{Prop}\label{PropDen2} Denote by $\widehat{N}\subseteq B(\mathscr{L}^2(N))$ the von Neumann algebra \[\widehat{N} = \{x\in B(\mathscr{L}^2(N))\mid \widetilde{V}^*(x\otimes 1)\widetilde{V} = x\otimes 1\}.\]

\noindent Then $\widehat{N}$ satisfies the following properties.

\begin{enumerate}\item The von Neumann algebra $\widehat{N}$ is an $l^{\infty}$-sum of type $I$-factors.
\item The equality $\widehat{N} = \{(\omega\otimes \iota)(\widetilde{W}^*)\mid \omega\in B(\mathscr{L}^2(M),\mathscr{L}^2(N))_*\}^{\sigma\textrm{-weak closure}}$ holds.
\end{enumerate}

\end{Prop}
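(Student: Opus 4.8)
The plan is to analyze the von Neumann algebra $\widehat{N}$ defined as the relative commutant of $N$ (embedded via $\widetilde{V}$) and establish both its structural type and the dual description via $\widetilde{W}$.

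\emph{Structure as an $l^\infty$-sum of type I factors.} First I would use the pentagonal identity $\widetilde{V}_{12}\widetilde{V}_{13}V_{23} = \widetilde{V}_{23}\widetilde{V}_{12}$ from Proposition \ref{PropElProp} to understand how $\widehat{N}$ interacts with the algebraic structure. The defining condition says $x \in \widehat{N}$ iff $x \otimes 1$ commutes with $\widetilde{V}$ in the appropriate sense. I expect that, analogously to the classical compact quantum group case, $\widehat{N}$ can be identified with (or related to) the commutant of $N$ acting on $\mathscr{L}^2(N)$, or equivalently with the fixed-point algebra under a natural coaction. The key to showing it is an $l^\infty$-sum of type I factors is to produce a normal faithful state (or trace) together with a modular structure that forces the algebra to be atomic with type I summands. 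Concretely, I would try to exhibit $\widehat{N}$ as the commutant of the von Neumann algebra generated by $N^{\mathrm{op}}\cdot N \subseteq M$ together with the corepresentation structure, and then invoke the fact that $M$ itself decomposes as a direct sum of matrix algebras (Peter–Weyl for the compact Woronowicz algebra) so that the relevant relative commutant inherits the type I, atomic character. The separability assumption guarantees the sum is at most countable.

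\emph{The dual description via $\widetilde{W}$.} For the second assertion, I would first verify that every element of the form $(\omega\otimes\iota)(\widetilde{W}^*)$, for $\omega\in B(\mathscr{L}^2(M),\mathscr{L}^2(N))_*$, indeed lies in $\widehat{N}$. The natural tool here is the second pentagonal identity $W_{12}\widetilde{W}_{13}\widetilde{W}_{23} = \widetilde{W}_{23}\widetilde{W}_{12}$ together with the identity $(\Delta_N\otimes\iota)(\widetilde{W}^*) = \widetilde{W}_{23}^*\widetilde{W}_{13}^*$ from part 5; slicing these appropriately should show that slices of $\widetilde{W}^*$ commute with $\widetilde{V}$ in the first leg, placing them in $\widehat{N}$. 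This gives the inclusion $\supseteq$ (after taking $\sigma$-weak closure). For the reverse inclusion, I would argue by contradiction in the style of Lemma \ref{LemDen1}: if the $\sigma$-weak closure of the slices of $\widetilde{W}^*$ were strictly smaller than $\widehat{N}$, there would be a nonzero normal functional annihilating all such slices, and I would use the pentagonal/coproduct relations together with the density statements (conditions 1 and 7 of Definition \ref{DefGalCo}) to derive that the functional must vanish.

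\emph{Main obstacle.} The hardest part will be establishing the type I factorial decomposition in part 1, since this is where genuinely new non-Kac phenomena enter: unlike the group case, the modular automorphism group associated to $\varphi_M$ acts nontrivially, and one cannot simply appeal to traciality to conclude atomicity. I expect one must carefully track how the modular structure of $\varphi_M$ transports through the Galois co-object to $\mathscr{L}^2(N)$, and show that $\widehat{N}$ is invariant under (and well-behaved with respect to) the relevant modular data, forcing it to be a direct sum of type I factors rather than a more general von Neumann algebra. The containment arguments in part 2 are more routine slicing-and-density manipulations, but they depend on having the correct identification of $\widehat{N}$ from part 1, so the two parts are intertwined.
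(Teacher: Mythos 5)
There is a genuine gap in both halves of your plan. For part 1, the paper's argument is much shorter than the modular-theoretic analysis you anticipate, and your proposed route rests on a false premise: $M$ itself does \emph{not} decompose as a direct sum of matrix algebras (think of $\mathscr{L}^{\infty}(SU(2))$, which is diffuse); Peter--Weyl gives that decomposition for the \emph{dual} $\widehat{M}$, not for $M$. Likewise, $\widehat{N}$ is not naturally a relative commutant of $N^{\textrm{op}}\cdot N$ (already in the case $(N,\Delta_N)=(M,\Delta_M)$, $\widehat{M}$ is not the commutant of $M$). The actual mechanism is this: by Proposition \ref{PropElProp}.5 the map $\textrm{Ad}_L(x)=\Sigma\widetilde{V}^*(x\otimes 1)\widetilde{V}\Sigma$ is a coaction of $(M,\Delta_M)$ on $B(\mathscr{L}^2(N))$ whose fixed-point algebra is exactly $\widehat{N}$, so $E=(\varphi_M\otimes\iota)\circ\textrm{Ad}_L$ is a \emph{normal conditional expectation} of $B(\mathscr{L}^2(N))$ onto $\widehat{N}$; a von Neumann subalgebra of $B(\mathscr{H})$ that is the range of a normal conditional expectation is automatically an $l^{\infty}$-sum of type $I$ factors (Takesaki, Exercise IX.4.1). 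No tracking of modular data is needed, and the invariant state $\varphi_M$ need not be a trace for this to work --- so the step you single out as the main obstacle is in fact the easy one, \emph{provided} you see the conditional expectation, which your outline never produces.

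For part 2, your forward inclusion matches the paper (the commutation $\widetilde{V}_{23}\widetilde{W}_{12}^*=\widetilde{W}_{12}^*\widetilde{V}_{23}$ yields $(\iota\otimes\textrm{Ad}_L)(\widetilde{W})=\widetilde{W}_{13}$, placing slices of $\widetilde{W}^*$ in $\widehat{N}$), but your Hahn--Banach contradiction for the reverse inclusion is circular as stated. If $\omega\in\widehat{N}_*$ annihilates all slices $(\mu\otimes\iota)(\widetilde{W}^*)$, you obtain $(\iota\otimes\omega)(\widetilde{W}^*)=0$; to conclude $\omega=0$ on $\widehat{N}$ you would need to know that the right leg of $\widetilde{W}$ fills up $\widehat{N}$ --- which is precisely the statement being proved. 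In Lemma \ref{LemDen1} the analogous argument works because conditions 1, 2, 4 and 7 of Definition \ref{DefGalCo} supply an \emph{a priori} density statement for $N\bar{\otimes}N$; no such statement is available for the fixed-point algebra $\widehat{N}$. The paper instead argues constructively, and this is where parts 1 and 2 genuinely intertwine: it computes the conditional expectation on rank-one operators, proving the identity
\[E\bigl(\theta_{\Lambda_N(a),\Lambda_N(b)}\bigr)=\bigl((\varphi_M(b^*\,\cdot\,)\ast\varphi_M(S_N(a)\,\cdot\,))\otimes\iota\bigr)(\widetilde{W}^*)\]
for $a$ a slice of $\widetilde{V}$ and $b\in N$ (the verification uses the invariance of $\varphi_M$ and condition 7 of Definition \ref{DefGalCo}). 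Since the span of such $\theta_{\Lambda_N(a),\Lambda_N(b)}$ is $\sigma$-weakly dense in $B(\mathscr{L}^2(N))$ by Lemma \ref{LemDen1} and $E$ is normal with range $\widehat{N}$, every element of $\widehat{N}$ is a $\sigma$-weak limit of slices of $\widetilde{W}^*$. To repair your proposal you would need either to prove this explicit formula (or an equivalent fullness statement for the right leg of $\widetilde{W}$) rather than appeal to a separation argument.
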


\noindent \emph{Remark:} In the special case where $(N,\Delta_N)$ equals $(M,\Delta_M)$ considered as a right Galois co-object over itself, one denotes the above von Neumann algebra as $\widehat{M}$.

\begin{proof} Consider the unital normal faithful $^*$-homomorphism \[ \textrm{Ad}_L: B(\mathscr{L}^2(N))\rightarrow M\bar{\otimes} B(\mathscr{L}^2(N)): x\rightarrow \Sigma \widetilde{V}^*(x\otimes 1)\widetilde{V}\Sigma.\] Then by Proposition \ref{PropElProp}.5, it follows that $\textrm{Ad}_L$ is a coaction by $(M,\Delta_M)$: \[(\Delta_M\otimes \iota)\textrm{Ad}_L = (\iota\otimes \textrm{Ad}_L)\textrm{Ad}_L.\]

\noindent Hence $\widehat{N}$ is precisely the set $B(\mathscr{L}^{2}(N))^{\textrm{Ad}_L}$ of $\textrm{Ad}_L$-fixed elements in $B(\mathscr{L}^2(N))$, that is, the set of elements satisfying $\textrm{Ad}_L(x) = 1\otimes x$. It is well-known (and easy to see) that the map \[E: B(\mathscr{L}^2(N)) \rightarrow B(\mathscr{L}^2(N)): x\rightarrow (\varphi_{M}\otimes \iota)\textrm{Ad}_L(x)\] is then a \emph{normal} conditional expectation of $B(\mathscr{L}^2(N))$ onto $\widehat{N}$. This forces $\widehat{N}$ to be an $l^{\infty}$-direct sum of type $I$-factors (see for example Exercise IX.4.1 in \cite{Tak1}).\\

\noindent We now prove the second point. First of all, remark that \[\widetilde{V}_{23}\widetilde{W}_{12}^*=\widetilde{W}_{12}^*\widetilde{V}_{23},\] which follows from a straightforward computation. From this, it is easy to get that \[(\iota\otimes \textrm{Ad}_L)(\widetilde{W}) = \widetilde{W}_{13},\] and so all elements of the form $(\omega\otimes \iota)(\widetilde{W})$ with $\omega\in B(\mathscr{L}^2(N),\mathscr{L}^2(M))_*$ lie in $\widehat{N}$. We next show that all elements of $\widehat{N}$ can be approximated $\sigma$-weakly by such elements.\\

\noindent For $\omega_1,\omega_2\in N_*$, denote \[\omega_1*\omega_2 := (\omega_1\otimes \omega_2)\circ \Delta_N\,\in N_*.\] For $\xi,\eta\in B(\mathscr{L}^2(N))$, denote $\theta_{\xi,\eta}$ for the rank one operator $\zeta\rightarrow \langle \zeta,\eta\rangle \xi$ on $\mathscr{L}^2(N)$, and denote $\omega_{\xi,\eta}$ for the normal functional $x\rightarrow \langle x\xi,\eta\rangle$. Choose $b,x,y\in N$, and denote \[a= (\omega_{\Lambda_N(x),\Lambda_N(y)}\otimes \iota)(\widetilde{V}) \in N,\qquad S_N(a) = (\omega_{\Lambda_N(x),\Lambda_N(y)}\otimes \iota)(\widetilde{V}^*) \in N^{\textrm{op}},\] where we recall that $N^{\textrm{op}} = \{x^*\mid x\in N\} \subseteq B(\mathscr{L}^2(N),\mathscr{L}^2(M))$. We will prove the identity \begin{equation}\label{EqCond} E (\theta_{\Lambda_N(a),\Lambda_N(b)}) = ((\varphi_{M}(b^*\,\cdot\,)* \varphi_{M}(S_N(a)\,\cdot\,))\otimes \iota)(\widetilde{W}^*),\end{equation} where $E$ is the conditional expectation defined in the first part of the proof, and where $\varphi_M(b^*\,\cdot\,)$ and $\varphi_M(S_N(a)\,\cdot\,)$ are the obvious normal functionals on $N$. As the linear span of the $\theta_{\Lambda_N(a),\Lambda_N(b)}$ is $\sigma$-weakly dense in $B(\mathscr{L}^2(N))$ by Lemma \ref{LemDen1}, and as $E$ is a normal map with $\widehat{N}$ as its range, the second point of the proposition will follow from this identity.\\

\noindent To prove the identity (\ref{EqCond}), choose further $c,d\in N$. It is sufficient to prove then that \begin{equation}\label{EqCond2} \langle E (\theta_{\Lambda_N(a),\Lambda_N(b)})\cdot \Lambda_N(c),\Lambda_N(d)\rangle  = \langle ((\varphi_{M}(b^*\,\cdot\,)* \varphi_{M}(S_N(a)\,\cdot\,))\otimes \iota)(\widetilde{W}^*)\cdot \Lambda_N(c),\Lambda_N(d)\rangle.\end{equation} We remark now that $a$ and $S_N(a)$ can also be rewritten in the following form, by a simple computation involving only the definition of $\widetilde{V}$: \[ a = (\varphi_N\otimes \iota)((y^*\otimes 1)\Delta_N(x)),\qquad S_N(a) = (\varphi_M\otimes \iota)(\Delta_N(y)^*(x\otimes 1)).\] Using again the definition of $\widetilde{V}$, the left hand of equation (\ref{EqCond2}) then simplifies to \begin{equation}\label{EqCond3}(\varphi_M\otimes \varphi_M\otimes \varphi_M\otimes \varphi_M)((y^*\otimes 1\otimes b^*\otimes 1)\Delta_N(d)_{24}^*\Delta_N(c)_{34}\Delta_N(x)_{12}).\end{equation} On the other hand, using the definition of $\widetilde{W}^*$, we get that the right hand side of equation (\ref{EqCond2}) becomes \begin{equation}\label{EqCond4}(\varphi_M\otimes\varphi_M\otimes \varphi_M\otimes \varphi_M)((1\otimes b^*\otimes 1\otimes d^*)\Delta_N(y)_{13}^*(x\otimes \Delta_N^{(2)}(c))),\end{equation} where $\Delta_N^{(2)}(c) = (\iota\otimes \Delta_N)\Delta_N(c)$.
In both expressions (\ref{EqCond3}) and (\ref{EqCond4}), we can write \[v=(\varphi_M\otimes \iota)((b^*\otimes 1)\Delta_N(c)),\] and we then have to prove that \begin{equation}\label{EqCond5} (\varphi_M\otimes \varphi_M\otimes \varphi_M)((y\otimes \Delta_N(d))^*(\Delta_N(x)\otimes v)) = (\varphi_M\otimes \varphi_M\otimes \varphi_M)((\Delta_N(y)\otimes d)^*(x\otimes \Delta_N(v))).\end{equation} Now by the final condition in Definition \ref{DefGalCo} (and the second remark following it), it is enough to show that these two expression are equal when we replace $x\otimes v$ by $\Delta_N(z)(m\otimes 1)$ and $y\otimes d$ by $\Delta_N(w)(n\otimes 1)$, where $w,z\in N$ and $m,n\in M$. But then the left hand side of (\ref{EqCond5}) becomes \[(\varphi_M\otimes \varphi_M\otimes \varphi_M)((n^*\otimes 1\otimes 1)\Delta_M^{(2)}(w^*z)(\Delta_M(m)\otimes 1)),\] which by invariance of $\varphi_M$ collapses to $\varphi_M(n^*w^*zm)$. A similar computation shows that with this replacement, also the right hand side expression in (\ref{EqCond5}) collapses to $\varphi_M(n^*w^*zm)$. This concludes the proof.\end{proof}

\noindent Of course, we then also have \[\widehat{N} = \{(\omega\otimes \iota)(\widetilde{W})\mid \omega\in B(\mathscr{L}^2(N),\mathscr{L}^2(M))_*\}^{\sigma\textrm{-weak closure}},\] which follows immediately by applying the $^*$-operation to both sides of the identity in the second point of the previous proposition.

\begin{Not}\label{NotDecomp} By Proposition \ref{PropDen2}.1, we may identify the center $\mathscr{Z}(\widehat{N})$ of $\widehat{N}$ with $l^{\infty}(I_N)$, for some countable set $I_N$. Denoting $p_r$ the minimal central projection in $\mathscr{Z}(\widehat{N})$ associated to the element $r\in I_N$, we may further identify $p_r\widehat{N}$ with $B(\mathscr{H}_r)$ for some separable Hilbert space $\mathscr{H}_r$. We also denote \[n_r := \textrm{dim}(\mathscr{H}_r) \in \mathbb{N}_0\cup \{\infty\}.\]\end{Not}

\begin{Prop} The unital normal faithful $^*$-homomorphism \[\textrm{Ad}_R: \widehat{N}\rightarrow B(\mathscr{L}^2(N))\bar{\otimes} M: x\rightarrow \Sigma \widetilde{W}(1\otimes x)\widetilde{W}^*\Sigma\] restricts to a $^*$-homomorphism $\widehat{N}\rightarrow \widehat{N}\bar{\otimes} M$, and defines in this way a right coaction of $(M,\Delta_M)$ on $\widehat{N}$.\\

\noindent Moreover, the set of fixed elements for $\textrm{Ad}_R$ coincides with the center $\mathscr{Z}(\widehat{N})$ of $\widehat{N}$.
\end{Prop}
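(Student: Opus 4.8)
The plan is to work throughout with the operator $\rho(x):=\widetilde{W}(1\otimes x)\widetilde{W}^*$ on $\mathscr{L}^2(M)\otimes\mathscr{L}^2(N)$, so that $\textrm{Ad}_R(x)=\Sigma\rho(x)\Sigma$ and every claim for $\textrm{Ad}_R$ becomes its mirror image for $\rho$ (the $\widehat{N}$-leg and the $M$-leg simply trade places). Since $\rho$ is conjugation by the unitary $\widetilde{W}$ followed by the flip, it is automatically a normal unital faithful $^*$-homomorphism; only the assertions about its range and the coaction identity require work. For the range I would treat the two legs separately. The $M$-leg (the first leg of $\rho(x)$) is handled using $\widetilde{W}^*\in N\bar{\otimes}B(\mathscr{L}^2(N))$ from Proposition~\ref{PropElProp}: right-slicing $\rho(x)=\widetilde{W}(1\otimes x)\widetilde{W}^*$ by a normal functional on the second leg produces a $\sigma$-weak combination of products $a^*b$ with $a,b\in N$, and each such product lies in $M$ by condition~(3) of Definition~\ref{DefGalCo}; as $M$ is $\sigma$-weakly closed this gives $\rho(x)\in M\bar{\otimes}B(\mathscr{L}^2(N))$ for every $x$, hence $\textrm{Ad}_R(x)\in B(\mathscr{L}^2(N))\bar{\otimes}M$.

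To see that the remaining ($\widehat{N}$-) leg stays inside $\widehat{N}$ when $x\in\widehat{N}$, I would recycle the identity $(\iota\otimes\textrm{Ad}_L)(\widetilde{W})=\widetilde{W}_{13}$ that was extracted in the proof of Proposition~\ref{PropDen2}. Applying the normal $^*$-homomorphism $\iota\otimes\textrm{Ad}_L$ to $\rho(x)$ and using that $\textrm{Ad}_L(x)=1\otimes x$ for $x\in\widehat{N}$, one gets $(\iota\otimes\textrm{Ad}_L)\rho(x)=\widetilde{W}_{13}(1\otimes1\otimes x)\widetilde{W}_{13}^*=\rho(x)_{13}$, which says precisely that the second leg of $\rho(x)$ is $\textrm{Ad}_L$-fixed, i.e. lies in $\widehat{N}=B(\mathscr{L}^2(N))^{\textrm{Ad}_L}$. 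Combined with the previous paragraph this yields $\textrm{Ad}_R(\widehat{N})\subseteq\widehat{N}\bar{\otimes}M$.

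For the coaction identity the cleanest route is to prove the left-handed statement $(\iota\otimes\rho)\rho=(\Delta_M^{\textrm{op}}\otimes\iota)\rho$ and then transport it through the flip. I would first record the amplification formula $(\iota\otimes\rho)(A)=\widetilde{W}_{23}A_{13}\widetilde{W}_{23}^*$, giving $(\iota\otimes\rho)\rho(x)=\widetilde{W}_{23}\widetilde{W}_{13}(1\otimes1\otimes x)\widetilde{W}_{13}^*\widetilde{W}_{23}^*$. On the other side I would use $\Delta_M(m)=W^*(1\otimes m)W$ (the $N=M$ case of Proposition~\ref{PropElProp}) to write $(\Delta_M\otimes\iota)\rho(x)=W_{12}^*\rho(x)_{23}W_{12}$, and then feed in the pentagonal identity $W_{12}\widetilde{W}_{13}\widetilde{W}_{23}=\widetilde{W}_{23}\widetilde{W}_{12}$ in the form $W_{12}^*\widetilde{W}_{23}=\widetilde{W}_{13}\widetilde{W}_{23}\widetilde{W}_{12}^*$; after cancelling the $\widetilde{W}_{12}$'s (which commute with $1\otimes1\otimes x$) this collapses to $\widetilde{W}_{13}\widetilde{W}_{23}(1\otimes1\otimes x)\widetilde{W}_{23}^*\widetilde{W}_{13}^*$. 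Conjugating by $\Sigma_{12}$ interchanges $\widetilde{W}_{13}$ and $\widetilde{W}_{23}$ and turns $\Delta_M$ into $\Delta_M^{\textrm{op}}$, so the two computations coincide and establish $(\iota\otimes\rho)\rho=(\Delta_M^{\textrm{op}}\otimes\iota)\rho$. Since $\textrm{Ad}_R=\Sigma\rho\,\Sigma$ and the flip converts a left coaction for $\Delta_M^{\textrm{op}}$ into a right coaction for $\Delta_M$, this is exactly $(\textrm{Ad}_R\otimes\iota)\textrm{Ad}_R=(\iota\otimes\Delta_M)\textrm{Ad}_R$.

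Finally, for the fixed points I would unwind the flip: $\textrm{Ad}_R(x)=x\otimes1$ is equivalent to $\rho(x)=1\otimes x$, i.e. to $\widetilde{W}(1\otimes x)=(1\otimes x)\widetilde{W}$. Slicing the first leg of this relation shows that $x$ commutes with every $(\omega\otimes\iota)(\widetilde{W})$, and these generate $\widehat{N}$ (the remark following Proposition~\ref{PropDen2}), so $x\in\widehat{N}'$; since $x\in\widehat{N}$ this means $x\in\mathscr{Z}(\widehat{N})$. Conversely, the same slicing shows $\widetilde{W}$ commutes with $1\otimes\widehat{N}'$ (equivalently, the right leg of $\widetilde{W}$ lies in $\widehat{N}$), so any $x\in\mathscr{Z}(\widehat{N})\subseteq\widehat{N}'$ is fixed; hence the fixed-point algebra is exactly $\mathscr{Z}(\widehat{N})$. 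The step I expect to be the main obstacle is the coaction identity, and within it the bookkeeping of the single flip: it is easy to end up with $\Delta_M$ rather than $\Delta_M^{\textrm{op}}$, and the correctness of the whole statement rests on tracking the exchange $\widetilde{W}_{13}\leftrightarrow\widetilde{W}_{23}$ under $\Sigma_{12}$ through the pentagon.
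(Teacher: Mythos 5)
Your proof is correct, and in substance it is the paper's proof, with two steps rerouted. For the range statement, the paper deduces $\widetilde{W}\in N^{\textrm{op}}\bar{\otimes}\widehat{N}$ in one stroke from Proposition \ref{PropDen2}.2 together with the weak slice map property (using that $N^{\textrm{op}}$ is a corner of a von Neumann algebra), which gives both legs of $\textrm{Ad}_R(x)$ simultaneously; you split the legs, getting the $M$-leg from $\widetilde{W}^*\in N\bar{\otimes}B(\mathscr{L}^2(N))$ plus condition 3 of Definition \ref{DefGalCo}, and the $\widehat{N}$-leg from the covariance identity $(\iota\otimes\textrm{Ad}_L)(\widetilde{W})=\widetilde{W}_{13}$. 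Your variant is marginally more economical there, since it only invokes the ``easy'' half of the proof of Proposition \ref{PropDen2} (slices of $\widetilde{W}$ are $\textrm{Ad}_L$-fixed, with the normal conditional expectation $E$ onto $\widehat{N}$ converting fixedness of all slices into membership of $M\bar{\otimes}\widehat{N}$) rather than the full density theorem --- though the density is still indispensable in your fixed-point argument, so nothing is saved globally. For the coaction identity, the paper simply cites $(\Delta_N\otimes\iota)(\widetilde{W}^*)=\widetilde{W}_{23}^*\widetilde{W}_{13}^*$ (Proposition \ref{PropElProp}.5); your derivation from the pentagon $W_{12}\widetilde{W}_{13}\widetilde{W}_{23}=\widetilde{W}_{23}\widetilde{W}_{12}$ together with $\Delta_M(\cdot)=W^*(1\otimes\cdot)W$ just unfolds how that identity was obtained, and your flip bookkeeping is right: $(\iota\otimes\rho)\rho=(\Delta_M^{\textrm{op}}\otimes\iota)\rho$ does mirror, under the leg reversal, to $(\textrm{Ad}_R\otimes\iota)\textrm{Ad}_R=(\iota\otimes\Delta_M)\textrm{Ad}_R$. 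A small bonus of your write-up is that you prove both inclusions in the fixed-point statement, whereas the paper writes out only fixed $\subseteq$ center and leaves the converse (immediate from $\widetilde{W}\in N^{\textrm{op}}\bar{\otimes}\widehat{N}$) implicit. One cosmetic slip: $\rho$ is conjugation by $\widetilde{W}$ alone --- the flip belongs to $\textrm{Ad}_R$, not to $\rho$ --- but nothing in your argument depends on this misstatement.
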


\begin{proof} From Proposition \ref{PropDen2}.2, it follows that $\widetilde{W}\in N^{\textrm{op}}\bar{\otimes} \widehat{N}$ (we may apply the weak slice map property as $N^{\textrm{op}}$ is a corner of a von Neumann algebra). Hence $\textrm{Ad}_R(x)\in \widehat{N}\bar{\otimes}M$ for $x\in \widehat{N}$. By applying Proposition \ref{PropElProp}.5, we get \[(\textrm{Ad}_R\otimes \iota)\textrm{Ad}_R(x) = (\iota\otimes \Delta_M)\textrm{Ad}_R(x).\] Hence the first part of the proposition follows.\\

\noindent If further $x\in \widehat{N}$ is a fixed element for $\textrm{Ad}_R$, then it follows that $(1\otimes x)\widetilde{W} = \widetilde{W}(1\otimes x)$. Again by Proposition \ref{PropDen2}.2, we deduce that $xy = yx$ for all $y\in \widehat{N}$, i.e.~ $x\in \mathscr{Z}(\widehat{N})$.
\end{proof}

\begin{Cor}\label{CorAd} Using Notation \ref{NotDecomp} and the notation from the previous proposition, the coaction $\textrm{Ad}_R$ restricts to an ergodic coaction \[\textrm{Ad}_R^{(r)}:B(\mathscr{H}_r) \rightarrow B(\mathscr{H}_r)\bar{\otimes}M\] for each $r\in I_N$.

\end{Cor}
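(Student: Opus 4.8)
The plan is to derive the whole statement from the two facts just established: that $\textrm{Ad}_R$ is a right coaction of $(M,\Delta_M)$ on $\widehat{N}$, and that its fixed point algebra coincides with the center $\mathscr{Z}(\widehat{N})$. Everything else is a matter of transporting these facts to the corners $p_r\widehat{N} = B(\mathscr{H}_r)$.

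First I would note that each minimal central projection $p_r$ lies in $\mathscr{Z}(\widehat{N})$ and is therefore a fixed element, so that $\textrm{Ad}_R(p_r) = p_r\otimes 1$. Since $\textrm{Ad}_R$ is multiplicative, this gives for every $x\in p_r\widehat{N} = B(\mathscr{H}_r)$
\[\textrm{Ad}_R(x) = \textrm{Ad}_R(p_r x p_r) = (p_r\otimes 1)\,\textrm{Ad}_R(x)\,(p_r\otimes 1) \;\in\; B(\mathscr{H}_r)\bar{\otimes}M,\]
so that $\textrm{Ad}_R$ restricts to a normal faithful $^*$-homomorphism $\textrm{Ad}_R^{(r)}: B(\mathscr{H}_r)\to B(\mathscr{H}_r)\bar{\otimes}M$. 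This restriction is unital because $p_r$ is the unit of $B(\mathscr{H}_r)$ and $\textrm{Ad}_R^{(r)}(p_r)=p_r\otimes 1$; it is faithful because $\textrm{Ad}_R$ already is; and the coaction identity $(\textrm{Ad}_R^{(r)}\otimes\iota)\textrm{Ad}_R^{(r)}=(\iota\otimes\Delta_M)\textrm{Ad}_R^{(r)}$ is simply the restriction to $p_r\widehat{N}$ of the corresponding identity for $\textrm{Ad}_R$. Hence $\textrm{Ad}_R^{(r)}$ is a genuine coaction of $(M,\Delta_M)$ on $B(\mathscr{H}_r)$.

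For ergodicity I would compute the fixed point algebra of $\textrm{Ad}_R^{(r)}$. Because $\textrm{Ad}_R^{(r)}$ agrees with $\textrm{Ad}_R$ on $B(\mathscr{H}_r)=p_r\widehat{N}$, an element $x\in B(\mathscr{H}_r)$ is $\textrm{Ad}_R^{(r)}$-fixed precisely when it is $\textrm{Ad}_R$-fixed, that is, precisely when $x\in\mathscr{Z}(\widehat{N})$ by the previous proposition. The fixed point algebra of $\textrm{Ad}_R^{(r)}$ is therefore $p_r\widehat{N}\cap\mathscr{Z}(\widehat{N}) = p_r\mathscr{Z}(\widehat{N})$, and this equals $\mathbb{C}p_r$ since $p_r$ is a \emph{minimal} central projection. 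Thus the fixed point algebra reduces to the scalars, which is exactly the assertion that $\textrm{Ad}_R^{(r)}$ is ergodic.

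I do not expect a serious obstacle, as this is a direct corollary. The only step deserving genuine care is the verification that the restriction to the corner $p_r\widehat{N}$ really produces a coaction — i.e. that $\textrm{Ad}_R^{(r)}$ lands inside $B(\mathscr{H}_r)\bar{\otimes}M$ and stays unital and faithful. This hinges entirely on $p_r$ being fixed, which is itself immediate once the fixed point algebra has been identified with $\mathscr{Z}(\widehat{N})$; after that, the ergodicity is a formal consequence of the minimality of $p_r$ inside $\mathscr{Z}(\widehat{N})$.
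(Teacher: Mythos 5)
Your proof is correct and follows essentially the same route as the paper: both deduce the restriction to $B(\mathscr{H}_r)$ from the fact that $p_r\in\mathscr{Z}(\widehat{N})$ is $\textrm{Ad}_R$-fixed, and both obtain ergodicity by identifying the fixed points of $\textrm{Ad}_R^{(r)}$ with $\mathscr{Z}(\widehat{N})\cap B(\mathscr{H}_r)=\mathbb{C}p_r$ via minimality of $p_r$. You merely spell out explicitly (via multiplicativity of $\textrm{Ad}_R$ and the corner computation) what the paper dismisses as ``immediate''.
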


\noindent We recall that a coaction $\alpha$ is called ergodic if the only elements satisfying $\alpha(x)=x\otimes 1$ are scalar multiples of the unit element.\\

\begin{proof} Clearly, as $\mathscr{Z}(\widehat{N})$ consists of the fixed points of $\textrm{Ad}_R$ by the previous proposition, it is immediate that $\textrm{Ad}_R$ indeed restricts to $B(\mathscr{H}_r)$. If then $x$ is a fixed element for $\textrm{Ad}_R^{(r)}$, we have, again by the previous proposition, that $x\in \mathscr{Z}(\widehat{N})\cap B(\mathscr{H}_r)$, and $x$ is a scalar operator.

\end{proof}

\noindent Now as each $\textrm{Ad}_R^{(r)}$ appearing in the previous corollary is ergodic, there exists a \emph{unique} $\textrm{Ad}^{(r)}_R$-invariant state $\phi_{N,r}$ on $B(\mathscr{H}_r)$, determined by the formula \[\phi_{N,r}(x)1_{B(\mathscr{H}_r)} = (\iota\otimes  \varphi_M)(\textrm{Ad}^{(r)}_R(x)),\qquad \textrm{for all }x\in B(\mathscr{H}_r).\]

\begin{Not}\label{NotWeight} If $T_r$ is the positive trace class operator associated to the state $\phi_{N,r}$ on $B(\mathscr{H}_r)$ introduced above, we denote by $T_{r,0}\geq T_{r,1} \geq \ldots$ the descending sequence of its eigenvalues, counting multiplicities. We further fix in $\mathscr{H}_r$ a basis $e_{r,i}$, with $0\leq i< n_r$, such that $e_{r,i}$ is an eigenvector for $T_r$ with eigenvalue $T_{r,i}$.\\

\noindent We denote by $e_{r,ij} \in \widehat{N}$ the matrix units associated to the basis $e_{r,i}$, and we denote by $\omega_{r,ij}$ the following normal functionals on $\widehat{N} \subseteq B(\oplus_{r\in I_N} \mathscr{H}_r)$: \[\omega_{r,ij}(x) = \langle xe_{r,i},e_{r,j}\rangle,\qquad x\in \widehat{N}.\]

\noindent In the special case where $(N,\Delta_N)$ equals $(M,\Delta_M)$ considered as a right Galois co-object over itself, we will denote the $n_r$ as $m_r$, the $T_{r,j}$ as $D_{r,j}$ and the $\mathscr{H}_r$ as $\mathscr{K}_r$, but otherwise keep all notation as above.
\end{Not}

\begin{Theorem}\label{PropOrt} Denote \[\widetilde{W}_{r,ij} = (\iota\otimes \omega_{r,ji})(\widetilde{W}) \in N^{\textrm{op}}\subseteq B(\mathscr{L}^2(N),\mathscr{L}^2(M)).\] Then the following statements hold.
\begin{enumerate} \item The unitary $\widetilde{W}$ equals the strong$^*$ convergent sum $\sum_{r\in I_N}\sum_{i,j=0}^{n_r-1} \widetilde{W}_{r,ij}\otimes e_{r,ij}$.
\item For each $r\in I_N$ and $0\leq i,j< n_r$, we have \[\sum_{k=0}^{n_r-1} \widetilde{W}_{r,ik}\cdot \widetilde{W}_{r,jk}^*  = \delta_{i,j} 1_{\mathscr{L}^2(M)},\] \[\sum_{k=0}^{n_r-1} \widetilde{W}_{r,ki}^*\cdot\widetilde{W}_{r,kj}= \delta_{i,j} 1_{\mathscr{L}^2(N)},\] both sums converging strongly.
\item For each $r\in I_N$ and $0\leq i,j< n_r$, we have \[\Delta_{N}(\widetilde{W}_{r,ij}^*) = \sum_{k=0}^{n_r-1} \widetilde{W}_{r,ik}^*\otimes \widetilde{W}_{r,kj}^*,\] the sum again being a strongly$^*$ converging one.
\item The following orthogonality relations hold: \[\varphi_M(\widetilde{W}_{r,ij}\cdot\widetilde{W}_{s,kl}^*) = \delta_{r,s}\delta_{i,k}\delta_{j,l}\, T_{r,j}, \qquad \textrm{for all }r,s\in I_N, 0\leq i,j<n_r, 0\leq k,l<n_s.\]
\end{enumerate}
\end{Theorem}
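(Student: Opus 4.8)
The plan is to treat the four assertions in the order stated, extracting (1)--(3) as essentially formal consequences of the central decomposition $\widehat N=\bigoplus_{r}B(\mathscr H_r)$ together with the unitarity of $\widetilde W$ and the coproduct identity of Proposition~\ref{PropElProp}.5, and then to reserve the real work for the orthogonality relations in (4).

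First I would record that, by Proposition~\ref{PropDen2}.2, $\widetilde W\in N^{\mathrm{op}}\bar{\otimes}\widehat N$, so its second leg can be expanded against the matrix units $e_{r,ij}$ of $\widehat N$. Writing $P_{F,k}=\sum_{r\in F}\sum_{i<k}e_{r,ii}$ for the increasing net of finite projections in $\widehat N$ with $P_{F,k}\uparrow 1$, the truncations $(1\otimes P_{F,k})\widetilde W(1\otimes P_{F,k})=\sum_{r\in F}\sum_{i,j<k}\widetilde W_{r,ij}\otimes e_{r,ij}$ converge strongly$^*$ to $\widetilde W$ (since $1\otimes P_{F,k}\to 1$ strongly and $\widetilde W$ is bounded), which is assertion (1). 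Substituting this expansion into $\widetilde W\widetilde W^*=1_{\mathscr L^2(M)\otimes\mathscr L^2(N)}$ and $\widetilde W^*\widetilde W=1_{\mathscr L^2(N)\otimes\mathscr L^2(N)}$, using $e_{r,ij}e_{s,kl}=\delta_{rs}\delta_{jk}e_{r,il}$ and comparing the coefficient of each $e_{r,ac}$, yields (2). Likewise, feeding the expansion of $\widetilde W^*=\sum\widetilde W_{r,pq}^*\otimes e_{r,qp}$ into $(\Delta_N\otimes\iota)(\widetilde W^*)=\widetilde W_{23}^*\widetilde W_{13}^*$ and matching the coefficient of $e_{r,kj}$ in the third leg gives $\Delta_N(\widetilde W_{r,jk}^*)=\sum_i\widetilde W_{r,ji}^*\otimes\widetilde W_{r,ik}^*$, i.e.\ (3) after relabelling. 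In each case the only care needed is that the strongly$^*$ convergent sums may be manipulated termwise, which is justified by the boundedness of the partial isometries involved and normality.

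For the diagonal part of (4), the case $r=s$, I would compute $\mathrm{Ad}_R$ on a matrix unit. A direct substitution of the expansion of $\widetilde W$ gives $\widetilde W(1\otimes e_{r,ij})\widetilde W^*=\sum_{a,c}\widetilde W_{r,ai}\widetilde W_{r,cj}^*\otimes e_{r,ac}$, whence $\mathrm{Ad}_R(e_{r,ij})=\sum_{a,c}e_{r,ac}\otimes\widetilde W_{r,ai}\widetilde W_{r,cj}^*$, each $\widetilde W_{r,ai}\widetilde W_{r,cj}^*$ lying in $M$ by condition~3 of Definition~\ref{DefGalCo}. Applying $\iota\otimes\varphi_M$ and using that $\mathrm{Ad}_R^{(r)}$ is ergodic (Corollary~\ref{CorAd}) with invariant state $\phi_{N,r}$ of density $T_r$, the left-hand side equals $\phi_{N,r}(e_{r,ij})p_r=\delta_{ij}T_{r,i}\,p_r$; comparing coefficients of the linearly independent $e_{r,ac}$ in $\widehat N$ gives $\varphi_M(\widetilde W_{r,ai}\widetilde W_{r,cj}^*)=\delta_{ij}\delta_{ac}T_{r,i}$, which is precisely the $r=s$ case of (4) after relabelling.

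The main obstacle is the off-diagonal case $r\neq s$, since the computation above only ever sees one block of $\widehat N$ and never couples coefficients from two distinct blocks. To reach these I would extend $\mathrm{Ad}_R$ to all of $B(\mathscr L^2(N))$: the map $\beta(y)=\Sigma\widetilde W(1\otimes y)\widetilde W^*\Sigma$ makes sense for every $y\in B(\mathscr L^2(N))$ and, because $\widetilde W\in N^{\mathrm{op}}\bar{\otimes}\widehat N$, has first leg in $M$, so $\beta(y)\in B(\mathscr L^2(N))\bar{\otimes}M$; by Proposition~\ref{PropElProp}.5 it is again a coaction of $(M,\Delta_M)$. Consequently $E_\beta:=(\iota\otimes\varphi_M)\beta$ is a normal conditional expectation onto the fixed-point algebra $\{y:(1\otimes y)\widetilde W=\widetilde W(1\otimes y)\}$, which by slicing the first leg and Proposition~\ref{PropDen2}.2 is exactly the commutant $\widehat N'$. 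Since $\widehat N'$ commutes with the central projections $p_r\in\widehat N$, every element of $\widehat N'$ is block-diagonal. Now applying $E_\beta$ to a rank-one operator $\theta_{\xi,\eta}$ with $\xi\in p_s\mathscr L^2(N)$ and $\eta\in p_r\mathscr L^2(N)$ and expanding as before produces a combination of operators supported on the corner $p_sB(\mathscr L^2(N))p_r$, with coefficients of the form $\varphi_M(\widetilde W_{s,\,\cdot}\widetilde W_{r,\,\cdot}^*)$; as the result must be block-diagonal, it vanishes, forcing $\varphi_M(\widetilde W_{r,ij}\widetilde W_{s,kl}^*)=0$ whenever $r\neq s$. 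Combined with the diagonal computation this establishes (4). I expect the flip and leg bookkeeping in identifying $E_\beta$ with a conditional expectation onto $\widehat N'$, and the verification that the off-block corner genuinely cannot survive there, to be the only delicate points; everything else is manipulation with matrix units.
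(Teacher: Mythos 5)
Your proposal is correct, and for parts (1)--(3) and the diagonal case $r=s$ of (4) it coincides with the paper's proof: the paper likewise obtains (1) and (2) directly from $\widetilde{W}\in N^{\textrm{op}}\bar{\otimes}\widehat{N}$ and unitarity, (3) from $(\Delta_N\otimes\iota)(\widetilde{W}^*)=\widetilde{W}_{23}^*\widetilde{W}_{13}^*$, and the $r=s$ relations by writing out $(\iota\otimes\varphi_M)(\textrm{Ad}_R^{(r)}(e_{r,ij}))=\phi_{N,r}(e_{r,ij})=\delta_{i,j}T_{r,j}$. Where you genuinely diverge is the off-diagonal case $r\neq s$: the paper runs the classical Woronowicz intertwiner argument, forming $F(x)=(\varphi_M\otimes\iota)(\widetilde{W}_r(1\otimes x)\widetilde{W}_s^*)$, showing via Proposition \ref{PropElProp}.5 and invariance of $\varphi_M$ that $y=F(x)$ satisfies $\widetilde{W}_r(1\otimes y)\widetilde{W}_s^*=1\otimes y$, using ergodicity of $\textrm{Ad}_R^{(r)}$ and $\textrm{Ad}_R^{(s)}$ to scale $y$ to a unitary $u$ with $(1\otimes u)\widetilde{W}_s(1\otimes u^*)=\widetilde{W}_r$, and then contradicting Proposition \ref{PropDen2}.2 (two distinct normal functionals on $\widehat{N}$ with equal slices of $\widetilde{W}$). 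You instead extend the conjugation defining $\textrm{Ad}_R$ to a coaction $\beta$ on all of $B(\mathscr{L}^2(N))$ and identify its fixed-point algebra with $\widehat{N}'$ using only Proposition \ref{PropDen2}.2 and $\widetilde{W}\in N^{\textrm{op}}\bar{\otimes}\widehat{N}$ --- this is precisely the second equality of Lemma \ref{LemFormV}.3, which the paper establishes later but without invoking the orthogonality relations, so your use of it is not circular --- and you then kill the cross terms by the block-diagonality of $\widehat{N}'$, via the module property $E_\beta(p_s y p_r)=p_s E_\beta(y)p_r$, which holds because each $p_t$ is $\beta$-fixed. Both routes ultimately rest on Proposition \ref{PropDen2}.2; yours is closer in spirit to the paper's own proof of Proposition \ref{PropDen2} (computing a conditional expectation on rank-one operators) and avoids constructing intertwiners, while the paper's version yields as a by-product that the blocks $\widetilde{W}_r$ are pairwise inequivalent, a fact it recycles in the proof of Theorem \ref{TheoPW}.2. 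The one step you should make explicit is the extraction of individual coefficients at the end: taking $\xi=e_{s,b}\otimes\overline{e_{s,j_0}}$ and $\eta=e_{r,d}\otimes\overline{e_{r,l_0}}$, pairing $E_\beta(\theta_{\xi,\eta})=0$ against the orthonormal families $e_{s,a}\otimes\overline{e_{s,j_0}}$ and $e_{r,c}\otimes\overline{e_{r,l_0}}$ gives $\varphi_M(\widetilde{W}_{s,ab}\widetilde{W}_{r,cd}^*)=0$ termwise, with no convergence issue even when $n_r=\infty$.
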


\begin{proof} The first point is immediate, and also the second one follows straightforwardly from the unitarity of $\widetilde{W}$. The third point follows from the identity $(\Delta_{N}\otimes \iota)(\widetilde{W}^*) = \widetilde{W}_{23}^*\widetilde{W}_{13}^*$ in Proposition \ref{PropElProp}.5. In the fourth point, the orthogonality relations for $r=s$ follow from writing out the identity \[(\iota\otimes \varphi_M)(\textrm{Ad}_R^{(r)}(e_{r,ij})) = \phi_{N,r}(e_{r,ij}) = \delta_{i,j} T_{r,j}.\]

\noindent Thus the only thing left to show is that $\varphi_M(\widetilde{W}_{r,ij}\cdot\widetilde{W}_{s,kl}^*)=0$ for $r\neq s$. But also here, we can use a standard technique (see e.g. \cite{Wor1}). For suppose that this were not so, and choose $r\neq s$ which violate this condition. Consider, for $x\in B(\mathscr{H}_s,\mathscr{H}_r)$, the element \[F(x) = (\varphi_M\otimes \iota)(\widetilde{W}_r(1\otimes x)\widetilde{W}_s^*) \in B(\mathscr{H}_s,\mathscr{H}_r),\] where of course \[\widetilde{W}_r = (1\otimes p_r)\widetilde{W} = \sum_{i,j=0}^{n_r-1} \widetilde{W}_{r,ij}\otimes e_{r,ij} \in N^{\textrm{op}}\bar{\otimes}B(\mathscr{H}_r).\] By assumption, there must exist an $x$ with $F(x)\neq 0$. Fixing such an $x$, denote $y=F(x)$. Then it is easy to see that \[\widetilde{W}_r(1\otimes y)\widetilde{W}_s^* = (1\otimes y),\] using Proposition \ref{PropElProp}.5 and the $\Delta_M$-invariance of $\varphi_M$. This implies that $y^*y$, resp.~ $yy^*$, is a fixed element for $\textrm{Ad}_R^{(s)}$, resp.~ $\textrm{Ad}_R^{(r)}$. Since these coactions are ergodic, $y^*y$ and $yy^*$ must be (identical) scalars, and so we can scale $x$ such that $y$ becomes a unitary $u$.\\

\noindent We then find that \[ (1\otimes u) \widetilde{W}_s(1\otimes u^*) = \widetilde{W}_r.\] This implies that there exist two non-equal normal functionals $\omega_1$ and $\omega_2$ on $\widehat{N}$ such that \[(\iota\otimes \omega_1)(\widetilde{W}) = (\iota\otimes \omega_2)(\widetilde{W}).\] As the set $\{(\omega\otimes \iota)(\widetilde{W}) \mid \omega\in B(\mathscr{L}^2(N),\mathscr{L}^2(M))\}$ is $\sigma$-weakly dense in $\widehat{N}$ by Proposition \ref{PropDen2}.2, this clearly gives a contradiction. Hence $\varphi_M(\widetilde{W}_{r,ij}\cdot\widetilde{W}_{s,kl}^*)=0$ for $r\neq s$.

\end{proof}

\begin{Not} By the final part of the previous proposition, we have a unitary transformation \[\mathscr{L}^2(N) \cong \bigoplus_{r\in I_N} \mathscr{H}_r\otimes  \overline{\mathscr{H}_r},\] by means of the map \[\widetilde{W}_{r,ij}^*\xi_{M} \rightarrow T_{r,j}^{1/2}e_{r,i}\otimes \overline{e_{r,j}}.\] In the following, we will then always identify $\mathscr{L}^2(N)$ and $\bigoplus_{r\in I_N} \mathscr{H}_r\otimes  \overline{\mathscr{H}_r}$ in this way, so that for example the elements $x\in N$ act directly as linear operators \[\bigoplus_{r\in I_M} \mathscr{K}_r\otimes  \overline{\mathscr{K}_r}\rightarrow \bigoplus_{r\in I_N} \mathscr{H}_r\otimes  \overline{\mathscr{H}_r}.\]\end{Not}

\begin{Lem}\label{LemFormV} \begin{enumerate}\item With $\widetilde{V}_{r,ij} := T_{r,i}^{1/2} T_{r,j}^{-1/2}\widetilde{W}_{r,ij}^*$, we have the identity \[\widetilde{V} = \sum_{r\in I_N}\sum_{i,j=0}^{n_r-1} \overline{e_{r,ij}} \otimes \widetilde{V}_{r,ij} ,\] the sum converging strongly$^*$.\\
\item The $\widetilde{V}_{r,ij}$ satisfy the following orthogonality relations: \[ \varphi_M(\widetilde{V}_{r,ij}^*\widetilde{V}_{s,kl}) = \delta_{r,s}\delta_{i,k}\delta_{j,l}T_{r,i} \qquad \textrm{for all }r,s\in I_N, 0\leq i,j< n_r, 0\leq k,l<n_s.\]
\item The following equalities hold: \begin{eqnarray*} \widehat{N}' &=& \{(\iota\otimes \omega)(\widetilde{V})\mid \omega\in N_*\}\\ &=& \{x\in B(\mathscr{L}^2(N))\mid \widetilde{W}^*(1\otimes x)\widetilde{W} = 1\otimes x\}.\end{eqnarray*}\end{enumerate}
\end{Lem}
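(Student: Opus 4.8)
The plan is to prove the three assertions in order, the first being the crux; the second will follow from it by a one-line computation, and the third by a double-commutant argument. Throughout I use the decomposition and orthogonality relations for $\widetilde{W}$ from Theorem \ref{PropOrt} and the identification $\mathscr{L}^2(N)\cong\bigoplus_{r\in I_N}\mathscr{H}_r\otimes\overline{\mathscr{H}_r}$ under which $\Lambda_N(\widetilde{W}_{s,kl}^*)=T_{s,l}^{1/2}\,e_{s,k}\otimes\overline{e_{s,l}}$. For the first assertion I would test the claimed identity against the vectors $\Lambda_N(\widetilde{W}_{s,kl}^*)\otimes\eta$ with $\eta\in\mathscr{L}^2(M)$, which are total in $\mathscr{L}^2(N)\otimes\mathscr{L}^2(M)$ because the scalars $T_{s,l}$ are strictly positive (the invariant state $\phi_{N,s}$ of the ergodic coaction $\textrm{Ad}_R^{(s)}$ is faithful). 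Using the defining property of $\widetilde{V}$ together with the coproduct formula $\Delta_N(\widetilde{W}_{s,kl}^*)=\sum_m\widetilde{W}_{s,km}^*\otimes\widetilde{W}_{s,ml}^*$ of Theorem \ref{PropOrt}.3, the left-hand side becomes $\sum_m\Lambda_N(\widetilde{W}_{s,km}^*)\otimes\widetilde{W}_{s,ml}^*\eta$.

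Applying the proposed series $\sum_{r,i,j}\overline{e_{r,ij}}\otimes\widetilde{V}_{r,ij}$ to the same vector, the matrix units on $\overline{\mathscr{H}_r}$ force $r=s$ and $j=l$; substituting $\widetilde{V}_{s,il}=T_{s,i}^{1/2}T_{s,l}^{-1/2}\widetilde{W}_{s,il}^*$, the scalar $T_{s,l}^{1/2}$ coming from $\Lambda_N(\widetilde{W}_{s,kl}^*)$ recombines with $T_{s,i}^{1/2}T_{s,l}^{-1/2}$ into precisely the factor $T_{s,i}^{1/2}$ needed to rewrite $e_{s,k}\otimes\overline{e_{s,i}}$ as $\Lambda_N(\widetilde{W}_{s,ki}^*)$, so that the result is $\sum_i\Lambda_N(\widetilde{W}_{s,ki}^*)\otimes\widetilde{W}_{s,il}^*\eta$ --- term for term the left-hand side. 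Since an input in the $s$-block produces an output in the $s$-block, $\widetilde{V}$ respects the $r$-grading of its first leg, the series splits into mutually orthogonal blocks, and strong$^*$ convergence follows as for $\widetilde{W}$ in Theorem \ref{PropOrt}.1. The second assertion is then immediate: from $\widetilde{V}_{r,ij}=T_{r,i}^{1/2}T_{r,j}^{-1/2}\widetilde{W}_{r,ij}^*$ and Theorem \ref{PropOrt}.4 one gets $\varphi_M(\widetilde{V}_{r,ij}^*\widetilde{V}_{s,kl})=T_{r,i}^{1/2}T_{r,j}^{-1/2}T_{s,k}^{1/2}T_{s,l}^{-1/2}\varphi_M(\widetilde{W}_{r,ij}\widetilde{W}_{s,kl}^*)$, and on the diagonal the weight factors collapse to $T_{r,i}$.

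For the third assertion I would argue by commutants. By the first assertion each slice $(\iota\otimes\omega)(\widetilde{V})=\sum_{r,i,j}\omega(\widetilde{V}_{r,ij})\,\overline{e_{r,ij}}$ lies in $\widehat{N}'$, as $\overline{e_{r,ij}}\in\widehat{N}'$. Conversely, since slices by $N_*$ separate points of $B(\mathscr{L}^2(N))\bar{\otimes}N$, an element $y$ commutes with all $(\iota\otimes\omega)(\widetilde{V})$ if and only if $(y\otimes 1)\widetilde{V}=\widetilde{V}(y\otimes 1)$, that is if and only if $y\in\widehat{N}$ by the definition of $\widehat{N}$ in Proposition \ref{PropDen2}; hence the slices generate $\widehat{N}'$. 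For the second description, $\widetilde{W}\in N^{\textrm{op}}\bar{\otimes}\widehat{N}$, so slicing the first leg shows that $\widetilde{W}^*(1\otimes x)\widetilde{W}=1\otimes x$ holds exactly when $x$ commutes with $\{(\omega\otimes\iota)(\widetilde{W})\}$, which is $\sigma$-weakly dense in $\widehat{N}$ by Proposition \ref{PropDen2}.2, i.e. exactly when $x\in\widehat{N}'$. As in Lemma \ref{LemDen1} and Proposition \ref{PropDen2}, the first equality is to be understood with the evident $\sigma$-weak closure of the linear span; an exact set equality cannot be expected in general, since the unit of $\widehat{N}'$ would then correspond to a normal counit, which need not exist outside the Kac case.

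The main obstacle is the bookkeeping in the first assertion: correctly tracking the half-integer powers $T_{r,\cdot}^{\pm 1/2}$ through the identification $\mathscr{L}^2(N)\cong\bigoplus_r\mathscr{H}_r\otimes\overline{\mathscr{H}_r}$ and checking that they recombine as claimed, which also requires fixing a matrix-unit convention on the conjugate spaces so that $\overline{e_{r,ij}}\,\overline{e_{s,l}}=\delta_{r,s}\delta_{j,l}\,\overline{e_{s,i}}$. Everything else is formal once this is pinned down.
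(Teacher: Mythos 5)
Your proposal is correct and, for the first two assertions, is the paper's proof essentially verbatim: the paper likewise applies $\widetilde{V}$ to the total family $\Lambda_N(\widetilde{W}_{r,ij}^*)\otimes\eta = T_{r,j}^{1/2}\,e_{r,i}\otimes\overline{e_{r,j}}\otimes\eta$, expands via $\Delta_N(\widetilde{W}_{r,ij}^*)=\sum_k \widetilde{W}_{r,ik}^*\otimes\widetilde{W}_{r,kj}^*$ from Theorem \ref{PropOrt}.3, checks that the powers of $T_{r,\cdot}$ recombine exactly as you describe, and treats the second assertion as a one-line reformulation of Theorem \ref{PropOrt}.4. The only divergence is in the third assertion, and there your reading is actually the more careful one. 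The paper claims the first equality as a literal set identity (``these orthogonality relations then immediately imply...''), but what the orthogonality relations give is that every slice $(\iota\otimes\omega)(\widetilde{V})$ lies in $\widehat{N}'$ (since $\widetilde{V}\in\widehat{N}'\bar{\otimes}N$ by part 1) and that the slices contain every $T_{r,i}\,\overline{e_{r,ij}}$ (take $\omega=\varphi_M(\widetilde{V}_{r,ij}^*\,\cdot\,)\in N_*$), hence are $\sigma$-weakly dense --- which is all that is ever used later, e.g.\ for the irreducibility of $\Sigma\widetilde{V}_r\Sigma$ in Theorem \ref{TheoPW}. Literal equality indeed fails in general: already for $(N,\Delta_N)=(M,\Delta_M)$ with $M=\mathscr{L}^{\infty}(\mathbb{T})$, the slices of $V$ are diagonal in the Fourier basis with coefficients vanishing at infinity (Riemann--Lebesgue), while $\widehat{M}'\cong l^{\infty}(\mathbb{Z})$. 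Note that this counterexample is of Kac type, so your dividing line ``outside the Kac case'' is not quite right --- normality of the counit fails for essentially every infinite-dimensional compact Woronowicz algebra --- but the substance of your caveat stands. Two small remarks on your bicommutant route for the converse inclusion: your commutant computation ($y$ commutes with all slices iff $(y\otimes1)\widetilde{V}=\widetilde{V}(y\otimes1)$ iff $y\in\widehat{N}$) is fine, but to pass from it to ``the slices generate $\widehat{N}'$'' as a $\sigma$-weakly closed \emph{linear span} you should either observe that slices multiply as $(\iota\otimes\omega_1)(\widetilde{V})\,(\iota\otimes\omega_2)(\widetilde{V})=(\iota\otimes(\omega_1\otimes\omega_2)\circ\Delta_N)(\widetilde{V})$ (from $\Delta_N(x)=\widetilde{V}(x\otimes1)V^*$ and the pentagon identity), or simply fall back on the matrix-unit computation above, which is the paper's shortcut. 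Your proof of the second characterization --- slicing the first leg of $\widetilde{W}$, invoking Proposition \ref{PropDen2}.2, and using $\widetilde{W}\in N^{\textrm{op}}\bar{\otimes}\widehat{N}$ for the easy direction --- coincides with the paper's.
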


\begin{proof} Choose $r\in I_N$, $0\leq i,j<n_r$ and $\eta\in \mathscr{L}^2(M)$. Then we compute \begin{eqnarray*} \widetilde{V}\, e_{r,i} \otimes \overline{e_{r,j}}\otimes \eta &=& T_{r,j}^{-1/2}\, \widetilde{V} (\widetilde{W}_{r,ij}^*\xi_M\otimes \eta)\\ &=& T_{r,j}^{-1/2}\, \sum_{k=0}^{n_r-1} \widetilde{W}_{r,ik}^*\xi_M\otimes \widetilde{W}_{r,kj}^*\eta \\ &=&  \sum_{k=0}^{n_r-1}  T_{r,k}^{1/2}T_{r,j}^{-1/2}\, e_{r,i}\otimes \overline{e_{r,k}}\otimes \widetilde{W}_{r,kj}^*\eta.\end{eqnarray*} From this, the first point in the lemma follows.\\

\noindent The second point is of course just a reformulation of Theorem \ref{PropOrt}.4.\\

\noindent These orthogonality relations then immediately imply that \[\widehat{N}' = \{(\iota\otimes \omega)(\widetilde{V})\mid \omega\in N_*\}.\] Also the second equality of the third point follows straightforwardly: if $x\in B(\mathscr{L}^2(N))$ and \[\widetilde{W}^*(1\otimes x)\widetilde{W} = 1\otimes x,\] then $x(\omega\otimes \iota)(\widetilde{W}) = (\omega\otimes \iota)(\widetilde{W})x$ for all $\omega\in (N^{\textrm{op}})_*$. From Proposition \ref{PropDen2}.2, we conclude that $x\in \widehat{N}'$. As $\widetilde{W}\in N^{\textrm{op}}\otimes \widehat{N}$, it is also clear that any $x\in \widehat{N}'$ satisfies $\widetilde{W}^*(1\otimes x)\widetilde{W} = 1\otimes x$.\\

\end{proof}

\noindent Recall that we had introduced in Definition \ref{DefCoop} the notion of the co-opposite Galois co-object \[(N^{\textrm{cop}},\Delta_{N^{\textrm{cop}}})=(N,\Delta_N^{\textrm{op}}).\] The following lemma gathers some transfer results between this structure and the original one.

\begin{Lem}\label{LemComp1} \begin{enumerate} \item The right regular $(N,\Delta_{N}^{\textrm{op}})$-corepresentation for $(M,\Delta_M^{\textrm{op}})$ equals $\Sigma \widetilde{W}^*\Sigma$, while the left regular $(N,\Delta_{N}^{\textrm{op}})$-corepresentation equals $\Sigma \widetilde{V}^*\Sigma$.
\item The dual von Neumann algebra $(N^{\textrm{cop}})^{\wedge}$ equals $\widehat{N}'$.
\end{enumerate}

\end{Lem}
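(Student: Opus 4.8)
The plan is to verify both assertions by straightforward substitution into the defining relations of the regular unitaries, exploiting the fact that the co-opposite Galois co-object $(N^{\textrm{cop}},\Delta_{N^{\textrm{cop}}})=(N,\Delta_N^{\textrm{op}})$ is carried by the \emph{same} Hilbert spaces and shares all GNS data with $(N,\Delta_N)$. Since $\Delta_M^{\textrm{op}}=\Sigma\Delta_M\Sigma$, one checks that $(\varphi_M\otimes\iota)\Delta_M^{\textrm{op}}=(\iota\otimes\varphi_M)\Delta_M=\varphi_M(\,\cdot\,)1$, so $\varphi_M$ is again invariant and, by uniqueness of the invariant state, the GNS objects $\mathscr{L}^2(M)$, $\xi_M$, $\Lambda_M$, and hence $\Lambda_N$, are literally unchanged on passing to the co-opposite. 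Let $\widetilde{V}^{\textrm{cop}}$ and $\widetilde{W}^{\textrm{cop}}$ denote the two unitaries furnished by Proposition \ref{PropElProp}.1 applied to $(N^{\textrm{cop}},\Delta_N^{\textrm{op}})$; these are exactly the right and left regular corepresentations named in the statement.

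For the first claim I would start from the defining property of $\widetilde{V}^{\textrm{cop}}$: for $x\in N$ and $\eta\in\mathscr{L}^2(M)$,
\[\widetilde{V}^{\textrm{cop}}(\Lambda_N(x)\otimes\eta)=\Delta_N^{\textrm{op}}(x)(\xi_M\otimes\eta)=\Sigma\Delta_N(x)(\eta\otimes\xi_M),\]
where I substituted $\Delta_N^{\textrm{op}}=\Sigma\Delta_N\Sigma$ and used $\Sigma(\xi_M\otimes\eta)=\eta\otimes\xi_M$. The defining relation of $\widetilde{W}$ gives $\Delta_N(x)(\eta\otimes\xi_M)=\widetilde{W}^*(\eta\otimes\Lambda_N(x))$, so the right-hand side equals $\Sigma\widetilde{W}^*\Sigma(\Lambda_N(x)\otimes\eta)$. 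As the vectors $\Lambda_N(x)\otimes\eta$ are dense, the uniqueness in Proposition \ref{PropElProp}.1 yields $\widetilde{V}^{\textrm{cop}}=\Sigma\widetilde{W}^*\Sigma$. The companion identity is identical in spirit: beginning from $(\widetilde{W}^{\textrm{cop}})^*(\eta\otimes\Lambda_N(x))=\Delta_N^{\textrm{op}}(x)(\eta\otimes\xi_M)=\Sigma\Delta_N(x)(\xi_M\otimes\eta)$ and inserting $\Delta_N(x)(\xi_M\otimes\eta)=\widetilde{V}(\Lambda_N(x)\otimes\eta)$ gives $(\widetilde{W}^{\textrm{cop}})^*=\Sigma\widetilde{V}\Sigma$, i.e.\ $\widetilde{W}^{\textrm{cop}}=\Sigma\widetilde{V}^*\Sigma$.

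For the second claim I would feed the formula $\widetilde{V}^{\textrm{cop}}=\Sigma\widetilde{W}^*\Sigma$ into the definition of the dual von Neumann algebra, $(N^{\textrm{cop}})^{\wedge}=\{x\mid (\widetilde{V}^{\textrm{cop}})^*(x\otimes1)\widetilde{V}^{\textrm{cop}}=x\otimes1\}$. Writing $(\widetilde{V}^{\textrm{cop}})^*=\Sigma\widetilde{W}\Sigma$ and conjugating the flip inward (which converts $x\otimes1$ into $1\otimes x$) rewrites the fixed-point condition as $\Sigma\widetilde{W}(1\otimes x)\widetilde{W}^*\Sigma=x\otimes1$, that is $\widetilde{W}(1\otimes x)\widetilde{W}^*=1\otimes x$, equivalently $\widetilde{W}^*(1\otimes x)\widetilde{W}=1\otimes x$. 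By the second equality in Lemma \ref{LemFormV}.3 this is precisely the characterization of $\widehat{N}'$, whence $(N^{\textrm{cop}})^{\wedge}=\widehat{N}'$.

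I do not expect a genuine obstacle, since everything reduces to substitutions into the defining relations together with the uniqueness clause of Proposition \ref{PropElProp}.1. The only step demanding real care is the bookkeeping of the flip maps $\Sigma$ and the tracking of which Hilbert space each leg of $\widetilde{V}$, $\widetilde{W}$ (and hence of $1\otimes x$ versus $x\otimes1$) lives on, so that the flipped operators land in the correct $B(\,\cdot\,,\,\cdot\,)$. I would also secure at the outset the observation that $\varphi_M$ remains the invariant state for $(M,\Delta_M^{\textrm{op}})$, as the entire argument rests on the GNS data being unchanged; this is the one conceptual point, and it is immediate from the invariance identity above.
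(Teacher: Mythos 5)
Your proposal is correct and is exactly the verification the paper leaves implicit: the paper's proof consists of the single remark that both statements are easily verified, with the second following from Lemma \ref{LemFormV}.3, which is precisely the route you take (substitution into the defining relations of $\widetilde{V}$ and $\widetilde{W}$ together with the density of the vectors $\Lambda_N(x)\otimes\eta$, then feeding $\widetilde{V}^{\textrm{cop}}=\Sigma\widetilde{W}^*\Sigma$ into the fixed-point definition of the dual algebra). Your bookkeeping of the flips and of the invariance of $\varphi_M$ under $\Delta_M^{\textrm{op}}$ is accurate, so nothing is missing.
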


\begin{proof} The two statements are easily verified (the second one follows from Lemma \ref{LemFormV}.3). \end{proof}

\noindent One can also relate the two adjoint coactions on respectively $\widehat{N}$ and $\widehat{N}'$, but this result requires some more preparation. We will relegate this investigation to the end of the third section (see Proposition \ref{PropComp2}).\\

\noindent Let us end this section with some remarks on 2-cocycles.

\begin{Def}(\cite{Eno1})\label{DefCoc} Let $(M,\Delta_M)$ be a von Neumann bialgebra. A unitary element $\Omega\in M\bar{\otimes}M$ is called \emph{a unitary 2-cocycle} if $\Omega$ satisfies the following identity, called the 2-cocycle identity: \[(\Omega\otimes 1)(\Delta_M\otimes \iota)(\Omega) = (1\otimes \Omega)(\iota\otimes \Delta_M)(\Omega).\]\end{Def}

\begin{Exa}\label{ExaCocy} Let $(M,\Delta_M)$ be a compact Woronowicz algebra, and $\Omega$ a unitary 2-cocycle for $(M,\Delta_M)$. Then if we put $\mathscr{L}^2(N) = \mathscr{L}^2(M)$, $N=M$ and \[\Delta_N(x) = \Omega\Delta_M(x),\qquad \textrm{for all }x\in M,\] the couple $(N,\Delta_N)$ is a Galois co-object for $(M,\Delta_M)$, called the \emph{Galois co-object associated to $\Omega$}.
\end{Exa}

\noindent It is easy to see that if $\Omega_1$ and $\Omega_2$ are two unitary 2-cocycles for $(M,\Delta_M)$, then their associated Galois co-objects are isomorphic iff the unitary 2-cocycles are coboundary equivalent, that is, iff there exists a unitary $u\in M$ such that \[\Omega_2 = (u^*\otimes u^*)\Omega_1\Delta_M(u).\] In particular, the Galois co-object associated to a 2-cocycle $\Omega$ on $(M,\Delta_M)$ is isomorphic to $(M,\Delta_M)$ as a right Galois co-object iff the 2-cocycle is a \emph{coboundary}, i.e.~ is coboundary equivalent to $1\otimes 1$.

\begin{Def} Let $(M,\Delta_M)$ be a compact von Neumann algebra, and $(N,\Delta_N)$ a Galois co-object for $(M,\Delta_M)$. Then $(N,\Delta_N)$ is called \emph{cleft} if there exists a unitary 2-cocycle $\Omega$ for $(M,\Delta_M)$ such that $(N,\Delta_N)$ is isomorphic to the Galois co-object associated to $\Omega$.\end{Def}

\noindent At the moment, we do not have any examples of non-cleft Galois co-objects for compact Woronowicz algebras, although these \emph{do} exist in the non-compact case. For example, in \cite{Bic1}, non-cleft Galois co-objects were (implicitly) constructed for discrete Woronowicz algebras (see Definition \ref{DefDis}), the Galois co-object being an $l^{\infty}$-direct sum of \emph{rectangular} matrix blocks. For \emph{commutative} compact Woronowicz algebras, that is, those arising from compact groups, it \emph{can} be proven that all Galois co-objects are necessarily cleft (that is, arise from a unitary (measurable) 2-cocycle function on the compact group). We will later prove that this is also the case for \emph{co-commutative} compact Woronowicz algebras (i.e. group von Neumann algebras of discrete groups).\\

\section{Galois objects for discrete Woronowicz algebras}

\noindent In this section, we will make the connection with the theory of Galois objects from \cite{DeC1}.\\

\noindent We first introduce the notion of the dual of a compact Woronowicz algebra.

\begin{Def}\label{DefDis} Let $(M,\Delta_M)$ be a compact Woronowicz algebra with regular left corepresentation $W$. Define \[\widehat{M} = \{(\omega\otimes \iota)(W)\mid \omega\in M_*\}^{\sigma\textrm{-weak closure}}.\] Then $\widehat{M}$ is a von Neumann algebra which can be endowed with a von Neumann bialgebra structure by giving it the unique comultiplication $\Delta_{\widehat{M}}$ such that \[(\iota\otimes \Delta_{\widehat{M}})(W) = W_{13}W_{12}.\] We will call the couple $(\widehat{M},\Delta_{\widehat{M}})$ the \emph{discrete Woronowicz algebra dual to $(M,\Delta_M)$}.\end{Def}

\noindent In fact, we had already introduced the notation $\widehat{M}$ in the remark after Proposition \ref{PropDen2}, as it can be considered to be the space $\widehat{N}$ in the special case where the right Galois co-object $(N,\Delta_N)$ equals $(M,\Delta_M)$. We then also remind that we had introduced some special notations for this case in the Notation \ref{NotWeight}. The following proposition gathers some useful information which can be found in the literature (for example, see the Remark 1.15 in \cite{Vae2}, although we warn the reader that their comultiplication on $\widehat{M}$ is opposite to ours).

\begin{Prop}\label{PropDisc} Let $(M,\Delta_M)$ be a compact Woronowicz algebra, and $(\widehat{M},\Delta_{\widehat{M}})$ its dual. \begin{enumerate}\item For all $r\in I_M$, the number $m_r = \textrm{dim}(\mathscr{K}_r)$ is finite.
\item There exists a \emph{left $\Delta_M$-invariant nsf weight} $\varphi_{\widehat{M}}$ on $\widehat{M}$: for all normal states on $\widehat{M}$ and all positive $x\in \widehat{M}^+$, we have \[\varphi_{\widehat{M}}((\omega\otimes\iota)\Delta_{\widehat{M}}(x)) = \varphi_{\widehat{M}}(x).\] A concrete formula for $\varphi_{\widehat{M}}$ is given by \[\varphi_{\widehat{M}}(e_{r,ij}) = \delta_{i,j}D_{r,j}^{-1}, \qquad \textrm{for all }r\in I_M, 0\leq i,j< m_r.\]
\item On the other hand, define $\psi_{\widehat{M}}$ to be the unique nsf weight on $\widehat{M}$ such that \[\psi_{\widehat{M}}(e_{r,ij}) = \delta_{i,j}\,c_r^2\, D_{r,i},\] where $c_r = \textrm{Tr}(D_r^{-1})^{1/2}$ (which is known as the \emph{quantum dimension} of the irreducible corepresentation corresponding to the index $r\in I_M$). Then $\psi_{\widehat{M}}$ is right $\Delta_{\widehat{M}}$-invariant: for all normal states on $\widehat{M}$ and all positive $x\in \widehat{M}^+$, we have \[\psi_{\widehat{M}}((\iota\otimes\omega)\Delta_{\widehat{M}}(x)) = \psi_{\widehat{M}}(x).\]
\item The Radon-Nikodym derivative between $\psi_{\widehat{M}}$ and $\varphi_{\widehat{M}}$ is given by the (possibly unbounded) positive, non-singular operator \[\delta_{\widehat{M}} = \bigoplus_{r\in I_M} c_r^2D_r^2,\] and $\delta_{\widehat{M}}$ is then a group-like element: for all $t\in \mathbb{R}$, we have \[\Delta_{\widehat{M}}(\delta_{\widehat{M}}^{it}) = \delta_{\widehat{M}}^{it}\otimes \delta_{\widehat{M}}^{it}.\]
\end{enumerate}
\end{Prop}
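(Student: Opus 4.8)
The plan is to establish the four statements more or less in the order they are stated, exploiting at each point the structure of $\widehat{M}$ as the $l^\infty$-sum of the matrix blocks $B(\mathscr{K}_r) \cong p_r\widehat{M}$, together with the formulas for $W$ and $\widetilde{W}$ in the special case $(N,\Delta_N) = (M,\Delta_M)$. For the first point, the key input is that $W \in M \bar\otimes B(\mathscr{L}^2(M))$ and that $\widehat{M}$ arises from an \emph{ergodic} coaction of $(M,\Delta_M)$ on each factor $B(\mathscr{K}_r)$ (Corollary \ref{CorAd}, specialized). Finiteness of $m_r = \dim(\mathscr{K}_r)$ should follow from the fact that the ergodic invariant state $\phi_{M,r}$ has a trace-class density $D_r$ with $\textrm{Tr}(D_r) = 1$, while the orthogonality relations of Theorem \ref{PropOrt}.4 force the matrix coefficients $W_{r,ij}$ to be genuine elements of $M$ with $\varphi_M(W_{r,ij}W_{s,kl}^*) = \delta\cdots D_{r,j}$; since $W$ is a unitary and the $W_{r,ij}$ are finite in number per block precisely when $\dim\mathscr{K}_r < \infty$, one argues that an infinite-dimensional block would contradict unitarity/square-integrability. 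This is the standard Woronowicz finite-dimensionality argument specialized to the regular corepresentation.

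For the second and third points, I would simply \emph{verify} that the candidate weights, defined by the given formulas on matrix units, are invariant, rather than proving existence abstractly. Concretely, $\widehat{M} = \bigoplus_r B(\mathscr{K}_r)$ and one defines $\varphi_{\widehat{M}}(e_{r,ij}) = \delta_{i,j}D_{r,j}^{-1}$ and $\psi_{\widehat{M}}(e_{r,ij}) = \delta_{i,j}c_r^2 D_{r,i}$; both are manifestly nsf since the densities are positive and non-singular on each finite block. The invariance identities are then checked by applying the comultiplication rule $(\iota\otimes\Delta_{\widehat{M}})(W) = W_{13}W_{12}$ to compute $\Delta_{\widehat{M}}(e_{r,ij})$ explicitly in terms of the matrix units, and then evaluating. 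In fact $\Delta_{\widehat{M}}(e_{r,ij}) = \sum_k e_{r,ik}\otimes e_{r,kj}$ (up to the usual conventions), so left-invariance of $\varphi_{\widehat{M}}$ reduces to the numerical identity $\sum_k$ (appropriate product) $= D_{r,j}^{-1}$, which is where the specific eigenvalue weighting is forced. The analogous computation for $\psi_{\widehat{M}}$ uses the other leg and produces the $c_r^2 D_{r,i}$ weighting.

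For the fourth point, the Radon-Nikodym derivative is obtained formally: on each block the densities of $\psi_{\widehat{M}}$ and $\varphi_{\widehat{M}}$ relative to the canonical trace are $c_r^2 D_r$ and $D_r^{-1}$ respectively, so $\delta_{\widehat{M}} = \bigoplus_r c_r^2 D_r^2$ as claimed. The group-like relation $\Delta_{\widehat{M}}(\delta_{\widehat{M}}^{it}) = \delta_{\widehat{M}}^{it}\otimes\delta_{\widehat{M}}^{it}$ should follow from the fact that $\delta_{\widehat{M}}^{it}$ implements the modular structure relating the two invariant weights, combined with the homomorphism property of $\Delta_{\widehat{M}}$; one checks it on matrix units using $\Delta_{\widehat{M}}(e_{r,ij}) = \sum_k e_{r,ik}\otimes e_{r,kj}$ and the diagonality of $\delta_{\widehat{M}}$. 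I expect the \textbf{main obstacle} to be the first point: rigorously deducing finiteness of each $m_r$ requires knowing that the eigenvalues $D_{r,j}$ of the density do not accumulate at $0$ in a way compatible with unitarity, which genuinely uses the compactness (state, not merely weight) of $\varphi_M$; the remaining points are essentially bookkeeping on the explicit formulas once the blocks are known to be finite-dimensional. Since the statement explicitly flags this as available in the literature (Remark 1.15 of \cite{Vae2}), the cleanest route is to cite the standard Peter-Weyl structure theory for $\widehat{M}$ and only verify the concrete weight formulas by the matrix-unit computations sketched above.
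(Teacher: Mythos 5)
Your closing fallback --- cite the literature and only check formulas --- actually coincides with the paper's approach: the paper gives no proof of Proposition \ref{PropDisc} at all, deferring to Remark 1.15 of \cite{Vae2} and the standard Peter--Weyl theory. But the substantive verification you sketch breaks at two points. First, your argument for point 1 cannot work as stated: every ingredient you invoke --- unitarity of the regular corepresentation, orthogonality relations with a trace-class density implementing the ergodic invariant state, $\varphi_M$ being a state rather than a weight --- holds verbatim for an \emph{arbitrary} Galois co-object (Theorem \ref{PropOrt}.4 gives $\varphi_M(\widetilde{W}_{r,ij}\widetilde{W}_{s,kl}^*)=\delta_{r,s}\delta_{i,k}\delta_{j,l}T_{r,j}$ with $T_r$ trace class of trace $1$), and yet the central point of the paper, realized explicitly for $SU_q(2)$ in Section 8, is that $n_r=\infty$ occurs in that generality. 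So no deduction from these data alone can yield $m_r<\infty$; your diagnosis that the state property of $\varphi_M$ is the crucial input is off target. What is genuinely special to the trivial co-object $N=M$ is that the coefficients $W_{r,ij}$ generate a Hopf $^*$-algebra with counit and antipode, so the contragredient of an irreducible corepresentation is again a corepresentation of the \emph{same} object --- this is the engine of Woronowicz's finiteness proof, producing an invertible positive trace-class self-intertwiner. For a general Galois co-object the contragredient lives over $(N^{\textrm{op}},\Delta_{N^{\textrm{op}}})$, which is precisely why both the argument and the conclusion fail there.

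Second, the computational backbone of your checks for points 2--4, namely $\Delta_{\widehat{M}}(e_{r,ij})=\sum_k e_{r,ik}\otimes e_{r,kj}$, is not the comultiplication of $\widehat{M}$: that formula describes the matrix \emph{coalgebra} of coefficients inside $M$ (where $\Delta_M(W_{r,ij})$-type identities hold, cf.\ Theorem \ref{PropOrt}.3). The dual comultiplication, determined by $(\iota\otimes\Delta_{\widehat{M}})(W)=W_{13}W_{12}$, spreads $e_{r,ij}$ over \emph{all} pairs of blocks $(s,t)$ via the Clebsch--Gordan isometries $\mathscr{K}_r\rightarrow\mathscr{K}_s\otimes\mathscr{K}_t$. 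A sanity check: for $M=\mathscr{L}(\Gamma)$ one has $\widehat{M}=l^{\infty}(\Gamma)$ with all blocks one-dimensional, and $\Delta_{\widehat{M}}(\delta_k)=\sum_h \delta_h\otimes\delta_{h^{-1}k}$, whereas your formula would give $\delta_k\otimes\delta_k$. Consequently your matrix-unit verifications of invariance and of group-likeness would not go through as written. With the correct comultiplication, left invariance of $\varphi_{\widehat{M}}$ is a genuine computation --- it is in fact exactly the case $(N,\Delta_N)=(M,\Delta_M)$ of Proposition \ref{PropInW}, whose proof needs the pentagon identity and the closedness of $\Lambda_{\widehat{M}}$ --- and the group-like property of $\delta_{\widehat{M}}$ encodes the non-trivial behavior of the operators $D_r$ under tensoring (alternatively it follows from uniqueness arguments for invariant weights, as in \cite{Kus2}), not a diagonal check. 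The one piece of your bookkeeping that is correct as stated is the identification $\delta_{\widehat{M}}=\oplus_r c_r^2D_r^2$, since both weights are diagonal with respect to the same eigenbasis.
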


\noindent The purpose of this section is to show that for an \emph{arbitrary} Galois co-object $(N,\Delta_N)$, the comultiplication $\Delta_{\widehat{M}}$ can be generalized to a coaction $\alpha_{\widehat{N}}$ of $(\widehat{M},\Delta_{\widehat{M}})$ on $\widehat{N}$. This coaction then shares many properties with the actual comultiplication $\Delta_{\widehat{M}}$.\\

\noindent \emph{For the rest of this section, we again fix a compact Woronowicz algebra $(M,\Delta_M)$ and a right Galois co-object $(N,\Delta_N)$ for it. We keep using the notation from the previous section.}\\

\begin{Not}\label{NotGNS} We denote by $\varphi_{\widehat{N}}$ the nsf weight on $\widehat{N}$ which is uniquely determined by the fact that all $e_{r,ij} \in \mathscr{M}_{\varphi_{\widehat{N}}}$, with \[ \varphi_{\widehat{N}}(e_{r,ij}) = \delta_{i,j} T_{r,j}^{-1},\] where the $T_{r,j}$ were introduced in Notation \ref{NotWeight}. We will then take the GNS-construction for $\varphi_{\widehat{N}}$ also inside $\bigoplus_{r\in I_N} \mathscr{H}_r\otimes \overline{\mathscr{H}_r}$, the GNS-map $\Lambda_{\widehat{N}}$ of $\varphi_{\widehat{N}}$ being determined by \[\Lambda_{\widehat{N}}(e_{r,ij}) = T_{r,j}^{-1/2} e_{r,i}\otimes \overline{e_{r,j}}.\]

\noindent The same notation will be used when $(N,\Delta_N)$ equals $(M,\Delta_M)$ considered as a right Galois co-object over itself, taking however into consideration the special notations from Notation \ref{NotWeight}.\end{Not}

\noindent \emph{Remarks:} \begin{enumerate}\item  The fact that there exists a \emph{unique} nsf weight with the above properties requires in fact a small technical argument (at least in case the $\mathscr{H}_r$ are not finite-dimensional). The main observations to make are the well-known fact that any nsf weight $\psi$ on a type $I$-factor is of the form $\textrm{Tr}(S^{1/2}\,\cdot\,S^{1/2})$ for some non-singular positive (possibly unbounded) operator $S$ (see \cite{Tak1}, Lemma VIII.2.8), and the fact that if $\xi$ is a vector with $\psi(\theta_{\xi,\xi}) < \infty$ (where we recall that $\theta_{\xi,\xi}$ is the rank one operator associated to $\xi$), then $\xi \in \mathscr{D}(S^{1/2})$ with $\|S^{1/2}\xi\|^2 = \psi (\theta_{\xi,\xi})$ (this can, for example, be pieced together from the results in \cite{Tak1}, section IX.3). With this information, it should then be easy to verify that the nsf weight $\varphi_{\widehat{N}}$ in the previous notation is indeed well-defined and uniquely determined. \\
\item It is easy to check, using the orthogonality relations between the $\widetilde{W}_{r,ij}$, that for $r\in I_N$ and $0\leq i,j <n_r$, we have \[(\varphi_M(\,\cdot\, \widetilde{W}_{r,ij}^*) \otimes \iota)(\widetilde{W}) \in \mathscr{N}_{\varphi_{\widehat{N}}},\] with \[\Lambda_{\widehat{N}}((\varphi_M(\,\cdot\, \widetilde{W}_{r,ij}^*) \otimes \iota)(\widetilde{W})) = \widetilde{W}_{r,ij}^*\xi_M.\]
Hence our identifications of $\mathscr{L}^2(N)$ and $\mathscr{L}^2(\widehat{N})$ coincide with the `usual' way in which Pontryagin duality is defined in the setting of (locally) compact quantum groups (see \cite{Kus2}).\end{enumerate}

\begin{Prop}\label{PropDualCo} Denote by $\alpha_{\widehat{N}}$ the unital normal faithful $^*$-homomorphism \[\alpha_{\widehat{N}}:\widehat{N}\rightarrow \widehat{N}\bar{\otimes} B(\mathscr{L}^2(M)): x\rightarrow \Sigma \widetilde{W}(x\otimes 1)\widetilde{W}^*\Sigma.\] Then $\alpha_{\widehat{N}}$ has range in $\widehat{N}\bar{\otimes}\widehat{M}$, and determines an \emph{ergodic} coaction of $(\widehat{M},\Delta_{\widehat{M}})$ on $\widehat{N}$.

\end{Prop}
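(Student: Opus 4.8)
The plan is to funnel everything through a single ``master formula'' describing how $\alpha_{\widehat{N}}$ interacts with $\widetilde{W}$, and then to use that slices of $\widetilde{W}$ over its first leg are $\sigma$-weakly total in $\widehat{N}$ (Proposition \ref{PropDen2}.2). First I would compute $(\iota\otimes\alpha_{\widehat{N}})(\widetilde{W})$ by feeding the $\widehat{N}$-leg of $\widetilde{W}$ into the definition of $\alpha_{\widehat{N}}$, which yields $(\iota\otimes\alpha_{\widehat{N}})(\widetilde{W})=\Sigma_{23}\widetilde{W}_{23}\widetilde{W}_{12}\widetilde{W}_{23}^{*}\Sigma_{23}$. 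The second pentagonal identity of Proposition \ref{PropElProp}.4, namely $W_{12}\widetilde{W}_{13}\widetilde{W}_{23}=\widetilde{W}_{23}\widetilde{W}_{12}$, rewrites after right multiplication by $\widetilde{W}_{23}^{*}$ as $\widetilde{W}_{23}\widetilde{W}_{12}\widetilde{W}_{23}^{*}=W_{12}\widetilde{W}_{13}$, and conjugating by $\Sigma_{23}$ gives the master formula
\[(\iota\otimes\alpha_{\widehat{N}})(\widetilde{W})=W_{13}\,\widetilde{W}_{12}.\]

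Both the membership claim and the coaction identity should drop out of this. Since $\widetilde{W}\in N^{\textrm{op}}\bar{\otimes}\widehat{N}$ while $W\in M\bar{\otimes}\widehat{M}$, the right-hand side $W_{13}\widetilde{W}_{12}$ carries its second leg in $\widehat{N}$ and its third leg in $\widehat{M}$; slicing the first leg against $\omega\in B(\mathscr{L}^2(N),\mathscr{L}^2(M))_{*}$ then shows $\alpha_{\widehat{N}}((\omega\otimes\iota)(\widetilde{W}))\in\widehat{N}\bar{\otimes}\widehat{M}$, and normality of $\alpha_{\widehat{N}}$ together with Proposition \ref{PropDen2}.2 upgrades this to $\alpha_{\widehat{N}}(\widehat{N})\subseteq\widehat{N}\bar{\otimes}\widehat{M}$. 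For the coaction identity I would verify $(\alpha_{\widehat{N}}\otimes\iota)\alpha_{\widehat{N}}=(\iota\otimes\Delta_{\widehat{M}})\alpha_{\widehat{N}}$ on the generators $(\omega\otimes\iota)(\widetilde{W})$: applying $(\iota\otimes\alpha_{\widehat{N}}\otimes\iota)$ to the master formula and reusing it, respectively applying $(\iota\otimes\iota\otimes\Delta_{\widehat{M}})$ and invoking $(\iota\otimes\Delta_{\widehat{M}})(W)=W_{13}W_{12}$ from Definition \ref{DefDis}, both sides collapse to $W_{14}W_{13}\widetilde{W}_{12}$. Equality on these generators plus normality then establishes that $\alpha_{\widehat{N}}$ is a coaction of $(\widehat{M},\Delta_{\widehat{M}})$.

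The hard part is ergodicity. Undoing the flips, the condition $\alpha_{\widehat{N}}(x)=x\otimes 1$ is equivalent to the intertwining relation $\widetilde{W}(x\otimes 1)=(1\otimes x)\widetilde{W}$. Cutting this down with the minimal central projections $p_r$ of $\widehat{N}$ inserted in the range (second) leg turns it into the system $\widetilde{W}_r(x\otimes 1)\widetilde{W}_s^{*}=\delta_{r,s}(1\otimes x_r)$ for all $r,s\in I_N$, where $x_r=p_rx$ and $\widetilde{W}_r=(1\otimes p_r)\widetilde{W}=\sum_{i,j}\widetilde{W}_{r,ij}\otimes e_{r,ij}$; expanding in matrix units this is the same as $x\widetilde{W}_{s,kl}^{*}=\sum_{p}(x_s)_{pk}\widetilde{W}_{s,pl}^{*}$ for all $s,k,l$. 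My plan is to read these as intertwiner equations for the ergodic coactions $\textrm{Ad}_R^{(r)}$ of Corollary \ref{CorAd}, in close parallel with the disjointness argument in the proof of Theorem \ref{PropOrt}.4: ergodicity of each $\textrm{Ad}_R^{(r)}$ together with the orthogonality relations of Theorem \ref{PropOrt} should force the diagonal data $x_r$ to be scalar, while a propagation argument across blocks — driven by the right $M$-module structure of $N$ and ultimately by the density condition 7 of Definition \ref{DefGalCo} — should force all these scalars to coincide, giving $x\in\mathbb{C}1$.

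The main obstacle is precisely this last step. The nuisance is that in $\alpha_{\widehat{N}}$ the fixed element $x$ sits in the \emph{first} (i.e.\ $\mathscr{L}^2(N)$) leg rather than in the $\widehat{N}$-leg, so — unlike in Theorem \ref{PropOrt}.4 — one cannot decompose $(x\otimes 1)$ blockwise against $\widetilde{W}$, and the counit of $\widehat{M}$, acting only on the second leg of $\alpha_{\widehat{N}}$, yields no information at all. I therefore expect the cleanest execution to be an explicit computation combining the relation $x\widetilde{W}_{s,kl}^{*}=\sum_{p}(x_s)_{pk}\widetilde{W}_{s,pl}^{*}$ with the way right multiplication by $M$ connects the different blocks of $N$ (the ``fusion'' data), using Lemma \ref{LemDen1} and Proposition \ref{PropDen2}.2 to guarantee enough generators, rather than any abstract shortcut.
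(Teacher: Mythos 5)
Your first half is correct and coincides with the paper's own argument: the paper computes, for $x=(\omega\otimes\iota)(\widetilde{W})$ with $\omega\in (N^{\textrm{op}})_*$, that $\alpha_{\widehat{N}}(x)=(\omega\otimes\iota\otimes\iota)(W_{13}\widetilde{W}_{12})$ — the sliced form of your master formula, obtained from the same pentagonal identity — and then derives both the membership $\alpha_{\widehat{N}}(\widehat{N})\subseteq\widehat{N}\bar{\otimes}\widehat{M}$ and the identity $(\alpha_{\widehat{N}}\otimes\iota)\alpha_{\widehat{N}}=(\iota\otimes\Delta_{\widehat{M}})\alpha_{\widehat{N}}$ exactly as you propose, using $(\iota\otimes\Delta_{\widehat{M}})(W)=W_{13}W_{12}$ and the density of the slices from Proposition \ref{PropDen2}.2.

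The genuine gap is ergodicity, which you leave as a plan (``should force'', ``I expect'') without executing either half, and for the first half your proposed mechanism is not the one that works. To get each $x_r$ scalar you do not need any per-block intertwiner analysis or the orthogonality relations: from $\widetilde{W}(x\otimes 1)=(1\otimes x)\widetilde{W}$ one sees directly that $x\otimes 1$ commutes with $\textrm{Ad}_R(y)=\Sigma\widetilde{W}(1\otimes y)\widetilde{W}^*\Sigma$ for every $y\in\widehat{N}$, and since the slices $\{(\iota\otimes\omega)\textrm{Ad}_R(y)\mid y\in\widehat{N},\,\omega\in M_*\}$ generate $\widehat{N}$ as a von Neumann algebra (a general fact for coactions of compact Woronowicz algebras), this gives $x\in\mathscr{Z}(\widehat{N})$ at once — neatly sidestepping the ``first leg'' nuisance you flag. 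For the propagation across blocks your instinct matches the paper, which isolates it as Lemma \ref{LemGen}: for one \emph{fixed} $r$, the span of $\{\widetilde{W}_{r,ij}^*a\mid a\in M\}$ is already $\sigma$-weakly dense in $N$. This is not a delicate fusion computation but a one-line consequence of the unitarity row relation of Theorem \ref{PropOrt}.2, since any $z\in N$ equals $\sum_k\widetilde{W}_{r,k0}^*(\widetilde{W}_{r,k0}z)$ with $\widetilde{W}_{r,k0}z\in M$. Given the lemma, one takes $\omega=\omega_{a\xi_M,\widetilde{W}_{r,ij}^*\xi_M}$ in the fixed-point relation, applies both sides to $\widetilde{W}_{r,kj}^*\xi_M$, and uses the orthogonality relations of Theorem \ref{PropOrt}.4 to get $x\,\widetilde{W}_{r,ki}^*a\xi_M=x_r\,\widetilde{W}_{r,ki}^*a\xi_M$ for all $a\in M$; totality of such vectors in $\mathscr{L}^2(N)$ then forces $x=x_r1$. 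So your reduction and your guess about the role of the right $M$-module structure are sound, but the two concrete ingredients — the $\textrm{Ad}_R$-commutation trick for centrality and Lemma \ref{LemGen} for propagation — are missing, and without them the argument as written does not close.
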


\noindent Before giving the proof, we first state a lemma that we will need in the course of it.

\begin{Lem} \label{LemGen} Take $r\in I_N$ arbitrary. Then the linear span of the set \[\{\widetilde{W}_{r,ij}^{*} a\mid 0\leq i,j< n_r, a\in M\}\] is $\sigma$-weakly dense in $N$.
\end{Lem}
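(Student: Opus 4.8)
The plan is to fix $r\in I_N$, let $\mathcal{N}_r$ denote the $\sigma$-weakly closed linear span of $\{\widetilde{W}_{r,ij}^{*}a\mid 0\leq i,j<n_r,\ a\in M\}$, and prove that $\mathcal{N}_r$ contains \emph{every} element $\widetilde{W}_{s,pl}^{*}$ with $s\in I_N$ and $0\leq p,l<n_s$. This suffices: by Lemma \ref{LemDen1} we have $N=\{(\iota\otimes\omega)(\widetilde{W}^{*})\mid\omega\in B(\mathscr{L}^2(N))_*\}^{\sigma\textrm{-weak closure}}$, and since $\widetilde{W}^{*}=\sum_{s,p,l}\widetilde{W}_{s,pl}^{*}\otimes e_{s,lp}$ each slice $(\iota\otimes\omega)(\widetilde{W}^{*})$ lies in the $\sigma$-weakly closed span of the $\widetilde{W}_{s,pl}^{*}$; hence once all $\widetilde{W}_{s,pl}^{*}\in\mathcal{N}_r$ we get $\mathcal{N}_r=N$.

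The key step is to turn a block-$s$ coefficient into a block-$r$ coefficient multiplied by an element of $M$, using the completeness relation from Theorem \ref{PropOrt}.2. Setting $i=j$ there gives the strongly convergent identity
\[\sum_{k=0}^{n_r-1}\widetilde{W}_{r,ki}^{*}\,\widetilde{W}_{r,ki}=1_{\mathscr{L}^2(N)}.\]
Right-multiplying by the bounded operator $\widetilde{W}_{s,pl}^{*}\in N$ and using that strong convergence is preserved under right multiplication by a fixed bounded operator, I obtain
\[\widetilde{W}_{s,pl}^{*}=\sum_{k=0}^{n_r-1}\widetilde{W}_{r,ki}^{*}\bigl(\widetilde{W}_{r,ki}\,\widetilde{W}_{s,pl}^{*}\bigr),\]
the sum converging strongly, hence $\sigma$-weakly. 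Now I invoke condition 3 of Definition \ref{DefGalCo}: taking $x=\widetilde{W}_{r,ki}^{*}\in N$ and $y=\widetilde{W}_{s,pl}^{*}\in N$, the product $x^{*}y=\widetilde{W}_{r,ki}\,\widetilde{W}_{s,pl}^{*}$ lies in $M$. Therefore each summand is of the form $\widetilde{W}_{r,ki}^{*}a$ with $a\in M$, i.e.\ belongs to the generating set of $\mathcal{N}_r$; the finite partial sums lie in $\mathcal{N}_r$, and since $\mathcal{N}_r$ is $\sigma$-weakly closed the limit $\widetilde{W}_{s,pl}^{*}$ lies in $\mathcal{N}_r$ as well. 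This holds for every $s,p,l$, completing the argument.

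The one conceptual subtlety worth flagging is \emph{why} the module action by $M$ cannot be dispensed with. The orthogonality relations of Theorem \ref{PropOrt}.4 show that the bare coefficients $\{\widetilde{W}_{r,ij}^{*}\}$ for a single fixed $r$ are mutually orthogonal to all coefficients of the other blocks $s\neq r$, so their closed span is only the block-$r$ part of $N$ and is a proper subspace whenever $I_N$ has more than one element. The genuine content of the lemma is that multiplication by $M$ connects the blocks, and the technical heart of the proof is the observation above that the completeness relation $\sum_k\widetilde{W}_{r,ki}^{*}\widetilde{W}_{r,ki}=1$ lets one reconstruct an arbitrary $\widetilde{W}_{s,pl}^{*}$ by absorbing the ``transition operators'' $\widetilde{W}_{r,ki}\widetilde{W}_{s,pl}^{*}$ into $M$ via condition 3. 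The only points requiring a little care are the interchange of the strong (and hence $\sigma$-weak) limit with membership in the $\sigma$-weakly closed space $\mathcal{N}_r$, and the correct bookkeeping of the leg spaces ($\widetilde{W}_{r,ki}^{*}\in N$, $\widetilde{W}_{r,ki}\in N^{\textrm{op}}$) so that the composition indeed lands in $M$; both are routine once the decomposition is written out.
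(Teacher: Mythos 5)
Your proof is correct and rests on exactly the paper's mechanism: the completeness relation $\sum_{k}\widetilde{W}_{r,ki}^{*}\widetilde{W}_{r,ki}=1_{\mathscr{L}^2(N)}$ from Theorem \ref{PropOrt}.2 combined with condition 3 of Definition \ref{DefGalCo} to absorb the transition operators $\widetilde{W}_{r,ki}\,\cdot\,$ into $M$. The only difference is a small detour: the paper applies the identity directly to an \emph{arbitrary} $x\in N$, writing $x=\sum_{k}\widetilde{W}_{r,k0}^{*}(\widetilde{W}_{r,k0}x)$ with $\widetilde{W}_{r,k0}x\in M$, which makes your preliminary reduction via Lemma \ref{LemDen1} to the coefficients $\widetilde{W}_{s,pl}^{*}$ unnecessary.
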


\begin{proof} Denote by $\widetilde{N}$ the $\sigma$-weak closure of the linear span of $\{\widetilde{W}_{r,ij}^{*} a\mid 0\leq i,j< n_r, a\in M\}$. Then clearly $\widetilde{N}\subseteq N$. Now choose $r\in I_N$ fixed, and take an $x\in N$. By Proposition \ref{PropOrt}.2, we have \[x = \sum_{k=0}^{n_r-1} \widetilde{W}_{r,k0}^*(\widetilde{W}_{r,k0}x),\] the sum converging $\sigma$-weakly. As $\widetilde{W}_{r,k0}x\in M$ for all $0\leq k<n_r$, the sum on the right hand side lies in $\widetilde{N}$. So also $N\subseteq \widetilde{N}$.

\end{proof}

\begin{proof}[Proof (of Proposition \ref{PropDualCo})] Take $\omega\in (N^{\textrm{op}})_*$, and denote $x = (\omega\otimes \iota)(\widetilde{W})$. By the pentagonal identity for $\widetilde{W}$ (Proposition \ref{PropElProp}.4), we easily get that \[\alpha_{\widehat{N}}(x) = (\omega\otimes \iota\otimes \iota)(W_{13}\widetilde{W}_{12}) \in \widehat{N}\bar{\otimes}\widehat{M},\] and by an application of the formula $(\iota\otimes \Delta_{\widehat{M}})(W) = W_{13}W_{12}$, we find \[(\alpha_{\widehat{N}} \otimes \iota)\alpha_{\widehat{N}}(x) = (\iota\otimes \Delta_{\widehat{M}})\alpha_{\widehat{N}}(x).\] As elements of the form $x$ constitute a $\sigma$-weakly dense subspace of $\widehat{N}$ by Proposition \ref{PropDen2}.2, we have proven that $\alpha_{\widehat{N}}$ is a well-defined coaction.\\

\noindent We now show that it is ergodic. Take an element $x\in \widehat{N}$ satisfying $\alpha_{\widehat{N}}(x)=x\otimes 1$. Then for all $y\in \widehat{N}$, we get \[(x\otimes 1)\textrm{Ad}_R(y) = \textrm{Ad}_R(y) (x\otimes 1),\] where we recall that $\textrm{Ad}_R(y) = \Sigma \widetilde{W}(1\otimes y)\widetilde{W}^*\Sigma$. As $\{(\iota\otimes \omega)\textrm{Ad}_R(y)\mid \omega\in M_*\}^{''} = \widehat{N}$, a general fact for any coaction of a compact Woronowicz algebra, we find $x\in \mathscr{Z}(\widehat{N})$, the center of $\widehat{N}$.\\

\noindent Write then $x = \sum_{r\in I_N} x_r p_r$, where $r\rightarrow x_r\in l^{\infty}(I_N)$ and $p_r$ the $r$-th minimal central projection of $\widehat{N}$. Then as $\alpha_{\widehat{N}}(x) = x\otimes 1$, we have \[  x (\iota\otimes \omega)(\widetilde{W}^*\Sigma) = (\iota\otimes \omega)(\widetilde{W}^*\Sigma) x \] for all $\omega \in B(\mathscr{L}^2(M),\mathscr{L}^2(N))_*$. If we take $\omega = \omega_{a\xi_M,\widetilde{W}_{r,ij}^*\xi_M}$ for some $r\in I_N$, $0\leq i,j< n_r$ and $a\in M$, and apply both sides of the above equality to $\widetilde{W}_{r,kj}^*\xi_M$ for some $0\leq k<n_r$, we get, by using the orthogonality relations for the $\widetilde{W}_{r,ij}$, that \[  x\widetilde{W}_{r,ki}^*a\xi_M = x_r \widetilde{W}_{r,ki}^*a\xi_M.\] As, with $r$ a \emph{fixed} element of $I_N$, we have that $\{\widetilde{W}_{r,ki}^*a\mid a\in M\}$ is $\sigma$-weakly dense in $N$, by the previous lemma, we get that $x\xi = x_r\xi$ for all $\xi \in \mathscr{L}^2(N)$, so $x$ is a scalar multiple of the unit.

\end{proof}

\noindent Our next goal is to show that $\alpha_{\widehat{N}}$ is nicely behaved with respect to the weight structure on $(\widehat{M},\Delta_{\widehat{M}})$.

\begin{Prop}\label{PropInW} For all $x\in \widehat{N}^+$ and all normal positive states $\omega$ on $\widehat{N}$, we have \[\varphi_{\widehat{M}}((\omega\otimes \iota)\alpha_{\widehat{N}}(x)) = \varphi_{\widehat{N}}(x).\] In particular, $\alpha_{\widehat{N}}$ is an \emph{integrable} coaction.\end{Prop}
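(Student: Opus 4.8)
The plan is to first recast the statement in the cleaner operator-valued form
\[
(\iota\otimes\varphi_{\widehat{M}})\,\alpha_{\widehat{N}}(x) = \varphi_{\widehat{N}}(x)\,1_{\widehat{N}}\qquad(x\in\widehat{N}^+),
\]
which is equivalent to the displayed identity: slicing the first ($\widehat{N}$) leg with a normal positive state $\omega$ and using $\omega(1)=1$ recovers $\varphi_{\widehat{M}}((\omega\otimes\iota)\alpha_{\widehat{N}}(x))=\varphi_{\widehat{N}}(x)$, the interchange of the state $\omega$ with the normal weight $\varphi_{\widehat{M}}$ being justified by normality. Integrability of $\alpha_{\widehat{N}}$ is then immediate, since the right-hand side is the nsf weight $\varphi_{\widehat{N}}$. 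Writing $\Phi(x):=(\iota\otimes\varphi_{\widehat{M}})\alpha_{\widehat{N}}(x)$, this exhibits $\Phi$ as a normal operator-valued weight from $\widehat{N}$ into its extended positive cone.

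The heart of the argument is a soft invariance computation showing that $\Phi(x)$ is always an $\alpha_{\widehat{N}}$-fixed element. Using that $\alpha_{\widehat{N}}$ is a coaction, i.e. $(\alpha_{\widehat{N}}\otimes\iota)\alpha_{\widehat{N}}=(\iota\otimes\Delta_{\widehat{M}})\alpha_{\widehat{N}}$ (established inside the proof of Proposition \ref{PropDualCo}), together with the invariance $(\iota\otimes\varphi_{\widehat{M}})\Delta_{\widehat{M}}(y)=\varphi_{\widehat{M}}(y)1$ from Proposition \ref{PropDisc}.2, I would compute $\alpha_{\widehat{N}}(\Phi(x))=(\iota\otimes\iota\otimes\varphi_{\widehat{M}})(\iota\otimes\Delta_{\widehat{M}})\alpha_{\widehat{N}}(x)=\bigl(\iota\otimes[(\iota\otimes\varphi_{\widehat{M}})\Delta_{\widehat{M}}]\bigr)\alpha_{\widehat{N}}(x)=\Phi(x)\otimes 1$, the middle step pulling the normal weight through the comultiplication. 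Since $\alpha_{\widehat{N}}$ is \emph{ergodic} (Proposition \ref{PropDualCo}), every $\alpha_{\widehat{N}}$-fixed element of $\widehat{N}$ is scalar, so $\Phi(x)=\lambda(x)\,1_{\widehat{N}}$ for some $\lambda(x)\in[0,\infty]$; normality and positivity of $\Phi$ make $\lambda$ a normal semifinite weight on $\widehat{N}$.

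It remains to identify $\lambda$ with $\varphi_{\widehat{N}}$, and this normalization is where the real work lies. Expanding $\widetilde{W}=\sum_{s,i,j}\widetilde{W}_{s,ij}\otimes e_{s,ij}$ (Theorem \ref{PropOrt}.1) and using $\alpha_{\widehat{N}}(x)=\Sigma\widetilde{W}(x\otimes 1)\widetilde{W}^*\Sigma$, one finds $\Phi(x)=\sum_{s,k,k',l}\varphi_{\widehat{M}}(\widetilde{W}_{s,kl}\,x\,\widetilde{W}_{s,k'l}^{*})\,e_{s,kk'}$; matching this against $\lambda(x)\sum_{s,k}e_{s,kk}$ gives $\lambda(x)=\sum_{l}\varphi_{\widehat{M}}(\widetilde{W}_{s,kl}\,x\,\widetilde{W}_{s,kl}^{*})$ for every fixed $s,k$ (the independence of $s,k$ coming for free from $\Phi(x)$ being scalar) and forces the off-diagonal coefficients to vanish. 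Writing $\varphi_{\widehat{M}}=\mathrm{Tr}\bigl((\oplus_t D_t^{-1})\,\cdot\,\bigr)$ as in Proposition \ref{PropDisc}.2 and applying cyclicity of the trace, this becomes $\lambda(x)=\mathrm{Tr}\bigl(\bigl(\sum_l\widetilde{W}_{s,kl}^{*}(\oplus_t D_t^{-1})\widetilde{W}_{s,kl}\bigr)x\bigr)$, and one must verify that the partial trace of the bracketed operator over the $\overline{\mathscr{H}_r}$-legs equals $T_r^{-1}$ on each block $r$.

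I expect this last verification to be the main obstacle: it requires tracking how a single block $r\in I_N$ of $\widehat{N}$ couples, through the coefficients $\widetilde{W}_{s,ij}$, to the various blocks $s\in I_M$ of $\widehat{M}$ — the fusion phenomenon — and seeing the block-normalizations conspire so that $\lambda(e_{r,ij})=\delta_{i,j}T_{r,j}^{-1}=\varphi_{\widehat{N}}(e_{r,ij})$. The clean input is the orthogonality relation $\varphi_M(\widetilde{W}_{r,ij}\widetilde{W}_{s,kl}^{*})=\delta_{r,s}\delta_{i,k}\delta_{j,l}T_{r,j}$ of Theorem \ref{PropOrt}.4, which simultaneously kills the off-diagonal terms and produces the diagonal values $T_{r,j}^{-1}$, thereby closing the proof. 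Alternatively, one could sidestep the computation entirely by invoking the observation in the second remark after Notation \ref{NotGNS}, namely that $\varphi_{\widehat{N}}$ has been set up precisely as the dual (Plancherel) weight for the canonical Pontryagin duality, for which this invariance is automatic.
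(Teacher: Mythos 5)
Your first half is sound, but it is not where the difficulty lies: the computation $\alpha_{\widehat{N}}(\Phi(x))=\Phi(x)\otimes 1$ via the coaction identity and left invariance of $\varphi_{\widehat{M}}$, followed by ergodicity, is exactly what the paper imports wholesale by citing Proposition 1.3 of \cite{Vae1}, which produces the normal faithful weight $\varphi_{\widehat{N}}'$ with $\varphi_{\widehat{N}}'(x)1=(\iota\otimes\varphi_{\widehat{M}})\alpha_{\widehat{N}}(x)$. (One small inaccuracy there: semifiniteness of your $\lambda$ is not ``automatic from normality and positivity'' --- the paper is careful to call $\varphi_{\widehat{N}}'$ \emph{not necessarily semi-finite}; semifiniteness only comes out at the end, once $\lambda=\varphi_{\widehat{N}}$ is proved.) The genuine gap is in the normalization step, which you correctly flag as the main obstacle but then close too quickly.

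Theorem \ref{PropOrt}.4 cannot do what you ask of it. It computes the inner products $\langle\widetilde{W}_{r,ij}^*\xi_M,\widetilde{W}_{s,kl}^*\xi_M\rangle=\varphi_M(\widetilde{W}_{s,kl}\widetilde{W}_{r,ij}^*)$, i.e.\ pairings of products of two coefficients against $\varphi_M$; your quantity $\sum_l\varphi_{\widehat{M}}(\widetilde{W}_{s,kl}\,e_{r,ij}\,\widetilde{W}_{s,kl}^*)$ pairs such products against the \emph{dual} weight $\varphi_{\widehat{M}}$. Writing $e_{r,ij}=\sum_m T_{r,m}^{-1}\theta_{\widetilde{W}_{r,im}^*\xi_M,\,\widetilde{W}_{r,jm}^*\xi_M}$, what you actually need is how the vectors $\widetilde{W}_{s,kl}\widetilde{W}_{r,im}^*\xi_M$ distribute over the blocks $\mathscr{K}_t\otimes\overline{\mathscr{K}_t}$ of $\mathscr{L}^2(M)$, i.e.\ the fusion coefficients $\varphi_M(W_{t,mn}\widetilde{W}_{s,lk}\widetilde{W}_{r,ij}^*)$, about which Theorem \ref{PropOrt}.4 is silent. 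Your cyclicity-of-the-trace move is also not legitimate as written: $\varphi_{\widehat{M}}$ is a weight on $\widehat{M}$, the individual conjugates $\widetilde{W}_{s,kl}\,x\,\widetilde{W}_{s,k'l}^*$ do not lie in $\widehat{M}$ (only the sum over $l$ does), and any extension of $\varphi_{\widehat{M}}$ to a density weight on $B(\mathscr{L}^2(M))$ for which cyclicity could be invoked must keep track of the multiplicity $m_t$ with which each block of $\widehat{M}$ acts on $\mathscr{L}^2(M)$ --- data absent from $\textrm{Tr}((\oplus_t D_t^{-1})\,\cdot\,)$. The paper's proof avoids evaluating any fusion data: it establishes the GNS-level intertwining $\Lambda_{\widehat{M}}((\omega\otimes\iota)\alpha_{\widehat{N}}(x))=(\iota\otimes\omega)(\widetilde{W})\Lambda_{\widehat{N}}(x)$ for $x$ in the span of the $e_{r,ij}$ (the unknown coefficients $\varphi_M(W_{t,mn}\widetilde{W}_{s,lk}\widetilde{W}_{r,ij}^*)$ appear identically in both expansions and are matched using closedness of $\Lambda_{\widehat{M}}$), and then obtains the normalization from the \emph{unitarity} of $\widetilde{W}$, via $\sum_n\|(\iota\otimes\omega_{\xi,\xi_n})(\widetilde{W})\Lambda_{\widehat{N}}(y)\|^2=\|\widetilde{W}(\Lambda_{\widehat{N}}(y)\otimes\xi)\|^2=\varphi_{\widehat{N}}(y^*y)$. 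That unitarity --- the Galois property itself --- is the normalizing ingredient your argument never invokes, and without it the block-trace identity you defer cannot be verified. Your proposed fallback is circular: the second remark after Notation \ref{NotGNS} only records that the chosen identification of $\mathscr{L}^2(N)$ with $\mathscr{L}^2(\widehat{N})$ agrees with the usual duality conventions; the invariance of $\varphi_{\widehat{N}}$ under $\alpha_{\widehat{N}}$ for a general Galois co-object is not ``automatic'' there --- it is precisely the content of Proposition \ref{PropInW}.
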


\noindent \emph{Remark:} The fact that $\alpha_{\widehat{N}}$ is integrable means that there exists a $\sigma$-weakly dense subspace of $\widehat{N}$ consisting of elements $x$ for which \emph{all} expressions $(\omega\otimes \iota)\alpha_{\widehat{N}}(x)$ with $\omega\in \widehat{N}_*$ lie in $\mathscr{M}_{\varphi_{\widehat{M}}}$.

\begin{proof} We first recall a small technical result from \cite{Vae1}, Proposition 1.3. Namely, as $\alpha_{\widehat{N}}$ is an ergodic coaction, there exists a (not necessarily semi-finite) normal faithful weight $\varphi_{\widehat{N}}'$ on $\widehat{N}$, determined by the following formula: for all $x\in \widehat{N}_+$, we have \[\varphi_{\widehat{N}}'(x) =  \varphi_{\widehat{M}}((\omega\otimes \iota)\alpha_{\widehat{N}}(x)),\] where $\omega$ is \emph{any} normal state on $\widehat{N}$. Our job then is to prove that $\varphi_{\widehat{N}}' = \varphi_{\widehat{N}}$. By the remark after Notation \ref{NotGNS}, it is enough to prove that the $e_{r,ij}$ are in $\mathscr{M}_{\varphi_{\widehat{N}}'}$ with \[\varphi_{\widehat{N}}'(e_{r,ij}) =\delta_{i,j}T_{r,j}^{-1}.\]

\noindent Take $r,s\in I_N$ and $0\leq i,j<n_r$, $0\leq k,l < n_s$. Then we compute, using the GNS-construction for $\varphi_{\widehat{N}}$ from Notation \ref{NotGNS} and the functionals $\omega_{s,kl}$ introduced in Notation \ref{NotWeight}, that \begin{eqnarray*} &&\hspace{-2cm}(\iota\otimes \omega_{s,kl})(\widetilde{W})\Lambda_{\widehat{N}}(e_{r,ij})\\ &=& T_{r,j}^{-1} \widetilde{W}_{s,lk}\widetilde{W}_{r,ij}^*\xi_M \\ &=&  T_{r,j}^{-1} \sum_{t\in I_M} \sum_{m,n=0}^{m_t-1} \langle \widetilde{W}_{s,lk}\widetilde{W}_{r,ij}^*\xi_M,\frac{1}{\|W_{t,mn}^*\xi_M\|} W_{t,mn}^*\xi_M\rangle\, \frac{1}{\|W_{t,mn}^*\xi_M\|} W_{t,mn}^*\xi_M \\ &=& T_{r,j}^{-1} \sum_{t\in I_M} \sum_{m,n=0}^{m_t-1} \varphi_M(W_{t,mn}\widetilde{W}_{s,lk}\widetilde{W}_{r,ij}^*) D_{t,n}^{-1} W_{t,mn}^*\xi_M \\ &=& T_{r,j}^{-1} \sum_{t\in I_M} \sum_{m,n=0}^{m_t-1} \varphi_M(W_{t,mn}\widetilde{W}_{s,lk}\widetilde{W}_{r,ij}^*) \Lambda_{\widehat{M}}(e_{t,mn}),\end{eqnarray*} the latter sums converging in norm.\\

\noindent On the other hand, \begin{eqnarray*} (\omega_{s,kl}\otimes \iota)\alpha_{\widehat{N}}(e_{r,ij}) &=& T_{r,j}^{-1}(\omega_{s,kl} \otimes \iota)\alpha_{\widehat{N}}((\varphi_M(\,\cdot\,\widetilde{W}_{r,ij}^*)\otimes \iota)(\widetilde{W})) \\ &=& T_{r,j}^{-1} (\varphi_M(\,\cdot\,\widetilde{W}_{r,ij}^*)\otimes\omega_{s,kl} \otimes \iota)(W_{13}\widetilde{W}_{12}) \\ &=& T_{r,j}^{-1} \sum_{t\in I_M} \sum_{m,n=0}^{m_t-1} \varphi_M(W_{t,mn}\widetilde{W}_{s,lk}\widetilde{W}_{r,ij}^*)e_{t,mn},\end{eqnarray*} where the latter sum now converges in the strong topology.\\

\noindent As $\Lambda_{\widehat{M}}$ is a strong-norm closed map from $\mathscr{N}_{\varphi_{\widehat{M}}}$ to $\mathscr{L}^2(\widehat{M})$, it follows that \[(\omega_{s,kl}\otimes \iota)(\alpha_{\widehat{N}}(e_{r,ij})) \in \mathscr{N}_{\varphi_{\widehat{M}}},\] with \[\Lambda_{\widehat{M}}((\omega_{s,kl}\otimes \iota)(\alpha_{\widehat{N}}(e_{r,ij}))) = (\iota\otimes \omega_{s,kl})(\widetilde{W})\Lambda_{\widehat{N}}(e_{r,ij}).\] Again by closedness, these assertions remain true when $\omega_{s,kl}$ is replaced by an arbitrary normal functional on $\widehat{N}$.\\

\noindent Let now $\xi_{r,i,j} = e_{r,i}\otimes \overline{e_{r,j}}$ for $r\in I_N$ and $0\leq i,j< n_r$. For any finite subset $\mathcal{J}_0$ of the set $\mathcal{J}=\{(r,i,j)\mid r\in I_N,0\leq i,j< n_r\}$, denote by $P_{\mathcal{J}_0}$ the orthogonal projection onto the linear span of the $\xi_{n}$ with $n\in \mathcal{J}_0$. Take an arbitrary state $\omega \in \widehat{N}_*$ and $x = y^*y$ in the linear span of the $e_{r,ij}$. We remark then that there exists a unit vector $\xi \in \mathscr{L}^2(\widehat{N})$ with $\omega = \omega_{\xi,\xi} (= \langle \,\cdot\,\xi,\xi\rangle)$. We can now compute, using the normality of our weights, that \begin{eqnarray*} \varphi_{\widehat{M}}((\omega\otimes \iota)\alpha_{\widehat{N}}(x)) &=& \lim_{\mathcal{J}_0\underset{\textrm{fin}}{\subseteq} \mathcal{J}} \varphi_{\widehat{M}}((\omega_{\xi,\xi}\otimes \iota)(\alpha_{\widehat{N}}(y)^*(P_{\mathcal{J}_0}\otimes 1)\alpha_{\widehat{N}}(y)))\\ &=& \lim_{\mathcal{J}_0\underset{\textrm{fin}}{\subseteq} \mathcal{J}} \sum_{n\in \mathcal{J}_0} \varphi_{\widehat{M}}((\omega_{\xi,\xi_n}\otimes \iota)(\alpha_{\widehat{N}}(y))^*\cdot (\omega_{\xi,\xi_n}\otimes \iota)(\alpha_{\widehat{N}})(y)) \\ &=& \lim_{\mathcal{J}_0\underset{\textrm{fin}}{\subseteq} \mathcal{J}} \sum_{n\in \mathcal{J}_0} \|(\iota\otimes \omega_{\xi,\xi_n})(\widetilde{W})\Lambda_{\widehat{N}}(y)\|^2 \\ &=&\varphi_{\widehat{N}}(y^*y) = \varphi_{\widehat{N}}(x),\end{eqnarray*} by the unitarity of $\widetilde{W}$. From this, it immediately follows that all $e_{r,ij}$ are integrable for $\varphi_{\widehat{N}}'$, and that $\varphi_{\widehat{N}}' = \varphi_{\widehat{N}}$ on the linear span of the $e_{r,ij}$. This then concludes the proof.

\end{proof}

\noindent We have shown so far that $\alpha_{\widehat{N}}$ is an integrable, ergodic coaction. The final property of $\alpha_{\widehat{N}}$ is that a certain isometry which can be constructed from $\alpha_{\widehat{N}}$ is in fact a unitary.

\begin{Prop} Take $x,y\in \mathscr{N}_{\varphi_{\widehat{N}}}$. Then $\alpha_{\widehat{N}}(y)(x\otimes 1) \in \mathscr{N}_{\varphi_{\widehat{N}}\otimes \varphi_{\widehat{M}}}$, and \[(\Lambda_{\widehat{N}}\otimes \Lambda_{\widehat{M}})(\alpha(y)(x\otimes 1)) = \Sigma \widetilde{W}\Sigma\, \Lambda_{\widehat{N}}(x)\otimes \Lambda_{\widehat{N}}(y).\]

\end{Prop}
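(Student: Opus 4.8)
The plan is to exhibit the vector $\Theta:=\Sigma\widetilde{W}\Sigma\,(\Lambda_{\widehat{N}}(x)\otimes\Lambda_{\widehat{N}}(y))$ as the GNS image of $Z:=\alpha_{\widehat{N}}(y)(x\otimes 1)$, splitting the argument into an \emph{integrability} part (membership in $\mathscr{N}_{\varphi_{\widehat{N}}\otimes\varphi_{\widehat{M}}}$) and an \emph{identification} part (computing that image). Write $\psi=\varphi_{\widehat{N}}\otimes\varphi_{\widehat{M}}$ throughout.

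For integrability I would first record a clean consequence of Proposition \ref{PropInW}: testing $\varphi_{\widehat{M}}((\omega\otimes\iota)\alpha_{\widehat{N}}(z))=\varphi_{\widehat{N}}(z)$ against \emph{all} normal states $\omega$ on $\widehat{N}$ and using Fubini for the product weight $\omega\otimes\varphi_{\widehat{M}}$ yields
\[
(\iota\otimes\varphi_{\widehat{M}})(\alpha_{\widehat{N}}(z))=\varphi_{\widehat{N}}(z)\,1_{\widehat{N}},\qquad z\in\widehat{N}^{+}.
\]
Applying this to $z=y^{*}y$ (with $y\in\mathscr{N}_{\varphi_{\widehat{N}}}$), using that $\alpha_{\widehat{N}}$ is a $^*$-homomorphism and the module property of the operator-valued slice, I get
\[
\psi(Z^{*}Z)=\varphi_{\widehat{N}}\big(x^{*}(\iota\otimes\varphi_{\widehat{M}})(\alpha_{\widehat{N}}(y^{*}y))x\big)=\varphi_{\widehat{N}}(y^{*}y)\,\varphi_{\widehat{N}}(x^{*}x),
\]
which is finite; hence $Z\in\mathscr{N}_{\psi}$, and moreover $\psi(Z^{*}Z)=\|\Lambda_{\widehat{N}}(x)\|^{2}\|\Lambda_{\widehat{N}}(y)\|^{2}=\|\Theta\|^{2}$, the last step because $\Sigma\widetilde{W}\Sigma$ is unitary. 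This already forces the prospective assignment $\Lambda_{\widehat{N}}(x)\otimes\Lambda_{\widehat{N}}(y)\mapsto\Lambda_{\psi}(Z)$ to be isometric, but it does not yet pin down the vector.

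For the identification I would match the first-leg slices of $\Lambda_{\psi}(Z)$ and of $\Theta$. For $a\in\mathscr{N}_{\varphi_{\widehat{N}}}$ let $R_{a}=(\langle\,\cdot\,,\Lambda_{\widehat{N}}(a)\rangle\otimes\iota)$ be the partial inner product on the first leg, and put $\rho_{a,x}=\omega_{\Lambda_{\widehat{N}}(x),\Lambda_{\widehat{N}}(a)}\in\widehat{N}_{*}$. On elementary tensors $\alpha_{\widehat{N}}(y)=\sum_{k}p_{k}\otimes q_{k}$ one has $\langle\Lambda_{\widehat{N}}(p_{k}x),\Lambda_{\widehat{N}}(a)\rangle=\rho_{a,x}(p_{k})$, so the slicing calculus for tensor-product weights gives $R_{a}\Lambda_{\psi}(Z)=\Lambda_{\widehat{M}}\big((\rho_{a,x}\otimes\iota)(\alpha_{\widehat{N}}(y))\big)$, extended by normality. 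On the other hand, expanding $\Sigma\widetilde{W}\Sigma=\sum_{t,m,n}e_{t,mn}\otimes\widetilde{W}_{t,mn}$ and applying $R_{a}$ directly gives $R_{a}\Theta=\sum_{t,m,n}\rho_{a,x}(e_{t,mn})\widetilde{W}_{t,mn}\Lambda_{\widehat{N}}(y)=(\iota\otimes\rho_{a,x})(\widetilde{W})\Lambda_{\widehat{N}}(y)$. The slice identity established inside the proof of Proposition \ref{PropInW}, namely $\Lambda_{\widehat{M}}((\omega\otimes\iota)\alpha_{\widehat{N}}(y))=(\iota\otimes\omega)(\widetilde{W})\Lambda_{\widehat{N}}(y)$ for $y$ in the linear span $\mathcal{E}$ of the $e_{r,ij}$ and arbitrary $\omega\in\widehat{N}_{*}$, applied with $\omega=\rho_{a,x}$, shows $R_{a}\Lambda_{\psi}(Z)=R_{a}\Theta$. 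As the $\Lambda_{\widehat{N}}(a)$ are dense, this yields $\Lambda_{\psi}(Z)=\Theta$ for all $x\in\mathscr{N}_{\varphi_{\widehat{N}}}$ and $y\in\mathcal{E}$. To pass to general $y\in\mathscr{N}_{\varphi_{\widehat{N}}}$, I fix $x$, choose $y_{k}\in\mathcal{E}$ with $y_{k}\to y$ $\sigma$-strongly$^{*}$ and $\Lambda_{\widehat{N}}(y_{k})\to\Lambda_{\widehat{N}}(y)$ (possible since $\mathcal{E}$ is a core for $\Lambda_{\widehat{N}}$), and invoke normality of $\alpha_{\widehat{N}}$, boundedness of $\Sigma\widetilde{W}\Sigma$, and closedness of $\Lambda_{\psi}$.

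The main obstacle is the analytic bookkeeping when the $\mathscr{H}_{r}$ are infinite-dimensional: one must guarantee that the sums defining the slices converge and, above all, that $Z$ is genuinely square-integrable rather than merely formally so. The Fubini reduction to $(\iota\otimes\varphi_{\widehat{M}})\alpha_{\widehat{N}}(y^{*}y)=\varphi_{\widehat{N}}(y^{*}y)1$ is exactly what tames this, and the unitarity of $\widetilde{W}$ — equivalently the Parseval identity already exploited at the end of the proof of Proposition \ref{PropInW} — is what forces the norms to match; the remaining steps are the routine slicing calculus and closedness arguments for tensor-product weights.
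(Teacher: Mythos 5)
Your proof is correct and follows essentially the same route as the paper: square-integrability is deduced from the integrability identity $(\iota\otimes\varphi_{\widehat{M}})\alpha_{\widehat{N}}(\,\cdot\,)=\varphi_{\widehat{N}}(\,\cdot\,)1$, and the GNS vector is identified via the slice identity $\Lambda_{\widehat{M}}((\omega\otimes\iota)\alpha_{\widehat{N}}(y))=(\iota\otimes\omega)(\widetilde{W})\Lambda_{\widehat{N}}(y)$ established in the proof of Proposition \ref{PropInW}, together with density. The only difference is packaging: the paper first defines the canonical isometry $\widetilde{G}$ (well-defined for all $x,y$ by the invariance computation) and proves the operator identity $\widetilde{G}=\widetilde{W}$ by slicing, whereas you compare the two vectors directly for $y$ in the span of the $e_{r,ij}$ and extend by closedness of the GNS map --- the same argument in slightly different clothing.
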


\begin{proof} The claim concerning the square integrability of $\alpha_{\widehat{N}}(y)(x\otimes 1)$ follows immediately from the fact that $\alpha_{\widehat{N}}$ is integrable, with $(\iota\otimes \varphi_{\widehat{M}})\alpha_{\widehat{N}}= \varphi_{\widehat{N}}$. Moreover, we can then define an isometry \[\widetilde{G}: \mathscr{L}^2(N) \otimes \mathscr{L}^2(N)\rightarrow \mathscr{L}^2(M)\otimes \mathscr{L}^2(N)\] inside $B(\mathscr{L}^2(N),\mathscr{L}^2(M))\bar{\otimes} \widehat{N}$ such that precisely \[(\Lambda_{\widehat{N}}\otimes \Lambda_{\widehat{M}})(\alpha(y)(x\otimes 1)) = \Sigma \widetilde{G}\Sigma\, \Lambda_{\widehat{N}}(x)\otimes \Lambda_{\widehat{N}}(y)\] for all $x,y\in \mathscr{N}_{\varphi_{\widehat{N}}}$. We need to show that $\widetilde{G} =  \widetilde{W}$.\\

\noindent However, it is easily seen that for all $\omega \in B(\mathscr{L}^2(\widehat{N}))_*$ and $x\in \mathscr{N}_{\varphi_{\widehat{N}}}$, we will have \[ (\iota\otimes \omega)(\widetilde{G})\Lambda_{\widehat{N}}(x) = \Lambda_{\widehat{M}}((\omega\otimes \iota)\alpha_{\widehat{N}}(x)).\] By the computations made in the previous proposition, it follows that $(\iota\otimes \omega)(\widetilde{G})$ coincides with $(\iota\otimes \omega)(\widetilde{W})$ on the linear span of the $e_{r,i}\otimes \overline{e_{r,j}}$ for $r\in I_N$, $0\leq i,j<n_r$, and hence $\widetilde{G}=  \widetilde{W}$.
\end{proof}

\noindent The three propositions above immediately show the following.

\begin{Theorem}\label{TheoGalobj} Let $(M,\Delta_M)$ be a compact Woronowicz algebra, $(N,\Delta_N)$ a right Galois co-object for $(M,\Delta_M)$. Then the couple $(\widehat{N},\alpha_{\widehat{N}})$ makes $\widehat{N}$ into a right Galois object for the discrete Woronowicz algebra $(\widehat{M},\Delta_{\widehat{M}})$, with corresponding Galois unitary $\widetilde{G}=\widetilde{W}$.\end{Theorem}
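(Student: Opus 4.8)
The plan is to recognize that Theorem \ref{TheoGalobj} is essentially a \emph{packaging} result: the three preceding propositions (Proposition \ref{PropDualCo}, Proposition \ref{PropInW}, and the isometry/unitarity proposition) have already established each of the defining axioms of a right Galois object for a discrete Woronowicz algebra, so the proof amounts to checking that these facts exactly match the definition of a Galois object and its Galois unitary. Accordingly, my first step would be to recall (from \cite{DeC3}, since the definition is not reproduced in this excerpt) the precise list of conditions that $(\widehat{N},\alpha_{\widehat{N}})$ must satisfy to be a right Galois object for $(\widehat{M},\Delta_{\widehat{M}})$: namely that $\alpha_{\widehat{N}}$ is a coaction of $(\widehat{M},\Delta_{\widehat{M}})$ on $\widehat{N}$, that it is \emph{ergodic} and \emph{integrable} with respect to the invariant weight $\varphi_{\widehat{M}}$, and that the associated \emph{Galois map} (the isometry built from $\alpha_{\widehat{N}}$ and the GNS constructions) is in fact \emph{unitary}.

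Having pinned down the definition, the second step is to read off each condition from the already-proven propositions. The fact that $\alpha_{\widehat{N}}$ is a well-defined normal faithful coaction with values in $\widehat{N}\bar{\otimes}\widehat{M}$ satisfying the coaction identity $(\alpha_{\widehat{N}}\otimes\iota)\alpha_{\widehat{N}}=(\iota\otimes\Delta_{\widehat{M}})\alpha_{\widehat{N}}$, together with its ergodicity, is exactly the content of Proposition \ref{PropDualCo}. The integrability — the existence of a $\sigma$-weakly dense subspace of square-integrable elements together with the relation $(\iota\otimes\varphi_{\widehat{M}})\alpha_{\widehat{N}}=\varphi_{\widehat{N}}$ — is precisely Proposition \ref{PropInW} and its remark. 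Finally, the Galois-unitarity condition is supplied by the last proposition, where the candidate isometry $\widetilde{G}$ is shown to coincide with the \emph{unitary} $\widetilde{W}$; in particular the identification of the Galois unitary as $\widetilde{W}$ (and hence the ``$\widetilde{G}=\widetilde{W}$'' in the statement) is immediate from that proposition.

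The third step is simply to assemble these identifications into a single sentence asserting that all axioms hold and that the Galois unitary is $\widetilde{W}$. Since every ingredient is already established, no new computation is required — the work is entirely in verifying that the definition of a Galois object from \cite{DeC3} is phrased in terms of exactly these four properties (coaction, ergodicity, integrability with the correct invariant weight, unitarity of the Galois map), so that nothing is left over.

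The main obstacle I anticipate is not any analytic difficulty but rather a \emph{matching} subtlety: one must confirm that the conventions for a Galois object in \cite{DeC3} — in particular the precise form of the Galois map and the choice of invariant weight $\varphi_{\widehat{M}}$ versus the right-invariant weight $\psi_{\widehat{M}}$, and whether left or right coactions are used — align with the data produced here. A secondary point worth checking is that the isometry $\widetilde{G}$ constructed in the final proposition is genuinely the Galois map appearing in the definition (and not merely \emph{a} related isometry), so that its equality with the unitary $\widetilde{W}$ legitimately discharges the unitarity axiom. Once these bookkeeping matches are confirmed, the theorem follows at once, which is exactly why the author can justifiably write that ``the three propositions above immediately show the following.''
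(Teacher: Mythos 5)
Your proposal is correct and matches the paper's own treatment exactly: the paper proves this theorem with the single remark that ``the three propositions above immediately show the following,'' relying, just as you do, on Proposition \ref{PropDualCo} for the ergodic coaction, Proposition \ref{PropInW} for integrability, and the final proposition for unitarity of the Galois map $\widetilde{G}=\widetilde{W}$, with the definition of a Galois object taken from \cite{DeC3} (as the paper itself recalls right after the theorem, it is \emph{defined} as an integrable ergodic coaction whose Galois map is unitary). Your attention to the convention-matching subtleties is exactly the only nontrivial content here, so nothing is missing.
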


\noindent For the terminology `Galois object', we refer the reader to \cite{DeC3} (where the notations $N$ and $\widehat{N}$ are interchanged). In fact, it is simply \emph{defined} to be an integrable ergodic coaction for which the map $\widetilde{G}$, as we constructed it in the course of the proof the previous proposition, is a unitary. This map $\widetilde{G}$ is in general called the \emph{Galois unitary} associated with the Galois object, and as we saw in the previous proposition, it coincides precisely with $\widetilde{W}$ in case the Galois object is constructed from a Galois co-object for a compact Woronowicz algebra. Galois objects can also be defined as being \emph{ergodic, semi-dual coactions} (see \cite{Vae1}, Proposition 5.12 for the terminology, and the remark under Proposition 3.5 of \cite{DeC3} for the connection). We further remark that Galois objects for \emph{compact} Woronowicz algebras were treated in \cite{Bic1} (where they are termed `actions of full quantum multiplicity'), and for \emph{commutative} compact Woronowicz algebras, that is for ordinary compact groups, in \cite{Was1} and \cite{Lan1} (where they are termed `actions of full multiplicity'). For Galois objects in the Hopf algebra setting, we refer to the overview \cite{Sch1}.\\

\noindent We may now use the results from \cite{DeC3}, which we gather in the following theorem.

\begin{Theorem}\label{TheoGalob} Let $(N,\Delta_N)$ be a right Galois co-object for a compact Woronowicz algebra $(M,\Delta_M)$, and let $(\widehat{N},\alpha_{\widehat{N}})$ be the associated right Galois object for the dual discrete Woronowicz algebra $(\widehat{M},\Delta_{\widehat{M}})$. Then the following statements hold.
 \begin{enumerate}\item There exists an nsf weight $\psi_{\widehat{N}}$ on $\widehat{N}$, unique up to scaling with a positive constant, which is $\alpha_{\widehat{N}}$-invariant: for all states $\omega\in\widehat{M}_*$ and all $x\in \widehat{N}^+$, we have \[\psi_{\widehat{N}}((\iota\otimes \omega)\alpha_{\widehat{N}}(x)) = \psi_{\widehat{N}}(x).\]
\item The Radon-Nikodym derivative $\delta_{\widehat{N}}\eta \widehat{N}$ of $\psi_{\widehat{N}}$ with respect to $\varphi_{\widehat{N}}$ satisfies \[\alpha_{\widehat{N}}(\delta_{\widehat{N}}^{it}) = \delta_{\widehat{N}}^{it}\otimes \delta_{\widehat{M}}^{it} \qquad \textrm{for all }t\in \mathbb{R}.\]
\item The Radon-Nikodym derivative $\delta_{\widehat{N}}$ is $\sigma^{\varphi_{\widehat{N}}}_t$-invariant, where the latter denotes the modular one-parametergroup associated to $\varphi_{\widehat{N}}$: \[\sigma^{\varphi_{\widehat{N}}}_t(\delta_{\widehat{N}}^{is}) = \delta_{\widehat{N}}^{is} \qquad \textrm{for all }s,t\in \mathbb{R}.\]
\end{enumerate}
\end{Theorem}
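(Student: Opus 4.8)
The plan is to exploit the very concrete structure of $\widehat{N}$, which by Proposition \ref{PropDen2}.1 and Notation \ref{NotGNS} is the $l^\infty$-sum $\bigoplus_{r\in I_N}B(\mathscr{H}_r)$ on which $\varphi_{\widehat{N}}$ is the diagonal weight $\varphi_{\widehat{N}}(e_{r,ij})=\delta_{ij}T_{r,j}^{-1}$, i.e.\ $\varphi_{\widehat{N}}=\mathrm{Tr}(T^{-1}\,\cdot\,)$ with $T=\bigoplus_r T_r$. For the invariant weight I would look for $\psi_{\widehat{N}}$ in the same diagonal family, $\psi_{\widehat{N}}(e_{r,ij})=\delta_{ij}\lambda_r T_{r,i}$, and pin down the positive scalars $\lambda_r$ by imposing $\alpha_{\widehat{N}}$-invariance. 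This Ansatz is dictated by the quantum-group case: Proposition \ref{PropDisc} shows that on $\widehat{M}$ the right-invariant weight has exactly this shape, $\psi_{\widehat{M}}(e_{r,ij})=\delta_{ij}c_r^2 D_{r,i}$ with $c_r^2=\mathrm{Tr}(D_r^{-1})$, and the Galois object should mirror it block by block.

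For existence and invariance (part 1) I would argue two ways and cross-check. Abstractly, $(\widehat{N},\alpha_{\widehat{N}})$ is an ergodic, integrable, semi-dual coaction (Theorem \ref{TheoGalobj}), so the existence of a right-invariant nsf weight, unique up to a positive scalar, is precisely what the general theory of such coactions in \cite{Vae1} and \cite{DeC3} supplies; uniqueness up to scaling is the standard consequence of ergodicity, since any two invariant weights have a Connes cocycle that is $\alpha_{\widehat{N}}$-fixed and hence scalar. Concretely, I would verify right-invariance of the diagonal Ansatz by hand: writing $\alpha_{\widehat{N}}(e_{r,ij})=\Sigma\widetilde{W}(e_{r,ij}\otimes 1)\widetilde{W}^*\Sigma$, expanding $\widetilde{W}=\sum\widetilde{W}_{s,kl}\otimes e_{s,kl}$, computing $(\iota\otimes\omega)\alpha_{\widehat{N}}(e_{r,ij})$ and applying $\psi_{\widehat{N}}$. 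The orthogonality relations of Theorem \ref{PropOrt}.4, together with the lifting relation of Proposition \ref{PropInW}, should collapse the sums and fix $\lambda_r$; I expect $\lambda_r$ to come out as the formal dimension $\mathrm{Tr}(T_r^{-1})$, the direct analogue of $c_r^2$.

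With both weights in hand, part 2 is essentially bookkeeping. The Radon-Nikodym derivative is the diagonal operator $\delta_{\widehat{N}}=\bigoplus_r\lambda_r T_r^2$, so that $\psi_{\widehat{N}}=\varphi_{\widehat{N}}(\delta_{\widehat{N}}^{1/2}\,\cdot\,\delta_{\widehat{N}}^{1/2})$, affiliated with $\widehat{N}$. The cocycle identity $\alpha_{\widehat{N}}(\delta_{\widehat{N}}^{it})=\delta_{\widehat{N}}^{it}\otimes\delta_{\widehat{M}}^{it}$ I would obtain by transporting, through the Galois unitary $\widetilde{W}$ (which models $\alpha_{\widehat{N}}$ on $\Delta_{\widehat{M}}$), the group-like property $\Delta_{\widehat{M}}(\delta_{\widehat{M}}^{it})=\delta_{\widehat{M}}^{it}\otimes\delta_{\widehat{M}}^{it}$ of Proposition \ref{PropDisc}.4: comparing the invariance of $\varphi_{\widehat{N}}$ (Proposition \ref{PropInW}) against that of $\psi_{\widehat{N}}$ forces $\delta_{\widehat{N}}$ to intertwine the two exactly as $\delta_{\widehat{M}}$ does for $\varphi_{\widehat{M}}$ versus $\psi_{\widehat{M}}$, with $\delta_{\widehat{M}}^{it}$ emerging in the $\widehat{M}$-leg. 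Part 3 is then immediate from the explicit forms: $\sigma_t^{\varphi_{\widehat{N}}}$ is the diagonal automorphism group $x\mapsto T^{-it}xT^{it}$, and since $\delta_{\widehat{N}}=\bigoplus_r\lambda_r T_r^2$ is diagonal in the same eigenbasis of $T$, it commutes with every $T^{it}$, giving $\sigma_t^{\varphi_{\widehat{N}}}(\delta_{\widehat{N}}^{is})=\delta_{\widehat{N}}^{is}$.

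The main obstacle is part 1, namely the existence (and invariance) of $\psi_{\widehat{N}}$: as always for quantum groups and their Galois objects, producing the Haar-type weight is the real work, and the clean abstract route genuinely leans on the semi-duality machinery of \cite{DeC3} and \cite{Vae1}. The concrete verification, though elementary in principle once the diagonal Ansatz is made, hinges on carrying the $\widetilde{W}$-expansion of $\alpha_{\widehat{N}}$ through the orthogonality relations carefully enough to see that a single scalar $\lambda_r$ per block suffices; controlling the normalization and the convergence of the (generally infinite) sums is where I expect most of the effort to go.
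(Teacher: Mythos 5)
Your ``abstract route'' is, in fact, the paper's entire proof: having established in Theorem \ref{TheoGalobj} that $(\widehat{N},\alpha_{\widehat{N}})$ is a Galois object, the paper proves the present theorem purely by citation to \cite{DeC3} (Theorem 3.18, Proposition 3.15 and Lemma 3.17), and your ergodicity remark for uniqueness is the standard one. The genuine gap lies in your concrete ``cross-check'', on which your proofs of parts 2 and 3 actually rest. The Ansatz $\psi_{\widehat{N}}(e_{r,ij})=\delta_{ij}\lambda_r T_{r,i}$, equivalently $\delta_{\widehat{N}}=\bigoplus_r \lambda_r T_r^2$, is strictly stronger than the theorem: part 3 only yields that $A_r=p_r\delta_{\widehat{N}}$ \emph{commutes} with $T_r$, and by Proposition \ref{PropFin}.3 the special shape $A_r=\lambda_r T_r^2$ is precisely the unimodularity of the reflected quantum group $(P,\Delta_P)$ --- which the remark after Proposition \ref{PropFin} explicitly flags as an open question. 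So no choice of block scalars $\lambda_r$ need make your diagonal family invariant; a priori the invariant weight involves arbitrary positive non-singular $A_r$ commuting with $T_r$. In particular your derivation of part 3 (``$\delta_{\widehat{N}}$ is diagonal in the eigenbasis of $T$, hence fixed by $\sigma_t^{\varphi_{\widehat{N}}}$'') is circular: in the paper the joint eigenbasis of $A_r$ and $T_r$ (Notation \ref{NotA}) becomes available only \emph{because} part 3 has been proven first.

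Your predicted normalization also fails concretely in the regime this paper is about. First, $\lambda_r=\mathrm{Tr}(T_r^{-1})$ diverges whenever $n_r=\infty$, since $T_r$ is trace class with $T_{r,i}\to 0$; infinite-dimensional blocks are exactly the new phenomenon here. Second, in the $SU_q(2)$ example of Section 8 every block carries the \emph{same} operator $T$ ($Te_i=q^{2i}e_i$), yet $\delta_{\widehat{N}}=\oplus_{n\in\mathbb{Z}}q^{2n}T^2$: the relative constants $q^{2n}$ differ across blocks with identical $T_r$, so $\lambda_r$ cannot be given by any formula in $T_r$ alone --- it is dynamical data (there it comes from $\sigma_t^{\varphi_0}(v)=q^{-2it}v$, a modular computation in the reflected quantum group $\widetilde{E}_q(2)$). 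Third, the by-hand invariance check does not close with the relations you invoke: applying $\psi_{\widehat{N}}\otimes\iota$ to $\alpha_{\widehat{N}}(e_{r,ij})=\Sigma\widetilde{W}(e_{r,ij}\otimes 1)\widetilde{W}^*\Sigma$ produces weighted sums of triple products $\widetilde{W}_{s,kl}\,e_{r,ij}\,\widetilde{W}_{s,kl}^*$, pairings of the ``second kind'' not governed by Theorem \ref{PropOrt}.4 (which controls only $\widetilde{W}_{r,ij}\widetilde{W}_{s,kl}^*\in N^{\mathrm{op}}\cdot N\subseteq M$); the relations that would close the computation, $\psi_P(\widetilde{W}_{r,ij}^*\widetilde{W}_{s,kl})=\delta_{r,s}\delta_{i,k}\delta_{j,l}\,T_{r,i}/A_{r,i}$ of Theorem \ref{TheoFormW}, live on the reflected algebra $P$ rather than on $M$ and are themselves \emph{derived from} the existence of $\delta_{\widehat{N}}$, i.e.\ from the theorem being proved. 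The semi-dual machinery of \cite{DeC3} and \cite{Vae1} is therefore not an optional consistency check but the only non-circular route available, and it is the one the paper takes.
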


\begin{proof} See \cite{DeC3}, Theorem 3.18, Proposition 3.15 and Lemma 3.17.\end{proof}

\begin{Not}\label{NotA} We denote by $A_r$ the non-singular (possibly unbounded) positive operator $p_r\delta_{\widehat{N}} \in B(\mathscr{H}_r)$, so that \[\delta_{\widehat{N}} = \bigoplus_{r\in I_N} A_r.\]\end{Not}

\noindent By the third item in the previous theorem, the operator $A_r$ strongly commutes with $T_r$. In particular, this means that $A_r$ is diagonalizable, and that we may choose our $e_{r,i}\in \mathscr{H}_r$ so that they are \emph{also} eigenvectors for the $A_r$. We then write $A_{r,i}$ for the eigenvalue of $A_r$ with respect to the eigenvector $e_{r,i}$.\\

\noindent \emph{Remark:} Let $\mathscr{A}$ be the Hopf $^*$-algebra associated to $(M,\Delta_M)$, consisting of all elements $x\in M$ with $\Delta_M(x)\in M\odot M$, the algebraic tensor product. Then it is well-known that $\mathscr{A}$ is a $\sigma$-weakly dense sub-$^*$-algebra of $M$, closed under the modular automorphism group $\sigma_t^{\varphi_M}$ of $\varphi_M$. Let $\mathscr{B}_r$ be the sub-$^*$-algebra of $B(\mathscr{H}_r)$ consisting of all elements $x\in B(\mathscr{H}_r)$ with $\textrm{Ad}_R^{(r)}(x) \in B(\mathscr{H}_r)\odot \mathscr{A}$. Again, it is well-known that $\mathscr{B}_r$ is a $\sigma$-weakly dense sub-$^*$-algebra of $B(\mathscr{H}_r)$ (it is the linear span of the coefficients of the spectral subspaces associated to $\textrm{Ad}_R^{(r)}$, see for example \cite{Bic1}). Then the operators $A_r$, introduced in the above notation, turn out to be determined, up to a scalar, by the formula \[ A_r^{it}xA_r^{-it} = (\iota\otimes \varepsilon\circ \sigma_t^{\varphi_M})\textrm{Ad}_R^{(r)}(x),\qquad \textrm{for all }x\in \mathscr{B}_r,\] where $\varepsilon$ denotes the counit of $\mathscr{A}$. This formula can be derived from the way in which $\delta_{\widehat{N}}$ was constructed in \cite{DeC3}. Hence, up to multiplication with a non-singular (possibly unbounded) positive element in the center of $\widehat{N}$, the operator $\delta_{\widehat{N}}$ can be recovered from the knowledge of all the $\textrm{Ad}_R^{(r)}$.\\

\section{Projective representations of compact quantum groups}

\noindent Using the results from the first section, we can easily develop a Peter-Weyl theory for projective representations of compact quantum groups. We will in the following use again the notation which we introduced in the first section. \\

\noindent We first define the notion of a projective representation relative to a fixed Galois co-object.

\begin{Def} Let $(M,\Delta_M)$ be a compact Woronowicz algebra, $(N,\Delta_N)$ a right Galois co-object for $(M,\Delta_M)$. A \emph{(left) $(N,\Delta_N)$-corepresentation} of $(M,\Delta_M)$ on a Hilbert space $\mathscr{H}$ consists of a unitary map $\mathcal{G}\in N\bar{\otimes} B(\mathscr{H})$ such that \[(\Delta_N\otimes \iota)\mathcal{G} = \mathcal{G}_{13}\mathcal{G}_{23}.\]

\noindent We call a Hilbert subspace $\mathscr{K}\subseteq \mathscr{H}$ \emph{invariant} w.r.t.~ the $(N,\Delta_N)$-corepresentation when $\mathcal{G}$ restricts to a unitary in $N\bar{\otimes} B(\mathscr{K})$.\\

\noindent We call $\mathcal{G}$ \emph{irreducible} if the only invariant Hilbert subspaces are $0$ and $\mathscr{H}$, and \emph{indecomposable} when $\mathscr{H}$ can not be written as the direct sum of two non-zero invariant subspaces.\\

\noindent We call two $(N,\Delta_N)$-corepresentations $\mathcal{G}_1$ and $\mathcal{G}_2$ on respective Hilbert spaces $\mathscr{H}_1$ and $\mathscr{H}_2$ \emph{unitarily equivalent} if there exists $u\in B(\mathscr{H}_1,\mathscr{H}_2)$ such that \[\mathcal{G}_2(1\otimes u) = (1\otimes u)\mathcal{G}_1.\]
\end{Def}

\noindent \emph{Remark:} When $(N,\Delta_N)$ comes from a 2-cocycle $\Omega$ for $(M,\Delta_M)$, we will also simply speak of $\Omega$-corepresentations.\\

\begin{Theorem}\label{TheoPW} Let $(M,\Delta_M)$ be a compact quantum group, $(N,\Delta_N)$ a right Galois co-object for $(M,\Delta_M)$. Denote by $\widetilde{V}$ the right regular $(N,\Delta_N)$-corepresentation for $(M,\Delta_M)$, and let \[\widetilde{V}_r = (p_r\otimes 1)\widetilde{V} \in N\bar{\otimes} B(\overline{\mathscr{H}_r})\] be the components of $\widetilde{V}$, where the $p_r$ denote the minimal projections of $\mathscr{Z}(\widehat{N})$.
\begin{enumerate}
\item The unitaries $\Sigma \widetilde{V}_r\Sigma$ are indecomposable left $(N,\Delta_N)$-corepresentations on the Hilbert spaces $\overline{\mathscr{H}_r}$.
\item Any indecomposable $(N,\Delta_N)$-corepresentation is unitarily equivalent with a unique $\Sigma\widetilde{V}_r\Sigma$.
\item Any $(N,\Delta_N)$-corepresentation splits as a direct sum of indecomposable $(N,\Delta_N)$-corepresentations.
\item Any indecomposable $(N,\Delta_N)$-corepresentation is irreducible.
\end{enumerate}
\end{Theorem}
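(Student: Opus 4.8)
The plan is to reduce the whole theorem to the (elementary) representation theory of the von Neumann algebra $\widehat{N}'$, by setting up a dictionary between left $(N,\Delta_N)$-corepresentations and normal unital $^*$-representations of $\widehat{N}'$. Recall from Lemma~\ref{LemFormV}.3 that $\widetilde{V}\in\widehat{N}'\bar{\otimes}N$ with $\widehat{N}'=\{(\iota\otimes\omega)(\widetilde{V})\mid\omega\in N_*\}$, and that by Proposition~\ref{PropDen2} together with the identification $\mathscr{L}^2(N)\cong\bigoplus_{r}\mathscr{H}_r\otimes\overline{\mathscr{H}_r}$ the algebra $\widehat{N}'$ is the direct sum $\bigoplus_{r\in I_N}B(\overline{\mathscr{H}_r})$ of type $I$ factors, sharing with $\widehat{N}$ the minimal central projections $p_r$. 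To a normal unital $^*$-representation $\pi\colon\widehat{N}'\to B(\mathscr{H})$ I associate $\mathcal{G}_\pi:=(\iota\otimes\pi)(\Sigma\widetilde{V}\Sigma)\in N\bar{\otimes}B(\mathscr{H})$; that this is a left $(N,\Delta_N)$-corepresentation is immediate from $(\iota\otimes\Delta_N)\widetilde{V}=\widetilde{V}_{12}\widetilde{V}_{13}$ (Proposition~\ref{PropElProp}.5) and the multiplicativity of $\pi$. The identity representation of $\widehat{N}'$ returns $\Sigma\widetilde{V}\Sigma$, and the representation $\pi_r$ that is the identity on the factor $p_r\widehat{N}'=B(\overline{\mathscr{H}_r})$ returns exactly $\Sigma\widetilde{V}_r\Sigma$.

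Conversely, from a corepresentation $\mathcal{G}$ I recover a representation by $\pi_\mathcal{G}((\iota\otimes\omega)(\widetilde{V})):=(\omega\otimes\iota)(\mathcal{G})$. Writing $\omega_1*\omega_2=(\omega_1\otimes\omega_2)\Delta_N$, the identities $(\iota\otimes\omega_1)(\widetilde{V})(\iota\otimes\omega_2)(\widetilde{V})=(\iota\otimes(\omega_1*\omega_2))(\widetilde{V})$ and $(\omega_1\otimes\iota)(\mathcal{G})(\omega_2\otimes\iota)(\mathcal{G})=((\omega_1*\omega_2)\otimes\iota)(\mathcal{G})$ both follow from $(\iota\otimes\Delta_N)\widetilde{V}=\widetilde{V}_{12}\widetilde{V}_{13}$ and $(\Delta_N\otimes\iota)\mathcal{G}=\mathcal{G}_{13}\mathcal{G}_{23}$ respectively, so $\pi_\mathcal{G}$ is multiplicative and (using $\widetilde{V}^*$ and $\mathcal{G}^*$) $^*$-preserving \emph{provided it is well defined}. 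This well-definedness, namely the implication $(\iota\otimes\eta)(\widetilde{V})=0\Rightarrow(\eta\otimes\iota)(\mathcal{G})=0$ together with normality, is the main obstacle, and is precisely the analogue of Woronowicz' theorem that every corepresentation is absorbed by the regular one. I expect to overcome it by a Peter--Weyl absorption argument: using the fullness of the regular corepresentation (Lemma~\ref{LemDen1}) and the orthogonality relations (Lemma~\ref{LemFormV}.2, Theorem~\ref{PropOrt}.4), one builds from matrix coefficients of $\mathcal{G}$ enough intertwiners $\mathrm{Mor}(\Sigma\widetilde{V}_r\Sigma,\mathcal{G})$ to exhibit $\mathcal{G}$ as contained in a multiple of $\Sigma\widetilde{V}\Sigma$, whence any relation among slices of $\widetilde{V}$ must persist for $\mathcal{G}$. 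The structural reason this must hold is that by Proposition~\ref{PropElProp}.3 conjugation by $\widetilde{V}$ implements $\Delta_N$ on the $N$-leg.

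Granting the dictionary, the four assertions become statements about $\bigoplus_r B(\overline{\mathscr{H}_r})$. For $(1)$ I check directly, without the dictionary, that $\Sigma\widetilde{V}_r\Sigma$ is a unitary corepresentation: since $p_r\in\mathscr{Z}(\widehat{N})=\mathscr{Z}(\widehat{N}')$ commutes with the first leg of $\widetilde{V}$, one gets $\widetilde{V}_r^*\widetilde{V}_r=p_r\otimes1$ and $\widetilde{V}_r\widetilde{V}_r^*=p_r\otimes1$, and the corepresentation identity is the restriction of $(\iota\otimes\Delta_N)\widetilde{V}=\widetilde{V}_{12}\widetilde{V}_{13}$ by the same central projection. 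For $(3)$, every normal unital representation of $\bigoplus_r B(\overline{\mathscr{H}_r})$ splits as a direct sum of block representations; as the dictionary turns direct sums of representations into direct sums of corepresentations, every corepresentation splits into indecomposables. For $(2)$, an indecomposable corepresentation corresponds to a representation $\pi$ whose commutant $\pi(\widehat{N}')'$ has no nontrivial projection, hence is irreducible, hence is supported on one block and equals the identity representation of that factor $B(\overline{\mathscr{H}_r})$; thus $\mathcal{G}\cong\Sigma\widetilde{V}_r\Sigma$, with uniqueness of $r$ coming from the distinctness of the central supports $p_r$.

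Finally $(4)$ is the point where the non-Kac, infinite-dimensional phenomenon genuinely enters, and it is the most delicate. The difficulty is that an invariant subspace $\mathscr{K}$, on which $\mathcal{G}$ restricts to a unitary $\mathcal{G}_\mathscr{K}=(1\otimes p)\mathcal{G}(1\otimes p)$, need \emph{not} a priori give a projection commuting with $\mathcal{G}$: in $N\bar{\otimes}B(\mathscr{H})$ with $\dim\mathscr{H}=\infty$ the relations $1\otimes p\le\mathcal{G}(1\otimes p)\mathcal{G}^*$ and $1\otimes p\sim\mathcal{G}(1\otimes p)\mathcal{G}^*$ no longer force equality. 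I resolve this by applying the dictionary to $\mathcal{G}_\mathscr{K}$, which is itself a corepresentation: then $p\,\pi(\cdot)\,p$ is a representation of $\widehat{N}'$, and multiplicativity $p\pi(a)p\pi(b)p=p\pi(ab)p$ forces $(1-p)\pi(a)p=0$, i.e. $p\in\pi(\widehat{N}')'$. Hence invariant subspaces of $\mathcal{G}$ correspond \emph{exactly} to projections of the von Neumann algebra $\pi(\widehat{N}')'$, so the orthogonal complement of an invariant subspace is again invariant, and an indecomposable corepresentation can admit no nontrivial invariant subspace at all, which is irreducibility. The only genuine obstacle is therefore the well-definedness and normality of the dictionary in the second paragraph; everything else is a transcription of the representation theory of a direct sum of type $I$ factors.
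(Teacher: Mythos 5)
Your reduction stands or falls with the well-definedness and normality of the map $\pi_{\mathcal{G}}$, and as written this step is only a promissory note (``I expect to overcome it\dots''). It is genuinely the crux: all of your proofs of (2), (3) and (4) invoke the dictionary, and the absorption statement you appeal to --- $\mathcal{G}$ is contained in a multiple of $\Sigma\widetilde{V}\Sigma$ --- is on its face a strengthening of (2) and (3) combined, so you must check that the argument does not presuppose what it is meant to prove. Fortunately it need not: for $x\in B(\overline{\mathscr{H}_r},\mathscr{H})$ the element $y=(\varphi_M\otimes\iota)\bigl(\mathcal{G}^*(1\otimes x)\,\Sigma\widetilde{V}_r\Sigma\bigr)$ makes sense because the first legs multiply into $M$ (condition 3 of Definition \ref{DefGalCo}), and $\Delta_M$-invariance of $\varphi_M$ together with the two corepresentation identities shows $y\in\mathrm{Mor}(\Sigma\widetilde{V}_r\Sigma,\mathcal{G})$; if some nonzero $\xi\in\mathscr{H}$ were orthogonal to the ranges of all such $y$, then $\varphi_M\bigl((\iota\otimes\omega_{\xi',\xi})(\mathcal{G}^*)\,\widetilde{V}_{r,ij}\bigr)=0$ for all $r,i,j,\xi'$, and norm-density of the vectors $\Lambda_N(\widetilde{V}_{r,ij})$ in $\mathscr{L}^2(N)$ forces $(\iota\otimes\omega_{\xi,\xi'})(\mathcal{G})=0$ for all $\xi'$, contradicting unitarity of $\mathcal{G}$. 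Totality of the intertwiner ranges then gives well-definedness, since $(\eta\otimes\iota)(\mathcal{G})\,y=y\,(\eta\otimes\iota)(\Sigma\widetilde{V}_r\Sigma)$ for every intertwiner $y$, and normality follows once $\mathcal{G}$ is exhibited as a direct sum of copies of the $\Sigma\widetilde{V}_r\Sigma$. So the gap is fillable --- but note that what fills it is essentially verbatim the computation constituting the paper's own proof of point (2) (modeled on the averaging argument in the proof of Theorem \ref{PropOrt}). You have deferred the key analytic step rather than replaced it.

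Granting that lemma, your architecture is correct and genuinely different in organization from the paper's. The paper never constructs the corepresentation--representation dictionary: it proves (1) exactly as you do (fullness of the slices of $\widetilde{V}_r$, Lemma \ref{LemFormV}.3); it proves (2) by the averaging argument above, using that indecomposability makes the coaction $\alpha(x)=\mathcal{G}^*(1\otimes x)\mathcal{G}$ ergodic, so that $y^*y$ and $yy^*$ are scalars and $y$ can be normalized to a unitary; it proves (3) \emph{without any absorption}, by observing that the fixed-point algebra of $\alpha$ is the range of the normal conditional expectation $(\varphi_M\otimes\iota)\alpha$, hence an atomic von Neumann algebra, and cutting $\mathscr{H}$ by the minimal projections of an atomic maximal abelian subalgebra thereof; and it then gets (4) for free from (1) and (2), whereas you rate (4) the most delicate point. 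What your route buys, once the absorption lemma is supplied, is the stronger structural statement that $(N,\Delta_N)$-corepresentations correspond to normal representations of $\bigoplus_{r}B(\overline{\mathscr{H}_r})$; in particular your commutant lemma --- invariance of $\mathscr{K}=p\mathscr{H}$ forces $p\pi_{\mathcal{G}}(\cdot)p$ to be multiplicative, hence $p\in\pi_{\mathcal{G}}(\widehat{N}')'$, so complements of invariant subspaces are invariant --- is a clean, independent proof of what the paper only records later as Proposition \ref{PropIndec}. The paper's route is shorter precisely because its proof of (3) via the conditional expectation sidesteps the dictionary entirely.
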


\begin{proof} As $(\iota\otimes \Delta_N)\widetilde{V} = \widetilde{V}_{12}\widetilde{V}_{13}$, we immediately get that the unitaries $\Sigma\widetilde{V}_r\Sigma$ are left $(N,\Delta_N)$-corepresentations. By Lemma \ref{LemFormV}.3, the space \[\{(\iota\otimes \omega)(\widetilde{V}_r)\mid \omega\in B(\mathscr{L}^2(M),\mathscr{L}^2(N))_*\}\] equals the whole of $B(\overline{\mathscr{H}_r})$, from which it immediately follows that $\Sigma \widetilde{V}_r\Sigma$ is indecomposable, and even irreducible.\\

\noindent For the second statement, we use that the linear span of the matrix entries of the $\widetilde{V}_r$'s span a norm-dense subset of $\mathscr{L}^2(N)$ when applied to $\xi_M$. Hence, if $\mathcal{G}$ is an indecomposable left $(N,\Delta_N)$-corepresentation of $(M,\Delta_M)$ on a Hilbert space $\mathscr{H}$, there must exist some $r\in I_N$ and an $x\in B(\mathscr{H},\overline{\mathscr{H}_r})$ such that \[(\varphi_M\otimes \iota)( (\Sigma\widetilde{V}_r\Sigma)^* (1\otimes x)\mathcal{G}) \neq 0.\] As in the proof of Theorem \ref{PropOrt}, this forces a scalar multiple of this expression to be a unitary intertwiner, proving that $\mathcal{G}$ is isomorphic to $\Sigma\widetilde{V}_r\Sigma$. By the orthogonality relations between the $\Sigma\widetilde{V}_r\Sigma$, these are all pairwise non-isomorphic. Hence the above $r$ for $\mathcal{G}$ is uniquely determined.\\

\noindent To prove the third statement, consider the normal faithful unital $^*$-homomorphism \[\alpha: B(\mathscr{H})\rightarrow M\bar{\otimes} B(\mathscr{H}): x\rightarrow \mathcal{G}^*(1\otimes x)\mathcal{G}.\] As $(\Delta_N\otimes\iota)\mathcal{G}=\mathcal{G}_{13}\mathcal{G}_{23}$, we see that $\alpha$ is a coaction. Let $Z = B(\mathscr{H})^{\alpha}$, the set of fixed points for $\alpha$. As in the proof of Proposition \ref{PropDen2}, we have that $Z$ is the range of a normal conditional expectation on $B(\mathscr{H})$. Hence $Z$ is a von Neumann algebraic direct sum of type $I$-factors. Let then $A$ be an atomic maximal abelian von Neumann subalgebra of $Z$, and denote by $p_i$ the set of minimal projections in $A$. Then it is clear that each $p_i\mathscr{H}$ is a fixed subspace for $\mathcal{G}$. The spaces $p_i\mathscr{H}$ must further be indecomposable: for if not, then we could find a $p_i$ and a non-zero projection $p$ in $B(\mathscr{H})$ with $p$ strictly smaller than $p_i$ and both $p\mathscr{H}$ and $(p_i-p)\mathscr{H}$ invariant under $\mathcal{G}$. This would imply that $p$ is a fixed element for $\alpha$, commuting with all $x\in A$. Hence $p\in A$ by maximal abelianness. As $p_i$ was a minimal projection in $A$, this gives a contradiction.\\

\noindent As for the fourth point, we may take our indecomposable $(N,\Delta_N)$-corepresentation to be some $\Sigma\widetilde{V}_r\Sigma$, for which we have already proven irreducibility in the proof of the first point.

\end{proof}

\begin{Cor}\label{CorFormN} Let $(N,\Delta_N)$ be a right Galois co-object for a compact Woronowicz algebra $(M,\Delta_M)$. Let $\mathcal{G}_{i}$, $i\in I$, be a maximal set of non-isomorphic irreducible $(N,\Delta_N)$-corepresentations on Hilbert spaces $\mathscr{H}_i$. Then $I \cong I_N$, and $\widehat{N}\cong \oplus_{i\in I} B(\mathscr{H}_i)$.
\end{Cor}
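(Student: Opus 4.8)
The plan is to show that the corollary is essentially a bookkeeping consequence of Theorem \ref{TheoPW}, which already provides a complete classification of the irreducible $(N,\Delta_N)$-corepresentations. First I would recall that by Theorem \ref{TheoPW}, parts (1), (2) and (4), every irreducible $(N,\Delta_N)$-corepresentation is unitarily equivalent to exactly one of the unitaries $\Sigma\widetilde{V}_r\Sigma$ acting on $\overline{\mathscr{H}_r}$, with $r$ ranging over $I_N$, and distinct indices $r$ give non-isomorphic corepresentations. Since the $\mathcal{G}_i$, $i\in I$, were chosen as a \emph{maximal} set of pairwise non-isomorphic irreducibles, the assignment $i\mapsto r(i)$ sending $\mathcal{G}_i$ to the unique index of its equivalence class is a well-defined bijection $I\cong I_N$. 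This is the first and least technical step.

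Next I would fix, via this bijection, the identification of each representation space $\mathscr{H}_i$ with $\overline{\mathscr{H}_{r(i)}}$ through the intertwining unitary supplied by Theorem \ref{TheoPW}.2. Under this identification, $\oplus_{i\in I} B(\mathscr{H}_i)$ becomes $\oplus_{r\in I_N} B(\overline{\mathscr{H}_r})$, so the second assertion reduces to exhibiting a normal $^*$-isomorphism $\widehat{N}\cong \oplus_{r\in I_N} B(\overline{\mathscr{H}_r})$. Here I would invoke Notation \ref{NotDecomp} and Proposition \ref{PropDen2}.1: the latter tells us $\widehat{N}$ is an $l^\infty$-sum of type $I$-factors, and the former records precisely the decomposition $\widehat{N}=\oplus_{r\in I_N} B(\mathscr{H}_r)$ with $p_r\widehat{N}\cong B(\mathscr{H}_r)$. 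The only remaining point is to see that $B(\mathscr{H}_r)\cong B(\overline{\mathscr{H}_r})$ canonically (the conjugation $\xi\mapsto\overline{\xi}$ intertwines an operator with its transpose and induces a $^*$-anti-isomorphism, but composing appropriately, or simply noting $\dim\mathscr{H}_r=\dim\overline{\mathscr{H}_r}=n_r$, gives $^*$-isomorphic factors). Thus the block sizes match up and the two von Neumann algebras are isomorphic.

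Concretely, I would phrase the isomorphism so that it is the one naturally induced by $\widetilde{V}$: the block $\widehat{N}p_r\cong B(\mathscr{H}_r)$ acts on $\overline{\mathscr{H}_r}$ exactly as the space of operators $\{(\iota\otimes\omega)(\widetilde{V}_r)\mid \omega\in N_*\}$ described in Lemma \ref{LemFormV}.3, which by that lemma fills out all of $B(\overline{\mathscr{H}_r})$. This makes the correspondence between the $r$-th block of $\widehat{N}$ and the endomorphisms of the $r$-th irreducible corepresentation completely explicit and compatible with the bijection $I\cong I_N$ established in the first step.

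I do not expect a genuine obstacle here: the substance of the classification is already contained in Theorem \ref{TheoPW} and the structural description of $\widehat{N}$ in Proposition \ref{PropDen2} and Notation \ref{NotDecomp}. The only subtlety worth a careful sentence is the passage between $\mathscr{H}_r$ and its conjugate $\overline{\mathscr{H}_r}$ on which the indecomposable corepresentations actually live, so that the direct sum $\oplus_i B(\mathscr{H}_i)$ is correctly matched with the factors of $\widehat{N}$ rather than with their opposites; everything else is direct assembly of earlier results.
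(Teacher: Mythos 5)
Your proposal is correct and follows exactly the route of the paper, whose entire proof is the observation that the statement ``follows immediately from the second point of the previous proposition'' (i.e.\ Theorem \ref{TheoPW}.2, combined with the decomposition $\widehat{N}=\oplus_{r\in I_N}B(\mathscr{H}_r)$ of Notation \ref{NotDecomp}). You merely make explicit the bookkeeping the paper leaves implicit --- the bijection $i\mapsto r(i)$ from maximality plus uniqueness in Theorem \ref{TheoPW}.2, and the harmless passage between $\mathscr{H}_r$ and $\overline{\mathscr{H}_r}$ --- which is a faithful expansion rather than a different argument.
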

\begin{proof} This follows immediately from the second point of the previous proposition.

\end{proof}

\noindent We can now pass to projective representations without reference to a fixed Galois co-object.

\begin{Def} Let $(M,\Delta_M)$ be a compact Woronowicz algebra. A \emph{projective (left) corepresentation of $(M,\Delta_M)$ on a Hilbert space $\mathscr{H}$} consists of a left coaction $\alpha$ of $(M,\Delta_M)$ on $B(\mathscr{H})$, \[\alpha: B(\mathscr{H})\rightarrow M\bar{\otimes} B(\mathscr{H}).\]
\end{Def}

\noindent \emph{Remarks:} \begin{enumerate}\item Interpreting $(M,\Delta_M)$ as the space of $\mathscr{L}^{\infty}$-functions on some `compact quantum group' $\mathbb{G}$, the above then corresponds to having a (necessarily continuous) action of $\mathbb{G}$ on $B(\mathscr{H})$. As $\textrm{Aut}(B(\mathscr{H}))\cong \mathcal{U}(\mathscr{H})/S^1$, this indeed captures the notion of a projective representation when $\mathbb{G}$ is an actual compact group.
\item One similarly has the notion of a \emph{projective right corepresentation} of $(M,\Delta_M)$, for which we replace the left coaction $\alpha$ above by a right coaction. For example, the coactions $\textrm{Ad}_R$ and $\textrm{Ad}_R^{(r)}$ from the first section are then projective right corepresentations. At the end of this section, we will show one can pass from left to right projective representations, so that one may essentially restrict oneself to the study of projective left corepresentations, as we will do.
\item Some results on (special) coactions of compact Kac algebras on type $I$ factors appear in \cite{Lin1}.
\end{enumerate}

\noindent In \cite{DeC1}, we proved that from any projective corepresentation $\alpha$, one can construct a Galois co-object $(N,\Delta_N)$ together with an $(N,\Delta_N)$-corepresentation $\mathcal{G}$ `implementing' $\alpha$. We will state the proposition and give a sketch of the proof. For the full proof, we refer the reader to \cite{DeC1}.

\begin{Prop}\label{PropRecon} Let $(M,\Delta_M)$ be a compact Woronowicz algebra, $\mathscr{H}$ a Hilbert space, and $\alpha$ a projective corepresentation of $(M,\Delta_M)$ on $\mathscr{H}$. Then there exists a right Galois co-object $(N,\Delta_N)$ for $(M,\Delta_M)$, together with an $(N,\Delta_N)$-corepresentation $\mathcal{G}$ of $(M,\Delta_M)$ on $\mathscr{H}$, such that \[\mathcal{G}^*(1\otimes x)\mathcal{G} = \alpha(x),\qquad \textrm{for all }x\in B(\mathscr{H}).\] \end{Prop}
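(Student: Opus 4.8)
The plan is to build the Galois co-object directly from the coaction $\alpha$, mimicking the way $N$ and $\widehat{N}$ were manufactured from $\widetilde{V}$ in the first section, but now running the construction in reverse. Given the projective corepresentation $\alpha\colon B(\mathscr H)\to M\bar\otimes B(\mathscr H)$, I would first form the crossed-product-type space encoding the intertwiners of $\alpha$ with the regular corepresentation $V$. Concretely, I would consider the space of operators $T\colon \mathscr L^2(M)\otimes\mathscr H\to \mathscr H$ (or rather the associated `coefficients') that satisfy the relevant equivariance with respect to $V$ and $\alpha$, and define $\mathscr L^2(N)$, $N$, and $\Delta_N$ out of this data. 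The key idea is that the unitary $\mathcal G$ we seek is precisely the object implementing $\alpha$ in the sense $\mathcal G^*(1\otimes x)\mathcal G=\alpha(x)$, so $\mathcal G$ should be constructed as an intertwiner between $\alpha$ and the trivial coaction, twisted by $V$.

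\textbf{Key steps.} First I would use coassociativity of $\alpha$ together with the pentagonal identity for $V$ (Proposition \ref{PropElProp}.4) to produce a candidate unitary $\mathcal G\in B(\mathscr L^2(M))\bar\otimes B(\mathscr H)$ satisfying a cocycle-type relation, analogous to how $\widetilde V$ was shown to satisfy $(\iota\otimes\Delta_N)\widetilde V=\widetilde V_{12}\widetilde V_{13}$. Second, I would \emph{define} $N$ as the $\sigma$-weakly closed linear span $\{(\iota\otimes\omega)(\mathcal G)\mid \omega\in B(\mathscr H)_*\}$ together with its right $M$-module structure, and define $\mathscr L^2(N)$ as the closure of $N\cdot\mathscr L^2(M)$, exactly paralleling Lemma \ref{LemDen1}. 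Third, I would define $\Delta_N$ on $N$ by the formula $\Delta_N(x)=\mathcal G_?\,(x\otimes 1)\,V^*$-type expression forced by the requirement that $\mathcal G$ be an $(N,\Delta_N)$-corepresentation; one then checks the seven axioms of Definition \ref{DefGalCo} one by one. Finally, verifying $(\Delta_N\otimes\iota)\mathcal G=\mathcal G_{13}\mathcal G_{23}$ and $\mathcal G^*(1\otimes x)\mathcal G=\alpha(x)$ closes the loop.

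\textbf{Main obstacle.} The hardest part will be checking the Galois conditions 3, 5, and especially 7 of Definition \ref{DefGalCo} — that is, that $x^*y\in M$ for $x,y\in N$, that $\Delta_N$ is compatible with $\Delta_M$ via $\Delta_N(x)^*\Delta_N(y)=\Delta_M(x^*y)$, and that the non-degeneracy density condition holds. These are precisely the properties that encode that $\alpha$ is a genuine coaction (not merely a homomorphism) and that $N$ is a full Morita module; they do not come for free and require using coassociativity of $\alpha$ in an essential way, together with $\Delta_M$-invariance of $\varphi_M$ to control the inner products. Establishing that $N^{\textrm{op}}\cdot\mathscr L^2(N)$ is dense in $\mathscr L^2(M)$ (the second half of condition 1) will similarly require the unitarity of $\mathcal G$ rather than mere isometry. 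Since the full verification is somewhat lengthy and is carried out in \cite{DeC1}, I would present the construction of $\mathcal G$, $N$, and $\Delta_N$ explicitly and then indicate that conditions 1–7 follow from the coassociativity and unitarity of $\alpha$ by computations entirely parallel to those already performed for $\widetilde V$ in Proposition \ref{PropElProp} and Lemma \ref{LemDen1}, referring to \cite{DeC1} for the remaining routine checks.
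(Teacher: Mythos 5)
Your high-level scaffolding --- first produce a unitary $\mathcal{G}$, then define $N$ from its first-leg coefficients (closed up into a right $M$-module), then define $\Delta_N$ through $(\Delta_N\otimes\iota)\mathcal{G}=\mathcal{G}_{13}\mathcal{G}_{23}$ together with right $\Delta_M$-linearity, and finally verify the axioms of Definition \ref{DefGalCo} --- is exactly the paper's scheme, and your diagnosis that conditions 3, 5 and 7 are the substantive checks is fair. But the one step that carries the whole proposition, namely the actual construction of $\mathcal{G}$ and of the Hilbert space it maps into, is missing from your proposal, and what you put in its place would fail. You propose ``a candidate unitary $\mathcal{G}\in B(\mathscr{L}^2(M))\bar{\otimes}B(\mathscr{H})$'' obtained from coassociativity of $\alpha$ and the pentagonal identity for $V$. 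Two problems here. First, coassociativity and the pentagon are identities one verifies \emph{after} a candidate exists; they do not produce an operator, and the pentagon for $V$ in fact plays no role in constructing $\mathcal{G}$ at all. Second, placing the first leg of $\mathcal{G}$ on $\mathscr{L}^2(M)$, so that $N\subseteq B(\mathscr{L}^2(M))$ with a cocycle-type relation against the only coproduct available at that stage, amounts to presupposing that the Galois co-object is cleft, i.e.\ trivial as a right $M$-module --- precisely what cannot be assumed (the correct home is $N\subseteq B(\mathscr{L}^2(M),\mathscr{L}^2(N))$ for a \emph{new} Hilbert space $\mathscr{L}^2(N)$; compare the $SU_q(2)$ example of Section 8, where $N = B(l^2(\mathbb{N}),l^2(\mathbb{Z}))\bar{\otimes}1\otimes\mathscr{L}(\mathbb{Z})$ genuinely consists of operators between different spaces). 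Your alternative suggestion, a space of maps $\mathscr{L}^2(M)\otimes\mathscr{H}\to\mathscr{H}$ ``equivariant with respect to $V$ and $\alpha$,'' is circular as stated: the relevant equivariance would have to be against the regular $(N,\Delta_N)$-corepresentation $\widetilde{V}$, which does not exist yet, and you give no argument that such an intertwiner space is nonzero.

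The missing device is elementary but essential, and uses only that $B(\mathscr{H})$ is a type $I$ factor and that $\alpha$ is a normal unital $^*$-homomorphism: fix matrix units $e_{ij}$ for a basis $(e_i)_{i\in I}$ of $\mathscr{H}$, set $\mathscr{L}^2(N):=\alpha(e_{00})(\mathscr{L}^2(M)\otimes\mathscr{H})$ --- this is where the new Hilbert space comes from --- and define $\mathcal{G}:\mathscr{L}^2(M)\otimes\mathscr{H}\to\mathscr{L}^2(N)\otimes\mathscr{H}$ by $\xi\mapsto\sum_i(\alpha(e_{0i})\xi)\otimes e_i$. Unitarity is then immediate from the matrix-unit relations and normality of $\alpha$: one has $\mathcal{G}^*\mathcal{G}=\sum_i\alpha(e_{i0}e_{0i})=\alpha(1)=1$, while the $(i,j)$ entry of $\mathcal{G}\mathcal{G}^*$ is $\alpha(e_{0i}e_{j0})=\delta_{ij}\alpha(e_{00})$, the identity of $\mathscr{L}^2(N)$. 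Coassociativity of $\alpha$ enters only afterwards, in showing that $\Delta_N$ is well defined by your two requirements and in the Galois axioms; condition 3, for instance, is the direct computation $\mathcal{G}_{ij}^*\mathcal{G}_{kl}=(\iota\otimes\omega_{e_l,e_j})(\alpha(e_{ik}))\in M$. Note also that the paper takes $N$ to be the closed span of the $\mathcal{G}_{ij}m$ with $m\in M$, not merely of the slices $(\iota\otimes\omega)(\mathcal{G})$: the right $M$-module property (axiom 2) must be built in, since the analogue of Lemma \ref{LemDen1} is only available once $(N,\Delta_N)$ is already known to be a Galois co-object. Without the explicit $\mathscr{L}^2(N)$ and $\mathcal{G}$, your proposal describes the shell of the argument but not its engine.
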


\begin{proof}[Sketch of proof] Choose a basis $\{e_i\}_{i\in I}$ of $\mathscr{H}$, and fix an element $0\in I$. We can then consider the Hilbert space $\mathscr{L}^2(N) = \alpha(e_{00})(\mathscr{L}^2(M)\otimes \mathscr{H})$. We can construct a unitary \[\mathcal{G}: \mathscr{L}^2(M)\otimes \mathscr{H}\rightarrow \mathscr{L}^2(N)\otimes \mathscr{H}: \xi \rightarrow \sum_{i\in I} (\alpha(e_{0i})\xi)\otimes e_i.\] Denote by $\mathcal{G}_{ij}$ the $i,j$-th component of $\mathcal{G}$. Then $\mathcal{G}_{ij}$ is an operator from $\mathscr{L}^2(M)$ to $\mathscr{L}^2(N)$. We define \[N = \{\mathcal{G}_{ij} m\mid i,j\in I, m\in M\}^{\sigma\textrm{-weakly closed linear span}}.\]

\noindent It is then possible to construct a map $\Delta_N:N\rightarrow N\bar{\otimes} N$, uniquely determined by the properties that \[(\Delta_N\otimes \iota)\mathcal{G} = \mathcal{G}_{[13]}\mathcal{G}_{[23]}\] (where we have added brackets in the leg numbering notation to distinguish them from the indices for matrix coefficients of $\mathcal{G}$) and \[\Delta_N(xy) = \Delta_N(x)\Delta_{M}(y), \qquad \textrm{for all }x\in N,y\in M.\]

\noindent One proves that $(N,\Delta_N)$ is a Galois co-object for $(M,\Delta_M)$, and then it immediately follows from the above property that $\mathcal{G}$ is a left $(N,\Delta_N)$-corepresentation of $(M,\Delta_M)$ on $\mathscr{H}$. Finally, one proves that $\mathcal{G}^*(1\otimes x)\mathcal{G} = \alpha(x)$ by direct computation.
\end{proof}

\begin{Def}\label{DefImpl} Let $\alpha$ be a projective corepresentation of a compact Woronowicz algebra $(M,\Delta_M)$ on a Hilbert space $\mathscr{H}$. Denote by $(N,\Delta_N)$ the Galois co-object constructed from $\alpha$ as in the above proposition, and denote by $\lbrack (N,\Delta_N)\rbrack$ its isomorphism class. Then we say that $\alpha$ is an $\lbrack (N,\Delta_N)\rbrack$-corepresentation.\end{Def}

\noindent It can be proven (see Proposition 3.4 in \cite{DeC1}) that if $\alpha$ is an $\lbrack (N,\Delta_N)\rbrack$-corepresentation of $(M,\Delta_M)$ on a Hilbert space $\mathscr{H}$, and if there exists a Galois co-object $(\widetilde{N},\Delta_{\widetilde{N}})$ for $(M,\Delta_M)$ which possesses an $(\widetilde{N},\Delta_{\widetilde{N}})$-corepresentation on $\mathscr{H}$ implementing $\alpha$, then necessarily $\lbrack (\widetilde{N},\Delta_{\widetilde{N}})\rbrack = \lbrack (N,\Delta_N)\rbrack$. One may regard the isomorphism class of such a Galois co-object as a generalization of the notion of a 2-cohomology class. Also remark that if $\mathcal{G}\in N\bar{\otimes} B(\mathscr{H})$ is a projective $(N,\Delta_N)$-corepresentation of $(M,\Delta_M)$ on a Hilbert space $\mathscr{H}$, then the associated projective corepresentation \[\alpha: B(\mathscr{H})\rightarrow M\bar{\otimes}B(\mathscr{H}): x\rightarrow \mathcal{G}^*(1\otimes x)\mathcal{G}\] is an $\lbrack{(N,\Delta_N)}\rbrack$-corepresentation by the above uniqueness result.\\

\noindent If $(N,\Delta_N)$ is a Galois co-object for a compact Woronowicz algebra $(M,\Delta_M)$, and an $\lbrack (N,\Delta_N)\rbrack$-corepresentation $\alpha$ for $(M,\Delta_M)$ is given, then it is in general not true that all $(N,\Delta_N)$-corepresenta-tions implementing $\alpha$ are isomorphic: consider for example ordinary one-dimensional representations. We will see a further instance of this in the final section.\\

\begin{Def} Let $(M,\Delta_M)$ be a compact Woronowicz algebra, $\alpha$ a projective corepresentation of $(M,\Delta_M)$ on a Hilbert space $\mathscr{H}$. We say that $\alpha$ is an \emph{irreducible} projective corepresentation if $\alpha$ is ergodic.
\end{Def}

\begin{Prop}\label{PropIndec} Let $(N,\Delta_N)$ be a right Galois co-object for a compact Woronowicz algebra $(M,\Delta_M)$, let $\alpha$ be an $\lbrack (N,\Delta_N)\rbrack$-corepresentation of $(M,\Delta_M)$ on a Hilbert space $\mathscr{H}$, and let $\mathcal{G}$ be an $(N,\Delta_N)$-corepresentation implementing $\alpha$.\\

\noindent Then there is a one-to-one correspondence between the set of $\alpha$-fixed self-adjoint projections in $B(\mathscr{H})$ and the $\mathcal{G}$-invariant subspaces $\mathscr{K}$ of $\mathscr{H}$, given by the correspondence \[p\rightarrow \mathscr{K} = p\mathscr{H}.\]

\noindent In particular, $\alpha$ is irreducible iff $\mathcal{G}$ is irreducible.\end{Prop}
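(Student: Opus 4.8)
The plan is to reduce both notions --- $\alpha$-fixedness of a projection and $\mathcal{G}$-invariance of a subspace --- to a single commutation relation, and then read off the bijection. Throughout, write $p\in B(\mathscr{H})$ for a self-adjoint projection and let $\mathscr{K}=p\mathscr{H}$. Since $\mathcal{G}$ implements $\alpha$, we have by definition $\alpha(p)=\mathcal{G}^*(1\otimes p)\mathcal{G}$, where on the right the operator $1\otimes p$ acts on $\mathscr{L}^2(N)\otimes\mathscr{H}$. The first and central claim I would prove is the chain of equivalences
\[
\alpha(p)=1\otimes p
\quad\Longleftrightarrow\quad
(1\otimes p)\mathcal{G}=\mathcal{G}(1\otimes p)
\quad\Longleftrightarrow\quad
\mathscr{K}\text{ is }\mathcal{G}\text{-invariant},
\]
where in the middle term the left factor $1\otimes p$ acts on $\mathscr{L}^2(N)\otimes\mathscr{H}$ and the right one on $\mathscr{L}^2(M)\otimes\mathscr{H}$.

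The first equivalence is immediate from the defining formula together with the unitarity of $\mathcal{G}$: multiplying the relation $\alpha(p)=1\otimes p$ on the left by $\mathcal{G}$ and using $\mathcal{G}\mathcal{G}^*=1$ turns it into $(1\otimes p)\mathcal{G}=\mathcal{G}(1\otimes p)$, and the step is reversible upon multiplying by $\mathcal{G}^*$. For the second equivalence I would unwind the definition of invariance: $\mathscr{K}$ is invariant precisely when $\mathcal{G}$ restricts to a \emph{unitary} in $N\bar{\otimes}B(\mathscr{K})$, i.e.\ when $\mathcal{G}$ maps $\mathscr{L}^2(M)\otimes\mathscr{K}$ onto $\mathscr{L}^2(N)\otimes\mathscr{K}$; equivalently $\mathcal{G}(1\otimes p)=(1\otimes p)\mathcal{G}(1\otimes p)$. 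The one remaining point is to upgrade this one-sided relation to full commutation: since $\mathcal{G}$ is a unitary carrying $\mathscr{L}^2(M)\otimes\mathscr{K}$ onto $\mathscr{L}^2(N)\otimes\mathscr{K}$, it must also carry the orthogonal complement $\mathscr{L}^2(M)\otimes\mathscr{K}^\perp$ onto $\mathscr{L}^2(N)\otimes\mathscr{K}^\perp$, whence $(1\otimes p)\mathcal{G}=\mathcal{G}(1\otimes p)$; the converse is trivial. Membership of $\mathcal{G}(1\otimes p)$ in $N\bar{\otimes}B(\mathscr{K})$ is automatic, as $\mathcal{G}\in N\bar{\otimes}B(\mathscr{H})$ and $pB(\mathscr{H})p=B(\mathscr{K})$.

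Granting this, the map $p\mapsto p\mathscr{H}$ is manifestly a bijection between $\alpha$-fixed projections and $\mathcal{G}$-invariant subspaces: it is injective since a subspace determines its orthogonal projection, and surjective since every invariant $\mathscr{K}$ arises from its own projection, which the equivalence shows to be $\alpha$-fixed. For the final assertion, recall (as in the proof of Theorem \ref{TheoPW}) that the fixed-point set $B(\mathscr{H})^{\alpha}$ is a von Neumann algebra, and hence equals $\mathbb{C}1$ if and only if its only projections are $0$ and $1$. By the correspondence just established this happens exactly when the only $\mathcal{G}$-invariant subspaces are $0$ and $\mathscr{H}$; that is, $\alpha$ is ergodic (irreducible) if and only if $\mathcal{G}$ is irreducible.

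I expect no serious obstacle: the statement is essentially a dictionary between the coaction picture and the corepresentation picture. The only step requiring a moment's care is the passage from the one-sided invariance condition $\mathcal{G}(1\otimes p)=(1\otimes p)\mathcal{G}(1\otimes p)$ to genuine commutation, where one must genuinely use that $\mathcal{G}$ is unitary (and not merely an isometric intertwiner). Everything else is formal manipulation together with the standard fact that whether a von Neumann algebra is trivial is determined by its projection lattice.
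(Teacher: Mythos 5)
Your proposal is correct and takes essentially the same route as the paper: both reduce everything to the dictionary $\alpha(p)=1\otimes p \Leftrightarrow (1\otimes p)\mathcal{G}=\mathcal{G}(1\otimes p) \Leftrightarrow p\mathscr{H}$ invariant, the only difference being that at the converse step the paper cites Theorem \ref{TheoPW} for the invariance of $\mathscr{K}^{\perp}$, whereas you derive it directly (and more self-containedly) from the fact that a unitary carrying $\mathscr{L}^2(M)\otimes\mathscr{K}$ onto $\mathscr{L}^2(N)\otimes\mathscr{K}$ must carry the orthogonal complements onto each other. One caution on wording: your parenthetical ``equivalently $\mathcal{G}(1\otimes p)=(1\otimes p)\mathcal{G}(1\otimes p)$'' is not literally an equivalence with invariance, since that one-sided relation only encodes that $\mathcal{G}$ maps $\mathscr{L}^2(M)\otimes\mathscr{K}$ \emph{into} $\mathscr{L}^2(N)\otimes\mathscr{K}$ (a unitary can map a subspace strictly into itself), but your subsequent upgrade to full commutation correctly invokes the ``onto'' form built into the paper's definition of invariance, so the argument stands.
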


\begin{proof} By assumption, we have that \[\alpha(x) = \mathcal{G}^*(1\otimes x)\mathcal{G} \qquad \textrm{for all }x\in B(\mathscr{H}).\] So if $p$ is a self-adjoint projection in $B(\mathscr{H})$ with $\alpha(p)= 1\otimes p$, we have \[\mathcal{G}(1\otimes p )  = (1\otimes p)\mathcal{G},\] and hence \[\mathcal{G} (\mathscr{L}^2(M)\otimes p\mathscr{H}) \subseteq \mathscr{L}^2(N)\otimes p\mathscr{H},\] which means $p\mathscr{H}$ is a $\mathcal{G}$-invariant subspace.\\

\noindent Conversely, if $\mathscr{K}$ is a $\mathcal{G}$-invariant subspace, and $p$ the projection onto $\mathscr{K}$, then also $\mathscr{K}^{\perp}$ is $\mathcal{G}$-invariant by Theorem \ref{TheoPW}. Hence we have \[\mathcal{G}(1\otimes p) = (1\otimes p)\mathcal{G},\] and so $\alpha(p)=1\otimes p$, i.e.~ $p$ is an $\alpha$-fixed projection.\end{proof}

\noindent \emph{Remark:} We in fact already used the above argument in the course of proving Theorem \ref{TheoPW}.3.\\

\noindent Our next proposition shows how projective corepresentations and ordinary corepresentations mesh together.\\

\begin{Prop}\label{PropFus} Let $(N,\Delta_N)$ be a right Galois co-object for a compact Woronowicz algebra $(M,\Delta_M)$. Let $\mathcal{G}$ be a projective left $(N,\Delta_N)$-corepresentation on a Hilbert space $\mathscr{H}$, and let $U$ be an ordinary left corepresentation of $(M,\Delta_M)$ on a Hilbert space $\mathscr{K}$. \\

\noindent Then the following statements hold.
\begin{enumerate} \item The unitary $\mathcal{G}_{12}U_{13}\in N\bar{\otimes}B(\mathscr{H}\otimes \mathscr{K})$ is a unitary $(N,\Delta_N)$-corepresentation on $\mathscr{H}\otimes \mathscr{K}$.
\item If both $\mathcal{G}$ and $U$ are irreducible, then $\mathcal{G}_{12}\mathcal{U}_{13}$ is a \emph{finite} direct sum of irreducible $(N,\Delta_N)$-corepresentations.
\end{enumerate}
\end{Prop}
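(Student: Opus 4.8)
The plan is to prove the two assertions in turn, disposing of the corepresentation identity by a direct leg-numbering computation and then reducing the decomposition statement to a single finiteness fact about spectral multiplicities.

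For the first assertion I would begin by checking that $\mathcal{G}_{12}U_{13}$ is a well-defined unitary in $N\bar{\otimes}B(\mathscr{H}\otimes\mathscr{K})$. As an operator $\mathscr{L}^2(M)\otimes\mathscr{H}\otimes\mathscr{K}\to\mathscr{L}^2(N)\otimes\mathscr{H}\otimes\mathscr{K}$ it is the composition of the two unitaries $U_{13}$ and $\mathcal{G}_{12}$, hence unitary; and its first leg is a product of an element of $N$ (coming from $\mathcal{G}$) with an element of $M$ (coming from $U$), so it lands in $N$ by the right $M$-module property (condition 2 of Definition \ref{DefGalCo}). To verify the corepresentation identity I would apply $\Delta_N$ to the first leg and use the compatibility $\Delta_N(xy)=\Delta_N(x)\Delta_M(y)$ of condition 4 to split the result as $[(\Delta_N\otimes\iota)\mathcal{G}]_{123}\cdot[(\Delta_M\otimes\iota)U]_{124}$. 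Inserting $(\Delta_N\otimes\iota)\mathcal{G}=\mathcal{G}_{13}\mathcal{G}_{23}$ and $(\Delta_M\otimes\iota)U=U_{13}U_{23}$ gives, in the four-leg picture where legs $1,2$ are the two $N$-copies, leg $3$ is $\mathscr{H}$ and leg $4$ is $\mathscr{K}$,
\[ (\Delta_N\otimes\iota)(\mathcal{G}_{12}U_{13}) = \mathcal{G}_{13}\mathcal{G}_{23}U_{14}U_{24}, \qquad (\mathcal{G}_{12}U_{13})_{134}(\mathcal{G}_{12}U_{13})_{234} = \mathcal{G}_{13}U_{14}\mathcal{G}_{23}U_{24}. \]
These agree because $\mathcal{G}_{23}$ and $U_{14}$ act on disjoint legs and therefore commute. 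This part is entirely routine.

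For the second assertion the key observation is that, by Theorem \ref{TheoPW}, $\mathcal{G}_{12}U_{13}$ is automatically \emph{some} direct sum of irreducible $(N,\Delta_N)$-corepresentations; the whole content is that this sum is finite. I would reformulate finiteness in terms of the endomorphism algebra: writing the decomposition as $\bigoplus_s m_s\cdot(\Sigma\widetilde{V}_s\Sigma)$, one has $\dim\mathrm{End}(\mathcal{G}_{12}U_{13})=\sum_s m_s^2$, so the direct sum is finite precisely when $\mathrm{End}(\mathcal{G}_{12}U_{13})$ is finite-dimensional (this algebra is exactly the fixed-point algebra of the associated coaction, as in the proofs of Theorem \ref{TheoPW}.3 and Proposition \ref{PropIndec}). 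Now $U$, being an irreducible ordinary corepresentation, is finite-dimensional (Proposition \ref{PropDisc}.1), hence admits a conjugate corepresentation $\bar{U}$ on $\overline{\mathscr{K}}$ together with the usual duality morphisms. Frobenius reciprocity then yields a linear isomorphism
\[ \mathrm{End}(\mathcal{G}_{12}U_{13}) \cong \mathrm{Mor}\bigl(\mathcal{G},\, \mathcal{G}_{12}U_{13}\bar{U}_{14}\bigr), \]
where $\mathcal{G}\cong\Sigma\widetilde{V}_r\Sigma$ is irreducible by Theorem \ref{TheoPW}.2. Since $U_{13}\bar{U}_{14}$ is a finite-dimensional ordinary corepresentation of $(M,\Delta_M)$ on $\mathscr{K}\otimes\overline{\mathscr{K}}$, it splits as a \emph{finite} sum $\bigoplus_k\pi_k$ of irreducible ordinary corepresentations, whence $\dim\mathrm{End}(\mathcal{G}_{12}U_{13})=\sum_k\dim\mathrm{Mor}(\mathcal{G},\mathcal{G}_{12}(\pi_k)_{13})$.

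It then remains to show that each term $\dim\mathrm{Mor}(\mathcal{G},\mathcal{G}_{12}(\pi_k)_{13})$ is finite, and this is the step I expect to be the main obstacle. The point is that this number is precisely the multiplicity with which $\pi_k$ occurs in the spectral decomposition of the coaction $x\mapsto\mathcal{G}^*(1\otimes x)\mathcal{G}$ on $B(\mathscr{H})$, which is \emph{ergodic} because $\mathcal{G}$ is irreducible (Proposition \ref{PropIndec}; this is a version of the ergodic coaction $\mathrm{Ad}_R^{(r)}$ of Corollary \ref{CorAd}). The finiteness of such spectral multiplicities for ergodic actions of compact quantum groups (Boca's theorem) supplies the required bound. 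I would stress that no naive dimension count can replace this: both $\mathscr{H}$ and the irreducible constituents $\overline{\mathscr{H}_s}$ may be infinite-dimensional — this being the main novelty of the paper — so one genuinely has to exploit the ergodicity of the coaction (equivalently, the uniqueness of its invariant state $\phi_{N,r}$) rather than any counting argument. Granting this, finitely many finite terms sum to a finite number, $\mathrm{End}(\mathcal{G}_{12}U_{13})$ is finite-dimensional, and the decomposition of $\mathcal{G}_{12}U_{13}$ into irreducibles has only finitely many summands.
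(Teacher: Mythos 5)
Your proposal is correct and in substance identical to the paper's argument: both reduce the finiteness of the decomposition to the finite-dimensionality of the fixed-point (endomorphism) algebra of $\mathcal{G}_{12}U_{13}$, and both obtain that from Boca's finite-multiplicity theorem applied to the ergodic coaction $\alpha(x)=\mathcal{G}^*(1\otimes x)\mathcal{G}$ together with the finite-dimensionality of the irreducible $U$ — exactly the step you correctly identify as the crux. The only difference is bookkeeping: the paper computes the multiplicity of the trivial corepresentation in the spectral comodule $U^c\times\mathscr{B}\times U$ of the coaction implemented by $\mathcal{G}_{12}U_{13}$ (getting $\sum_r g_r k_r<\infty$) and then counts summands via a maximal abelian subalgebra of the fixed-point algebra, whereas you run the same count through Frobenius reciprocity $\mathrm{End}(\mathcal{G}\times U)\cong\mathrm{Mor}\bigl(\mathcal{G},\mathcal{G}\times(U\times\bar{U})\bigr)$ in the module category and identify each term with a spectral multiplicity of $\alpha$ — a repackaging of the same computation.
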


\begin{proof} The fact that $\mathcal{G}_{12}U_{13}$ is a unitary $(N,\Delta_N)$-corepresentation is trivial to verify. If further $\mathcal{G}$ and $U$ are irreducible, then we know already that $\mathcal{G}_{12}U_{13} = \oplus_{i\in J} \mathcal{G}_j$ for a certain set $\mathcal{G}_j$ of irreducible $(N,\Delta_N)$-corepresentations indexed by a parameter set $J$. We have to prove that $J$ is finite.\\

\noindent Let $\alpha$ be the projective corepresentation associated to $\mathcal{G}$, so \[\alpha(x) = \mathcal{G}^*(1\otimes x)\mathcal{G}, \qquad x\in B(\mathscr{H}).\] By the previous proposition, we know that $\alpha$ is ergodic. Let $\mathscr{B}$ be the linear span of the spectral subspaces inside $B(\mathscr{H})$, which is a $\sigma$-weakly dense sub-$^*$-algebra of $B(\mathscr{H})$ (see the remark following Notation \ref{NotA}). If we then denote by $U_r$, $r\in I_{M}$, a total set of representatives for the irreducible corepresentations of $(M,\Delta_M)$ on Hilbert spaces $\mathscr{K}_r$, we know by \cite{Boc1} that \[(\mathscr{B},\alpha) \cong (\oplus_{r\in I_M} \mathscr{K}_r\otimes \mathbb{C}^{k_r},\oplus_{r\in I_{M}} U_r\otimes 1)\] as a comodule over the Hopf algebra $\mathscr{A}\subseteq M$, \emph{where $k_r <\infty$}.\\

\noindent Now if $\beta$ is the projective corepresentation associated to $\mathcal{G}_{12}U_{13}$, then \[\beta(x) = U_{13}^*(\alpha\otimes \iota)(x)U_{13}, \qquad \textrm{for all }x\in B(\mathscr{H})\otimes B(\mathscr{K}).\] Hence if $\widetilde{\mathscr{B}}$ is the linear span of the spectral subspaces of $\beta$, then as a comodule, we have \[\widetilde{\mathscr{B}} \cong U^{c}\times \mathscr{B}\times U,\] where $U^c$ denotes the contragredient of $U$ and where we denote by $\times$ the tensor product of corepresentations/comodules. But this means that the trivial corepresentation appears in $\widetilde{B}$ with multiplicity $\sum g_rk_r$, where $g_r$ is the multiplicity of $U_r \subseteq U \times U^c$. Hence the fixed point algebra of $\beta$ is finite-dimensional, and by the previous Proposition, $J$ will have as its cardinality the dimension of a maximal abelian subalgebra of the fixed point algebra of $\beta$. Hence $J$ is finite.
\end{proof}

\noindent The previous proposition leads to the following considerations. Let $(N,\Delta_N)$ be a fixed right Galois co-object for a compact Woronowicz algebra $(M,\Delta_M)$. Then we can make a W$^*$-category $\mathscr{D}$ by considering as objects the $(N,\Delta_N)$-corepresentations which are (isomorphic to) finite direct sums of irreducible $(N,\Delta_N)$-corepresentations, and as morphisms bounded intertwiners. Then if we denote by $\mathscr{C}$ the tensor W$^*$-category of finite-dimensional $(M,\Delta_M)$-corepresentations, we can make $\mathscr{D}$ into a right $\mathscr{C}$-module by the natural composition introduced above: \[ \mathscr{D}\times \mathscr{C}\rightarrow \mathscr{D}: (\mathcal{G},U)\rightarrow \mathcal{G}_{12}U_{13},\] while the action of morphisms is simply by tensoring. We can then also turn $\textrm{F}_N := \oplus_{r\in I_N} \mathbb{Z}$ into a module over the fusion ring $\textrm{F}_M := \oplus_{r\in I_M} \mathbb{Z}$ by means of the fusion rules associated to this categorical construction.\\

\noindent But in fact, there is another different way in which to obtain these fusion rules, making use of the theory developed in \cite{Tom1}. In that paper, Wassermann's multiplicity theory for ergodic compact Lie group actions on C$^*$-and von Neumann algebras is extended to the setting of compact Woronowicz algebras. Although the paper works in the C$^*$-algebraic realm and uses right coactions, the results also apply in the von Neumann algebra setting and with left actions, and we make the transition without further comment in explaining these ideas.\\

\noindent Let then $(M,\Delta_M)$ be a compact Woronowicz algebra with an ergodic coaction $\alpha$ on a von Neumann algebra $A$. It is well-known that the crossed product \[ M\ltimes A = \{(x\otimes 1)\alpha(y)\mid x\in M, y\in A\}'' \subseteq B(\mathscr{L}^2(M)\otimes \mathscr{L}^2(A))\] is a von Neumann algebraic direct sum of type $I$ factors. Let $I_{\alpha}$ be the set of atoms of the center of $M\ltimes A$, and let $\textrm{F}_{\alpha}$ be the free abelian group generated by $I_{\alpha}$. Then one can turn $\textrm{F}_{\alpha}$ into a right $\textrm{F}_M$-module by the following procedure. Let $\{p_s \mid s\in I_{\alpha}\}$ be the set of minimal projections in the center of $M\ltimes A$, and choose for each $s\in I_{\alpha}$ a minimal projection $e_s\leq p_s$ in $M\ltimes A$. We can equip the corners $e_s (B(\mathscr{L}^2(M))\bar{\otimes} A)e_t$ with a left $(M,\Delta_M)$-coaction $\alpha_{st}$ by the formula \[\alpha_{st}(z) = (\Sigma\otimes 1)(V^*_{12} (\iota\otimes \alpha)(z)V_{12})(\Sigma\otimes 1).\] For each $r\in I_M$, $s,t\in I_{\alpha}$, define $M^{(r)}_{st}$ to be the dimension of the set of $(M,\Delta_M)$-intertwiners between the corepresentation $U_r$ associated to $r$ and $\alpha_{st}$. Then the action of $r\in I_M$ on an element $t\in I_{\alpha}$ is defined as \[ t\cdot r := \sum_{s\in I_{\alpha}} M^{(r)}_{ts}\cdot s.\]

\noindent Let now $(N,\Delta_N)$ be a right Galois co-object for the compact Woronowicz algebra $(M,\Delta_M)$. Choose $r\in I_N$. Then we can apply the above ideas to the left coaction $\alpha_r$ on $B(\mathscr{H}_r)$, where $\alpha_r$ is the coaction associated to the irreducible projective $(N,\Delta_N)$-corepresentation $\mathcal{G}_r$ pertaining to $r$. \emph{We claim that the resulting right $\textrm{F}_M$-module is independent of the choice of $r$, and coincides precisely with the right $\textrm{F}_M$-module as constructed after Proposition \ref{PropFus}}. We will briefly indicate how this can be proven.\\

\noindent We first observe that $M\ltimes B(\mathscr{H}_r)$ equals $\mathcal{G}_r^* (\widehat{N}\bar{\otimes} B(\mathscr{H}_r))\mathcal{G}_r$. Indeed, this follows by the characterization of $\widehat{N}$ as a fixed point space, by the pentagon identity for $\widetilde{V}$ (and the related pentagon identity for $\mathcal{G}_r$), by the fact that $\mathcal{G}_r$ implements $\alpha_r$, and finally by the characterization of $M\ltimes B(\mathscr{H}_r)$ as the set of elements $z$ in $B(\mathscr{L}^2(M))\bar{\otimes} B(\mathscr{H}_r)$ satisfying \[ V_{12} z_{13}V_{12}^* = (\iota\otimes \alpha_r)(z).\] Hence we already see that $I_{\alpha_r} = I_N$.\\

\noindent Choose then for each $s\in I_N$ the element \[e_s := e_{s,00}\otimes e_{r,00} \in \widehat{N}\bar{\otimes} B(\mathscr{H}_r) \cong M\ltimes B(\mathscr{H}_r)\] as a minimal projection. Then by transporting all structure with the aid of $\mathcal{G}$, one sees that the corner $e_s (B(\mathscr{L}^2(M))\bar{\otimes} B(\mathscr{H}_r))e_t$ is isomorphic to $e_{s,00} B(\mathscr{L}^2(N))e_{t,00}$, equipped with the restriction of the coaction $\textrm{Ad}_L$ (which appears in the proof of Proposition \ref{PropDen2}). This may further be simplified to the coaction \[\alpha_{st}: B(\mathscr{H}_s,\mathscr{H}_t)\rightarrow M\bar{\otimes} B(\mathscr{H}_s,\mathscr{H}_t): x\rightarrow \mathcal{G}_s^* (1\otimes x)\mathcal{G}_t.\] This final coaction may be interpreted as corresponding to the (ordinary) corepresentation `$\mathcal{G}_s^{c}\times \mathcal{G}_t$'. A Frobenius-type argument then shows that this corepresentation contains some $U_u$ with $u\in I_M$ as much times as $\mathcal{G}_t$ is contained in $\mathcal{G}_s\times U_u$. This shows that the two mentioned fusion rules indeed coincide, and ends our sketch of proof.\\

\noindent To end this section, let us come back to comparing the structures of $(N,\Delta_N)$ and $(N,\Delta_N^{\textrm{op}})$ which we started in Lemma \ref{LemComp1}. We begin by introducing a certain antipode on a subspace of $N$.

\begin{Prop} Let $(N,\Delta_N)$ be a right Galois co-object for a compact quantum group $(M,\Delta_M)$. Denote by $\mathscr{N}$ the linear span of the $\widetilde{V}_{r,ij}$ in $N$ (see Lemma \ref{LemFormV}). Denote by $\mathscr{A}$ the corresponding subspace of $M$, which coincides with the Hopf $^*$-algebra associated with $(M,\Delta_M)$ (see the remark after Notation \ref{NotA}). Then the following statements hold.

\begin{enumerate} \item The space $\mathscr{N}$ is a right $\mathscr{A}$-module.

\item If we define the anti-linear map \[S_N(\,\cdot\,)^* : \mathscr{N}\rightarrow \mathscr{N}: \widetilde{V}_{r,ij}\rightarrow \widetilde{V}_{r,ji},\] then for all $x\in \mathscr{N}$ and $y\in \mathscr{A}$, we have \[S_N(xy)^* = S_N(x)^*S_M(y)^*,\] where $S_M$ denotes the antipode of the Hopf $^*$-algebra $\mathscr{A}$.

\item For all $r\in I_N$ and $0\leq i,j<n_r$, we have \[\Lambda_N(S_N(\widetilde{V}_{r,ij})^*) = \left(\frac{T_{r,j}}{T_{r,i}}\right)^{1/2} J_{\widehat{N}} \Lambda_N(\widetilde{V}_{r,ij}),\] where $J_{\widehat{N}}$ denotes the modular conjugation for $\widehat{N}$, given by $e_{r,i}\otimes \overline{e_{r,j}}\rightarrow e_{r,j}\otimes \overline{e_{r,i}}$.

\end{enumerate}

\end{Prop}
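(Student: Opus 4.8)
The plan is to prove the three assertions in the order given, since each relies on the previous one. Throughout I write $V_{s,kl}$ for the matrix coefficients of the regular corepresentation spanning $\mathscr{A}$, and I use the intrinsic description of the map in the statement: by Lemma \ref{LemFormV} the $\widetilde{V}_{r,ij}$ are linearly independent, so $S_N$ is the well-defined linear map $\mathscr{N}\to N^{\mathrm{op}}$ with $S_N(\widetilde{V}_{r,ij})=\widetilde{V}_{r,ji}^{*}$, equivalently $(\iota\otimes S_N)(\widetilde{V})=\widetilde{V}^{*}$. For the first assertion I would invoke the fusion construction of Proposition \ref{PropFus}: for each irreducible ordinary corepresentation $U$ on $\mathscr{K}$ the unitary $(\Sigma\widetilde{V}_r\Sigma)_{12}U_{13}$ is an $(N,\Delta_N)$-corepresentation on $\overline{\mathscr{H}_r}\otimes\mathscr{K}$, whose matrix coefficients are exactly the products $\widetilde{V}_{r,ij}\cdot U_{kl}$. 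By the decomposition in Theorem \ref{TheoPW} it is a finite direct sum of the irreducibles $\Sigma\widetilde{V}_t\Sigma$ through a unitary intertwiner with scalar entries, so each such product is a finite $\mathbb{C}$-linear combination of the $\widetilde{V}_{t,ab}$ and hence lies in $\mathscr{N}$. Letting $U$ range over all irreducibles, whose coefficients span $\mathscr{A}$, yields $\mathscr{N}\cdot\mathscr{A}\subseteq\mathscr{N}$.

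The second assertion is the heart of the matter, and the key step I would isolate is an intrinsic description of $S_N$: for any $(N,\Delta_N)$-corepresentation $\mathcal{P}\in N\bar{\otimes}B(\mathscr{H})$ with $\dim\mathscr{H}<\infty$ and all coefficients $\mathcal{P}_{ab}$ in $\mathscr{N}$, one has $S_N(\mathcal{P}_{ab})=\mathcal{P}_{ba}^{*}$, i.e. $(\iota\otimes S_N)(\mathcal{P})$ is the matrix-transpose of $\mathcal{P}^{*}$. For the irreducible blocks $\Sigma\widetilde{V}_t\Sigma$ this is precisely the defining formula $S_N(\widetilde{V}_{t,ij})=\widetilde{V}_{t,ji}^{*}$; the general case then follows by decomposing $\mathcal{P}$ into such blocks via Theorem \ref{TheoPW} and pushing the identity through the scalar intertwiner, using only the linearity of $S_N$. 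Applying this to the fusion corepresentation of the first assertion, whose $((i,k),(j,l))$-coefficient is $\widetilde{V}_{r,ij}V_{s,kl}\in\mathscr{N}$, gives $S_N(\widetilde{V}_{r,ij}V_{s,kl})=(\widetilde{V}_{r,ji}V_{s,lk})^{*}=V_{s,lk}^{*}\,\widetilde{V}_{r,ji}^{*}$. Since $V_{s,lk}^{*}=S_M(V_{s,kl})$ is the standard antipode formula for a unitary corepresentation of a compact quantum group, and $\widetilde{V}_{r,ji}^{*}=S_N(\widetilde{V}_{r,ij})$, taking adjoints produces $S_N(\widetilde{V}_{r,ij}V_{s,kl})^{*}=\widetilde{V}_{r,ji}V_{s,lk}=S_N(\widetilde{V}_{r,ij})^{*}S_M(V_{s,kl})^{*}$, and (anti)linearity extends this to all $x\in\mathscr{N}$, $y\in\mathscr{A}$. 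I expect the main obstacle to be exactly this intrinsic-description lemma: one must check that the formula $S_N(\mathcal{P}_{ab})=\mathcal{P}_{ba}^{*}$ really survives the Peter-Weyl reduction, so that the linearly-extended $S_N$ acts as transpose-adjoint on \emph{every} finite-dimensional corepresentation and not merely on the chosen basis, and that $\widetilde{V}_{r,ij}V_{s,kl}$ is genuinely realized as a corepresentation coefficient so that the lemma applies.

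The third assertion is then a direct computation in the identification $\mathscr{L}^2(N)\cong\bigoplus_r\mathscr{H}_r\otimes\overline{\mathscr{H}_r}$. From $\widetilde{V}_{r,ij}=T_{r,i}^{1/2}T_{r,j}^{-1/2}\widetilde{W}_{r,ij}^{*}$ in Lemma \ref{LemFormV} together with $\widetilde{W}_{r,ij}^{*}\xi_M=T_{r,j}^{1/2}\,e_{r,i}\otimes\overline{e_{r,j}}$ I obtain $\Lambda_N(\widetilde{V}_{r,ij})=T_{r,i}^{1/2}\,e_{r,i}\otimes\overline{e_{r,j}}$, whence $J_{\widehat{N}}\Lambda_N(\widetilde{V}_{r,ij})=T_{r,i}^{1/2}\,e_{r,j}\otimes\overline{e_{r,i}}$. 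On the other hand $\Lambda_N(S_N(\widetilde{V}_{r,ij})^{*})=\Lambda_N(\widetilde{V}_{r,ji})=T_{r,j}^{1/2}\,e_{r,j}\otimes\overline{e_{r,i}}$, and comparing the two scalars produces precisely the factor $(T_{r,j}/T_{r,i})^{1/2}$, as claimed. This last part is purely mechanical once the identifications and the definition of $S_N(\,\cdot\,)^{*}$ are in hand, so all the genuine work sits in the intrinsic lemma of the second assertion.
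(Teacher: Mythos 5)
Your treatment of the first and third assertions coincides with the paper's: the first is exactly the appeal to Proposition \ref{PropFus}.2, and the third is the same two-line computation from Lemma \ref{LemFormV}.1 and the identification $\Lambda_N(\widetilde{V}_{r,ij})=T_{r,i}^{1/2}\,e_{r,i}\otimes\overline{e_{r,j}}$ that the paper calls immediate. For the second assertion you take a genuinely different route. The paper works on the dual side: it calls a normal functional on $\widehat{N}'$ \emph{elementary} when it comes from the span of the $\omega_{s,kl}$, observes that $S_N((\omega\otimes\iota)(\widetilde{V}))^*=(\overline{\omega}\otimes\iota)(\widetilde{V})$ for elementary $\omega$, and then uses the pentagon identity $\widetilde{V}_{12}\widetilde{V}_{13}V_{23}=\widetilde{V}_{23}\widetilde{V}_{12}$ (Proposition \ref{PropElProp}.4) to write the product $(\omega_1\otimes\iota)(\widetilde{V})(\omega_2\otimes\iota)(V)$ as a single slice $(\widetilde{\omega}\otimes\iota)(\widetilde{V})$, with part 1 guaranteeing that $\widetilde{\omega}$ restricts to an elementary functional; compatibility of $\omega\mapsto\overline{\omega}$ with this convolution gives the $S_M$-linearity. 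Your version is the matrix transcription of the same computation: transpose-adjoint behaviour of $S_N$ on the irreducible blocks, stability under scalar unitary intertwiners, and the fusion corepresentation realizing $\widetilde{V}_{r,ij}V_{s,kl}$ as a coefficient; your decomposition unitary plays the role of the paper's $\widetilde{\omega}$, and your version has the merit of making the use of the standard antipode formula $S_M(V_{s,kl})=V_{s,lk}^*$ explicit.

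There is, however, one concrete flaw, sitting exactly where you predicted the obstacle would be. Your intrinsic lemma is stated for $\dim\mathscr{H}<\infty$, but you then apply it to $(\Sigma\widetilde{V}_r\Sigma)_{12}U_{13}$, which acts on $\overline{\mathscr{H}_r}\otimes\mathscr{K}$ and is infinite-dimensional whenever $n_r=\infty$ --- the very situation this paper is about (for the $SU_q(2)$ co-object of the final section every $\mathscr{H}_r$ is infinite-dimensional). Simply dropping the hypothesis is not free: in a decomposition $\mathcal{P}=(1\otimes u)\bigl(\oplus_t\,\Sigma\widetilde{V}_t\Sigma\bigr)(1\otimes u^*)$ the coefficient expansion $\mathcal{P}_{\alpha\beta}=\sum_{t,c,d}u_{\alpha,(t,c)}\overline{u_{\beta,(t,d)}}\,\widetilde{V}_{t,cd}$ is a priori only $\sigma$-weakly convergent, and ``only the linearity of $S_N$'' cannot be pushed through an infinite sum, since $S_N$ is defined on the algebraic span $\mathscr{N}$ with no continuity available. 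The repair stays inside your own argument: once part 1 gives $\mathcal{P}_{\alpha\beta}\in\mathscr{N}$ and $\mathcal{P}_{\beta\alpha}\in\mathscr{N}$, pairing the $\sigma$-weak expansion with the normal functionals $\varphi_M(\widetilde{V}_{t,cd}^*\,\cdot\,)$ and invoking the orthogonality relations of Lemma \ref{LemFormV}.2 shows that only finitely many of the products $u_{\alpha,(t,c)}\overline{u_{\beta,(t,d)}}$ are nonzero, so both expansions are in fact finite and your term-by-term transpose computation is legitimate. Restate the lemma for corepresentations that are finite direct sums of irreducibles (via scalar unitaries, as Theorem \ref{TheoPW} indeed provides) with all coefficients in $\mathscr{N}$, insert this truncation step, and your proof is complete. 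It is worth noting that the paper's own proof leans on part 1 at precisely the analogous spot --- to keep $\widetilde{\omega}$ elementary --- so the two arguments carry the same dependence on the module property.
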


\begin{proof} As the $\widetilde{V}_{r,ij}$ form a basis of $\mathscr{N}$, it is easy to see that $S_N(\,\cdot\,)^*$ is well-defined. Moreover, using the formulas in Lemma \ref{LemFormV}.1, the third statement follows immediately. As for the first point, this is an immediate consequence of Proposition \ref{PropFus}.2.\\

\noindent So the only thing left to show is the second statement, which is at least meaningful by the first part of the proposition.\\

\noindent For $\omega$ in the predual of $\widehat{N}'$, denote by $\overline{\omega}$ the normal functional $x\rightarrow \overline{\omega(x^*)}$ on $\widehat{N}'$. Let us call a normal functional on $\widehat{N}'$ \emph{elementary} when it is of the form $\overline{e_{r,ij}}\rightarrow \omega(e_{r,ji})$ for some $\omega$ in the linear span of the $\omega_{s,kl}$ (see Notation \ref{NotWeight}). Then with $\omega$ elementary, we immediately obtain the formula \begin{equation}\label{FormS} S_N((\omega\otimes \iota)(\widetilde{V}))^* = (\overline{\omega}\otimes \iota)(\widetilde{V}).\end{equation}

\noindent Choose now normal functionals $\omega_1$ and $\omega_2$ on respectively $B(\mathscr{L}^2(N))$ and $B(\mathscr{L}^2(M))$ which restrict to elementary functionals on respectively $\widehat{N}'$ and $\widehat{M}'$. Then by the pentagonal identity for $\widetilde{V}$, we have \[(\omega_1\otimes \iota)(\widetilde{V})(\omega_2\otimes \iota)(V)  = (\widetilde{\omega}\otimes \iota)(\widetilde{V}),\] where $\widetilde{\omega}$ is the functional \[x\in B(\mathscr{L}^2(N))\rightarrow (\omega_1\otimes \omega_2)(\widetilde{V}_{12}^*(1\otimes x)\widetilde{V}_{12}).\] By the first part of the proposition, the restriction of $\widetilde{\omega}$ to $\widehat{N}'$ is again elementary. Combining these statements with equation (\ref{FormS}) (and the corresponding one for $S_M$), we see that $S_N(\,\cdot\,)^*$ is indeed right $S_M(\,\cdot\,)^*$-linear.

\end{proof}

\noindent We can now make the connection between the adjoint coactions of $(M,\Delta_M)$ on $\widehat{N}$ and $(M,\Delta_M^{\textrm{op}})$ on $\widehat{N}'$ respectively (see the remark after Lemma \ref{LemComp1}). Let us first recall that any compact Woronowicz algebra $(M,\Delta_M)$ is endowed with an involutive anti-comultiplicative anti-$^*$-automorphism $R_M$, given by the formula \[R_M(x) = J_{\widehat{M}}x^*J_{\widehat{M}}\qquad \textrm{for all }x\in M.\] More concretely, we have $R_M(W_{r,ij})=V_{r,ji}$ for all $r\in I_M$ and $0\leq i,j<m_r$. We will also denote \[\mathscr{C}_{\widehat{N}}: \widehat{N}\rightarrow \widehat{N}': x\rightarrow J_{\widehat{N}}x^*J_{\widehat{N}} = \overline{x^*},\] and use the same notation for its inverse.

\begin{Prop}\label{PropComp2} Let $(N,\Delta_N)$ be a right Galois co-object for a compact Woronowicz algebra $(M,\Delta_M)$. Let $(N,\Delta_N^{\textrm{op}})$ be the co-opposite right Galois co-object for $(M,\Delta_M^{\textrm{op}})$. Then the right adjoint coaction of $(M,\Delta_M^{\textrm{op}})$ on $(N^{\textrm{cop}})^{\wedge}= \widehat{N}'$ is given by \[x\rightarrow (\mathscr{C}_N\otimes R_M)Ad_R(\mathscr{C}_N(x)).\]
\end{Prop}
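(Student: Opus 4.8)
The plan is to push everything down to a single conjugation relation between $\widetilde{V}$ and $\widetilde{W}$ by the modular conjugations $J_{\widehat{N}}$ and $J_{\widehat{M}}$, after which the two coactions match by a short formal manipulation. First I would make the left-hand side explicit. The right adjoint coaction of a right Galois co-object is defined through the unitary $\widetilde{W}$, so for the co-opposite co-object $(N^{\textrm{cop}},\Delta_N^{\textrm{op}})$ (a right Galois co-object for $(M,\Delta_M^{\textrm{op}})$) one must replace $\widetilde{W}$ by the unitary playing its role for that co-object, which by Lemma \ref{LemComp1}.1 is $\Sigma\widetilde{V}^*\Sigma$, and replace $\widehat{N}$ by $(N^{\textrm{cop}})^{\wedge}=\widehat{N}'$ (Lemma \ref{LemComp1}.2). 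Substituting into the definition of $\textrm{Ad}_R$ and cancelling flips gives, for $x\in\widehat{N}'$,
\[\Sigma(\Sigma\widetilde{V}^*\Sigma)(1\otimes x)(\Sigma\widetilde{V}\Sigma)\Sigma=\widetilde{V}^*(x\otimes 1)\widetilde{V}.\]
Thus it suffices to prove $\widetilde{V}^*(x\otimes 1)\widetilde{V}=(\mathscr{C}_{\widehat{N}}\otimes R_M)\textrm{Ad}_R(\mathscr{C}_{\widehat{N}}(x))$.

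Next I would rewrite the right-hand side using $\mathscr{C}_{\widehat{N}}(y)=J_{\widehat{N}}y^*J_{\widehat{N}}$ and $R_M(z)=J_{\widehat{M}}z^*J_{\widehat{M}}$, so that $(\mathscr{C}_{\widehat{N}}\otimes R_M)(Z)=(J_{\widehat{N}}\otimes J_{\widehat{M}})Z^*(J_{\widehat{N}}\otimes J_{\widehat{M}})$ for $Z\in\widehat{N}\bar{\otimes}M$. Using $(J_{\widehat{N}}\otimes J_{\widehat{M}})\Sigma=\Sigma(J_{\widehat{M}}\otimes J_{\widehat{N}})$ and $\mathscr{C}_{\widehat{N}}(x)^*=J_{\widehat{N}}xJ_{\widehat{N}}$, a direct rearrangement turns the right-hand side into
\[\Sigma\,(J_{\widehat{M}}\otimes J_{\widehat{N}})\widetilde{W}\,(1\otimes J_{\widehat{N}}xJ_{\widehat{N}})\,\widetilde{W}^*(J_{\widehat{M}}\otimes J_{\widehat{N}})\,\Sigma.\]
The heart of the matter is then the operator identity
\[(J_{\widehat{M}}\otimes J_{\widehat{N}})\,\widetilde{W}\,(J_{\widehat{N}}\otimes J_{\widehat{N}})=\Sigma\widetilde{V}^*\Sigma,\]
which I will call $(\ast)$. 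Granting $(\ast)$, I insert $(J_{\widehat{N}}\otimes J_{\widehat{N}})^2=1$ around the middle factor: the inner conjugation $(J_{\widehat{N}}\otimes J_{\widehat{N}})(1\otimes J_{\widehat{N}}xJ_{\widehat{N}})(J_{\widehat{N}}\otimes J_{\widehat{N}})$ collapses to $1\otimes x$, while the outer factors become $\Sigma\widetilde{V}^*\Sigma$ and its adjoint $\Sigma\widetilde{V}\Sigma$ by $(\ast)$; the residual flips cancel and the whole expression reduces to $\widetilde{V}^*(x\otimes 1)\widetilde{V}$, as required.

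It remains to prove $(\ast)$, which I expect to be the main obstacle. Expanding $\widetilde{W}=\sum_{r,i,j}\widetilde{W}_{r,ij}\otimes e_{r,ij}$ and using $J_{\widehat{N}}e_{r,ij}J_{\widehat{N}}=\overline{e_{r,ij}}$ (a direct consequence of the given action of $J_{\widehat{N}}$ on $\bigoplus_r\mathscr{H}_r\otimes\overline{\mathscr{H}_r}$), together with $\widetilde{V}_{r,ij}^*=T_{r,i}^{1/2}T_{r,j}^{-1/2}\widetilde{W}_{r,ij}$ from Lemma \ref{LemFormV}.1, one checks that $(\ast)$ is equivalent to the scalar-coefficient relation
\[J_{\widehat{M}}\,\widetilde{W}_{r,ij}\,J_{\widehat{N}}=T_{r,i}^{-1/2}T_{r,j}^{1/2}\,\widetilde{W}_{r,ji},\qquad r\in I_N,\ 0\leq i,j<n_r.\]
In the untwisted case $(N,\Delta_N)=(M,\Delta_M)$ this reads $J_{\widehat{M}}W_{r,ij}J_{\widehat{M}}=D_{r,i}^{-1/2}D_{r,j}^{1/2}W_{r,ji}$, which is exactly the stated relation $R_M(W_{r,ij})=V_{r,ji}$ rewritten through $V_{r,ij}=D_{r,i}^{1/2}D_{r,j}^{-1/2}W_{r,ij}^*$; this is a reassuring consistency check. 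For general $(N,\Delta_N)$ I would derive the relation by analysing how $R_M$, in the form $J_{\widehat{M}}(\,\cdot\,)^*J_{\widehat{M}}$, acts on the products $\widetilde{W}_{r,ij}\widetilde{W}_{s,kl}^*\in M$ — these being the matrix coefficients of the genuine adjoint corepresentations $\textrm{Ad}_R^{(r)}$ — combined with the antipode $S_N(\,\cdot\,)^*$ and the compatibility $\Lambda_N(S_N(\widetilde{V}_{r,ij})^*)=(T_{r,j}/T_{r,i})^{1/2}J_{\widehat{N}}\Lambda_N(\widetilde{V}_{r,ij})$ established in the preceding proposition. Pinning down the precise action of $R_M$ on these fused coefficients (equivalently, verifying $(\ast)$ from the modular data of the Galois object of Theorem \ref{TheoGalobj}) is where the genuine work lies; everything else is formal manipulation of the two multiplicative unitaries.
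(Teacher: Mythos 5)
Your formal reductions are all correct, and they in fact coincide with the paper's own: the paper likewise observes that, via Lemma \ref{LemComp1}.1, the right adjoint coaction of $(M,\Delta_M^{\textrm{op}})$ on $\widehat{N}'$ is $x\mapsto \widetilde{V}^*(x\otimes 1)\widetilde{V}$, and reduces the whole proposition to the single identity
\[\widetilde{V} = (J_{\widehat{N}}\otimes J_{\widehat{N}})(\Sigma \widetilde{W}^*\Sigma)(J_{\widehat{N}}\otimes J_{\widehat{M}}),\]
which is precisely the adjoint-and-flip of your $(\ast)$, with the same piecewise coefficient form $J_{\widehat{N}}\widetilde{W}_{r,ji}^*J_{\widehat{M}}=\widetilde{V}_{r,ij}$. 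But exactly at the point you flag as ``where the genuine work lies'' your proof stops: $(\ast)$ is never proven, only a strategy is gestured at, so the proposal is incomplete at its crux.

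Moreover, the sketched strategy would not close the gap as it stands. The antipode-compatibility formula you invoke, $\Lambda_N(S_N(\widetilde{V}_{r,ij})^*)=(T_{r,j}/T_{r,i})^{1/2}J_{\widehat{N}}\Lambda_N(\widetilde{V}_{r,ij})$, combined with $J_{\widehat{M}}\xi_M=\xi_M$, yields the desired coefficient relation only \emph{evaluated on the single vector} $\xi_M$; and analysing $R_M$ on the products $\widetilde{W}_{r,ij}\widetilde{W}_{s,kl}^*\in M$ cannot disentangle the two factors, since knowing $J_{\widehat{M}}(\,\cdot\,)^*J_{\widehat{M}}$ on $N^{\textrm{op}}\cdot N$ does not determine the mixed conjugations $J_{\widehat{M}}(\,\cdot\,)J_{\widehat{N}}$ of each factor separately. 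The missing ingredient — and the one the paper imports from \cite{DeC3} (the remark before Lemma 4.3 there) — is the a priori membership $J_{\widehat{N}}xJ_{\widehat{M}}\in N$ for all $x\in N$, i.e.\ that $R_N: x\mapsto J_{\widehat{M}}x^*J_{\widehat{N}}$ is a well-defined map from $N$ to $N^{\textrm{op}}$. Once both sides of the coefficient relation are known to lie in $N$, the injectivity of $\Lambda_N$ (faithfulness of $\varphi_M$) upgrades the $\xi_M$-evaluation — a one-line computation from Lemma \ref{LemFormV} and the identification $\widetilde{W}_{r,ij}^*\xi_M = T_{r,j}^{1/2}\,e_{r,i}\otimes \overline{e_{r,j}}$ — to the operator identity; a bounded operator is not otherwise determined by its value on one vector. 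So your vector-level input was essentially the right one, but without this structural fact from \cite{DeC3} the argument cannot be completed.
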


\begin{proof} Note that the right adjoint coaction on $B(\overline{\mathscr{H}_r})$ is, by its definition and by Lemma \ref{LemComp1}.1, given as \[x\rightarrow \widetilde{V}^*(x\otimes 1)\widetilde{V}.\] Denote now by $J_{\widehat{N}}$ the modular conjugation for $\widehat{N}$, which we recall is simply the map \[\mathscr{L}^2(N)\rightarrow \mathscr{L}^2(N): e_{r,i}\otimes \overline{e_{r,j}}\rightarrow e_{r,j}\otimes \overline{e_{r,j}}.\] It is then easily seen that the proposition follows one we can prove the following identity: \begin{equation}\label{EqnIdV}\widetilde{V} = (J_{\widehat{N}}\otimes J_{\widehat{N}})(\Sigma \widetilde{W}^*\Sigma)(J_{\widehat{N}}\otimes J_{\widehat{M}}).\end{equation}

\noindent Now piecewise, the identity (\ref{EqnIdV}) corresponds to the identities \[J_{\widehat{N}}\widetilde{W}_{r,ji}^*J_{\widehat{M}} = \widetilde{V}_{r,ij}, \qquad \textrm{for all }r\in I_N,0\leq i,j<n_r.\] But in \cite{DeC3}, it was proven that $J_{\widehat{N}}xJ_{\widehat{M}} \in N$ for $x\in N$ (see the remark just before Lemma 4.3 in that paper). Hence we only have to check if \[J_{\widehat{N}}\widetilde{W}_{r,ji}^*J_{\widehat{M}}\xi_M = \widetilde{V}_{r,ij}\xi_M, \textrm{for all }r\in I_N,0\leq i,j<n_r.\]
This now follows from an easy computation using Lemma \ref{LemFormV}.
\end{proof}

\noindent \emph{Remark:} It seems nicer to treat the right adjoint $(M,\Delta_M^{\textrm{op}})$-coaction on $\widehat{N}'$ as a left $(M,\Delta_M)$-coaction: \[\textrm{Ad}_L: \widehat{N}'\rightarrow M\bar{\otimes} \widehat{N}': x\rightarrow \Sigma \widetilde{V}^*(x\otimes 1)\widetilde{V}\Sigma.\] These then localize to left adjoint coactions $\textrm{Ad}_L^{(r)}$ on the $B(\overline{\mathscr{H}_r})$. Note that the map $\textrm{Ad}_L$ (as well as the map $S_N$) in fact already appeared in the proof of Proposition \ref{PropDen2}, and that the $\textrm{Ad}_L^{(r)}$ are nothing but the $\lbrack (N,\Delta_N)\rbrack$-corepresentations of $(M,\Delta_M)$ associated to the $(N,\Delta_N)$-corepresentations $\Sigma \widetilde{V}_r\Sigma$ from Theorem \ref{TheoPW}.

\section{Reflecting a compact Woronowicz algebra across a Galois co-object}

\noindent In this section, we will consider in the special case of compact Woronowicz algebras a technique which was introduced in \cite{DeC1}. The following theorem was proven in \cite{DeC1}, Proposition 2.1 and Theorem 0.7.

\begin{Theorem} \label{TheoRefl} Let $(M,\Delta_M)$ be a compact Woronowicz algebra, and $(N,\Delta_N)$ a right Galois co-object for $(M,\Delta_M)$. Denote by $P\subseteq B(\mathscr{L}^2(N))$ the von Neumann algebra which is generated by elements of the form $xy^*$, where $x,y\in N$. Then $P$ can be made into a von Neumann bialgebra, the comultiplication $\Delta_P$ being uniquely determined by the fact that \[\Delta_P(xy^*)=\Delta_N(x)\Delta_N(y)^*\qquad \textrm{for all }x,y\in N.\] The von Neumann bialgebra $(P,\Delta_P)$ furthermore admits (not necessarily finite) left and right $\Delta_P$-invariant nsf weights (i.e.~ is a von Neumann algebraic quantum group in the terminology of \cite{Kus2}).
\end{Theorem}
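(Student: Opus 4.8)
The plan is to build $\Delta_P$ as the restriction to $P$ of one inner $^*$-homomorphism, read off the bialgebra axioms from the pentagon identities of Proposition~\ref{PropElProp}, and then treat the construction of the invariant weights — the genuinely analytic part — via the dual Galois object of Section~2.

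\emph{Bialgebra structure.} First I would define $\beta: B(\mathscr{L}^2(N))\rightarrow B(\mathscr{L}^2(N))\bar{\otimes}B(\mathscr{L}^2(N))$ by $\beta(w)=\widetilde{V}(w\otimes 1)\widetilde{V}^*$. Being conjugation by the unitary $\widetilde{V}$, this is automatically a normal unital $^*$-homomorphism, and it is defined intrinsically (so no well-definedness issue can arise). Using $\Delta_N(x)=\widetilde{V}(x\otimes 1)V^*$ from Proposition~\ref{PropElProp}.3 together with $V^*V=1$, for $x,y\in N$ one gets
\[\beta(xy^*)=\widetilde{V}(x\otimes 1)V^*\,V(y^*\otimes 1)\widetilde{V}^*=\Delta_N(x)\Delta_N(y)^*,\]
so setting $\Delta_P:=\beta|_P$ realizes the prescribed formula. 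Here I would note that the linear span of $\{xy^*\}$ is already a unital $^*$-subalgebra: it is $^*$-closed, and $(xy^*)(uv^*)=(x\cdot(y^*u))v^*$ lies in it by conditions~2 and~3 of Definition~\ref{DefGalCo}; hence its $\sigma$-weak closure is all of $P$. To see $\Delta_P(P)\subseteq P\bar{\otimes}P$ I would use the two presentations $\Delta_P(z)=\widetilde{V}(z\otimes 1)\widetilde{V}^*=\widetilde{W}^*(1\otimes z)\widetilde{W}$ (both from Proposition~\ref{PropElProp}.3). Since $\widetilde{V}\in B(\mathscr{L}^2(N))\bar{\otimes}N$, a routine slice computation shows every left slice $(\omega\otimes\iota)\Delta_P(z)$ lies in $N\cdot N^{\mathrm{op}}\subseteq P$, whence $\Delta_P(z)\in B(\mathscr{L}^2(N))\bar{\otimes}P$; symmetrically $\widetilde{W}^*\in N\bar{\otimes}B(\mathscr{L}^2(N))$ forces every right slice into $P$, giving $\Delta_P(z)\in P\bar{\otimes}B(\mathscr{L}^2(N))$. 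The commutation theorem then yields $\Delta_P(z)\in P\bar{\otimes}P$.

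\emph{Coassociativity.} This I would extract directly from the pentagon. Unfolding $\Delta_P(z)=\widetilde{V}(z\otimes 1)\widetilde{V}^*$ on each leg gives
\[(\iota\otimes\Delta_P)\Delta_P(z)=\widetilde{V}_{23}\widetilde{V}_{12}(z\otimes 1\otimes 1)\widetilde{V}_{12}^*\widetilde{V}_{23}^*,\qquad (\Delta_P\otimes\iota)\Delta_P(z)=\widetilde{V}_{12}\widetilde{V}_{13}(z\otimes 1\otimes 1)\widetilde{V}_{13}^*\widetilde{V}_{12}^*.\]
The pentagon $\widetilde{V}_{12}\widetilde{V}_{13}V_{23}=\widetilde{V}_{23}\widetilde{V}_{12}$ of Proposition~\ref{PropElProp}.4 rewrites the left-hand expression as $\widetilde{V}_{12}\widetilde{V}_{13}V_{23}(z\otimes 1\otimes 1)V_{23}^*\widetilde{V}_{13}^*\widetilde{V}_{12}^*$, and since $z$ sits on the first leg it commutes with $V_{23}$, so $V_{23}(z\otimes 1\otimes 1)V_{23}^*=z\otimes 1\otimes 1$ and the two sides agree. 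This makes $(P,\Delta_P)$ a von Neumann bialgebra.

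\emph{Invariant weights (the hard part).} The remaining task is to produce left and right $\Delta_P$-invariant nsf weights, and this is where I expect essentially all of the difficulty to lie. Rather than a bare-hands construction I would pass to the dual picture: by Theorem~\ref{TheoGalobj} the algebra $\widehat{N}$ carries the structure of a Galois object $(\widehat{N},\alpha_{\widehat{N}})$ over the discrete quantum group $(\widehat{M},\Delta_{\widehat{M}})$, with an invariant weight $\psi_{\widehat{N}}$ and group-like Radon--Nikodym data $\delta_{\widehat{N}}$, $A_r$ recorded in Theorem~\ref{TheoGalob} and Notation~\ref{NotA}. Since the corepresentations $V,\widetilde{V},\widetilde{W}$ assemble on the linking space $\mathscr{L}^2(M)\oplus\mathscr{L}^2(N)$ into a single multiplicative unitary, $(P,\Delta_P)$ is precisely the quantum group it generates, and the Haar weights should be manufactured from $\psi_{\widehat{N}}$ and $\varphi_{\widehat{N}}$ by the Kustermans--Vaes dual-weight recipe, with left invariance again reduced to a pentagon slice identity. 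The genuine obstacle is the analytic verification: establishing semifiniteness and \emph{strong} left invariance — equivalently, that the underlying multiplicative unitary is manageable so that an antipode with the correct polar decomposition exists, and that the left and right weights are related by the appropriate scaling. It is exactly this step that upgrades the von Neumann bialgebra $(P,\Delta_P)$ to a von Neumann algebraic quantum group, and I expect it to absorb the bulk of the work.
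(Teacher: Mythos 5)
Your bialgebra half is correct, and it is essentially the canonical argument; note, though, that the paper itself contains no proof of this theorem --- it is imported wholesale from \cite{DeC1} (Proposition 2.1 and Theorem 0.7) --- so the comparison below is with that cited construction, part of which the paper replays later in the proof of Theorem \ref{TheoFormW}. Defining $\Delta_P$ intrinsically as $\mathrm{Ad}(\widetilde{V})$ restricted to $P$ neatly dissolves the well-definedness problem; the computation $\beta(xy^*)=\Delta_N(x)\Delta_N(y)^*$ via inserting $V^*V=1$ is right, the second implementation $\widetilde{W}^*(1\otimes z)\widetilde{W}$ agrees with the first on the generators $xy^*$ (insert $WW^*=1$) and hence on all of $P$ by normality, the two-sided slice argument plus the commutation theorem correctly yields $\Delta_P(P)\subseteq P\bar{\otimes}P$, and the pentagon computation for coassociativity is exactly the standard one. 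Two small repairs: the algebraic span of $\{xy^*\}$ is a $^*$-algebra but need not contain the unit --- what you actually get from condition 1 of Definition \ref{DefGalCo} is that it acts nondegenerately on $\mathscr{L}^2(N)$, which is what forces its $\sigma$-weak closure to be all of $P$; and the slices $(\omega\otimes\iota)\Delta_P(z)$ land in the $\sigma$-weakly closed span of $N\cdot N^{\textrm{op}}$ rather than literally in $N\cdot N^{\textrm{op}}$, the slice-map step requiring the weak slice map property for corners, exactly as the paper invokes it after Proposition \ref{PropDen2}.

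The genuine gap is the one you flag yourself: the existence of the left and right invariant nsf weights is asserted as a program (``should be manufactured by the Kustermans--Vaes dual-weight recipe'', manageability of the assembled linking-space multiplicative unitary), not proved, and that existence \emph{is} the substantive content of the theorem --- the bialgebra part is comparatively routine. Your instinct to route through the dual Galois object $(\widehat{N},\alpha_{\widehat{N}})$ of Theorems \ref{TheoGalobj} and \ref{TheoGalob} does match the actual construction in \cite{DeC3} (Theorem 4.8), but the mechanism there is not a manageability verification: one uses the modular data of the Galois object to build the one-parameter group $\nabla_{N,M}^{it}=\nabla_{\widehat{N}}^{it}J_{\widehat{N}}\delta_{\widehat{N}}^{it}J_{\widehat{N}}$, sets $\sigma_t^{N,M}(x)=\nabla_{N,M}^{it}x\nabla_M^{-it}$ on $N$ (well-definedness resting on a nontrivial commutation relation between $\widetilde{W}$, $\nabla_M$, $\nabla_{\widehat{N}}$ and $\delta_{\widehat{N}}$), and defines $\varphi_P(xy^*)=\varphi_M(\sigma_{-i/2}^{N,M}(y)^*\sigma_{-i/2}^{N,M}(x))$, so that $\varphi_P$ is a corner of the balanced weight on the linking von Neumann algebra $\left(\begin{array}{ll} P & N\\ N^{\textrm{op}} & M\end{array}\right)$, with $\psi_P=\varphi_P\circ R_P$ for $R_P=J_{\widehat{N}}(\,\cdot\,)^*J_{\widehat{N}}$; proving semifiniteness and invariance of these weights is the bulk of \cite{DeC1}/\cite{DeC3} and is not supplied by your outline. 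So: first half correct and in line with the cited proof, second half an accurate map of where the difficulty lies but not a proof of it.
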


\noindent The following theorem gives a concrete formula for the above invariant weights. We will use the notations introduced in the first three sections (see Notation \ref{NotWeight} and Notation \ref{NotA}).

\begin{Theorem}\label{TheoFormW} Let $(M,\Delta_M)$ be a compact Woronowicz algebra, $(N,\Delta_N)$ a right Galois co-object for $(M,\Delta_M)$, and $(P,\Delta_P)$ the von Neumann bialgebra introduced in Theorem \ref{TheoRefl}. Then, up to a positive scalar, the left invariant nsf weight $\varphi_{P}$ satisfies \[\widetilde{V}_{r,ij}\widetilde{V}_{s,kl}^* \in \mathscr{M}_{\varphi_P}\] with \[\varphi_{P}(\widetilde{V}_{r,ij}\widetilde{V}_{s,kl}^*) = \delta_{r,s}\delta_{i,k}\delta_{j,l} \frac{T_{r,j}}{A_{r,j}},\] while the right invariant nsf weight $\psi_{P}$ satisfies, again up to a positive scalar, \[\widetilde{W}_{r,ij}^*\widetilde{W}_{s,kl} \in \mathscr{M}_{\psi_P}\] and \[\psi_{P}(\widetilde{W}_{r,ij}^*\widetilde{W}_{s,kl}) = \delta_{r,s}\delta_{i,k}\delta_{j,l} \frac{T_{r,i}}{A_{r,i}}.\]

\end{Theorem}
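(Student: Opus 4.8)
The plan is to use the uniqueness (up to a positive scalar) of the left and right invariant nsf weights on the von Neumann algebraic quantum group $(P,\Delta_P)$ guaranteed by Theorem \ref{TheoRefl}. It therefore suffices to exhibit genuine nsf weights, given on the generators by the stated formulas, which satisfy the left- resp.\ right-invariance identity; uniqueness then identifies them with $\varphi_P$ and $\psi_P$ up to scaling. As a preliminary I would record the comultiplication of the matrix coefficients. From Theorem \ref{PropOrt}.3 we have $\Delta_N(\widetilde W_{r,ij}^*) = \sum_k \widetilde W_{r,ik}^*\otimes \widetilde W_{r,kj}^*$, and substituting the dictionary $\widetilde V_{r,ij} = T_{r,i}^{1/2}T_{r,j}^{-1/2}\widetilde W_{r,ij}^*$ from Lemma \ref{LemFormV}.1 the powers of $T$ telescope to the clean formula $\Delta_N(\widetilde V_{r,ij}) = \sum_k \widetilde V_{r,ik}\otimes \widetilde V_{r,kj}$. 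Using $\Delta_P(xy^*) = \Delta_N(x)\Delta_N(y)^*$ I then obtain $\Delta_P(\widetilde V_{r,ij}\widetilde V_{s,kl}^*) = \sum_{p,q}\widetilde V_{r,ip}\widetilde V_{s,kq}^*\otimes \widetilde V_{r,pj}\widetilde V_{s,ql}^*$ and the analogous expression with $\widetilde W$.

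Next I would verify the invariance identities. Applying $(\iota\otimes\varphi_P)$ to the coproduct above and inserting the proposed diagonal values collapses the $q$-sum through the Kronecker factors and leaves $\frac{T_{r,j}}{A_{r,j}}\sum_p \widetilde V_{r,ip}\widetilde V_{r,kp}^*$; the relation $\sum_p \widetilde V_{r,ip}\widetilde V_{r,kp}^* = \delta_{i,k}\,1$, read off from the unitarity $\widetilde V\widetilde V^* = 1$ together with Lemma \ref{LemFormV}.1, produces exactly $\varphi_P(\widetilde V_{r,ij}\widetilde V_{s,kl}^*)\,1$, so the candidate for $\varphi_P$ is left-invariant. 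Symmetrically, $(\psi_P\otimes\iota)$ applied to $\Delta_P(\widetilde W_{r,ij}^*\widetilde W_{s,kl})$, using Theorem \ref{PropOrt}.3 and the orthogonality relation $\sum_p \widetilde W_{r,pj}^*\widetilde W_{r,pl} = \delta_{j,l}\,1$ of Theorem \ref{PropOrt}.2, gives right-invariance of the candidate for $\psi_P$. I would also confirm that the two formulas are mutually consistent under the unitary antipode / modular element of $P$ by transporting one into the other through the dictionary $\widetilde V_{r,ij} = T_{r,i}^{1/2}T_{r,j}^{-1/2}\widetilde W_{r,ij}^*$; this is a bookkeeping check on powers of $T$ and $A$.

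The delicate point — and the step where the operators $A_r$ genuinely enter — is fixing the normalization. The invariance computation above is insensitive to the precise value of the diagonal coefficient: it goes through verbatim if $\frac{T_{r,j}}{A_{r,j}}$ is replaced by an arbitrary positive scalar $c_{r,j}$. Since the invariant weight is unique only up to a single global scalar, this means that for all but one choice of the ratios $c_{r,j}/c_{r',j'}$ the diagonal functional fails to be an honest nsf weight, and the correct ratio must be extracted from the weight axioms rather than from invariance. The natural way to pin it down is through the modular structure: the modular automorphism group $\sigma_t^{\varphi_P}$ of the left Haar weight of $(P,\Delta_P)$ is determined by the quantum-group structure and combines the modular group of $\varphi_M$ (acting on the coefficients through the eigenvalues $T_{r,i}$) with the scaling governed by the Radon--Nikodym derivative $\delta_{\widehat N} = \bigoplus_r A_r$ of Notation \ref{NotA} and Theorem \ref{TheoGalob}. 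Imposing the KMS condition for $\varphi_P$ relative to $\sigma_t^{\varphi_P}$ — equivalently, requiring that the GNS sesquilinear form $\langle\Lambda_P(a),\Lambda_P(b)\rangle = \varphi_P(b^*a)$ be positive and compatible with left multiplication — forces precisely the factor $T_{r,j}/A_{r,j}$.

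I expect this last step to be the main obstacle, as it is the only place where one must go beyond formal invariance and engage the analytic content of the weight. Concretely, it requires relating $\sigma_t^{\varphi_P}$ on the generators $\widetilde V_{r,ij}$ to the data $(T_r,A_r)$; the characterization of $A_r$ recorded in the remark after Notation \ref{NotA}, namely $A_r^{it}xA_r^{-it} = (\iota\otimes\varepsilon\circ\sigma_t^{\varphi_M})\mathrm{Ad}_R^{(r)}(x)$, is the identity I would lean on to make the Radon--Nikodym derivative appear with the correct exponent (alternatively, one may read the normalization off the explicit construction of the invariant weight in \cite{DeC1}, which is already built from $\delta_{\widehat N}$). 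Once the normalization of $\varphi_P$ is established, the formula for $\psi_P$ follows either by the same argument applied to right-invariance or, more economically, by transporting $\varphi_P$ through the unitary antipode and the group-like modular element, using the consistency check prepared above.
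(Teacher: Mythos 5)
Your proposal is correct and its decisive step coincides with the paper's own proof: the paper likewise treats the formal invariance computation as non-determining and instead reads the values off the explicit construction of the reflected weight from \cite{DeC3} (Theorem 4.8 — this, rather than \cite{DeC1}, is the right reference), namely the one-parameter group $\sigma_t^{N,M}$, which is the restriction to the corner $N$ of the modular group of the balanced weight $\varphi_P\oplus\varphi_M$ on the linking algebra and is implemented by $\nabla_{\widehat{N}}^{it}J_{\widehat{N}}\delta_{\widehat{N}}^{it}J_{\widehat{N}}$, so that $\sigma_t^{N,M}(\widetilde{V}_{r,ij})=T_{r,i}^{-it}T_{r,j}^{it}A_{r,j}^{-it}\widetilde{V}_{r,ij}$, and then evaluates via $\varphi_P(xy^*)=\varphi_M(\sigma_{-i/2}^{N,M}(y)^*\sigma_{-i/2}^{N,M}(x))$ together with the orthogonality relations of Lemma \ref{LemFormV} — precisely the mechanism your last paragraph identifies, with the caveat that a naive KMS condition inside $P$ alone would not act on single coefficients (products such as $\widetilde{V}_{s,kl}^*\widetilde{V}_{r,ij}$ lie in $M$, not $P$), which is exactly what the balanced-weight/linking-algebra formulation repairs. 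The $\psi_P$ formula is then obtained just as you propose, by transporting through the unitary antipode: $\psi_P=\varphi_P\circ R_P$ with $R_P(xy^*)=R_N(y)^*R_N(x)$ and $R_N(\widetilde{V}_{r,ij})=\widetilde{W}_{r,ji}$.
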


\begin{proof} For the proof of the theorem, we have to explain first how the invariant weights $\varphi_{P}$ and $\psi_P$ can be obtained. This goes back to Theorem 4.8 of \cite{DeC3}.\\

\noindent Denote by $\nabla_{N,M}^{it}$ the following one-parametergroup of unitaries on $\mathscr{L}^2(N)$: \[\nabla_{N,M}^{it} = \nabla_{\widehat{N}}^{it}J_{\widehat{N}}\delta_{\widehat{N}}^{it}J_{\widehat{N}},\] where $\nabla_{\widehat{N}}$ is the modular operator associated the weight $\varphi_{\widehat{N}}$ on $\widehat{N}$. On basis vectors, this one-parametergroup is concretely given as \[\nabla_{N,M}^{it} e_{r,i}\otimes \overline{e_{r,j}} = A_{r,j}^{-it}T_{r,j}^{it}T_{r,i}^{-it} \, e_{r,i}\otimes \overline{e_{r,j}}.\] We can then implement on $N$ a one-parametergroup $\sigma_{t}^{N,M}$, determined by the formula \[\sigma_t^{N,M}(x) = \nabla_{N,M}^{it}x\nabla_{M}^{-it},\qquad \textrm{for all }x\in N,\] where $\nabla_M$ is the modular operator on $\mathscr{L}^2(M)$ associated to $\varphi_M$. One verifies that this is well-defined by using the commutation relation \[(\nabla_{M}^{it}\otimes 1)\widetilde{W}(\nabla_{N,M}^{-it}\otimes 1)= (1\otimes \nabla_{\widehat{N}}^{-it})\widetilde{W}(1\otimes \nabla_{\widehat{N}}^{it}\delta_{\widehat{N}}^{it}),\] proven in Proposition 3.20 of \cite{DeC3} (we remark that the $P$-operator introduced there coincides with $\nabla_{\widehat{N}}$, as the modular element of $(M,\Delta_M)$ is trivial). This commutation relation also immediately shows that \[\sigma_t^{N,M}(\widetilde{V}_{r,ij}) = T_{r,i}^{-it} T_{r,j}^{it}A_{r,j}^{-it} \widetilde{V}_{r,ij},\] using the notation from Lemma \ref{LemFormV}.1.\\

\noindent Now by construction (see the discussion preceding Lemma 4.4 in \cite{DeC3}), all elements $x\in N$ which are analytic with respect to $\sigma_t^{N,M}$ will lie in the space of square integrable elements of $\varphi_{P}$, by the formula \[\varphi_P(xx^*) = \varphi_M(\sigma_{-i/2}^{N,M}(x)^*\sigma_{-i/2}^{N,M}(x)).\] By polarity, we get for $x,y\in N$ analytic w.r.t.~ $\sigma_t^{N,M}$ that \[\varphi_P(xy^*) = \varphi_M(\sigma_{-i/2}^{N,M}(y)^*\sigma_{-i/2}^{N,M}(x)).\] Applying this to $x=\widetilde{V}_{r,ij}$ and $y=\widetilde{V}_{s,kl}$, and using the orthogonality relations between the $\widetilde{V}_{r,ij}$, we immediately get the first formula in the statement of the theorem.\\

\noindent For the second formula, we can use the expression $\psi_P = \varphi_P\circ R_P$, where $R_P$ was an involutory anti-automorphism on $P$ determined by the formula \[R_P(x) = J_{\widehat{N}}x^*J_{\widehat{N}}, \qquad \textrm{for all }x\in P\] (see Lemma 4.3 in \cite{DeC3}). In fact, the discussion before that lemma states that, for $x,y\in N$, we have $R_P(xy^*) = R_N(y)^*R_N(x)$, where \[R_N: N\rightarrow N^{\textrm{op}}: x\rightarrow J_{\widehat{M}}x^*J_{\widehat{N}}.\] By applying $R_N(\widetilde{V}_{r,ij})^*$ to $\xi_M$, we find that \[R_N(\widetilde{V}_{r,ij}) = \widetilde{W}_{r,ji}\] (see also the proof of Proposition \ref{PropComp2}). Applying then $\varphi_P\circ R_P$ to $\widetilde{W}_{r,ij}^*\widetilde{W}_{s,kl}$ and using the first part of the proof, the expression for $\psi_P$ as in the statement of the theorem follows.\\

\end{proof}

\noindent From the formulas in Theorem \ref{TheoFormW}, we can draw the following conclusions.

\begin{Prop}\label{PropFin} Let $(M,\Delta_M)$, $(N,\Delta_N)$ and $(P,\Delta_P)$ be as in the foregoing theorem. \begin{enumerate} \item If $(P,\Delta_P)$ is a compact Woronowicz algebra (that is, if $\varphi_{P}$ and $\psi_P$ are finite), then all $n_r< \infty$, i.e.~ all irreducible $(N,\Delta_N)$-corepresentations of $(M,\Delta_M)$ are finite-dimensional.
\item Conversely, if one of the irreducible $(N,\Delta_N)$-corepresentations for $(M,\Delta_M)$ is finite-dimensional, then they all are, and then $(P,\Delta_P)$ is a compact Woronowicz algebra.
\item If $(P,\Delta_P)$ is unimodular (that is, if $\varphi_P$ is a multiple of $\psi_P$), then there exist positive numbers $d_r$ such that $A_r = d_r^2T_r^2$ (where the $A_r$ were introduced in Notation \ref{NotA}).
\end{enumerate}
\end{Prop}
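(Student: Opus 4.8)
The three assertions all flow from evaluating the invariant weights of Theorem~\ref{TheoFormW} against the two resolutions of the identity on $\mathscr{L}^2(N)$ carried by $\widetilde{V}$ and $\widetilde{W}$, so first I would record these. Reading $\widetilde{V}\widetilde{V}^*=1$ off the decomposition $\widetilde{V}=\sum_{r,i,j}\overline{e_{r,ij}}\otimes\widetilde{V}_{r,ij}$ of Lemma~\ref{LemFormV}.1 gives, for each fixed $r$ and $i$,
\[\sum_{j=0}^{n_r-1}\widetilde{V}_{r,ij}\widetilde{V}_{r,ij}^{*}=1_{\mathscr{L}^2(N)},\]
while Theorem~\ref{PropOrt}.2 gives, for each fixed $r$ and $j$,
\[\sum_{i=0}^{n_r-1}\widetilde{W}_{r,ij}^{*}\widetilde{W}_{r,ij}=1_{\mathscr{L}^2(N)}.\]
The bridge between them is $\widetilde{W}_{r,ij}^{*}\widetilde{W}_{r,ij}=T_{r,i}^{-1}T_{r,j}\,\widetilde{V}_{r,ij}\widetilde{V}_{r,ij}^{*}$, immediate from $\widetilde{V}_{r,ij}=T_{r,i}^{1/2}T_{r,j}^{-1/2}\widetilde{W}_{r,ij}^{*}$.

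For (1), I would assume $(P,\Delta_P)$ compact, so $\varphi_P(1)<\infty$, and crucially compute $\varphi_P(1)$ using the \emph{second} resolution rather than the first. Since $\widetilde{V}_{r,ij}\widetilde{V}_{r,ij}^{*}\in\mathscr{M}_{\varphi_P}$ with $\varphi_P(\widetilde{V}_{r,ij}\widetilde{V}_{r,ij}^{*})=T_{r,j}/A_{r,j}$, normality of $\varphi_P$ yields
\[\varphi_P(1)=\sum_{i}\varphi_P(\widetilde{W}_{r,ij}^{*}\widetilde{W}_{r,ij})=\sum_{i}T_{r,i}^{-1}T_{r,j}\cdot\frac{T_{r,j}}{A_{r,j}}=\frac{T_{r,j}^{2}}{A_{r,j}}\sum_{i}T_{r,i}^{-1}.\]
Because $T_r$ is the density of a state, $\sum_i T_{r,i}=1$, so if $n_r=\infty$ then $T_{r,i}\to0$ and $\sum_i T_{r,i}^{-1}=\infty$, forcing $\varphi_P(1)=\infty$, a contradiction; hence every $n_r<\infty$. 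The one nonobvious move is precisely to expand $1$ via the $\widetilde{W}$-resolution, which manufactures the manifestly divergent factor $\sum_i T_{r,i}^{-1}$, rather than via the $\widetilde{V}$-resolution, whose value $\sum_j T_{r,j}/A_{r,j}$ shows no such divergence.

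For (2), once some $n_{r_0}<\infty$ is known, I would deduce finiteness of every $n_s$ from the module structure of Proposition~\ref{PropFus}. Given $s\in I_N$, I would use that $\mathcal{G}_{r_0}^{c}\times\mathcal{G}_s$ is an \emph{ordinary} $(M,\Delta_M)$-corepresentation and hence, by Peter--Weyl for $(M,\Delta_M)$, a direct sum of finite-dimensional irreducibles; picking any $U_u$ occurring in it and invoking the Frobenius-type reciprocity of the discussion after Proposition~\ref{PropFus} (the multiplicity of $U_u$ in $\mathcal{G}_{r_0}^{c}\times\mathcal{G}_s$ equals that of $\mathcal{G}_s$ in $\mathcal{G}_{r_0}\times U_u$) exhibits $\mathcal{G}_s$ as a subcorepresentation of the finite-dimensional $\mathcal{G}_{r_0}\times U_u$, so $n_s\le n_{r_0}\dim U_u<\infty$. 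With all $n_r<\infty$, the first resolution then gives $\varphi_P(1)=\sum_{j=0}^{n_r-1}T_{r,j}/A_{r,j}<\infty$ and similarly $\psi_P(1)<\infty$, so $(P,\Delta_P)$ is compact. I expect this connectedness step --- producing, for each target $s$, an ordinary $U_u$ linking it back to $r_0$ --- to be the main obstacle, since it is where the categorical input (the finiteness of the fusion multiplicities in Proposition~\ref{PropFus}.2, together with the genuinely ordinary nature of $\mathcal{G}_{r_0}^{c}\times\mathcal{G}_s$) carries the argument.

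For (3), assuming $(P,\Delta_P)$ unimodular, i.e.\ $\varphi_P=\lambda\psi_P$ for some $\lambda>0$, the plan is simply to evaluate both weights on the \emph{same} positive element $\widetilde{V}_{r,ij}\widetilde{V}_{r,ij}^{*}$. Directly $\varphi_P(\widetilde{V}_{r,ij}\widetilde{V}_{r,ij}^{*})=T_{r,j}/A_{r,j}$, while the bridge identity together with the $\psi_P$-formula gives $\psi_P(\widetilde{V}_{r,ij}\widetilde{V}_{r,ij}^{*})=T_{r,i}^{2}/(T_{r,j}A_{r,i})$. Equating up to $\lambda$ and clearing denominators yields $A_{r,i}/T_{r,i}^{2}=\lambda\,A_{r,j}/T_{r,j}^{2}$ for all $i,j$; taking $i=j$ forces $\lambda=1$, whence $A_{r,i}/T_{r,i}^{2}$ is a constant $d_r^{2}$ depending only on $r$, i.e.\ $A_r=d_r^{2}T_r^{2}$. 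This part is routine once the two weights are brought onto a common element.
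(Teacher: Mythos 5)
Your proof is correct, and for parts (1) and (3) it is essentially the paper's own argument: in both, the engine of (1) is to expand $1=\sum_{i}\widetilde{W}_{r,ij}^{*}\widetilde{W}_{r,ij}$ and observe that the resulting factor $\sum_{i}T_{r,i}^{-1}$ diverges when $n_r=\infty$ because $\sum_i T_{r,i}=1$, and (3) is in both cases the bridge $\widetilde{V}_{r,ij}=T_{r,i}^{1/2}T_{r,j}^{-1/2}\widetilde{W}_{r,ij}^{*}$ of Lemma \ref{LemFormV} played against the two weight formulas of Theorem \ref{TheoFormW}, with the $i=j$ evaluation absorbing the unknown scalars. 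One small but genuine difference in (1): the paper proves (3) first, implicitly uses that a compact $(P,\Delta_P)$ is unimodular (uniqueness of the invariant state) to write $A_{r,i}=d_r^2T_{r,i}^2$, and then computes $\psi_P(1)=d_r^{-2}\sum_i T_{r,i}^{-1}$; you instead convert $\widetilde{W}_{r,ij}^{*}\widetilde{W}_{r,ij}$ into $T_{r,i}^{-1}T_{r,j}\,\widetilde{V}_{r,ij}\widetilde{V}_{r,ij}^{*}$ and evaluate $\varphi_P$ directly, which makes (1) independent of (3) and of the compact-implies-unimodular fact --- a mild gain in self-containedness. Where you genuinely diverge is (2): the paper simply notes that a single finite $n_r$ already makes $\psi_P(1)=\sum_{i<n_r}T_{r,i}/A_{r,i}$ finite, hence $(P,\Delta_P)$ is compact, and then deduces that \emph{all} $n_s$ are finite from point (1); your fusion-theoretic detour (finding $u\in I_M$ with $\mathcal{G}_s\subseteq\mathcal{G}_{r_0}\times U_u$, so $n_s\leq n_{r_0}\dim U_u$) is a valid alternative in spirit but is redundant --- your own closing weight computation needs only the one finite $n_{r_0}$ --- and it leans on the Frobenius-type reciprocity which the paper only \emph{sketches} in the discussion after Proposition \ref{PropFus}, so if you keep that route you should supply the reciprocity argument in detail (in particular that the ``ordinary'' corepresentation $\mathcal{G}_{r_0}^{c}\times\mathcal{G}_s$, realized as the coaction on $B(\mathscr{H}_{r_0},\mathscr{H}_s)$, has a nonzero spectral subspace for some $U_u$). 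The paper's route to (2) buys brevity and rests only on the already-proved weight formulas; yours buys a direct, weight-free explanation of why finite-dimensionality propagates across $I_N$.
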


\noindent \emph{Remark:} If the condition in the third point is satisfied, then one could interpret $d_r$ as a (finite!) \emph{relative quantum dimension} of the irreducible $(N,\Delta_N)$-corepresentation corresponding to $r$, in analogy with the case of ordinary irreducible corepresentations (compare with Proposition \ref{PropDisc}). Here the relativity refers to one irreducible $(N,\Delta_N)$-corepresentation w.r.t.~ another, as $\delta_{\widehat{N}}$ is only determined up to a positive scalar. We note that we do not know of any particular examples where $(P,\Delta_P)$ is not unimodular, so it could well be that this condition is \emph{always} fulfilled.\\

\begin{proof} The third point follows immediately from the formulas in the previous theorem combined with Lemma \ref{LemFormV}, as there then exists a positive number $c>0$ such that  \[\frac{A_{r,j}}{A_{r,i}}= c \frac{T_{r,j}^2}{T_{r,i}^2}.\]

\noindent If $(P,\Delta_P)$ is moreover compact, then for any $r\in I_N$ and $0\leq i<n_r$, we get, by using the unitary of $\widetilde{W}_r$ (and the normality of $\psi_P$), that \begin{eqnarray*} \psi_P(1) &=& \psi_{P} (\sum_{i=0}^{n_r-1} \widetilde{W}_{r,ij}^*\widetilde{W}_{r,ij}) \\ &=& \sum_{i=0}^{n_r-1} \psi_P(\widetilde{W}_{r,ij}^*\widetilde{W}_{r,ij}) \\ &=& \frac{1}{d_r^2} \sum_{i=0}^{n_r-1} \frac{1}{T_{i,r}} <\infty.\end{eqnarray*} As the $T_{r,i}$ are summable, the final sum must necessarily be finite, i.e. $n_r<\infty$.\\

\noindent Finally, suppose that $(M,\Delta_M)$ has a finite-dimensional irreducible $(N,\Delta_N)$-corepresentation, say corresponding to the index value $r$. Then as $\psi_P(1) = \psi_{P} (\sum_{i=0}^{n_r-1} \widetilde{W}_{r,ij}^*\widetilde{W}_{r,ij})$, we see that $\psi_P$ is finite, and hence $(P,\Delta_P)$ is a compact Woronowicz algebra. By the second point, also all other irreducible $(N,\Delta_N)$-corepresentations of $(M,\Delta_M)$ are finite-dimensional.

\end{proof}

\noindent \emph{Remark:} In case the irreducible $(N,\Delta_N)$-corepresentations are finite-dimensional, the linear span of the $\widetilde{W}_{r,ij}^*$ generates inside $N$ a purely algebraic Galois co-object $\mathscr{N}$ for the Hopf algebra $\mathscr{A}$ inside $(M,\Delta_M)$. Conversely, if one starts with a Galois co-object for $\mathscr{A}$, satisfying some suitable relations with the $^*$-structure, we can in essence develop the whole theory so far in an algebraic way, and then \emph{necessarily} the reflection will correspond to a compact quantum group (this was essentially already observed in \cite{DeC4}, see also \cite{VDae1}). As it turns out, there \emph{do} exist interesting Galois co-objects which are of a non-algebraic type (see the final section), which was part of the motivation for writing this paper.\\

\section{Galois co-objects and projective corepresentations for compact Kac algebras}

\noindent In this short section, we show that when one deals with compact Kac algebras (see Definition \ref{DefCQG}), one is \emph{always} in the algebraic setup (see the remark at the end of the previous section).

\begin{Prop}\label{PropKac} Let $(N,\Delta_N)$ be a Galois co-object for a compact Kac algebra $(M,\Delta_M)$, and let $(P,\Delta_P)$ be the reflected von Neumann bialgebra as obtained in Theorem \ref{TheoRefl}. Then also $(P,\Delta_P)$ is a compact Kac algebra.\end{Prop}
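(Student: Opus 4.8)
The plan is to reduce everything to two facts about the operators $A_r$ and $T_r$ from Notations \ref{NotWeight} and \ref{NotA}, and then read off compactness and traciality directly from the weight formulas in Theorem \ref{TheoFormW}. Concretely, I would first show that in the Kac case $A_r$ is a scalar for every $r$, and then deduce in turn that $\psi_P$ is finite (so that $(P,\Delta_P)$ is compact), that each $T_r$ is a scalar, and finally that the resulting invariant state is tracial.

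The first step is to prove $A_r = 1$. Since $(M,\Delta_M)$ is a compact Kac algebra, $\varphi_M = \tau_M$ is tracial, so $\nabla_M = 1$ and the densities $D_r$ of Proposition \ref{PropDisc} are scalar; hence $\delta_{\widehat{M}} = \bigoplus_r c_r^2 D_r^2 = 1$, i.e. $(\widehat{M},\Delta_{\widehat{M}})$ is a discrete Kac algebra. By Theorem \ref{TheoGalob}.2 this gives $\alpha_{\widehat{N}}(\delta_{\widehat{N}}^{it}) = \delta_{\widehat{N}}^{it}\otimes\delta_{\widehat{M}}^{it} = \delta_{\widehat{N}}^{it}\otimes 1$, so each $\delta_{\widehat{N}}^{it}$ is a fixed element for $\alpha_{\widehat{N}}$. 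As $\alpha_{\widehat{N}}$ is ergodic (Proposition \ref{PropDualCo}), $\delta_{\widehat{N}}^{it}$ must be a scalar, so $\delta_{\widehat{N}}$ is a positive scalar; since $\psi_{\widehat{N}}$, and hence $\delta_{\widehat{N}}$, is only determined up to a positive constant, I normalize so that $\delta_{\widehat{N}} = 1$, i.e. $A_r = 1_{\mathscr{H}_r}$ for all $r$.

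The key step, and the one I expect to be the main obstacle, is to obtain compactness \emph{for free}: naively one fears having to rule out infinite-dimensional $\mathscr{H}_r$ by a Høegh-Krohn--Landstad--Størmer type argument, but this can be bypassed. With $A_r = 1$, the second formula of Theorem \ref{TheoFormW} gives $\psi_P(\widetilde{W}_{r,ki}^*\widetilde{W}_{r,ki}) = T_{r,k}$, and the resolution of the identity $1 = \sum_{k} \widetilde{W}_{r,ki}^*\widetilde{W}_{r,ki}$ from Theorem \ref{PropOrt}.2, together with normality of $\psi_P$, yields
\[ \psi_P(1) = \sum_{k} T_{r,k} = \textrm{Tr}(T_r) = 1, \]
since $T_r$ is the density of the state $\phi_{N,r}$. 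Thus $\psi_P$ is finite whether or not $n_r$ is, and $\varphi_P(1) = \psi_P(R_P(1)) = 1$ as well. By the characterization used in Proposition \ref{PropFin}, $(P,\Delta_P)$ is therefore a compact Woronowicz algebra (so all $n_r<\infty$, recovering the section's aim). Being compact it is unimodular, so Proposition \ref{PropFin}.3 applies and gives $A_r = d_r^2 T_r^2$; combined with $A_r = 1$ this forces each $T_r = d_r^{-1}1_{\mathscr{H}_r}$ to be scalar.

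It remains to prove traciality. With $T_r$ scalar and $A_r = 1$, the formula for $\sigma_t^{N,M}$ in the proof of Theorem \ref{TheoFormW} collapses to $\sigma_t^{N,M}(\widetilde{V}_{r,ij}) = \widetilde{V}_{r,ij}$, so $\sigma_t^{N,M} = \iota$ on the dense span $\mathscr{N}$, hence on $N$ (equivalently $\nabla_{N,M}=1$). In particular every element of $N$ is analytic and $\sigma_{-i/2}^{N,M} = \iota$, so the formula $\varphi_P(xy^*) = \varphi_M(\sigma_{-i/2}^{N,M}(y)^*\sigma_{-i/2}^{N,M}(x))$ together with $\varphi_M = \tau_M$ simplifies to $\varphi_P(xy^*) = \tau_M(y^*x)$ for $x,y\in N$. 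For $x,y,z,w\in N$ one then computes $\varphi_P((xy^*)(zw^*)) = \tau_M(w^*x\cdot y^*z)$ and $\varphi_P((zw^*)(xy^*)) = \tau_M(y^*z\cdot w^*x)$, which coincide by traciality of $\tau_M$ applied to $a=w^*x$ and $b=y^*z$ (these lie in $M$ by condition 3 of Definition \ref{DefGalCo}). Hence the invariant state $\varphi_P$ is tracial, and $(P,\Delta_P)$ is a compact Kac algebra.
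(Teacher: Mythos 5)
Your proposal is correct and follows essentially the same path as the paper: you establish $\delta_{\widehat{N}}=1$ from $\delta_{\widehat{M}}=1$ via ergodicity of $\alpha_{\widehat{N}}$ (Theorem \ref{TheoGalob}.2), deduce $\psi_P(1)=\sum_k T_{r,k}=\mathrm{Tr}(T_r)=1$ from Theorem \ref{TheoFormW} and the resolution of the identity in Theorem \ref{PropOrt}.2, and then use unimodularity together with Proposition \ref{PropFin}.3 to force each $T_r$ to be scalar, exactly as in the paper. The only (harmless) divergence is the last step: the paper concludes traciality by noting that $\sigma_t^{\varphi_P}(xy^*)=\sigma_t^{N,M}(x)\sigma_t^{N,M}(y)^*$ (via the balanced weight) and that $\sigma_t^{N,M}$ is trivial, whereas you derive the explicit formula $\varphi_P(xy^*)=\tau_M(y^*x)$ and verify $\varphi_P(ab)=\varphi_P(ba)$ directly on the $\sigma$-weakly dense $^*$-subalgebra spanned by $\{xy^*\mid x,y\in N\}$ --- an equally valid, slightly more computational finish (it tacitly uses normality of $\varphi_P$ to extend traciality from the dense subalgebra to $P$, which is standard).
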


\begin{proof} \noindent As we recalled in Theorem \ref{TheoGalob}, the modular element $\delta_{\widehat{N}}^{it}$ satisfies $\alpha_{\widehat{N}}(\delta_{\widehat{N}}^{it}) = \delta_{\widehat{N}}^{it}\otimes \delta_{\widehat{M}}^{it}$. However, for a compact Kac algebra, $\delta_{\widehat{M}} = 1$. By ergodicity of $\alpha_{\widehat{N}}$, we then conclude that we can take $\delta_{\widehat{N}} = 1$.\\

\noindent From Theorem \ref{TheoFormW} and the normality of $\psi_P$, we then find \begin{eqnarray*} \psi_P(1) &=& \psi_{P} (\sum_{i=0}^{n_r-1} \widetilde{W}_{r,ij}^*\widetilde{W}_{r,ij}) \\ &=& \sum_{i=0}^{n_r-1} \psi_P(\widetilde{W}_{r,ij}^*\widetilde{W}_{r,ij}) \\ &=& \sum_{i=0}^{n_r-1} T_{i,r} =1,\end{eqnarray*} so that $\psi_P$ is finite, and $(P,\Delta_P)$ thus a compact Woronowicz algebra.\\

\noindent But then $(P,\Delta_P)$ is in particular unimodular, so that the third point of Proposition \ref{PropFin} gives us that $T_r$ is a scalar matrix, and hence just $\frac{1}{n_r}$ times the unit matrix on $\mathscr{H}_r$. This shows that the one-parameter-group $\nabla_{N,M}^{it}$, which we introduced in the course of the proof of Theorem \ref{TheoFormW}, is trivial. As $\sigma_t^{\varphi_P}(xy^*) = \sigma_t^{N,M}(x)\sigma_t^{N,M}(y)^*$ for all $x,y\in N$ (which follows from the fact that $\sigma_t^{N,M}$ is actually the restriction to $N$ of the modular automorphism group of the balanced weight $\varphi_P\oplus \tau_M$ on $\left(\begin{array}{ll} P & N\\N^{\textrm{op}} & M\end{array}\right)$), we get that $\sigma_t^{\varphi_P}$ is trivial, and hence $\varphi_P$ is a trace. This concludes the proof.

\end{proof}

\noindent Combining the previous proposition with Theorem \ref{TheoPW} and Proposition \ref{PropFin}, we obtain the following corollary.

\begin{Cor}\label{CorKac} Let $(M,\Delta_M)$ be a compact Kac algebra. If $\mathscr{H}$ is a Hilbert space, and $\alpha: B(\mathscr{H})\rightarrow M\bar{\otimes} B(\mathscr{H})$ a coaction, then the following statements hold.
\begin{enumerate} \item The trace on $B(\mathscr{H})$ is $\alpha$-invariant.
\item If $\alpha$ is ergodic, then $\mathscr{H}$ is finite-dimensional.
\end{enumerate}
\end{Cor}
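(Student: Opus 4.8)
The plan is to reduce everything to the Peter--Weyl decomposition of Theorem \ref{TheoPW} together with the single numerical fact, extracted from the proof of Proposition \ref{PropKac}, that in the Kac case each $T_r$ is the scalar matrix $\frac{1}{n_r}1_{\mathscr{H}_r}$ with $n_r<\infty$. First I would invoke Proposition \ref{PropRecon} to realise the given coaction $\alpha$ as $\alpha(x)=\mathcal{G}^*(1\otimes x)\mathcal{G}$ for an $(N,\Delta_N)$-corepresentation $\mathcal{G}\in N\bar{\otimes} B(\mathscr{H})$, where $(N,\Delta_N)$ is the associated right Galois co-object. By Proposition \ref{PropKac} the reflected bialgebra $(P,\Delta_P)$ is a compact Kac algebra; in particular it is a compact Woronowicz algebra, so Proposition \ref{PropFin}.1 gives $n_r<\infty$ for every $r$, and the unimodularity argument in the proof of Proposition \ref{PropKac} forces $T_r=\frac{1}{n_r}1_{\mathscr{H}_r}$.

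For the second assertion I would argue as follows. If $\alpha$ is ergodic, then by Proposition \ref{PropIndec} the corepresentation $\mathcal{G}$ is irreducible, so by Theorem \ref{TheoPW}.2 it is unitarily equivalent to a unique $\Sigma\widetilde{V}_r\Sigma$ on $\overline{\mathscr{H}_r}$. Hence $\mathscr{H}\cong\overline{\mathscr{H}_r}$ and $\dim\mathscr{H}=n_r$, which is finite by the previous paragraph. This part is essentially immediate from the cited results.

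For the first assertion I would read invariance of the (unnormalised) trace as $(\iota\otimes\textrm{Tr})\alpha(x)=\textrm{Tr}(x)1_M$, and compute the left-hand side in terms of the matrix components $\mathcal{G}_{pq}\in N$ of $\mathcal{G}$ relative to an orthonormal basis of $\mathscr{H}$ adapted to the decomposition of Theorem \ref{TheoPW}.3. A direct expansion gives, for a matrix unit $e_{pq}$, the identity $(\iota\otimes\textrm{Tr})\alpha(e_{pq})=\sum_l\mathcal{G}_{pl}^*\mathcal{G}_{ql}$, so the whole statement reduces to proving $\sum_l\mathcal{G}_{pl}^*\mathcal{G}_{ql}=\delta_{pq}1_M$. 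Since $\mathcal{G}$ is a direct sum of the irreducibles $\Sigma\widetilde{V}_r\Sigma$, this sum vanishes unless $p,q$ lie in the same block, where it becomes $\sum_l\widetilde{V}_{r,pl}^*\widetilde{V}_{r,ql}$; writing $\widetilde{V}_{r,ij}=T_{r,i}^{1/2}T_{r,j}^{-1/2}\widetilde{W}_{r,ij}^*$ as in Lemma \ref{LemFormV}.1 and using $T_r=\frac{1}{n_r}1$ together with the unitarity relation $\sum_l\widetilde{W}_{r,pl}\widetilde{W}_{r,ql}^*=\delta_{pq}1_M$ from Theorem \ref{PropOrt}.2, all the weight factors cancel and the sum collapses to $\delta_{pq}1_M$. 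Note that plain unitarity of $\mathcal{G}$ only yields $\sum_k\mathcal{G}_{kl}^*\mathcal{G}_{kc}=\delta_{lc}1_M$ and $\sum_l\mathcal{G}_{kl}\mathcal{G}_{bl}^*=\delta_{kb}1$, neither of which is the combination we need; it is precisely the triviality of the modular data in the Kac case that makes the required ``transposed'' relation hold.

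The main obstacle I anticipate is not the algebra, which is clean, but the functional-analytic bookkeeping in the first assertion when $\mathscr{H}$ is infinite-dimensional: the trace is then only a semifinite weight, the index set and the number of blocks may be infinite, and one must justify the term-by-term evaluation of $(\iota\otimes\textrm{Tr})\alpha(x)$ together with the strong convergence of the sums $\sum_l\widetilde{W}_{r,pl}\widetilde{W}_{r,ql}^*$, first on finite-rank $x$ and then extending to $\mathscr{M}_{\textrm{Tr}}^+$ by normality and positivity. Each individual block is finite-dimensional, so the only delicate point is interchanging the trace with the (possibly infinite) direct sum; once that is handled the per-block identity finishes the proof.
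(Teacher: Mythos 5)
Your proposal is correct and takes essentially the same route as the paper, whose proof of this corollary is literally ``combine Proposition \ref{PropKac}, Theorem \ref{TheoPW} and Proposition \ref{PropFin}'' — exactly the skeleton you use via Proposition \ref{PropRecon} and Proposition \ref{PropIndec}, with $T_r=\frac{1}{n_r}1_{\mathscr{H}_r}$ and $n_r<\infty$ extracted from the Kac case. Your explicit verification that $\sum_l\mathcal{G}_{pl}^*\mathcal{G}_{ql}=\delta_{pq}1$ (via Lemma \ref{LemFormV}.1 and Theorem \ref{PropOrt}.2, with the modular factors cancelling) simply fills in details the paper leaves to the reader, and it is accurate, including your observation that plain unitarity of $\mathcal{G}$ alone would not suffice.
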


\noindent In particular, this says that the invariant state associated to an ergodic coaction of a compact Kac algebra on a type $I$-factor is tracial. Note that this is \emph{not} true for ergodic coactions of Kac algebras on \emph{arbitrary} von Neumann algebras (counterexamples can be found in \cite{Bic1}). It would be nice to have a more direct proof of the above corollary, but we were not able to produce one.\\

\section{Projective corepresentations of finite-dimensional Kac algebras}

\noindent In this section, we will briefly review what can be said concerning the situation of finite-dimensional compact Woronowicz (and hence Kac) algebras.

\begin{Prop} Let $(M,\Delta_M)$ be a finite-dimensional Kac algebra, and $(N,\Delta_N)$ a right Galois co-object for $(M,\Delta_M)$. Then $N$ is finite-dimensional with $\textrm{dim}(N)=\textrm{dim}(M)$, and $(N,\Delta_N)$ is cleft.\end{Prop}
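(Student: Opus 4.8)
The plan is to reduce everything to a finite-dimensional statement and then to realize $(N,\Delta_N)$ as coming from an honest $2$-cocycle by producing a single unitary generator of $N$.

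\textbf{Finite-dimensionality and $\dim N=\dim M$.} Since $(M,\Delta_M)$ is a finite-dimensional Kac algebra, Proposition \ref{PropKac} shows that the reflected bialgebra $(P,\Delta_P)$ is a compact Kac algebra, and in particular carries a finite faithful normal trace $\tau_P$. I would then consider the linking von Neumann algebra $Q=\left(\begin{array}{cc}P & N\\ N^{\textrm{op}} & M\end{array}\right)$ acting on $\mathscr{L}^2(N)\oplus\mathscr{L}^2(M)$, which already appeared at the end of the proof of Proposition \ref{PropKac}; it carries the faithful normal trace $\tau_P\oplus\tau_M$. The density conditions in Definition \ref{DefGalCo}.1 say precisely that the two diagonal projections $e=1_{\mathscr{L}^2(N)}$ and $f=1_{\mathscr{L}^2(M)}$ both have central support $1$ in $Q$, so that $\mathscr{Z}(Q)\cong\mathscr{Z}(fQf)=\mathscr{Z}(M)$ is finite-dimensional and $Q$ has only finitely many factor blocks. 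Each such block is a finite factor (it carries a finite trace), and a nonzero corner of it is a summand of $fQf=M$, which is atomic; since a type $\mathrm{II}_1$ factor has no finite-dimensional corners, every block is a finite type $I$ factor. Hence $Q$, and with it $N=eQf$, is finite-dimensional. Finally, in finite dimensions the unitary $\widetilde V:\mathscr{L}^2(N)\otimes\mathscr{L}^2(M)\to\mathscr{L}^2(N)\otimes\mathscr{L}^2(N)$ of Proposition \ref{PropElProp}.1 forces $\dim\mathscr{L}^2(M)=\dim\mathscr{L}^2(N)$, and since $\Lambda_M$ and $\Lambda_N$ are linear isomorphisms onto their respective $\mathscr{L}^2$-spaces, this gives $\dim N=\dim M$.

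\textbf{Cleftness.} Here I claim it suffices to produce a \emph{unitary} $G\in N$. Indeed, given such a $G$, fullness of the module together with $\dim N=\dim M$ yields $N=GM$, and one checks directly from $\Delta_N(xy)=\Delta_N(x)\Delta_M(y)$ that $\Omega:=(G^*\otimes G^*)\Delta_N(G)$ lies in $M\bar{\otimes} M$; coassociativity of $\Delta_N$ then translates into the $2$-cocycle identity of Definition \ref{DefCoc}, while $x\mapsto G^*x$ becomes an isomorphism of $(N,\Delta_N)$ onto the Galois co-object associated to $\Omega$ (Example \ref{ExaCocy}). To obtain $G$, I would use that by Theorem \ref{TheoGalobj} the couple $(\widehat N,\alpha_{\widehat N})$ is a Galois object for $(\widehat M,\Delta_{\widehat M})$, that is, an ergodic action of full (quantum) multiplicity in the sense of \cite{Bic1},\cite{Was1}. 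In the Kac case full quantum multiplicity is ordinary full multiplicity, so that $N$, regarded as a right $M$-comodule through $\Delta_N$, contains each irreducible corepresentation of $(M,\Delta_M)$ with multiplicity exactly its own dimension. This is the same decomposition as that of $M$ itself, whence $N\cong M$ as right $M$-comodules; equivalently, in the linking picture the $P$--$M$ blocks of $N$ are square (so $P\cong M$), and a block-diagonal unitary provides the desired $G\in N$.

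\textbf{Main obstacle.} The routine parts are the finite-dimensional reductions and the purely formal passage from a unitary $G$ to a $2$-cocycle. The real content, and the step I expect to be hardest to make precise, is the production of the cleaving unitary $G$: showing that the abstract Galois condition forces $N$ to be a \emph{free rank-one} right $M$-module, equivalently that each irreducible corepresentation occurs with full multiplicity equal to its dimension, so that the Morita data relating $P$ and $M$ is balanced. It is worth stressing that this is exactly where the Kac hypothesis enters, since the trace underlies both the finite-dimensionality argument and the identification of full quantum multiplicity with full multiplicity; indeed cleftness may fail once one leaves the compact Kac setting, as the rectangular examples of \cite{Bic1} for discrete Woronowicz algebras already indicate.
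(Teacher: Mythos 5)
Your strategy is sound and genuinely different from the paper's on both halves, but one pivotal step is misstated. For finite-dimensionality the paper is far more economical: it observes that ergodicity of the coaction $\textrm{Ad}_R^{(r)}$ on $B(\mathscr{H}_r)$ immediately forces $n_r<\infty$ once $M$ is finite-dimensional, and then invokes Lemma \ref{LemGen} for a single fixed $r$, which exhibits $N$ as the $\sigma$-weak closure of the span of the finitely many translates $\widetilde{W}_{r,ij}^*m$, so that $\dim N\leq n_r^2\dim M$; the equality $\dim N=\dim M$ then follows from unitarity of $\widetilde{W}$, exactly as in your count with $\widetilde{V}$. Your linking-algebra argument is valid and non-circular (Proposition \ref{PropKac} does precede this statement), and the balanced weight $\tau_P\oplus\tau_M$ is indeed a trace in the Kac case by the paper's own remark in the proof of Proposition \ref{PropKac}; but it imports the machinery of Sections 2, 4 and 5 (Theorems \ref{TheoGalobj}, \ref{TheoGalob}, \ref{TheoFormW} and Proposition \ref{PropKac}) where the paper needs none of it. For cleftness, the paper simply forgets the $^*$-structure and cites the classical algebraic fact (the remark after Corollary 3.2.4 in \cite{Sch1}, essentially Kreimer--Takeuchi) that finite-dimensional Galois co-objects over Hopf algebras are cleft; this yields $N\cong M$ as right $M$-modules directly --- precisely the statement you labor to re-derive through the dual Galois object, and without needing the Kac hypothesis at that step. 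Your formal endgame (a unitary $G\in N$ gives $N=GM$, $\Omega=(G^*\otimes G^*)\Delta_N(G)\in M\bar{\otimes}M$ satisfies the cocycle identity, and $x\mapsto G^*x$ implements the isomorphism) matches the paper's, which uses $uN=M$ and $\Omega=\Delta_N(1_M)$.

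The genuine soft spot is the sentence producing $G$. As written it does not parse: $\Delta_N$ maps $N$ into $N\bar{\otimes}N$, not into $N\bar{\otimes}M$, so it does not make $N$ a right $M$-comodule, and the irreducibles occurring with full multiplicity are \emph{not} corepresentations of $(M,\Delta_M)$. The correct statement is about the other side of the duality: full quantum multiplicity of $(\widehat{N},\alpha_{\widehat{N}})$, which in the Kac case gives ordinary full multiplicity, says that each irreducible corepresentation of $(\widehat{M},\Delta_{\widehat{M}})$ occurs in the $\widehat{M}$-comodule $\widehat{N}$ with multiplicity equal to its dimension, whence $\widehat{N}\cong\widehat{M}$ as $\widehat{M}$-comodules. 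To conclude that $N\cong M$ as right $M$-modules --- equivalently, that your $P$--$M$ blocks of $N$ are square --- you must still dualize, using the finite-dimensional pairing between $N$ and $\widehat{N}$ under which the right $M$-action on $N$ (condition 2 of Definition \ref{DefGalCo}) is transposed into $\alpha_{\widehat{N}}$; this is true, being encoded in the pentagon identity $W_{12}\widetilde{W}_{13}\widetilde{W}_{23}=\widetilde{W}_{23}\widetilde{W}_{12}$, but it is not free, and it is exactly the step your wording skips. (Note also that the simple right $M$-modules inside $N$ are indexed by the blocks of $M$, i.e., by irreducibles of $\widehat{M}$, confirming that the multiplicity claim you need concerns $(\widehat{M},\Delta_{\widehat{M}})$.) Alternatively, within your own dimension-counting framework it would suffice to prove $\dim P=\dim M$: writing the blocks of $N$ as $c_i\times d_i$ rectangles, one has $\sum_i c_id_i=\dim N=\dim M=\sum_i d_i^2$, and $\sum_i c_i^2=\sum_i d_i^2$ would then force $\sum_i(c_i-d_i)^2=0$, hence square blocks; but that symmetry between $P$ and $M$ is not established in this paper and would have to be pulled from \cite{DeC1}. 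With either repair your argument closes.
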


\begin{proof} Choose $r\in I_N$. Then as the right coaction $\textrm{Ad}_R^{(r)}$ of $(M,\Delta_M)$ on $B(\mathscr{H}_r)$ is ergodic, it is easy to see \emph{immediately} that $n_r = \textrm{dim}(\mathscr{H}_r)$ is finite. Then take $r\in I_N$ fixed. As $N$ is the $\sigma$-weak closure of the set $\{\widetilde{W}_{r,ij}^{*}m\mid 0\leq i,j\leq n_r, m\in M\}$ by Lemma \ref{LemGen}, we see that $N$ is finite-dimensional. As $\widetilde{W}^*$ gives a unitary from $\mathscr{L}^2(N)\otimes \mathscr{L}^2(M)$ to $\mathscr{L}^2(N)\otimes \mathscr{L}^2(N)$, necessarily $\textrm{dim}(N)=\textrm{dim}(M)$.\\

\noindent Now disregarding the $^*$-structures, we get that $(N,\Delta_N)$ is a Galois co-object for the Hopf algebra $(M,\Delta_M)$. It is then well-known that $(N,\Delta_N)$ is cleft in this weaker form (see for example the remark following Corollary 3.2.4 in \cite{Sch1}). But this means that $N\cong M$ as right $M$-modules. As $M$ is a direct sum of matrix algebras, it is easy to see that we can in fact find a unitary $u:\mathscr{L}^2(N)\rightarrow \mathscr{L}^2(M)$ such that $uN = M$. Hence we may identify $\mathscr{L}^2(N)$ with $\mathscr{L}^2(M)$ and $N$ with $M$. We can then consider $\Omega = \Delta_N(1_M)$. By right linearity of $\Delta_N$, we then get $\Delta_N(x) = \Omega\Delta_M(x)$ for all $x\in N$, and by coassociativity of $\Delta_N$ we have that $\Omega$ satisfies the 2-cocycle relation. Hence $(N,\Delta_N)$ is cleft.

\end{proof}

\noindent \emph{Remark:} Finite Galois co-objects (in the operator algebra context) have also been dealt with in the papers \cite{Eno1}, \cite{Vai1} and, in a more general setting, \cite{Izu1}.\\

\noindent For later purposes, we also introduce the following definition of a \emph{non-degenerate} 2-cocycle (see Definition \ref{DefCoc} for the general notion of a unitary 2-cocycle).

\begin{Def}\label{DefNonDeg} Let $(M,\Delta_M)$ be a finite-dimensional compact Kac algebra, and $\Omega$ a unitary 2-cocycle for $(M,\Delta_M)$. We call $\Omega$ \emph{non-degenerate} if the associated Galois object $\widehat{N}$ is a (finite-dimensional) type $I$-factor.\end{Def}

\noindent This terminology was introduced in \cite{Eti1}. One observes that, as $(\widehat{N},\alpha_{\widehat{N}})$ is then in fact also a projective (right) corepresentation for $(\widehat{M},\Delta_{\widehat{M}})$, we can create from it a Galois co-object for $(\widehat{M},\Delta_{\widehat{M}})$, which will then necessarily also be cleft. If we denote by $\widehat{\Omega}\in \widehat{M}\otimes \widehat{M}$ an implementing unitary 2-cocycle, then $\widehat{\Omega}$ turns out to be non-degenerate again, and the correspondence $\lbrack \Omega \rbrack \rightarrow \lbrack \widehat{\Omega}\rbrack$ between cohomology classes of non-degenerate 2-cocycles of resp.~ $(M,\Delta_M)$ and $(\widehat{M},\Delta_{\widehat{M}})$ is a bijection. (For the proof of this result, we refer again to \cite{Eti1}.) To give a simple example, consider a finite abelian group $G$. Then the bicharacter on $\widehat{G}\times G$ given by evaluation gives a non-degenerate 2-cocycle function $\Omega$ on $G\times \widehat{G}$ by the formula $((g,\chi),(h,\chi')) \rightarrow \chi(h)$, and its dual is simply the same construction applied to the evaluation bicharacter on $\widehat{G}\times G$.\\

\section{Projective corepresentations for cocommutative Kac algebras}

\noindent As a second special case, we will consider compact Woronowicz algebras $(M,\Delta_M)$ which have a cocommutative coproduct: $\Delta^{\textrm{op}}_M = \Delta_M$. It is not so difficult to show that $(M,\Delta_M)$ is then Kac, and in fact that $M = \mathscr{L}(\Gamma)$ for some (countable) discrete group $\Gamma$, the coproduct being given by \[\Delta_M(\lambda_g) = \lambda_g\otimes \lambda_g\qquad \textrm{for all }g\in \Gamma,\] where the $\lambda_g$ denote the standard unitary generators in the group von Neumann algebra $\mathscr{L}(\Gamma)$. We will in the following denote $\mathscr{L}(\Gamma)$ as shorthand for $(\mathscr{L}(\Gamma),\Delta_{\mathscr{L}(\Gamma)})$, and we denote the invariant trace by $\tau$. The dual discrete Woronowicz algebra $(\widehat{M},\Delta_{\widehat{M}})$ is then simply the function space $l^{\infty}(\Gamma)$, equipped with the coproduct \[\Delta_{\widehat{M}}:l^{\infty}(\Gamma)\rightarrow l^{\infty}(\Gamma)\bar{\otimes} l^{\infty}(\Gamma)\cong l^{\infty}(\Gamma\times \Gamma)\] such that\[\Delta_{\widehat{M}}(f)(g,h) = f(gh) \qquad \textrm{for all }g,h\in \Gamma.\]

\noindent We will in the following write $\mathscr{L}^2(\mathscr{L}(\Gamma)) = l^2(\Gamma)$ of course, and then, with $\delta_g$ being the Dirac function at the point $g\in \Gamma$, we have \[\Lambda_{\mathscr{L}(\Gamma)}(\lambda_g) = \delta_g.\]

\noindent As group von Neumann algebras are in particular Kac algebras, we know from Corollary \ref{CorKac} that they can only act ergodically on type I factors which are of finite type. Let us give a more immediate proof of this fact in this particular case.

\begin{Lem} Let $\Gamma$ be a discrete group. Let $B$ be a von Neumann algebra, and suppose that we have given an ergodic coaction \[\alpha: B\rightarrow \mathscr{L}(\Gamma)\otimes B.\] If we denote by $\phi_B$ the unique $\alpha$-invariant state on $B$, then $\phi_B$ is a trace.
\end{Lem}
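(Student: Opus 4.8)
The plan is to exploit the fact that a coaction of the cocommutative algebra $\mathscr{L}(\Gamma)$ is nothing but a $\Gamma$-grading of $B$. For each $g\in\Gamma$ I set
\[B_g=\{b\in B\mid \alpha(b)=\lambda_g\otimes b\}.\]
Since $\Delta_M(\lambda_g)=\lambda_g\otimes\lambda_g$, each $\lambda_g$ is a one-dimensional corepresentation and the $B_g$ are the associated spectral subspaces; by the spectral theory of coactions of compact Woronowicz algebras (see \cite{Boc1}) the algebraic direct sum $\mathscr{B}=\bigoplus_{g\in\Gamma}B_g$ is a $\sigma$-weakly dense $^*$-subalgebra of $B$. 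Multiplicativity and the $^*$-preservation of $\alpha$ give at once $B_gB_h\subseteq B_{gh}$ and $B_g^{\,*}=B_{g^{-1}}$, while $B_e$ is exactly the fixed-point algebra of $\alpha$, hence $B_e=\mathbb{C}1$ by ergodicity. I would then recall that the invariant state is characterized by $(\tau\otimes\iota)\alpha(x)=\phi_B(x)1$ (the right-hand side lies in $\mathbb{C}1$ by left-invariance of $\tau$ and ergodicity). For homogeneous $b_g\in B_g$ this yields $\phi_B(b_g)1=\tau(\lambda_g)b_g=\delta_{g,e}\,b_g$, so $\phi_B$ vanishes on every $B_g$ with $g\neq e$ and implements the identification $B_e=\mathbb{C}1\cong\mathbb{C}$.

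The key step — and the only place where ergodicity enters in an essential way — is that every $B_g$ is at most one-dimensional and, when nonzero, is spanned by a unitary. Indeed, for $0\neq v\in B_g$ the product $v^{*}v$ lies in $B_{g^{-1}}B_g\subseteq B_e=\mathbb{C}1$, say $v^{*}v=\mu 1$ with $\mu>0$; after rescaling I may assume $v^{*}v=1$. Then $vv^{*}\in B_gB_{g^{-1}}\subseteq B_e=\mathbb{C}1$ is a nonzero scalar projection, so $vv^{*}=1$ and $v$ is unitary. If $w$ is a second nonzero element of $B_g$, the same reasoning produces a unitary proportional to $w$; comparing the two unitaries $u,u'$ so obtained, $u^{*}u'\in B_e=\mathbb{C}1$ forces $u'\in\mathbb{C}u$, whence $w\in\mathbb{C}v$. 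Thus $\dim B_g\leq 1$, and I fix a spanning unitary $u_g$ of $B_g$ whenever $B_g\neq 0$, with $u_e=1$; note also $B_{g^{-1}}=B_g^{\,*}=\mathbb{C}u_g^{*}$.

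With this in hand the trace property is immediate on homogeneous elements. Take $x\in B_g$ and $y\in B_h$. Both $xy\in B_{gh}$ and $yx\in B_{hg}$ are annihilated by $\phi_B$ unless $gh=e$, equivalently $hg=e$, i.e. $h=g^{-1}$; in all other cases $\phi_B(xy)=\phi_B(yx)=0$. When $h=g^{-1}$ and $B_g\neq 0$, I write $x=c\,u_g$ and $y=c'\,u_g^{*}$ with $c,c'\in\mathbb{C}$; then $xy=cc'\,u_gu_g^{*}=cc'\,1$ and $yx=cc'\,u_g^{*}u_g=cc'\,1$, so $\phi_B(xy)=cc'=\phi_B(yx)$ (if $B_g=0$ then $x=y=0$ and the identity is trivial). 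Hence $\phi_B(xy)=\phi_B(yx)$ for all $x,y\in\mathscr{B}$. To conclude, I fix $y\in\mathscr{B}$ and observe that $x\mapsto\phi_B(xy)$ and $x\mapsto\phi_B(yx)$ are normal functionals on $B$ agreeing on the $\sigma$-weakly dense subalgebra $\mathscr{B}$, hence everywhere; fixing then an arbitrary $x\in B$ and repeating the argument in the second variable gives $\phi_B(xy)=\phi_B(yx)$ for all $x,y\in B$. I do not expect any serious obstacle: the entire content is the multiplicity-one observation, which itself reduces to the ergodicity identity $B_e=\mathbb{C}1$, the remainder being bookkeeping of the grading together with the normality of $\phi_B$.
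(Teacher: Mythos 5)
Your proposal is correct and follows essentially the same route as the paper: decompose $B$ into the homogeneous subspaces $B_g=\{x\in B\mid \alpha(x)=\lambda_g\otimes x\}$, use ergodicity to get $B_e=\mathbb{C}1$ and hence that each nonzero $B_g$ is spanned by a unitary $u_g$, observe $\phi_B$ kills $B_g$ for $g\neq e$, verify traciality on products of the $u_g$, and extend by density and normality of $\phi_B$. The only (inessential) divergence is the density step, where you invoke the general spectral-subspace theory of \cite{Boc1} while the paper gives a short self-contained duality argument using $\phi_B=(\tau\otimes\iota)\alpha$; you also spell out the multiplicity-one claim that the paper merely asserts, which is fine.
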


\begin{proof} For each $g\in \Gamma$, denote $B_g = \{x\in B\mid \alpha(x) = \lambda_g\otimes x\}$. As $\alpha$ is ergodic, it is easily seen that each $B_g$ is either zero- or one-dimensional. It is further immediate that $B_g\cdot B_h \subseteq B_{gh}$ for all $g,h\in \Gamma$, and that $B_g^* = B_{g^{-1}}$. Therefore, whenever $B_g$ is not zero-dimensional, we may assume that $B_g = \mathbb{C}u_g$ with $u_g$ a unitary. We may moreover assume that $u_g^* = u_{g^{-1}}$. When $B_g =0$, we will denote $u_g=0$.\\

\noindent We claim that the linear span of the $u_g$ is $\sigma$-weakly dense in $B$. Indeed, if this was not the case, then we could find a nonzero $x\in B$ with $\phi_B(xu_g)=0$ for all $g\in \Gamma$. But as $\phi_B = (\tau\otimes \iota)\alpha$ by definition, this would imply that \[(\tau\otimes \iota)(\alpha(x)(\lambda_g\otimes 1))u_g =0\qquad \textrm{for all }g\in \Gamma.\] Now an easy computation shows that for all $g\in \Gamma$, we have \[(\tau\otimes \iota)(\alpha(x)(\lambda_g\otimes 1)) \in B_g^*.\] Hence we in fact have \[(\tau\otimes \iota)(\alpha(x)(\lambda_g\otimes 1)) =0\qquad \textrm{for all }g\in \Gamma.\] This implies $\alpha(x)=0$, and so $x=0$, which is a contradiction.\\

\noindent It is now enough to prove that $\phi_B(u_gu_h)=\phi_B(u_hu_g)$ for all $g,h\in \Gamma$. But the left hand side is a multiple of $\phi_B(u_{gh})$, which is zero in case $g\neq h^{-1}$. Similarly, the right hand side is zero in case $g\neq h^{-1}$. As both sides equal $1$ when $g=h^{-1}$, we are done.
\end{proof}

\noindent \emph{Remark:} General coactions of group von Neumann algebras (or rather, of the associated group C$^*$-algebras), have been studied in the theory of \emph{Fell bundles} over groups (see for example \cite{Qui1}). The intuitive connection between these notions is essentially contained the above proof.\\

\begin{Cor} Let $\Gamma$ be a discrete group, $\mathscr{H}$ a Hilbert space, and \[\alpha:B(\mathscr{H})\rightarrow \mathscr{L}(\Gamma)\bar{\otimes}B(\mathscr{H})\] an ergodic coaction of $\mathscr{L}(\Gamma)$ on $B(\mathscr{H})$. Then the following statements hold.

\begin{enumerate} \item The dimension of $\mathscr{H}$ is finite.
\item There exists a finite subgroup $H$ of $\Gamma$ such that \[\alpha(B(\mathscr{H}))\subseteq \mathscr{L}(H)\otimes B(\mathscr{H}) ,\] and such that, denoting by $\beta$ the coaction $\alpha$ with range restricted to $\mathscr{L}(H)\otimes B(\mathscr{H})$, the couple $(B(\mathscr{H}),\beta)$ is a (left) Galois object for $\mathscr{L}(H)$.
\end{enumerate}
\end{Cor}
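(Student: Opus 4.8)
The plan is to read off both assertions from the spectral decomposition introduced in the proof of the preceding lemma. Recall from there that for each $g \in \Gamma$ we have the spectral subspace $B_g = \{x \in B(\mathscr{H}) \mid \alpha(x) = \lambda_g \otimes x\}$; that ergodicity makes each $B_g$ at most one-dimensional with $B_e = \mathbb{C}1$; that $B_g B_h \subseteq B_{gh}$ and $B_g^* = B_{g^{-1}}$; that whenever $B_g \neq 0$ we may choose a unitary generator $u_g$ (with $u_e = 1$ and $u_g^* = u_{g^{-1}}$); and that the linear span of the $u_g$ is $\sigma$-weakly dense in $B(\mathscr{H})$.

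For the first statement I would argue via the invariant state. By the preceding lemma the unique $\alpha$-invariant state $\phi_B$ is a normal \emph{trace} on $B(\mathscr{H})$. A normal state on $B(\mathscr{H})$ has the form $x \mapsto \mathrm{Tr}(\rho x)$ for a density operator $\rho$, and the tracial property forces $\rho$ to commute with all of $B(\mathscr{H})$, hence $\rho$ is a positive scalar multiple of $1$; since $\rho$ must have unit trace, this is possible only when $\mathscr{H}$ is finite-dimensional. Thus $\dim \mathscr{H} < \infty$.

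For the second statement, set $H = \{g \in \Gamma \mid B_g \neq 0\}$. I would first check that $H$ is a subgroup: it contains $e$, it is closed under inversion because $u_g^* = u_{g^{-1}}$ is again a (nonzero) unitary, and it is closed under multiplication because $u_g u_h \in B_{gh}$ is a product of unitaries, hence a nonzero unitary. Next, the family $\{u_g\}_{g \in H}$ is linearly independent: applying $\alpha$ to a vanishing combination and slicing against functionals dual to the linearly independent generators $\lambda_g$ of $\mathscr{L}(\Gamma)$ separates the terms. Since $\dim \mathscr{H} < \infty$ the algebra $B(\mathscr{H})$ is finite-dimensional, so $\sigma$-weak density becomes genuine spanning and the $u_g$ form a basis; therefore $|H| = \dim B(\mathscr{H}) = (\dim \mathscr{H})^2 < \infty$, and $H$ is finite. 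As $\alpha(u_g) = \lambda_g \otimes u_g$ with $\lambda_g \in \mathscr{L}(H)$ for $g \in H$, and the $u_g$ span, we obtain $\alpha(B(\mathscr{H})) \subseteq \mathscr{L}(H) \otimes B(\mathscr{H})$; the range-restricted map $\beta$ is then a coaction of $\mathscr{L}(H)$, and it is still ergodic since its fixed-point space is again $B_e = \mathbb{C}1$.

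It remains to show that $(B(\mathscr{H}), \beta)$ is a Galois object for $\mathscr{L}(H)$, which I expect to be the delicate point. Since $H$ is finite, $\mathscr{L}(H)$ is a finite-dimensional compact Kac algebra whose irreducible corepresentations are exactly the one-dimensional group-likes $\lambda_g$, $g \in H$. As recalled after Theorem \ref{TheoGalobj}, for a compact Woronowicz algebra a Galois object is precisely an (integrable) ergodic coaction of full quantum multiplicity, i.e.\ one in which every irreducible corepresentation occurs with multiplicity equal to its dimension. Here the multiplicity of $\lambda_g$ in $\beta$ equals $\dim B_g = 1$, matching $\dim \lambda_g = 1$, for each $g \in H$; equivalently $\dim B(\mathscr{H}) = |H| = \dim \mathscr{L}(H)$ attains the maximal full-multiplicity value. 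Hence $\beta$ has full multiplicity and is therefore a Galois object. The main thing to be careful about is to apply the full-multiplicity/Galois-object equivalence to $\mathscr{L}(H)$ regarded as a finite (hence simultaneously compact and discrete) Woronowicz algebra, integrability being automatic in this finite-dimensional situation.
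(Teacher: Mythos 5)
Your proof is correct, and for the first statement and the construction of $H$ it follows the paper's argument closely: finiteness of $\dim\mathscr{H}$ comes from traciality of the invariant state (the paper simply notes that $B(\mathscr{H})$ admits a tracial state only in finite dimensions; your density-matrix argument via normality of $\phi_B$ is a slightly more explicit variant), and $H=\{g\in\Gamma\mid B_g\neq 0\}$ is a finite subgroup because the $u_g$ are linearly independent in the now finite-dimensional $B(\mathscr{H})$ (the paper instead uses orthogonality of the $u_g$ with respect to $\phi_B$, a cosmetic difference). Where you genuinely diverge is the verification that $(B(\mathscr{H}),\beta)$ is a Galois object. The paper works directly with the finite-dimensional definition stated in its remark before the corollary (bijectivity of the map $B(\mathscr{H})\otimes B(\mathscr{H})\rightarrow \mathscr{L}(H)\otimes B(\mathscr{H}):x\otimes y\rightarrow \beta(x)(1\otimes y)$): since $\beta$ intertwines the normalized traces, this map is isometric for the GNS inner products, hence injective, and the count $|H|=(\dim\mathscr{H})^2$ makes source and target have equal dimension, so it is bijective. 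You instead invoke the equivalence between Galois objects for compact Woronowicz algebras and ergodic coactions of full quantum multiplicity from \cite{Bic1}, which the paper does assert in the remark following Theorem \ref{TheoGalobj}, and observe that each irreducible $\lambda_g$, $g\in H$, occurs with multiplicity $\dim B_g=1$ equal to its (quantum) dimension, with integrability automatic in finite dimensions. This is legitimate within the paper's framework, and it makes the conceptual mechanism transparent; but it outsources the essential content to the cited equivalence, whereas the paper's isometry-plus-dimension-count argument is self-contained and elementary, and in fact contains exactly the data ($|H|=(\dim\mathscr{H})^2$ with each spectral subspace one-dimensional) that ``full multiplicity'' encodes. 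If you wanted your version to stand alone, you would still need to check that the full-multiplicity condition implies bijectivity of the Galois map in the finite-dimensional sense used here --- and the shortest way to do that is precisely the paper's dimension count.
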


\noindent \emph{Remark:} The notion of a Galois object was introduced in the second section. In the finite-dimensional setting, it may be defined as follows: let $A$ be a finite-dimensional Hopf $^*$-algebra with a left coaction $\beta$ on a finite-dimensional $^*$-algebra $B$. Then $(B,\beta)$ is called a left Galois object for $A$ if the map \[B\otimes B\rightarrow A\otimes B: x\otimes y\rightarrow \beta(x)(1\otimes y)\] is a bijection.

\begin{proof} By the previous lemma, we know that $B(\mathscr{H})$ admits a tracial state. Hence $\mathscr{H}$ must be finite.\\

\noindent As for the second point, this is rather a corollary of the proof of the previous proposition. For, using the notation introduced there, denote by $H$ the set of elements $g$ in $\Gamma$ for which $u_g\neq 0$. As the $u_g$ are orthogonal to each other with respect to the $\alpha$-invariant state, we must have that $H$ is finite. As $u_g\cdot u_h$ is a non-zero multiple of $u_{gh}$, and $u_g^* = u_{g^{-1}}$, we must have that $H$ is a finite group. It is then immediate that indeed $\alpha(B(\mathscr{H}))\subseteq \mathscr{L}(H)\otimes B(\mathscr{H}) $.\\

\noindent The coaction $\beta$ of $\mathscr{L}(H)$ on $B(\mathscr{H})$ is then clearly also an ergodic action, with the ordinary (normalized) trace $\textrm{tr}$ as its invariant state. This implies that the map \[\mathscr{L}^2(B(\mathscr{H}),\textrm{tr})\otimes \mathscr{L}^2(B(\mathscr{H}),\textrm{tr})\rightarrow (\mathscr{L}^2(\mathscr{L}(H)),\tau) \otimes\mathscr{L}^2(B(\mathscr{H}),\textrm{tr}): x\otimes y\rightarrow \beta(x)(1\otimes y)\] is isometric and thus injective. As the $u_g$ with $g\in H$ form an orthonormal basis of $B(\mathscr{H})$, we have that the order of $H$ equals the square of the dimension of $\mathscr{H}$. Hence a comparison of dimensions shows that the above map is also surjective, which proves that $(B(\mathscr{H}),\beta)$ is a left Galois object for $\mathscr{L}(H)$.

\end{proof}

\begin{Prop}\label{PropDG} Let $\Gamma$ be a discrete group, and let $(N,\Delta_N)$ be a right Galois co-object for $\mathscr{L}(\Gamma)$. Then there exists a finite subgroup $H\subseteq \Gamma$ and a non-degenerate 2-cocycle $\Omega$ for $\mathscr{L}(H)$, such that $(N,\Delta_N)$ is isomorphic to the cleft Galois co-object induced by $\Omega$ (considered as a 2-cocycle for $\mathscr{L}(\Gamma)$).\\
\end{Prop}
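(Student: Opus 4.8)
The plan is to reduce the whole classification to a single irreducible projective corepresentation, settle that case by the finite-dimensional theory of Section \ref{...}, and then transport the result back using the uniqueness of the Galois co-object class attached to a projective corepresentation. First I would note that $I_N$ is non-empty and fix one index $r_0\in I_N$. By Corollary \ref{CorAd} together with Proposition \ref{PropIndec}, the associated left projective corepresentation $\alpha:=\textrm{Ad}_L^{(r_0)}$ of $\mathscr{L}(\Gamma)$ on $\overline{\mathscr{H}_{r_0}}$ is ergodic, and by construction (see the remark following Proposition \ref{PropComp2}) it is an $\lbrack(N,\Delta_N)\rbrack$-corepresentation. Applying the preceding corollary to $\alpha$ then shows that $\mathscr{H}:=\overline{\mathscr{H}_{r_0}}$ is finite-dimensional and produces a finite subgroup $H\subseteq\Gamma$ with $\alpha(B(\mathscr{H}))\subseteq\mathscr{L}(H)\bar{\otimes}B(\mathscr{H})$, whose corestriction $\beta$ makes $(B(\mathscr{H}),\beta)$ a Galois object for $\mathscr{L}(H)$; in particular $(\dim\mathscr{H})^2=|H|$.

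Next I would manufacture the cocycle. Regarding the factor-valued ergodic coaction $\beta$ as a projective corepresentation of the finite-dimensional Kac algebra $\mathscr{L}(H)$, Proposition \ref{PropRecon} yields a Galois co-object $(N',\Delta_{N'})$ for $\mathscr{L}(H)$ together with an $(N',\Delta_{N'})$-corepresentation $\mathcal{G}\in\mathscr{L}(H)\bar{\otimes}B(\mathscr{H})$ implementing $\beta$. Since $\mathscr{L}(H)$ is finite-dimensional, the classification of Galois co-objects for finite-dimensional Kac algebras (the first proposition of the previous section) shows that $(N',\Delta_{N'})$ is cleft, so $N'$ is isomorphic to the Galois co-object associated to a unitary $2$-cocycle $\Omega$ for $\mathscr{L}(H)$ as in Example \ref{ExaCocy}. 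Non-degeneracy of $\Omega$ is then forced by a dimension count: as $\beta$ is an irreducible $\lbrack N'\rbrack$-corepresentation it is realised on one of the blocks of $\widehat{N'}$, and using $\mathscr{L}^2(N')\cong\bigoplus_i\mathscr{H}_i\otimes\overline{\mathscr{H}_i}$ together with $\dim\mathscr{L}^2(N')=\dim N'=|H|=(\dim\mathscr{H})^2$ (Corollary \ref{CorFormN}, cleftness) one sees that this single block already exhausts $\widehat{N'}$. Hence $\widehat{N'}$ is a type $I$-factor and $\Omega$ is non-degenerate in the sense of Definition \ref{DefNonDeg}.

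Now view $\Omega\in\mathscr{L}(H)\bar{\otimes}\mathscr{L}(H)\subseteq\mathscr{L}(\Gamma)\bar{\otimes}\mathscr{L}(\Gamma)$ as a $2$-cocycle for $\mathscr{L}(\Gamma)$ (the cocycle identity for $\mathscr{L}(\Gamma)$ reduces to that for $\mathscr{L}(H)$ because $\Omega$ is supported on $H$ and $\Delta_{\mathscr{L}(\Gamma)}$ restricts to $\Delta_{\mathscr{L}(H)}$ on $\mathscr{L}(H)$), and let $N_\Omega$ be its cleft Galois co-object. The decisive observation is that, for the same reason, $\Delta_{N_\Omega}$ agrees with $\Delta_{N'}$ on $\mathscr{L}(H)$; since the entries of $\mathcal{G}$ lie in $\mathscr{L}(H)$, the \emph{unchanged} unitary $\mathcal{G}\in N_\Omega\bar{\otimes}B(\mathscr{H})$ still satisfies $(\Delta_{N_\Omega}\otimes\iota)\mathcal{G}=\mathcal{G}_{13}\mathcal{G}_{23}$ and $\mathcal{G}^*(1\otimes x)\mathcal{G}=\alpha(x)$, so it is an $N_\Omega$-corepresentation implementing $\alpha$. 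Thus $\alpha$ is simultaneously an $\lbrack(N,\Delta_N)\rbrack$-corepresentation and an $\lbrack N_\Omega\rbrack$-corepresentation, and by the uniqueness of the Galois co-object class implementing a given projective corepresentation (Proposition 3.4 of \cite{DeC1}, recalled after Definition \ref{DefImpl}) we obtain $\lbrack(N,\Delta_N)\rbrack=\lbrack N_\Omega\rbrack$. This is exactly the assertion, and the uniqueness step also explains why the choice of the block $r_0$ is irrelevant, sidestepping any need to compare the cocycles coming from different blocks.

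I expect the main obstacle to be the finite-dimensional bookkeeping of the middle paragraph: verifying carefully that the corestriction of $\alpha$ to $\mathscr{L}(H)$ is genuinely a \emph{Galois} object (full multiplicity, $(\dim\mathscr{H})^2=|H|$) rather than merely an ergodic coaction, and that the ensuing dimension count indeed pins $\widehat{N'}$ down to a single factor so that $\Omega$ is non-degenerate. Once these finite-dimensional facts are in place, the remaining two moves—producing $\Omega$ from the cleftness theorem and re-reading $\mathcal{G}$ over $\mathscr{L}(\Gamma)$—are formal, the latter hinging entirely on the support condition $\Omega\in\mathscr{L}(H)\bar{\otimes}\mathscr{L}(H)$, and the conclusion drops out of the uniqueness result.
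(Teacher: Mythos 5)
Your proposal is correct and follows essentially the same route as the paper: pick an irreducible $\lbrack (N,\Delta_N)\rbrack$-corepresentation (e.g.\ an $\textrm{Ad}_L^{(r)}$), invoke the corollary on ergodic coactions of $\mathscr{L}(\Gamma)$ to obtain the finite subgroup $H$ and the Galois object structure of the corestriction $\beta$, pass to the cleft Galois co-object/cocycle for the finite-dimensional Kac algebra $\mathscr{L}(H)$, re-read the implementing unitary over $\mathscr{L}(\Gamma)$, and conclude by the uniqueness of the Galois co-object class implementing a given projective corepresentation. The only addition is your explicit dimension count $(\dim\mathscr{H})^2 = |H| = \dim\mathscr{L}^2(N')$ pinning $\widehat{N'}$ down to a single type $I$-factor, which correctly fills in the non-degeneracy claim that the paper dispatches with a reference to the final remarks of the finite-dimensional section.
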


\begin{proof} Let $(N,\Delta_N)$ be a right Galois co-object for $\mathscr{L}(\Gamma)$. Using the terminology introduced in Definition \ref{DefImpl}, choose an irreducible $\lbrack(N,\Delta_N)\rbrack$-corepresentation \[\alpha: B(\mathscr{H})\rightarrow \mathscr{L}(\Gamma)\otimes B(\mathscr{H})\] of $\mathscr{L}(\Gamma)$ on a Hilbert space $\mathscr{H}$ (for example, one of the $\textrm{Ad}_L^{(r)}$, see the end of the third section). By the previous proposition, we know that $\mathscr{H}$ is finite-dimensional, and that we can choose a minimal finite subgroup $H\subseteq \Gamma$ for which \[\alpha(B(\mathscr{H}))\subseteq \mathscr{L}(H)\otimes B(\mathscr{H}).\] We moreover know that the corresponding coaction $\beta$ of $\mathscr{L}(H)$ on $B(\mathscr{H})$ is then a Galois object. This means that the Galois co-object $(N_H,\Delta_{N_H})$ which is associated to $\beta$ (as a projective corepresentation) may be taken to be equal to $(\mathscr{L}(H),\Omega \Delta_{\mathscr{L}(H)}(\,\cdot\,))$, where $\Omega\in \mathscr{L}(H)\otimes \mathscr{L}(H)$ is a \emph{non-degenerate} unitary 2-cocycle (see the final remarks of the previous section). Denote further $(\widetilde{N},\Delta_{\widetilde{N}}) := (\mathscr{L}(\Gamma),\Omega\Delta_{\mathscr{L}(\Gamma)}(\,\cdot\,))$, which is a cleft Galois co-object for $\mathscr{L}(\Gamma)$.\\

\noindent Let then $\mathcal{G}$ be a projective $(N_H,\Delta_{N_H})$-corepresentation which implements $\beta$. As $N_H= \mathscr{L}(H) \subseteq \widetilde{N}=\mathscr{L}(\Gamma)$, we may interpret $\mathcal{G}$ to be an element of $\widetilde{N}\otimes B(\mathscr{H})$. It is trivial to see that $\mathcal{G}$ is then an $(\widetilde{N},\Delta_{\widetilde{N}})$-corepresentation which implements $\alpha$. Therefore $(N,\Delta_N)$ is isomorphic to $(\widetilde{N},\Delta_{\widetilde{N}})$ as a right Galois co-object for $\mathscr{L}(\Gamma)$ (see the remark after Definition \ref{DefImpl}), which proves the proposition.
\end{proof}

\begin{Cor}\label{CorTriv} If $\Gamma$ is a torsionless discrete group, or more generally, a group with no finite subgroups of square order, then any Galois co-object for $(\mathscr{L}(\Gamma),\Delta_{\mathscr{L}(\Gamma)})$ is isomorphic to $(\mathscr{L}(\Gamma),\Delta_{\mathscr{L}(\Gamma)})$ as a right Galois co-object.\\

\noindent In particular, any unitary 2-cocycle for $\mathscr{L}(\Gamma)$ is then a 2-coboundary (see the remark after Example \ref{ExaCocy}).\end{Cor}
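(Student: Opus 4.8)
The plan is to reduce the statement to Proposition~\ref{PropDG} together with a single dimension count. First I would take an arbitrary right Galois co-object $(N,\Delta_N)$ for $\mathscr{L}(\Gamma)$ and apply Proposition~\ref{PropDG}: it produces a finite subgroup $H\subseteq\Gamma$ and a \emph{non-degenerate} unitary 2-cocycle $\Omega$ for $\mathscr{L}(H)$ such that $(N,\Delta_N)$ is isomorphic to the cleft Galois co-object induced by $\Omega$ (viewed as a 2-cocycle for $\mathscr{L}(\Gamma)$). It therefore suffices to prove that the hypotheses on $\Gamma$ force $H$ to be trivial; once $H=\{e\}$ we have $\mathscr{L}(H)=\mathbb{C}$, so necessarily $\Omega=1\otimes 1$, and hence $(N,\Delta_N)\cong(\mathscr{L}(\Gamma),\Delta_{\mathscr{L}(\Gamma)})$.

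The key step is the observation that $\mathscr{L}(H)$ can carry a non-degenerate 2-cocycle only when $|H|$ is a perfect square. Indeed, by Definition~\ref{DefNonDeg} non-degeneracy of $\Omega$ means that the associated Galois object $\widehat{N}$ is a finite-dimensional type~$I$-factor, say $\widehat{N}\cong B(\mathscr{H})$, whence $\dim\widehat{N}=(\dim\mathscr{H})^2$. On the other hand a finite-dimensional Galois object has the same dimension as the algebra over which it lives---this is exactly the bijectivity of the Galois map $x\otimes y\mapsto\beta(x)(1\otimes y)$ already exploited in the proof of the preceding corollary---so $\dim\widehat{N}=\dim\mathscr{L}(H)=|H|$. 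Combining the two identities gives $|H|=(\dim\mathscr{H})^2$, a perfect square.

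Now I would invoke the hypothesis on $\Gamma$. If $\Gamma$ is torsionless then its only finite subgroup is $\{e\}$, and the reduction is immediate; more generally, if $\Gamma$ has no nontrivial finite subgroup of square order, then the relation $|H|=(\dim\mathscr{H})^2$ forces $|H|=1$, i.e.\ $H=\{e\}$. In either case the argument above yields $(N,\Delta_N)\cong(\mathscr{L}(\Gamma),\Delta_{\mathscr{L}(\Gamma)})$, proving the first assertion.

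Finally, for the ``in particular'' clause I would start from a unitary 2-cocycle $\Omega$ for $\mathscr{L}(\Gamma)$, form its associated cleft Galois co-object as in Example~\ref{ExaCocy}, and apply the first part to conclude that this co-object is isomorphic to $(\mathscr{L}(\Gamma),\Delta_{\mathscr{L}(\Gamma)})$ as a right Galois co-object. By the remark following Example~\ref{ExaCocy}, such an isomorphism with the trivial co-object is precisely the statement that $\Omega$ is a 2-coboundary. I do not expect a genuine obstacle here; the heart of the matter is the elementary perfect-square count. The only point requiring a little care is terminological: the phrase ``no finite subgroups of square order'' must be read as excluding only \emph{nontrivial} such subgroups, since the trivial subgroup has order $1=1^2$ yet contributes only the trivial cocycle and hence causes no difficulty.
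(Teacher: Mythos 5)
Your proposal is correct and takes essentially the same route as the paper: apply Proposition \ref{PropDG}, then observe that non-degeneracy forces $|H|=\dim(\mathscr{H})^2$ (via bijectivity of the Galois map, equivalently the orthonormal basis $\{u_g\}_{g\in H}$ of $B(\mathscr{H})$ from the section's earlier corollary), so the hypothesis on $\Gamma$ forces $H=\{e\}$ --- which is precisely the paper's two-line proof, expanded. One negligible nit: for $H=\{e\}$ the cocycle is a priori only a unimodular scalar $\lambda(1\otimes 1)$ rather than literally $1\otimes 1$, but any scalar 2-cocycle is a coboundary (take $u=\bar{\lambda}$), so your conclusion is unaffected.
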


\begin{proof} The statement concerning torsionless discrete groups is of course an immediate consequence of the previous proposition. As for the statement concerning the case when there are no finite subgroups of square order, observe that if $K$ is any finite group for which some $B(\mathscr{H})$ allows a Galois object structure for $l^{\infty}(K)$, then necessarily $|K| = \textrm{dim}(H)^2$.
\end{proof}

\noindent \emph{Remarks:} \begin{enumerate} \item For finite groups, Proposition \ref{PropDG} was proven in \cite{Mov1} (see also \cite{Eti2}).
\item In \cite{IPV1}, it is shown that for \emph{any} group $\Gamma$, all \emph{quasi-symmetric} 2-cocycles for $\mathscr{L}(\Gamma)$, i.e. cocycles which also satisfy $\Sigma \Omega\Sigma = \eta\Omega$ for some $\eta\in S^1$, are coboundaries. (The authors weaken this to allow unitaries satisfying the 2-cocycle condition up to a scalar, but it is possible to show that, in any compact Woronowicz algebra, such unitaries are \emph{automatically} 2-cocycles).
\item One can also easily describe the Galois objects dual to the Galois co-objects appearing in Proposition \ref{PropDG}. Namely, if we have given a discrete group $\Gamma$, a finite subgroup $H$ and a non-degenerate 2-cocycle $\Omega$ for $\mathscr{L}(H)$, let $\gamma:B(\mathscr{H})\rightarrow B(\mathscr{H})\otimes l^{\infty}(H)$ be the Galois object for $l^{\infty}(H)$ dual to the Galois co-object associated with $\Omega$. Then the dual of the Galois co-object for $\mathscr{L}(\Gamma)$ associated to $(H,\Omega)$ is the \emph{induction} of $\gamma$ to $\Gamma$. The underlying von Neumann algebra of this construction consists of the set of all elements $x\in B(\mathscr{H})\otimes l^{\infty}(\Gamma)$ for which \[(\gamma\otimes \iota)(x) = (\iota\otimes \beta_{l^{\infty}(H)})(x),\] where $\beta_{l^{\infty}(H)}$ is the coaction associated to the left translation action of $H$ on $\Gamma$. The right coaction $\alpha$ of $l^{\infty}(\Gamma)$ on this von Neumann algebra is then simply given by right translation, i.e.~ $\alpha(x) := (\iota\otimes\Delta_{l^{\infty}(\Gamma)})(x)$.\\
\end{enumerate}

\noindent The projective corepresentations associated to the Galois co-objects for $\mathscr{L}(\Gamma)$ can be determined as follows.

\begin{Prop} Let $\Gamma$ be a discrete group with a finite subgroup $H$. Let $\Omega$ be a non-degenerate 2-cocycle for $\mathscr{L}(H)$, and let $\mathcal{G} \in \mathscr{L}(H)\otimes B(\mathscr{H})$ be the associated irreducible $\Omega$-corepresentation on some Hilbert space $\mathscr{H}$.\\

\noindent Then with $(N,\Delta_N)$ the cleft Galois co-object for $\mathscr{L}(\Gamma)$ associated to $\Omega \in \mathscr{L}(\Gamma)\overline{\otimes} \mathscr{L}(\Gamma)$, we have $I_N\cong H\setminus\Gamma$, and a maximal set of irreducible non-isomorphic $(N,\Delta_N)$-corepresentations is given by the set \[\mathcal{G}_{Hg} := \mathcal{G}(\lambda_{s(Hg)}\otimes 1) \in \mathscr{L}(\Gamma)\otimes B(\mathscr{H}),\] where $s: H\setminus \Gamma \rightarrow \Gamma$ is a fixed section for $\Gamma \rightarrow H\setminus \Gamma$.

\end{Prop}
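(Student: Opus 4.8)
The plan is to work directly with the concrete description $N=\mathscr{L}(\Gamma)$ and $\Delta_N(x)=\Omega\Delta_{\mathscr{L}(\Gamma)}(x)$, under which an $(N,\Delta_N)$-corepresentation on a Hilbert space $\mathscr{K}$ is precisely a unitary $\mathcal{F}\in\mathscr{L}(\Gamma)\bar{\otimes}B(\mathscr{K})$ satisfying $(\Omega\otimes 1)(\Delta_{\mathscr{L}(\Gamma)}\otimes\iota)\mathcal{F}=\mathcal{F}_{13}\mathcal{F}_{23}$, i.e.\ an $\Omega$-corepresentation. I will (i) check that each $\mathcal{G}_{Hg}$ is an irreducible $\Omega$-corepresentation, (ii) show that distinct cosets yield non-isomorphic ones, and (iii) show that these exhaust the irreducibles, which via Corollary \ref{CorFormN} forces $I_N\cong H\setminus\Gamma$.

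For (i): since $\Omega\in\mathscr{L}(H)\bar{\otimes}\mathscr{L}(H)$ and $\Delta_{\mathscr{L}(\Gamma)}$ restricts to the coproduct of $\mathscr{L}(H)$ on $\mathscr{L}(H)$, the given $\mathcal{G}$ is at the same time an $\Omega$-corepresentation of $\mathscr{L}(\Gamma)$. Each $\lambda_k$ is group-like, $\Delta_{\mathscr{L}(\Gamma)}(\lambda_k)=\lambda_k\otimes\lambda_k$, hence a one-dimensional ordinary corepresentation of $\mathscr{L}(\Gamma)$; so by Proposition \ref{PropFus}.1 the unitary $\mathcal{G}_{Hg}=\mathcal{G}(\lambda_{s(Hg)}\otimes 1)$ is again an $\Omega$-corepresentation, as one also confirms by a one-line leg-numbering computation using $(\lambda_{s(Hg)}\otimes 1\otimes 1)\mathcal{G}_{23}=\mathcal{G}_{23}(\lambda_{s(Hg)}\otimes 1\otimes 1)$. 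Because $\lambda_{s(Hg)}\otimes 1$ commutes with every $1\otimes u$ and may be cancelled, the intertwiner space of $\mathcal{G}_{Hg}$ coincides with that of $\mathcal{G}$, so $\mathcal{G}_{Hg}$ is irreducible.

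For (ii): write $\mathcal{G}=\sum_{h\in H}\lambda_h\otimes G_h$, so $\mathcal{G}_{Hg}=\sum_{h\in H}\lambda_{h\,s(Hg)}\otimes G_h$ has Fourier support in the single coset $Hg$, while unitarity of $\mathcal{G}$ gives $\sum_{h\in H}G_h^*G_h=1$. If $u\in B(\mathscr{H})$ intertwines $\mathcal{G}_{Hg}$ and $\mathcal{G}_{Hg'}$, then comparing the Fourier coefficients of $\mathcal{G}_{Hg'}(1\otimes u)$ and $(1\otimes u)\mathcal{G}_{Hg}$, which are supported in the cosets $Hg'$ and $Hg$ respectively, forces $G_hu=0$ for all $h$ whenever $Hg\neq Hg'$, whence $u=\big(\sum_h G_h^*G_h\big)u=0$. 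Thus $\mathcal{G}_{Hg}\cong\mathcal{G}_{Hg'}$ implies $Hg=Hg'$.

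For (iii): non-degeneracy of $\Omega$ means the dual Galois object for $\mathscr{L}(H)$ is the single factor $B(\mathscr{H})$, so by Corollary \ref{CorFormN} $\mathcal{G}$ is the unique irreducible $\Omega$-corepresentation of $\mathscr{L}(H)$, and by the Peter--Weyl content of Theorem \ref{TheoPW} its matrix coefficients $(\iota\otimes\omega)(\mathcal{G})$, $\omega\in B(\mathscr{H})_*$, span $\mathscr{L}(H)$. Consequently the matrix coefficients of $\mathcal{G}_{Hg}=\mathcal{G}(\lambda_{s(Hg)}\otimes 1)$ span $\mathscr{L}(H)\lambda_{s(Hg)}=\mathrm{span}\{\lambda_\gamma:\gamma\in Hg\}$, and applying $\Lambda_N$ (with $\Lambda_N(\lambda_\gamma)=\delta_\gamma$) their closed span is $l^2(Hg)$. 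Running over all cosets gives $\bigoplus_{Hg\in H\setminus\Gamma}l^2(Hg)=l^2(\Gamma)=\mathscr{L}^2(N)$, so the matrix coefficients of the family $\{\mathcal{G}_{Hg}\}$ are dense in $\mathscr{L}^2(N)$. Since by Theorem \ref{PropOrt} the matrix coefficients of mutually non-isomorphic irreducibles are orthogonal in $\mathscr{L}^2(N)$, any further irreducible $(N,\Delta_N)$-corepresentation would have nonzero matrix coefficients orthogonal to this dense set --- a contradiction. Hence $\{\mathcal{G}_{Hg}:Hg\in H\setminus\Gamma\}$ is a maximal family of non-isomorphic irreducibles and $I_N\cong H\setminus\Gamma$. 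I expect the main obstacle to be step (iii): it rests on using non-degeneracy to produce a single irreducible for $\mathscr{L}(H)$ whose coefficients fill $\mathscr{L}(H)$, and on phrasing completeness through density in $\mathscr{L}^2(N)$ rather than a dimension count, so that the argument remains valid when $\Gamma$ is infinite.
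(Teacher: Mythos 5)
Your proof is correct, but it takes a genuinely different route from the paper's. The paper argues from the regular corepresentation: it notes that $\widetilde{V}=\Omega V$, checks that the coset indicator functions $\delta_{Hg}$ lie in $\mathscr{Z}(\widehat{N})$, and shows each block $\widetilde{V}_{Hg}=\widetilde{V}(\delta_{Hg}\otimes 1)$ has a type $I$-factor as the $\sigma$-weak closure of its right-leg slices, via the conjugation identity $\widetilde{V}_{Hg}=(\rho_g\otimes 1)\,\Omega V_H\,(\rho_g^*\otimes\lambda_g)$ with $\rho_g$ right translation and $V_H=\sum_{h\in H}\delta_h\otimes\lambda_h$ --- this is where non-degeneracy of $\Omega$ enters; the general structure theory (Theorem \ref{TheoPW}) then automatically identifies these blocks with the complete list of irreducibles, giving $I_N\cong H\setminus\Gamma$, and a final unitary equivalence $\Sigma\widetilde{V}_{Hg}\Sigma\cong\mathcal{G}_{Hg}$ finishes. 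You never touch $\widetilde{V}$ or $\widehat{N}$: you verify directly that each $\mathcal{G}_{Hg}$ is an irreducible $\Omega$-corepresentation (Proposition \ref{PropFus}.1 applied to the group-like $\lambda_{s(Hg)}$, plus cancellation of the unitary $\lambda_{s(Hg)}\otimes 1$ to identify intertwiner spaces), separate the classes by the Fourier-support argument --- an elementary step available precisely because $M$ is cocommutative --- and prove exhaustion by density: the coefficients of $\mathcal{G}$ span $\mathscr{L}(H)$ (uniqueness of the irreducible for a non-degenerate cocycle, $|H|=\dim(\mathscr{H})^2$), so the coefficients of the whole family fill $l^2(\Gamma)$ coset by coset, leaving no room for a further class by the orthogonality relations. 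Both proofs consume non-degeneracy at the same essential point (a single block for the $\mathscr{L}(H)$-theory); the paper's route yields the central decomposition of $\widehat{N}$ as a by-product, while yours stays entirely at the corepresentation level and makes the completeness mechanism (density plus orthogonality, rather than a dimension count) transparent, which is indeed what keeps it valid for infinite $\Gamma$. One small point to make explicit in step (iii): Theorem \ref{PropOrt}.4 states orthogonality for the coefficients of the blocks of the regular corepresentation, so you should add that by Theorem \ref{TheoPW}.2 any irreducible is unitarily equivalent to some $\Sigma\widetilde{V}_r\Sigma$ and the linear span of matrix coefficients is an invariant of unitary equivalence; with that remark your orthogonality-against-a-dense-set contradiction is complete.
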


\noindent \emph{Remark:} As $\Omega$ is assumed to be non-degenerate for $\mathscr{L}(H)$, the unitary $\mathcal{G}$ is indeed the \emph{unique} $\Omega$-corepresentation for $\mathscr{L}(H)$, up to isomorphism.\\

\begin{proof} It is immediately seen that the right regular $(N,\Delta_N)$-corepresentation for $\mathscr{L}(\Gamma)$ equals $\widetilde{V} = \Omega V$, while the left regular $(N^{\textrm{op}},\Delta_{N^{\textrm{op}}})$-corepresentation equals $\widetilde{W}= \Omega_H^* W$. For $g\in \Gamma$, let $\delta_{Hg}\in B(l^2(\Gamma))$ be the indicator function for the coset $Hg$. Clearly, $\delta_{Hg}$ commutes pointwise with $\mathscr{L}(H)$. Using then the definition of $\widehat{N}$ as a fixed point set (see Proposition \ref{PropDen2}), it is easy to check that $\delta_{Hg}\in \widehat{N}$. Using the second definition of $\widehat{N}$ as the closure of the right leg of $\widetilde{W}$ (see again Proposition \ref{PropDen2}), we get that in fact $\delta_{Hg}\in \mathscr{Z}(\widehat{N})$, the center of $\widehat{N}$.\\

\noindent We claim now that $I_N \cong H\setminus \Gamma$, and that the $\widetilde{V}_{Hg} := \widetilde{V}( \delta_{Hg}\otimes 1)$ are the irreducible components of $\widetilde{V}$. Indeed, as $\widetilde{V} = \oplus_{gH} \widetilde{V}_{Hg}$, it is enough to show that each of the sets $\{(\iota\otimes \omega)(\widetilde{V}_{Hg})\mid \omega\in \mathscr{L}(\Gamma)_*\}$ is a type $I$-factor. But denoting $V_H = \sum_{h\in H} \delta_h\otimes \lambda_h$, an easy computation shows that \[\widetilde{V}_{Hg} = (\rho_g\otimes 1)\Omega V_H (\rho_g^*\otimes \lambda_g),\] where $\rho_g$ is a right translation operator on $l^2(\Gamma)$, i.e.~ $\rho_g\delta_k = \delta_{kg}$ for $g,k\in \Gamma$. As we assumed that $\Omega$ is a non-degenerate 2-cocycle, we know that $\{(\iota\otimes \omega)(\Omega V_{H})\mid \omega\in \mathscr{L}(\Gamma)_*\}$ is a type $I$-factor. This proves the claim.\\

\noindent Now the irreducible $\Omega$-corepresentation $\Sigma \widetilde{V}_{Hg}\Sigma$ of $\mathscr{L}(\Gamma)$ is immediately seen to be isomorphic to the $\Omega$-corepresentation $\mathcal{G}_{gH}$ in the statement of the proposition, while $\widetilde{V}_H$ is isomorphic to $\mathcal{G}$ as an $\Omega$-corepresentation for $\mathscr{L}(H)$. This then finishes the proof.

\end{proof}

\noindent \emph{Remarks:}\begin{enumerate}\item  With the help of this proposition, one can show that if $H$ and $K$ are two finite subgroups of $\Gamma$, with respective non-degenerate 2-cocycles $\Omega_H$ and $\Omega_K$, then the associated Galois co-objects $(N,\Delta_{N})$ and $(\widetilde{N},\Delta_{\widetilde{N}})$ for $\mathscr{L}(\Gamma)$ are isomorphic iff there exists $g\in \Gamma$ and a unitary $u\in \mathscr{L}(H)$ with $g^{-1}Hg=K$ and \[(\lambda_g\otimes\lambda_g)\Omega_2(\lambda_g^*\otimes \lambda_g^*) = (u^*\otimes u^*)\Omega_1\Delta_{\mathscr{L}(H)}(u).\]

\item One can also be more specific on when the projective corepresentations associated to two given $(N,\Delta_N)$-corepresentations as above are actually isomorphic. Namely, using the notation as in the statement of the proposition, let $\alpha_g$ be the coaction of $\mathscr{L}(\Gamma)$ on $B(\mathscr{H})$ implemented by $\mathcal{G}_{Hg}$. We may assume that $\alpha_e$ is the `extension' of the coaction $\beta$ of $\mathscr{L}(H)$ on $B(\mathscr{H})$ implemented by $\mathcal{G}$. Then we have $\alpha_g\cong \alpha_e$ iff $gHg^{-1}=H$ and $\Omega$ is coboundary equivalent to $(\lambda_g\otimes \lambda_g)\Omega(\lambda_g^*\otimes\lambda_g^*)$ (inside $\mathscr{L}(H)$).

\end{enumerate}

\section{A projective representation for $SU_q(2)$}

\noindent In this section, we want to consider one special and non-trivial example of a projective representation of the compact quantum group $SU_q(2)$. This projective representation will be nothing else but (a completion of) its action on the standard Podle\`{s} sphere.\\

\noindent Let us first recall the definition of $SU_q(2)$ on the von Neumann algebra level. \emph{For the rest of this section, we fix a number $0<q<1$.}

\begin{Def}\label{DefSUq2} Denote $\mathscr{L}^2(SU_q(2))= l^2(\mathbb{N}) \otimes \overline{l^2(\mathbb{N})} \otimes l^2(\mathbb{Z})$. Consider on it the operators \[a=\sum_{k\in \mathbb{N}_0} \sqrt{1-q^{2k}}\,e_{k-1,k}\otimes 1 \otimes 1,\] \[b= (\sum_{k\in \mathbb{N}} q^k\,e_{kk})\otimes 1\otimes S,\] where $S$ denotes the forward bilateral shift.\\

\noindent Then the compact Woronowicz algebra $(\mathscr{L}^{\infty}(SU_q(2)),\Delta_+)$ consists of the von Neumann algebra \[\mathscr{L}^{\infty}(SU_q(2)) = B(l^2(I_+))\bar{\otimes}1\bar{\otimes}\mathscr{L}(\mathbb{Z}) \subseteq B(\mathscr{L}^2(SU_q(2))),\] equipped with the unique unital normal $^*$-homomorphism \[\Delta_+: \mathscr{L}^{\infty}(SU_q(2))\rightarrow \mathscr{L}^{\infty}(SU_q(2))\bar{\otimes}\mathscr{L}^{\infty}(SU_q(2))\] which satisfies
\[\left\{\begin{array}{l} \Delta_+(a) = a\otimes a - q b^* \otimes b \\ \Delta_+(b) = b\otimes a + a^*\otimes b.\end{array}\right.\]

\noindent Its invariant state $\varphi_+$ is given by the formula \[\varphi_+(e_{ij}\otimes 1\otimes S^k) = \delta_{i,j} \delta_{k,0}(1-q^2)q^{2k},\qquad \textrm{for all }i,j\in \mathbb{N},k\in \mathbb{Z},\] and we may identify $\mathscr{L}^2(SU_q(2))$ with the Hilbert space in the GNS-construction for $\varphi_+$ by putting $\xi_M = \sqrt{1-q^2}\sum_{i\in \mathbb{N}} q^{i}\,e_i\otimes \overline{e_i}\otimes e_0$.
\end{Def}

\noindent The definition of the standard Podle\`{s} sphere and the associated action of $SU_q(2)$ takes the following form on the von Neumann algebraic level.

\begin{Def}\label{DefPod} Define $\mathscr{L}^{\infty}(S_{q0}^2)$ to be the von Neumann algebra inside $\mathscr{L}^{\infty}(SU_q(2))$ generated by $X=qb^*a$ and $Z=b^*b$. Then $\Delta_+$ restricts to a left (ergodic) coaction $\alpha$ of $\mathscr{L}^{\infty}(SU_q(2))$ on $\mathscr{L}^{\infty}(S_{q0}^2)$. We say that this coaction corresponds to `the action of $SU_q(2)$ on the standard Podle\`{s} sphere'.
\end{Def}

\noindent One can show that $\mathscr{L}^{\infty}(S_{q0}^2)$ may be identified with the von Neumann algebra $B(l^2(\mathbb{N}))$, in such a way that \[X\rightarrow \sum_{k\in \mathbb{N}_0} q^k \sqrt{1-q^{2k}}\; e_{k-1,k}\]\[Z\rightarrow \sum_{k\in \mathbb{N}} q^{2k}\, e_{kk}.\] Under this correspondence, the $\alpha$-invariant state on $\mathscr{L}^{\infty}(S_{q0}^2) = B(l^2(\mathbb{N}))$ equals \[\phi_{\alpha}(e_{ij}) = \delta_{ij}(1-q^2)q^{2i}.\]

\noindent In the terminology of the present paper, the coaction $\alpha$ is thus an irreducible projective representation of $SU_q(2)$ on an infinite dimensional Hilbert space. In \cite{DeC2}, we computed the associated Galois co-object. To introduce it, let us first recall some notations from $q$-analysis (see \cite{Gas1}).

\begin{Not} For $n\in \mathbb{N}\cup\{\infty\}$ and $a\in \mathbb{C}$, we denote \[(a;q)_{n} = \overset{n-1}{\underset{k=0}{\prod}} (1-q^ka),\] which determines analytic functions in the variable $a$ with no zeroes in the open unit disc.\\

\noindent For $n\in \mathbb{N}$ and $a\in \mathbb{C}$, we denote by $p_n(x;a,0\mid q)$ the Wall polynomial of degree $n$ with parameter value $a$; so \[p_n(x;a,0\mid q) = \,_2\varphi_1(q^{-n},0;qa\mid q,qx),\] where $_2\varphi_1$ denotes Heine's basic hypergeometric function.
\end{Not}

\begin{Prop}\label{PropGalCobSU} Let $\mathscr{L}^2(N) = l^2(\mathbb{Z})\otimes \overline{l^2(\mathbb{Z})}\otimes l^2(\mathbb{Z})$. Denote by $v$ the operator $S^*\otimes1\otimes 1$, where $S$ denotes the forward bilateral shift, and by $L_{0+}$ the operator such that \[L_{0+}(e_n \otimes \overline{e_{m}}\otimes e_k) = (q^{2n+2};q^2)_{\infty}^{1/2}\,e_n\otimes \overline{e_m}\otimes e_k,\] so $L_{0+}=u(q^2b^*b;q^2)_{\infty}^{1/2}$ with $u$ the canonical isometric inclusion of $l^2(\mathbb{N})\otimes \overline{l^2(\mathbb{N})}\otimes l^2(\mathbb{Z})$ into $l^2(\mathbb{Z})\otimes \overline{l^2(\mathbb{Z})}\otimes l^2(\mathbb{Z})$. Denote $N$ for the $\sigma$-weak closure of the right $\mathscr{L}^{\infty}(SU_q(2))$-module generated by the elements $v^nL_{0+}$, where $n\in \mathbb{Z}$. (One easily shows that $N$ equals the set $B(l^2(\mathbb{N}),l^2(\mathbb{Z}))\bar{\otimes} 1\otimes \mathscr{L}(\mathbb{Z})$.)\\

\noindent Then there exists a unique Galois co-object structure $(N,\Delta_N)$ on $N$ for which \[\Delta_N(v^nL_{0+}) = (v^n\otimes v^n)\cdot (\sum_{p=0}^{\infty} (q^2;q^2)_p^{-1} \;v^pL_{0+}b^p\otimes v^pL_{0+}(-qb^*)^p),\] the right hand side converging in norm.\\

\noindent The coaction $\alpha$ of $\mathscr{L}^{\infty}(SU_q(2))$ on $\mathscr{L}^{\infty}(S_{q0}^2)$ is then an irreducible $\lbrack (N,\Delta_N)\rbrack$-corepresentation, and an associated implementing $(N,\Delta_N)$-corepresentation $\mathcal{G}$ is determined by the following formula: denoting $\mathcal{G}= \sum_{s,t\in \mathbb{N}} \mathcal{G}_{ts}\otimes e_{ts} \in N\bar{\otimes}B(l^2(\mathbb{N}))$, we have, for $0\leq t\leq s$, that \[\mathcal{G}_{ts} = q^{t(t-s)}\left(\frac{(q^2;q^2)_s}{(q^2;q^2)_t}\right)^{1/2} (q^2;q^2)_{s-t}^{-1} \cdot v^{s+t} L_{0+} b^{s-t}\cdot  p_t(b^*b;q^{2(s-t)},0\mid q^2),\] while for $0\leq s \leq t$, we have \[\mathcal{G}_{ts} = q^{s(s-t)}\left(\frac{(q^2;q^2)_t}{(q^2;q^2)_s}\right)^{1/2} (q^2;q^2)_{t-s}^{-1}\cdot v^{t+s} L_{0+} (-qb^*)^{t-s}  \cdot p_s(b^*b;q^{2(t-s)},0\mid q^2).\]
\end{Prop}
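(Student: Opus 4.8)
The plan is to derive everything from the reconstruction recipe of Proposition \ref{PropRecon}, applied to the concrete ergodic coaction $\alpha$ of Definition \ref{DefPod}, and then to match the resulting data against the explicit formulas; the substantive new content, carried out in \cite{DeC2}, is the identification of the matrix coefficients as Wall polynomials. First I would fix the standard basis $\{e_i\}_{i\in\mathbb{N}}$ of $\mathscr{H}=l^2(\mathbb{N})$, identified with $\mathscr{L}^{\infty}(S_{q0}^2)$ as in Definition \ref{DefPod}, and compute $\alpha(e_{0i})=\Delta_+(e_{0i})$ from the coproduct rules for $a$ and $b$ together with $X=qb^*a$, $Z=b^*b$. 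Setting $\mathscr{L}^2(N)=\alpha(e_{00})(\mathscr{L}^2(SU_q(2))\otimes\mathscr{H})$ as in the recipe, a direct computation shows that the normalisation forced by $\alpha(e_{00})$ produces precisely the factor $(q^2b^*b;q^2)_{\infty}^{1/2}$ occurring in $L_{0+}$; this simultaneously identifies $\mathscr{L}^2(N)$ with $l^2(\mathbb{Z})\otimes\overline{l^2(\mathbb{Z})}\otimes l^2(\mathbb{Z})$ and exhibits the generators $v^nL_{0+}$ of $N$. The parenthetical identity $N=B(l^2(\mathbb{N}),l^2(\mathbb{Z}))\bar{\otimes}1\otimes\mathscr{L}(\mathbb{Z})$ then follows by verifying that the right $\mathscr{L}^{\infty}(SU_q(2))$-module generated by the $v^nL_{0+}$ is $\sigma$-weakly dense in this corner.

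Next I would assemble the unitary $\mathcal{G}$ produced by Proposition \ref{PropRecon} and read off its components via $\mathcal{G}=\sum_{s,t}\mathcal{G}_{ts}\otimes e_{ts}$ with $\mathcal{G}_{ts}\in N$. The key step is to expand each $\mathcal{G}_{ts}$ in the operator basis $v^{s+t}L_{0+}b^{s-t}$ (for $t\le s$) and $v^{t+s}L_{0+}(-qb^*)^{t-s}$ (for $s\le t$), and to recognise the remaining scalar factors as values of the Wall polynomials $p_n(\,\cdot\,;a,0\mid q^2)$. This identification rests on matching the three-term recurrence of the Wall polynomials with the action of $b^*b$ in the representation, the normalisations being fixed by unitarity of $\mathcal{G}$; the closed forms so obtained are exactly those displayed in the statement. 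Since $\mathcal{G}$ is by construction a unitary in $N\bar{\otimes}B(l^2(\mathbb{N}))$ implementing $\alpha$, Proposition \ref{PropRecon} already guarantees that $(N,\Delta_N)$ is a Galois co-object, where $\Delta_N$ is the unique normal map satisfying $(\Delta_N\otimes\iota)\mathcal{G}=\mathcal{G}_{13}\mathcal{G}_{23}$ and right $\mathscr{L}^{\infty}(SU_q(2))$-linearity.

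It then remains to extract the explicit coproduct. Evaluating $(\Delta_N\otimes\iota)\mathcal{G}=\mathcal{G}_{13}\mathcal{G}_{23}$ on the generator $v^nL_{0+}$ yields the stated series for $\Delta_N(v^nL_{0+})$, and right $\mathscr{L}^{\infty}(SU_q(2))$-linearity then determines $\Delta_N$ on all of $N$. Norm convergence of the series is immediate, since in each term the factor $(-qb^*)^p$ has norm $q^p$ while the remaining factors and the coefficients $(q^2;q^2)_p^{-1}$ stay bounded, so that the general term decays geometrically. Uniqueness of the Galois co-object structure with these values on generators follows from condition~4 of Definition \ref{DefGalCo} (right $M$-linearity of $\Delta_N$) together with the fact that the $v^nL_{0+}$ generate $N$ as a right $\mathscr{L}^{\infty}(SU_q(2))$-module. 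Finally, $\alpha$ is ergodic by Definition \ref{DefPod}, so Proposition \ref{PropIndec} shows at once that $\mathcal{G}$ is irreducible and that $\alpha$ is an irreducible $\lbrack(N,\Delta_N)\rbrack$-corepresentation.

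The main obstacle I anticipate is the second step: proving that the scalar coefficients arising when $\mathcal{G}_{ts}$ is expressed in the operator basis are precisely the Wall polynomials, and that the corepresentation law $(\Delta_N\otimes\iota)\mathcal{G}=\mathcal{G}_{13}\mathcal{G}_{23}$ then holds with the claimed $\Delta_N$. Both reduce to nontrivial $q$-hypergeometric identities --- in effect a product (addition) formula for Wall polynomials, the $q$-analogue of the classical addition theorem for zonal spherical functions on $SU(2)/T$. These are exactly the computations of \cite{DeC2}, which I would invoke rather than reproduce the $q$-series manipulations from scratch.
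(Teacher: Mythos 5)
Your proposal is correct and follows essentially the route the paper itself takes: the paper offers no internal proof of Proposition \ref{PropGalCobSU}, simply importing the result from \cite{DeC2}, and your outline (reconstruction of $(N,\Delta_N)$ and $\mathcal{G}$ via Proposition \ref{PropRecon} applied to the Podle\`{s} sphere coaction, with the Wall-polynomial identifications and $q$-hypergeometric addition formulas deferred to \cite{DeC2}) matches how that computation is actually organized. Your supplementary checks --- geometric decay of the series since $\|(-qb^*)^p\|=q^p$, uniqueness of $\Delta_N$ from right $M$-linearity and normality on the $\sigma$-weakly dense module generated by the $v^nL_{0+}$, and irreducibility via ergodicity of $\alpha$ and Proposition \ref{PropIndec} --- are all sound.
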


\noindent \emph{For the rest of this section, we take $(M,\Delta_M)$ to be $(\mathscr{L}^{\infty}(SU_q(2)),\Delta_{\mathscr{L}^{\infty}(SU_q(2))})$, and we fix the right Galois co-object $(N,\Delta_N)$ for $(M,\Delta_M)$ introduced above. We then also keep using the notations introduced above, as well as those from the first four sections.}\\

\noindent Our aim now is to describe in more detail the structure of the Galois co-object $(N,\Delta_N)$. In particular, we want to find a complete set of irreducible $(N,\Delta_N)$-corepresentations. This is in fact not so difficult.

\begin{Prop} Let $(N,\Delta_N)$ be the Galois co-object and $\mathcal{G}$ the $(N,\Delta_N)$-corepresentation introduced in Proposition \ref{PropGalCobSU}.\\

\noindent Then the set of unitaries \[\mathcal{G}^{(n)} := (v^n\otimes 1)\mathcal{G},\qquad n\in\mathbb{Z}\] forms a complete set of irreducible $(N,\Delta_N)$-corepresentations for $\mathscr{L}^{\infty}(SU_q(2))$.\\

\noindent In particular, the set $I_N$ of isomorphism classes of irreducible $(N,\Delta_N)$-corepresentations can be naturally identified with $\mathbb{Z}$.
\end{Prop}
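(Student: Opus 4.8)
The plan is to exploit that $v$ behaves like a grouplike element for $\Delta_N$. First I would record the identity
\[
\Delta_N(v^n x) = (v^n\otimes v^n)\Delta_N(x)\qquad\textrm{for all }x\in N,\ n\in\mathbb{Z},
\]
which follows from the explicit formula $\Delta_N(v^n L_{0+}) = (v^n\otimes v^n)\Delta_N(L_{0+})$ in Proposition \ref{PropGalCobSU}, the right $M$-linearity $\Delta_N(xy)=\Delta_N(x)\Delta_M(y)$, and the fact that $N$ is the $\sigma$-weak closure of the right $M$-module generated by the $v^mL_{0+}$ (so normality propagates the identity from the generators). Granting this, since $\mathcal{G}$ is an $(N,\Delta_N)$-corepresentation and $v^nN=N$, a one-line leg-numbering computation gives $(\Delta_N\otimes\iota)\mathcal{G}^{(n)} = (v^n\otimes v^n\otimes 1)\mathcal{G}_{13}\mathcal{G}_{23} = \mathcal{G}^{(n)}_{13}\mathcal{G}^{(n)}_{23}$, so each $\mathcal{G}^{(n)}=(v^n\otimes1)\mathcal{G}$ is again a unitary $(N,\Delta_N)$-corepresentation on $\mathscr{H}=l^2(\mathbb{N})$. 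Moreover the implemented projective corepresentation is unchanged, since $(\mathcal{G}^{(n)})^*(1\otimes x)\mathcal{G}^{(n)} = \mathcal{G}^*(1\otimes x)\mathcal{G}=\alpha(x)$; as $\alpha$ is the ergodic Podle\`{s} action, Proposition \ref{PropIndec} shows every $\mathcal{G}^{(n)}$ is irreducible.

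It remains to see that the $\mathcal{G}^{(n)}$ are pairwise non-isomorphic and exhaust the irreducibles. The grouplike identity yields $\widetilde{W}^*(1\otimes v^n)=(v^n\otimes v^n)\widetilde{W}^*$ (test both sides against $\eta\otimes\Lambda_N(x)$ using the defining property of $\widetilde{W}$ and $\Lambda_N(v^nx)=v^n\Lambda_N(x)$), and slicing the first leg then shows $v^n\widehat{N}v^{-n}=\widehat{N}$ by the description of $\widehat{N}$ in Proposition \ref{PropDen2}.2. Thus conjugation by $v$ permutes the minimal central projections $p_r$, $r\in I_N$, i.e.\ defines a $\mathbb{Z}$-action on $I_N$. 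Writing $r_0\in I_N$ for the class of $\mathcal{G}$, the matrix-coefficient space of $\mathcal{G}^{(n)}$ in $\mathscr{L}^2(N)$ is spanned by the $v^n\Lambda_N(\mathcal{G}_{ts})$, hence equals $v^n p_{r_0}v^{-n}\mathscr{L}^2(N)$, so $\mathcal{G}^{(n)}$ corresponds to $n\cdot r_0$. Non-isomorphism now follows from a grading argument: with $g$ the grading on $\mathscr{L}^2(N)$ given by (second leg index) minus (first leg index), the explicit formula shows $\mathcal{G}_{ts}\xi_M$ is $g$-homogeneous of degree $s+t$ (only $v^{s+t}$ moves the first leg), so $\Lambda_N(\mathcal{G}^{(n)}_{ts})=v^n\mathcal{G}_{ts}\xi_M$ has degree $n+s+t$. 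Hence the block of $\mathcal{G}^{(n)}$ lies in $\{g\ge n\}$ and contains the nonzero degree-$n$ vector $v^nL_{0+}\xi_M$; since distinct minimal central projections have orthogonal ranges, two blocks with different minimal degree cannot coincide, so $\mathcal{G}^{(n)}\cong\mathcal{G}^{(m)}$ forces $n=m$.

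Finally, completeness is equivalent to transitivity of the $\mathbb{Z}$-action at $r_0$, i.e.\ to $\bigvee_{n\in\mathbb{Z}} v^np_{r_0}v^{-n}=1$, equivalently to the assertion that the $\Lambda_N$-images of the matrix coefficients $\{v^n\mathcal{G}_{ts}\}$ are total in $\mathscr{L}^2(N)$ (otherwise, by Theorem \ref{TheoPW} and Corollary \ref{CorFormN}, an irreducible outside the family would contribute a further central block orthogonal to all of them). This spanning statement is the main obstacle: it is not formal and must be extracted from the explicit description. I would begin by noting that $v^mL_{0+}=\mathcal{G}^{(m)}_{00}$ is itself a matrix coefficient, so the right $M$-module generated by all the $\{v^n\mathcal{G}_{ts}\}$ contains the generators $v^mL_{0+}$ and is therefore $\sigma$-weakly dense in $N$; the genuine work is to upgrade this to density of the $\Lambda_N$-images \emph{without} a trailing module factor, which reduces, via the Wall-polynomial form of the $\mathcal{G}_{ts}$ and a Stone--Weierstrass argument for the diagonal parts together with the $v$-shifts ranging over all gradings, to a concrete $q$-analytic density computation. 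Once transitivity is established, the orbit of $r_0$ is all of $I_N$ and is free (by the grading argument above), whence $I_N\cong\mathbb{Z}$ and $\{\mathcal{G}^{(n)}\}_{n\in\mathbb{Z}}$ is a complete set of pairwise non-isomorphic irreducible $(N,\Delta_N)$-corepresentations.
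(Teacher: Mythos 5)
Your first two steps coincide with the paper's (terse) opening: the paper likewise derives everything from the group-like behaviour of $v$, and irreducibility of each $\mathcal{G}^{(n)}$ is immediate because it implements the same ergodic coaction $\alpha$ (Proposition \ref{PropIndec}). Your non-isomorphism argument, however, is genuinely different and valid. The paper computes with the Haar state: using $\varphi_+(a^mb^k(b^*)^l)=0$ for $m\neq 0$, $k\neq l$, and the commutation $v^*L_{0+}=L_{0+}a^*$, it shows $\varphi_+(L_{0+}^*\mathcal{G}^{(n)}_{ts})\neq 0$ forces $s=t$ and $n+2t=0$, and must then still exclude the diagonal case $\mathcal{G}^{(-2t)}_{tt}$, $t\neq 0$, by observing that otherwise $p_t(b^*b;1,0\mid q^2)$ would be a scalar, impossible since $b^*b$ has infinite spectrum and $p_t$ is a non-constant polynomial. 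Your minimal-degree invariant --- each $\Lambda_N(\mathcal{G}^{(n)}_{ts})$ is homogeneous of degree $n+s+t$ for the grading (second-leg index) minus (first-leg index), so the coefficient block of $\mathcal{G}^{(n)}$ is a graded subspace of $\mathscr{L}^2(N)$ with minimal degree exactly $n$ --- disposes of all cases at once, including the diagonal one that costs the paper an extra argument; that is a real simplification.

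The gap is where you flag it yourself: completeness. Your reduction is correct --- completeness is equivalent to totality in $\mathscr{L}^2(N)$ of the vectors $\Lambda_N(v^n\mathcal{G}_{ts})$, equivalently to $\sigma$-weak density in $N$ of the \emph{plain} linear span of the coefficients $\mathcal{G}^{(n)}_{ts}$ (these are equivalent by a one-line normality argument: if $\xi$ is orthogonal to all $\Lambda_N(\mathcal{G}^{(n)}_{ts})$, the normal functional $x\mapsto\langle x\xi_M,\xi\rangle$ vanishes on the $\sigma$-weak closure of the span, hence on $N$, hence $\xi=0$). But you then leave this density as ``a concrete $q$-analytic density computation,'' and this is precisely the substantive content of the paper's proof. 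Moreover, no Stone--Weierstrass-type input is needed at the polynomial stage, only linear algebra on the explicit formulas: since $p_t(x;q^{2k},0\mid q^2)$ has degree exactly $t$ in $x$, the coefficients $\mathcal{G}^{(-2t-k)}_{t,t+k}$, which equal $L_{0+}b^k\,p_t(b^*b;q^{2k},0\mid q^2)$ up to nonzero scalars, span (for fixed $k$, varying $t$) the space $L_{0+}b^k\cdot\{\textrm{polynomials in }b^*b\}$, hence contain every $L_{0+}b^{k+t}(b^*)^t$; symmetrically the $\mathcal{G}^{(-2s-k)}_{s+k,s}$ yield every $L_{0+}b^s(b^*)^{s+k}$, so the span of all $\mathcal{G}^{(n)}_{ts}$ contains every $v^nL_{0+}b^s(b^*)^t$. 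One then approximates $e_{rr}\otimes 1\otimes S^k$ $\sigma$-weakly by elements of the span of the $b^s(b^*)^t$ (functional calculus on the diagonal operator $b^*b$), which gives the full $\sigma$-weak closure $N=B(l^2(\mathbb{N}),l^2(\mathbb{Z}))\bar{\otimes}1\otimes\mathscr{L}(\mathbb{Z})$. Until you supply this (or an equivalent) computation, your argument proves irreducibility and pairwise non-isomorphism of the $\mathcal{G}^{(n)}$, but not that they exhaust $I_N$.
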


\begin{proof} It is trivial to see that the $\mathcal{G}^{(n)}$ are indeed irreducible $(N,\Delta_N)$-corepresentations, by the group-like property of $v$. We then only need to show that the $\mathcal{G}^{(n)}$ are mutually non-isomorphic and have $\sigma$-weakly dense linear span in $N$.\\

\noindent We first prove that all $\mathcal{G}^{(n)}$ are mutually non-isomorphic. As an isomorphism between $\mathcal{G}^{(n)}$ and $\mathcal{G}^{(m)}$ would induce an isomorphism between $\mathcal{G}^{(0)}$ and $\mathcal{G}^{(m-n)}$, it is sufficient to show that $\mathcal{G}=\mathcal{G}^{(0)}$ is not isomorphic to $\mathcal{G}^{(n)}$ for $n\neq 0$. But for this, it is in turn sufficient to show that $L_{0+}=\mathcal{G}_{00}$ is orthogonal to all $\mathcal{G}^{(n)}_{ts}$, by the orthogonality relations in Lemma \ref{LemFormV}.2 (and Theorem \ref{TheoPW}.2). \\

\noindent Now we remark that $\varphi_+$ satisfies the property that $\varphi_+(a^mb^k(b^*)^l) = 0$ whenever $m\neq 0$ and $k\neq l$, and likewise with $a$ replaced by $a^*$. Moreover, one easily computes that the commutation relation $v^*L_{0+} = L_{0+}a^*$ holds. Using then the concrete form for the $\mathcal{G}_{rs}$ in the previous Proposition, it is easy to see that \[\varphi_+(L_{0+}^* \mathcal{G}_{ts}^{(n)}) \neq 0 \Rightarrow s=t\textrm{ and }n+2t=0.\] Hence the only thing left to do is to prove that $L_{0+}$ is orthogonal to $\mathcal{G}_{tt}^{(-2t)}$ for $t\in \mathbb{Z}_0$. But suppose this were not so. Then as $\mathcal{G}^{(-2t)}$ and $\mathcal{G}^{(0)}$ are irreducible, we would necessarily get that they are isomorphic, again by the orthogonality relations. As the inner product of $L_{0+}$ with all $\mathcal{G}_{rs}^{(-2t)}$ except $r,s = t$ is zero, this would then imply that $L_{0+}$ must be a scalar multiple of $\mathcal{G}_{tt}^{(-2t)}$, which is equivalent with saying that $p_{t}(b^*b;1,0\mid q^2)$ is a scalar multiple of the unit. As the spectrum of $b^*b$ is not finite, and $p_t(x;1,0\mid q^2)$ is a non-constant polynomial in $x$, we obtain a contradiction. Hence the $\mathcal{G}^{(n)}$ are mutually non-isomorphic.\\

\noindent We end by showing that the $\mathcal{G}_{ts}^{(n)}$ have a $\sigma$-weakly dense linear span in $N$. Consider, for $k,t\in \mathbb{N}$, the element $\mathcal{G}_{t,t+k}^{(-2t-k)}$. Then, up to a non-zero constant, this equals the element $L_{0+}b^kp_t(b^*b;q^{2k},0\mid q^2)$. As the $p_t(x;q^{2k},0\mid q^2)$ are polynomials of degree $t$, we conclude that the linear span of the $\mathcal{G}_{t,t+k}^{(-2t-k)}$ contains all elements of the form $L_{0+}b^{k+t}(b^*)^t$. A similar argument shows that the $\mathcal{G}_{s+k,s}^{(-2s-k)}$ contain all elements of the form $L_{0+}b^s(b^*)^{s+k}$. Hence the linear span of all $\mathcal{G}_{ts}^{(n)}$ contains all elements of the form $L_{0+}b^s(b^*)^t$ for $s,t\in \mathbb{N}$. As this linear span is closed under left multiplication with powers of $v$, we conclude that this linear span contains all elements of the form $v^nL_{0+}b^s(b^*)^t$ where $n\in \mathbb{Z},s,t\in \mathbb{N}$. As we can $\sigma$-weakly approximate elements of the form $e_{rr}\otimes 1\otimes S^k$ by elements in the linear span of the $b^s(b^*)^t$, it follows immediately that the $\sigma$-weak closure of the linear span of the $\mathcal{G}_{ts}^{(n)}$ indeed equals $N= B(l^2(\mathbb{N}),l^2(\mathbb{Z}))\bar{\otimes} 1\otimes\mathscr{L}(\mathbb{Z})$.
\end{proof}

\begin{Cor} \begin{enumerate}\item Up to isomorphism, there is only one irreducible $\lbrack (N,\Delta_N)\rbrack$-corepresentation of $\mathscr{L}^{\infty}(SU_q(2))$.
\item The von Neumann algebra $\widehat{N}$ (cf. Proposition \ref{PropDen2}) can be identified with $\oplus_{r\in \mathbb{Z}} B(l^2(\mathbb{N}))$.
\end{enumerate}
\end{Cor}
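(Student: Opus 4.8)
The plan is to read off both statements from the classification of irreducible $(N,\Delta_N)$-corepresentations established in the preceding proposition (they are exactly the $\mathcal{G}^{(n)}=(v^n\otimes 1)\mathcal{G}$, $n\in\mathbb{Z}$), combined with the correspondence between $(N,\Delta_N)$-corepresentations and $\lbrack(N,\Delta_N)\rbrack$-corepresentations supplied by Proposition \ref{PropIndec} and Proposition \ref{PropRecon}.

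For the first statement, the crucial point I would isolate is that the mutually non-isomorphic corepresentations $\mathcal{G}^{(n)}$ all \emph{implement the same} projective corepresentation $\alpha$. Since the group-like unitary $v^n$ acts only on the first leg $\mathscr{L}^2(N)$, it commutes with $1\otimes x$ for every $x\in B(\mathscr{H})$, so that
\[(\mathcal{G}^{(n)})^*(1\otimes x)\mathcal{G}^{(n)} = \mathcal{G}^*(v^{-n}\otimes 1)(1\otimes x)(v^n\otimes 1)\mathcal{G} = \mathcal{G}^*(1\otimes x)\mathcal{G} = \alpha(x).\]
I would then argue the converse: given any irreducible (hence ergodic) $\lbrack(N,\Delta_N)\rbrack$-corepresentation $\beta$ on a Hilbert space $\mathscr{K}$, Proposition \ref{PropRecon} together with the uniqueness statement after Definition \ref{DefImpl} produces an implementing $(N,\Delta_N)$-corepresentation $\mathcal{G}'\in N\bar{\otimes}B(\mathscr{K})$, which is irreducible by Proposition \ref{PropIndec}. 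By Theorem \ref{TheoPW}.2, $\mathcal{G}'$ is unitarily equivalent to some $\mathcal{G}^{(n)}$, and such an equivalence transports the coaction implemented by $\mathcal{G}'$ onto the one implemented by $\mathcal{G}^{(n)}$; hence $\beta\cong\alpha$. This shows that $\alpha$ is the unique irreducible $\lbrack(N,\Delta_N)\rbrack$-corepresentation up to isomorphism.

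For the second statement I would simply apply Corollary \ref{CorFormN}: as the $\mathcal{G}^{(n)}$ with $n\in\mathbb{Z}$ form a maximal family of non-isomorphic irreducible $(N,\Delta_N)$-corepresentations on the common Hilbert space $\mathscr{H}=l^2(\mathbb{N})$ (the space on which $\mathcal{G}$ was defined in Proposition \ref{PropGalCobSU}), we obtain $I_N\cong\mathbb{Z}$ and $\widehat{N}\cong\bigoplus_{r\in\mathbb{Z}}B(l^2(\mathbb{N}))$.

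The only genuinely substantive step is the short leg computation above, showing that left multiplication of $\mathcal{G}$ by $v^n$ leaves the associated coaction on $B(\mathscr{H})$ unchanged; everything else is a direct invocation of structural results already proven. I anticipate no real obstacle, the main point requiring care being to keep clearly separated the (finer) isomorphism of $(N,\Delta_N)$-corepresentations from the (coarser) isomorphism of the underlying projective corepresentations, since it is exactly this distinction that collapses the infinitely many $\mathcal{G}^{(n)}$ to the single class of $\alpha$.
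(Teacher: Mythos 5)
Your proposal is correct and takes essentially the same route as the paper: the paper likewise deduces the first statement from the preceding proposition together with Theorem \ref{TheoPW}.2 and Proposition \ref{PropIndec}, noting that every $\mathcal{G}^{(n)}$ ``clearly implements the same'' projective corepresentation (your short leg computation with $v^n\otimes 1$ just makes this explicit), and obtains the second statement directly from Corollary \ref{CorFormN}. The extra detail you supply for the converse direction is exactly what the paper leaves implicit, so there is no substantive difference in approach.
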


\begin{proof}
 The first statement follows immediately from the previous proposition, Theorem \ref{TheoPW}.2 and Proposition \ref{PropIndec}, as any $\mathcal{G}^{(n)}$ clearly implements the same irreducible $\lbrack(N,\Delta_N)\rbrack$-corepresentation. The second statement follows from Corollary \ref{CorFormN}.

\end{proof}

\begin{Prop}\label{PropFormV2} Denote $M=\mathscr{L}^{\infty}(SU_q(2))$. The elements $\frac{1}{q^t\sqrt{1-q^2}}\mathcal{G}_{ts}^{(n)}\xi_M$ form an orthonormal basis of $\mathscr{L}^2(N)$, and, under the identification $\mathscr{L}^2(N)\rightarrow \oplus_{n\in \mathbb{Z}} (l^2(\mathbb{N})\otimes l^2(\mathbb{N}))$ by sending $\frac{1}{q^t\sqrt{1-q^2}}\mathcal{G}_{ts}^{(n)}\xi_M$ to $e_{n,t}\otimes e_{n,s}$, the element \[\sum_{n\in \mathbb{Z}}\sum_{i,j=0}^{\infty} 1\otimes e_{n,ts}\otimes \mathcal{G}^{(n)}_{ts} \in \widehat{N}'\bar{\otimes}N\] equals $\widetilde{V}$.\end{Prop}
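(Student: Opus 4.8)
The plan is to prove the two assertions separately: first that the indicated vectors form an orthonormal basis (which is exactly what makes the identification map well defined and unitary), and then that $\widetilde{V}$ has the stated matrix form in that basis.

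For the orthonormality, I would exploit that each $\mathcal{G}^{(n)}$ is an \emph{irreducible} $(N,\Delta_N)$-corepresentation implementing the (one and the same) Podle\`s coaction $\alpha$, whose invariant state is $\phi_\alpha(e_{ij})=\delta_{ij}(1-q^2)q^{2i}$. Writing $\mathcal{G}^{(n)}=\sum_{t,s}\mathcal{G}^{(n)}_{ts}\otimes e_{ts}$ and evaluating the invariance identity $(\varphi_M\otimes\iota)(\mathcal{G}^{(n)*}(1\otimes e_{ab})\mathcal{G}^{(n)})=\phi_\alpha(e_{ab})1$ leg by leg yields the within-block orthogonality relations
\[\varphi_M((\mathcal{G}^{(n)}_{kl})^*\mathcal{G}^{(n)}_{ts})=\delta_{tk}\delta_{sl}(1-q^2)q^{2t},\]
so that $\langle\mathcal{G}^{(n)}_{ts}\xi_M,\mathcal{G}^{(n)}_{kl}\xi_M\rangle$ is diagonal with norm-square $(1-q^2)q^{2t}$; this is precisely the normalising factor $q^{-t}(1-q^2)^{-1/2}$ in the statement. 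Orthogonality between different blocks $n\neq m$ follows from the fact, established in the previous proposition, that the $\mathcal{G}^{(n)}$ are pairwise non-isomorphic, combined with the general orthogonality relations of Lemma \ref{LemFormV}.2 (equivalently, the standard intertwiner argument used for the case $r\neq s$ in the proof of Theorem \ref{PropOrt}). Completeness is immediate from the previous proposition, where the $\mathcal{G}^{(n)}_{ts}$ were shown to have $\sigma$-weakly dense span in $N$, so that their images under $\Lambda_N$ are dense in $\mathscr{L}^2(N)$ (this is the density of matrix coefficients of a complete set of irreducibles already invoked in the proof of Theorem \ref{TheoPW}). Hence the vectors form an orthonormal basis and the identification is a well-defined unitary.

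For the formula I would simply verify that $\widetilde{V}$ and the proposed element $T:=\sum_{n,t,s}(1\otimes e_{n,ts})\otimes\mathcal{G}^{(n)}_{ts}$ agree on the total set of vectors $\mathcal{G}^{(m)}_{ab}\xi_M\otimes\eta$ with $\eta\in\mathscr{L}^2(M)$. Using the defining property of $\widetilde{V}$ (Proposition \ref{PropElProp}.1) together with the corepresentation identity $(\Delta_N\otimes\iota)\mathcal{G}^{(m)}=\mathcal{G}^{(m)}_{13}\mathcal{G}^{(m)}_{23}$, which gives $\Delta_N(\mathcal{G}^{(m)}_{ab})=\sum_k\mathcal{G}^{(m)}_{ak}\otimes\mathcal{G}^{(m)}_{kb}$, one obtains
\[\widetilde{V}(\mathcal{G}^{(m)}_{ab}\xi_M\otimes\eta)=\Delta_N(\mathcal{G}^{(m)}_{ab})(\xi_M\otimes\eta)=\sum_k\mathcal{G}^{(m)}_{ak}\xi_M\otimes\mathcal{G}^{(m)}_{kb}\eta.\]
Substituting the normalisation $\mathcal{G}^{(m)}_{ak}\xi_M=q^a\sqrt{1-q^2}\,(e_{m,a}\otimes e_{m,k})$ into both the input vector and the output, the common factor $q^a\sqrt{1-q^2}$ cancels, and the right-hand side becomes $\sum_k(e_{m,a}\otimes e_{m,k})\otimes\mathcal{G}^{(m)}_{kb}\eta$. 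This is exactly $T(e_{m,a}\otimes e_{m,b}\otimes\eta)$, since in $T$ only the terms with $n=m$ and $s=b$ survive. As such vectors are total in $\mathscr{L}^2(N)\otimes\mathscr{L}^2(M)$, this gives $\widetilde{V}=T$; the membership $T\in\widehat{N}'\bar{\otimes}N$ is clear because $1\otimes e_{n,ts}\in\widehat{N}'$ and $\mathcal{G}^{(n)}_{ts}\in N$, matching $\widetilde{V}\in\widehat{N}'\bar{\otimes}N$ from Lemma \ref{LemFormV}.3.

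The computations themselves are short; the only point requiring genuine care is the \emph{index carrying the normalising weight}. The factor must be $q^{-t}$ with $t$ the \emph{first} (row) index of $\mathcal{G}^{(n)}_{ts}$, and it is exactly this choice that makes the cancellation in the previous paragraph reproduce $T$ rather than a version twisted by a spurious factor $q^{t-s}$. Keeping track of this, and checking that the invariant-state convention $(\varphi_M\otimes\iota)\alpha=\phi_\alpha(\cdot)1$ is the one matching the \emph{left} coaction $\alpha(x)=\mathcal{G}^{(n)*}(1\otimes x)\mathcal{G}^{(n)}$, is the main (mild) obstacle; everything else is bookkeeping already licensed by the general theory of Sections 1 and 3.
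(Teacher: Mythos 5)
Your proposal is correct and follows essentially the same route as the paper: within-block orthogonality from the invariance identity $(\varphi_M\otimes\iota)\alpha=\phi_\alpha(\cdot)1$ for the implemented Podle\`s coaction, cross-block orthogonality and completeness from the previous proposition together with the general orthogonality relations, and then a direct verification of the matrix form of $\widetilde{V}$ via $\Delta_N(\mathcal{G}^{(m)}_{ab})=\sum_k\mathcal{G}^{(m)}_{ak}\otimes\mathcal{G}^{(m)}_{kb}$. The only (cosmetic) difference is in the last step: you check agreement on $\mathcal{G}^{(m)}_{ab}\xi_M\otimes\eta$ for arbitrary $\eta$ and conclude by totality, whereas the paper checks only the vectors with $\eta=\xi_M$ and then invokes that $\xi_M$ is separating for $N$ --- your correct emphasis that the weight $q^{t}\sqrt{1-q^2}$ is attached to the row index is exactly what makes the computation close.
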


\begin{proof}
\noindent Remark first that $\varphi_{+}((\mathcal{G}^{(n)}_{ts})^*\mathcal{G}^{(n)}_{ru}) = \varphi_{+}(\mathcal{G}_{ts}^*\mathcal{G}_{ru})$. But as we have that \[\alpha(e_{ij}) = \sum_{k,l\in\mathbb{N}} \mathcal{G}_{ik}^*\mathcal{G}_{jl}\otimes e_{kl}\] and $(\varphi_+\otimes \iota)\alpha = \phi_{\alpha}$, it follows immediately that $\varphi_+(\mathcal{G}_{ik}^*\mathcal{G}_{jl}) = \delta_{kl}\delta_{ij} (1-q^2)q^{2i}$. Combined with the previous proposition, this proves that the $\frac{1}{q^t\sqrt{1-q^2}}\mathcal{G}_{ts}^{(n)}\xi_M$ form an orthonormal basis.\\

\noindent Now \[\widetilde{V}\, \mathcal{G}_{ij}^{(n)}\xi_M\otimes \xi_M = \sum_{k\in \mathbb{N}}\, \mathcal{G}_{ik}^{(n)}\xi_M\otimes \mathcal{G}_{kj}^{(n)}\xi_M.\] On the other hand, denoting $\widetilde{V}'=\sum_{n\in \mathbb{Z}}\sum_{i,j=0}^{\infty} 1\otimes e_{n,ts}\otimes \mathcal{G}^{(n)}_{ts}$, we have \begin{eqnarray*} \widetilde{V}'\,\mathcal{G}_{ij}^{(n)}\xi_M\otimes \xi_M  &\cong& q^i\sqrt{1-q^2} \sum_{k\in \mathbb{N}} e_{n,i}\otimes e_{n,k}\otimes \mathcal{G}_{kj}^{(n)}\xi_M\\ &\cong & \sum_{k\in \mathbb{N}} \mathcal{G}_{ik}^{(n)}\xi_M\otimes \mathcal{G}_{kj}^{(n)}\xi_M.\end{eqnarray*} As $\xi_M$ is separating for $N$, this shows that $\widetilde{V}=\widetilde{V}'$.

\end{proof}

\noindent One may deduce from this that the ordinary matrix units $e_{n,ts}$ in $\widehat{N} = \oplus_{n\in \mathbb{N}} B(l^2(\mathbb{N}))$ are of the form we required in the first section, namely their corresponding vectors $e_{n,t} \in l^2(\mathbb{N})$ are eigenvectors for the trace class operator $T$ implementing $\phi_{\alpha}$ (it should be remarked that we are implicitly using Proposition \ref{PropComp2} here).\\

\noindent We now remark that in \cite{DeC2}, we had already computed that the reflection of $\mathscr{L}^{\infty}(SU_q(2))$ across $(N,\Delta_N)$ (see section 4 for the terminology) may be identified with Woronowicz' quantum group $\mathscr{L}^{\infty}(\widetilde{E}_q(2))$ (see \cite{Wor3}), which has as its associated von Neumann algebra \[\mathscr{L}^{\infty}(\widetilde{E}_q(2)) = B(l^2(\mathbb{Z}))\bar{\otimes}1\bar{\otimes} \mathscr{L}(\mathbb{Z}) \subseteq B(\mathscr{L}^2(N)).\] Now it is known (see \cite{Baa2}) that this is a unimodular quantum group, with its invariant nsf weight $\varphi_0$ determined by \[\varphi_0(e_{ij}\otimes S^k) = \delta_{ij} \delta_{k,0} q^{2i}.\]

\begin{Prop} The modular element $\delta_{\widehat{N}}$ (see Theorem \ref{TheoGalob}.2) equals $\oplus_{n\in \mathbb{Z}} q^{2n}T^2 \in \widehat{N}$, where $T\in B(l^2(\mathbb{N}))$ is the operator $Te_i = q^{2i}e_i$.\end{Prop}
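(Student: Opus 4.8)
The plan is to determine each central block $A_n := p_n\delta_{\widehat{N}}$ separately, exploiting that the reflection across $(N,\Delta_N)$ is the \emph{unimodular} quantum group $\mathscr{L}^{\infty}(\widetilde{E}_q(2))$. Two facts fix the shape of $A_n$. First, all the $\mathcal{G}^{(n)}$ implement one and the same projective corepresentation $\alpha$: since $\mathcal{G}^{(n)}=(v^n\otimes 1)\mathcal{G}$ with $v$ group-like, the twist cancels in $x\mapsto(\mathcal{G}^{(n)})^*(1\otimes x)\mathcal{G}^{(n)}$, so the invariant states $\phi_{N,n}$ all coincide with $\phi_\alpha$; by the remark following Proposition \ref{PropFormV2} their common density $T_n$ is, up to the factor $(1-q^2)$, the operator $T$ with $Te_i=q^{2i}e_i$, i.e. $T_n=(1-q^2)T$. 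Second, $\mathscr{L}^{\infty}(\widetilde{E}_q(2))$ being unimodular, Proposition \ref{PropFin}.3 produces positive scalars $d_n$ with $A_n=d_n^2T_n^2=d_n^2(1-q^2)^2T^2$. Thus the $j$-dependence is already settled — $A_n$ is a scalar multiple of $T^2$ — and the whole problem reduces to the $n$-dependence of $d_n$, up to one global constant which is immaterial because $\delta_{\widehat{N}}$ is only defined up to a positive scalar (Theorem \ref{TheoGalob}.1).

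To extract $d_n$ I would use the explicit invariant weight. By Theorem \ref{TheoFormW}, for a suitable normalization,
\[\varphi_P\big(\widetilde{V}_{n,00}\widetilde{V}_{n,00}^*\big)=\frac{T_{n,0}}{A_{n,0}}=\frac{1-q^2}{d_n^2(1-q^2)^2},\]
while, the left invariant weight being unique up to scaling, $\varphi_P=\lambda\varphi_0$ for some $\lambda>0$, where $\varphi_0$ is the invariant weight of $\mathscr{L}^{\infty}(\widetilde{E}_q(2))$ recalled above. Comparing the two expressions for $\widetilde{V}$ in Lemma \ref{LemFormV}.1 and Proposition \ref{PropFormV2} identifies the matrix coefficient $\widetilde{V}_{n,00}$ with $\mathcal{G}_{00}^{(n)}$, and by Proposition \ref{PropGalCobSU} (taking $s=t=0$) together with $\mathcal{G}^{(n)}=(v^n\otimes 1)\mathcal{G}$ this is simply $v^nL_{0+}$. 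Hence everything comes down to evaluating $\varphi_0\big(v^nL_{0+}L_{0+}^*(v^*)^n\big)$.

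This last computation is short and is the only genuinely example-specific step. The operator $L_{0+}L_{0+}^*$ is diagonal in the first tensor leg with eigenvalue $(q^{2i+2};q^2)_{\infty}$ on $e_i$ for $i\geq 0$ (and $0$ otherwise) and acts as the identity in the third leg; conjugating by $v^n=(S^*)^n\otimes 1\otimes 1$ shifts this to eigenvalue $(q^{2(i+n)+2};q^2)_{\infty}$ on $e_i$ for $i\geq -n$. Applying $\varphi_0$, whose value on a diagonal element $\sum_i c_i\,e_{ii}\otimes 1\otimes 1$ is $\sum_i c_iq^{2i}$, and reindexing by $j=i+n$, one finds
\[\varphi_0\big(v^nL_{0+}L_{0+}^*(v^*)^n\big)=q^{-2n}\sum_{j=0}^{\infty}(q^{2j+2};q^2)_{\infty}\,q^{2j}=q^{-2n}C,\]
with $C>0$ finite and independent of $n$. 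Feeding this back through $\varphi_P=\lambda\varphi_0$ gives $d_n^2=q^{2n}/\big(\lambda C(1-q^2)\big)$, so $A_n=d_n^2(1-q^2)^2T^2$ is a fixed positive multiple of $q^{2n}T^2$. Absorbing that multiple into the scalar freedom of $\delta_{\widehat{N}}$ yields $\delta_{\widehat{N}}=\bigoplus_{n\in\mathbb{Z}}q^{2n}T^2$, as claimed.

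The part requiring the most care is not the final evaluation but the bookkeeping preceding it: matching the index conventions so that $\widetilde{V}_{n,00}$ is correctly recognized as $v^nL_{0+}$, and justifying that $T_n$ is independent of $n$ and equal to the density of $\phi_\alpha$ (this rests on Proposition \ref{PropComp2}, as flagged after Proposition \ref{PropFormV2}). One should also verify the square-integrability $\widetilde{V}_{n,00}\widetilde{V}_{n,00}^*\in\mathscr{M}_{\varphi_P}$, which is automatic once the series $C$ is seen to converge. Everything else is forced by the unimodularity of the reflection together with the uniqueness up to scalar of $\delta_{\widehat{N}}$.
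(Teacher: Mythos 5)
Your proposal is correct and follows essentially the same route as the paper's proof: unimodularity of the reflected quantum group together with Proposition \ref{PropFin}.3 gives $A_n = d_n^2T^2$, and Theorem \ref{TheoFormW} with the identification $\widetilde{V}_{n,00}=\mathcal{G}^{(n)}_{00}=v^nL_{0+}$ from Proposition \ref{PropFormV2} reduces everything to evaluating $\varphi_0\left(v^nL_{0+}L_{0+}^*(v^*)^n\right)$. The only (cosmetic) difference is in that last evaluation: the paper pulls out the factor $q^{-2n}$ abstractly from the modular automorphism relation $\sigma_t^{\varphi_0}(v)=q^{-2it}v$, whereas you obtain it by explicitly summing the diagonal eigenvalues against $\varphi_0$ and reindexing, which amounts to the same computation.
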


\begin{proof} Denote again by $A_n$ the $n$-th component of $\delta_{\widehat{N}}$ in $\widehat{N}$. Then by Proposition \ref{PropFin}.3, we know that $A_n = d_n^2 T^2$ for some $d_n>0$. Moreover, by Proposition \ref{PropFormV2} and Theorem \ref{TheoFormW}, we know that $\varphi_0(\mathcal{G}_{00}^{(n)}(\mathcal{G}_{00}^{(n)})^*) = \frac{1}{(1-q^2)d_n^2}$. So to know $d_n$, we should compute $\varphi_0(v^n L_{0+}L_{0+}^* (v^*)^n)$. But as $\sigma_t^{\varphi_0}(v) = q^{-2it}v$, we have that $\varphi_0(v^n L_{0+}L_{0+}^* (v^*)^n) = q^{-2n} \varphi_0(L_{0+}L_{0+}^*)$. As the $d_n$ are only determined up to a fixed scalar multiple anyway, we see that we may take $d_n = q^{n}$, which ends the proof.
\end{proof}

\begin{Prop} For all $m,n\in \mathbb{Z}$ and $i,j,k,l\in \mathbb{N}$, we have \[\varphi_0(\mathcal{G}_{ij}^{(n)}(\mathcal{G}_{kl}^{(m)})^*) = \delta_{mn}\delta_{ik}\delta_{jl} \frac{1}{q^{2n+2j}}.\]\end{Prop}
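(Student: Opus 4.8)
The plan is to obtain these orthogonality relations directly from the formula for the left invariant weight $\varphi_P$ in Theorem~\ref{TheoFormW}, after identifying the $\mathcal{G}^{(n)}_{ij}$ with the matrix coefficients of $\widetilde{V}$, and then to fix the single remaining scalar by one explicit calculation. By Proposition~\ref{PropFormV2}, under the identification $\mathscr{L}^2(N)\cong\bigoplus_{n\in\mathbb{Z}}(l^2(\mathbb{N})\otimes l^2(\mathbb{N}))$ we have $\widetilde{V}=\sum_{n,t,s}1\otimes e_{n,ts}\otimes\mathcal{G}^{(n)}_{ts}$, so that in the notation of Lemma~\ref{LemFormV} the coefficient $\widetilde{V}_{n,ij}$ is exactly $\mathcal{G}^{(n)}_{ij}$, with $I_N$ identified with $\mathbb{Z}$. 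Since the reflection $(P,\Delta_P)$ of $\mathscr{L}^{\infty}(SU_q(2))$ across $(N,\Delta_N)$ is isomorphic to the unimodular quantum group $\mathscr{L}^{\infty}(\widetilde{E}_q(2))$, the concrete invariant weight $\varphi_0$ is a positive multiple of $\varphi_P$, say $\varphi_0=\lambda\varphi_P$ with $\lambda>0$. Theorem~\ref{TheoFormW} then yields $\varphi_0(\mathcal{G}^{(n)}_{ij}(\mathcal{G}^{(m)}_{kl})^*)=\lambda\,\delta_{n,m}\delta_{i,k}\delta_{j,l}\,T_{n,j}A_{n,j}^{-1}$ for all indices.

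Next I would compute the two families of eigenvalues. From the orthonormality statement in Proposition~\ref{PropFormV2}, namely that the vectors $q^{-t}(1-q^2)^{-1/2}\mathcal{G}^{(n)}_{ts}\xi_M$ form an orthonormal basis, together with $\|\widetilde{V}_{n,ts}\xi_M\|^2=\varphi_M(\widetilde{V}_{n,ts}^*\widetilde{V}_{n,ts})=T_{n,t}$ coming from Lemma~\ref{LemFormV}.2, one reads off $T_{n,j}=(1-q^2)q^{2j}$, independently of $n$. From the preceding proposition, $\delta_{\widehat{N}}=\bigoplus_{n}q^{2n}T^2$ with $Te_i=q^{2i}e_i$, so by Notation~\ref{NotA} the eigenvalues of $A_n$ are $A_{n,j}=q^{2n+4j}$. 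As both $T_n$ and $A_n$ are diagonal in the basis $\{e_j\}$, these eigenvalues are indexed compatibly, and hence $T_{n,j}A_{n,j}^{-1}=(1-q^2)q^{-2n-2j}$.

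It remains to determine $\lambda$, which I would do by evaluating the simplest coefficient directly. By Proposition~\ref{PropGalCobSU}, $\mathcal{G}^{(0)}_{00}=L_{0+}$, and since $b^*b$ acts as $q^{2p}$ on the $p$-th vector of the first leg, one finds $L_{0+}L_{0+}^{*}=\sum_{p\ge 0}(q^{2p+2};q^2)_{\infty}\,e_{pp}\otimes 1\otimes 1$ inside $\mathscr{L}^{\infty}(\widetilde{E}_q(2))$. Applying $\varphi_0(e_{pp}\otimes 1\otimes 1)=q^{2p}$, rewriting $(q^{2p+2};q^2)_{\infty}=(q^2;q^2)_{\infty}(q^2;q^2)_p^{-1}$, and using the $q$-exponential identity $\sum_{p\ge 0}q^{2p}(q^2;q^2)_p^{-1}=(q^2;q^2)_{\infty}^{-1}$, I obtain $\varphi_0(L_{0+}L_{0+}^{*})=1$. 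Comparing with the formula of the previous paragraph at $n=j=0$, where it reads $\lambda(1-q^2)$, forces $\lambda=(1-q^2)^{-1}$. Substituting back gives $\varphi_0(\mathcal{G}^{(n)}_{ij}(\mathcal{G}^{(m)}_{kl})^*)=\delta_{n,m}\delta_{i,k}\delta_{j,l}\,q^{-2n-2j}$, as claimed.

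The genuinely delicate point, and the one I expect to be the main obstacle, is fixing the global scalar $\lambda$: Theorem~\ref{TheoFormW} pins down $\varphi_P$, and the construction of $\delta_{\widehat{N}}$ pins down the $A_{n,j}$, only up to positive constants, so the factor $(1-q^2)$ appearing in $T_{n,j}A_{n,j}^{-1}$ must be removed by matching against the honest weight $\varphi_0$. Everything else is bookkeeping once the identification $\widetilde{V}_{n,ij}=\mathcal{G}^{(n)}_{ij}$ and the eigenvalue data are assembled; the single $q$-series summation above both removes this ambiguity and confirms consistency with $\varphi_0(L_{0+}L_{0+}^{*})=1$.
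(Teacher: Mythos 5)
Your proof is correct and takes essentially the same route as the paper's: both obtain the relations up to a single multiplicative constant from Theorem \ref{TheoFormW} combined with the computation of $\delta_{\widehat{N}}$ in the preceding proposition, and then fix that constant by evaluating $\varphi_0(L_{0+}L_{0+}^*)=1$ through the same $q$-series summation (your $q$-exponential identity is the instance of the $q$-binomial theorem the paper invokes). The only difference is presentational: you make explicit the eigenvalue bookkeeping $T_{n,j}=(1-q^2)q^{2j}$, $A_{n,j}=q^{2n+4j}$ and the normalization $\varphi_0=\lambda\varphi_P$, which the paper absorbs into a single unspecified constant $c$.
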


\begin{proof} By Proposition \ref{TheoFormW} and the previous proposition, we have that \[\varphi_0(\mathcal{G}_{ij}^{(n)}(\mathcal{G}_{kl}^{(m)})^*) = \delta_{mn}\delta_{ik}\delta_{jl} \frac{c}{q^{2n+2j}}\] for a certain constant $c$. This constant is precisely the number $\varphi_0(L_{0+}L_{0+}^*)$ which we neglected to compute in the previous proposition. But \begin{eqnarray*} \varphi_0(L_{0+}L_{0+}) &=& \sum_{k\in \mathbb{Z}} q^{2k} (q^{2k+2};q^2)_{\infty} \\ &=& (q^2;q^2)_{\infty}\sum_{k\in \mathbb{N}} \frac{q^{2k}}{(q^2;q^2)_k} \\ &=& 1,\end{eqnarray*} by the q-binomial theorem.
\end{proof}

\noindent \emph{Remark:} These orthogonality relations can also be written out in terms of the Wall polynomials $p_n$. Then one would simply get back the well-known orthogonality relations between these polynomials (see e.g. \cite{Koo1}, equation (2.6)), from which the above orthogonality relations can also be directly deduced.\\

\noindent As a final computation, let us determine the fusion rules between the $\mathcal{G}^{(n)}$ and the irreducible corepresentations $U_r$ of $\mathscr{L}^{\infty}(SU_q(2))$ (where $r\in \frac{1}{2}\mathbb{N}$).

\begin{Prop} For all $n\in \mathbb{Z}$ and $r\in \frac{1}{2}\mathbb{N}$, we have \[\mathcal{G}^{(n)}\times U_r \cong \oplus_{i=0}^{2r} \mathcal{G}^{(n-2r+2i)}.\]\end{Prop}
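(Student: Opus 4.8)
The plan is to determine, for every $n\in\mathbb{Z}$ and $r\in\frac{1}{2}\mathbb{N}$, the multiplicity with which each irreducible $\mathcal{G}^{(m)}$ occurs in the $(N,\Delta_N)$-corepresentation $\mathcal{G}^{(n)}\times U_r=\mathcal{G}^{(n)}_{12}(U_r)_{13}$; by Proposition \ref{PropFus} this is a finite direct sum of the $\mathcal{G}^{(m)}$, so only the labels $m$ and their multiplicities are at issue. Write $N_r(k)$ for the multiplicity of $\mathcal{G}^{(n+k)}$ in $\mathcal{G}^{(n)}\times U_r$. Since $\mathcal{G}^{(m)}=(v^{m-n}\otimes1)\mathcal{G}^{(n)}$, the Frobenius-type reciprocity recorded after Proposition \ref{PropFus} identifies $N_r(k)$ with the multiplicity of $U_r$ in the ordinary corepresentation $\alpha_{n,m}\colon x\mapsto\mathcal{G}^{*}(v^{\,k}\otimes x)\mathcal{G}$, which depends only on $k=m-n$ and not on $n$. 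The statement to be proved is thus $N_r(k)=1$ for $k\in\{-2r,-2r+2,\dots,2r\}$ and $N_r(k)=0$ otherwise. Because $\textrm{F}_N=\bigoplus_{n\in\mathbb{Z}}\mathbb{Z}$ is a module over the fusion ring $\textrm{F}_M$ of $SU_q(2)$, I would prove this by induction on $r$, the engine being the classical Clebsch--Gordan rule $U_{1/2}\times U_r\cong U_{r-1/2}\oplus U_{r+1/2}$.

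For the inductive step I would compute $\mathcal{G}^{(n)}\times U_{1/2}\times U_r$ in $\textrm{F}_N$ in two ways. Associating to the right and using the Clebsch--Gordan rule gives $(\mathcal{G}^{(n)}\times U_{r-1/2})\oplus(\mathcal{G}^{(n)}\times U_{r+1/2})$; associating to the left and inserting the base case $\mathcal{G}^{(n)}\times U_{1/2}\cong\mathcal{G}^{(n-1)}\oplus\mathcal{G}^{(n+1)}$ gives $(\mathcal{G}^{(n-1)}\times U_r)\oplus(\mathcal{G}^{(n+1)}\times U_r)$. Everything except $\mathcal{G}^{(n)}\times U_{r+1/2}$ is given by the induction hypotheses for $r$ and $r-1/2$, so it remains to cancel in the free abelian group $\textrm{F}_N$. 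A short bookkeeping of exponents shows that the two shifted sums $\bigoplus_{i=0}^{2r}\mathcal{G}^{(n-1-2r+2i)}$ and $\bigoplus_{i=0}^{2r}\mathcal{G}^{(n+1-2r+2i)}$ overlap in exactly the $2r$ middle labels, which are precisely the terms of $\mathcal{G}^{(n)}\times U_{r-1/2}$; subtracting these leaves the two extreme labels $n-1-2r$ and $n+1+2r$ together with the middle ones each once, i.e.\ $\mathcal{G}^{(n)}\times U_{r+1/2}\cong\bigoplus_{i=0}^{2r+1}\mathcal{G}^{(n-(2r+1)+2i)}$, the asserted formula for $r+\tfrac12$. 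This step is purely formal.

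The substance is the base case $\mathcal{G}^{(n)}\times U_{1/2}\cong\mathcal{G}^{(n-1)}\oplus\mathcal{G}^{(n+1)}$ (the case $r=0$ being trivial). I would establish it from three facts. (i) \emph{There are exactly two irreducible summands.} By the proof of Proposition \ref{PropFus} this number is the dimension of the fixed-point algebra of the associated projective corepresentation, equal to the multiplicity of the trivial corepresentation in $U_{1/2}^{c}\times\mathscr{B}\times U_{1/2}$, where $\mathscr{B}$ is the spectral algebra of the Podle\`s coaction $\alpha$. Using the well-known decomposition $\mathscr{B}\cong\bigoplus_{l\in\mathbb{N}}U_l$ (each integer spin once) and $U_{1/2}^{c}\cong U_{1/2}$, a Frobenius count yields $2$. (ii) \emph{$N_{1/2}(0)=0$.} This multiplicity equals that of $U_{1/2}$ in $\alpha_{n,n}=\alpha$, which contains only integer spins, so the half-integer $U_{1/2}$ does not occur. (iii) \emph{$|k|\le1$.} Grading $\mathscr{L}^2(SU_q(2))$ by the bilateral-shift (third) leg, for which $\xi_M$ has degree $0$ and the entries $a,-qb^{*},b,a^{*}$ of $U_{1/2}$ have degrees $0,-1,1,0$, each matrix coefficient $\mathcal{G}^{(n)}_{ts}(U_{1/2})_{ba}\xi_M$ lies in a block differing from $n$ by at most $1$. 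Combining (ii) and (iii), the support of $N_{1/2}$ lies in $\{-1,+1\}$, and by (i) the total multiplicity is $2$; since the third-leg grading moreover exhibits a nonzero invariant piece in block $n+1$ and in block $n-1$ (equivalently, by the symmetry $v\mapsto v^{*}$, giving $N_{1/2}(-1)=N_{1/2}(1)$), each of $\mathcal{G}^{(n\pm1)}$ occurs exactly once.

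The step I expect to be delicate is (iii), and more generally making the degree bookkeeping rigorous: the block grading of Proposition \ref{PropFormV2} is \emph{not} diagonal in the leg basis, so one must verify with care that right multiplication by $a,b,a^{*},b^{*}$ shifts the block index in a controlled fashion (the relation $v^{*}L_{0+}=L_{0+}a^{*}$ is the model computation) and that the two summands carry distinct labels rather than one label of multiplicity two. A fully computational alternative to the whole base case is to write down the two Clebsch--Gordan isometries $l^2(\mathbb{N})\to l^2(\mathbb{N})\otimes\mathbb{C}^2$ directly from the Wall-polynomial formula for $\mathcal{G}$ in Proposition \ref{PropGalCobSU}; this bypasses the grading and the symmetry argument but requires a $q$-hypergeometric identity.
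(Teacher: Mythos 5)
Your overall architecture --- reduce to translation-invariant multiplicities $N_r(k)$, induct on $r$ through the Clebsch--Gordan rule in the $\textrm{F}_M$-module $\textrm{F}_N$, and concentrate the substance in the base case $r=\tfrac{1}{2}$ --- is genuinely different from the paper's proof, and most of it is sound: the cancellation in the free abelian group $\textrm{F}_N$ is legitimate (Proposition \ref{PropFus}.2 guarantees nonnegative finite multiplicities), and your counts (i) and (ii) are correct applications of the fixed-point-dimension computation from the proof of Proposition \ref{PropFus} and of the Frobenius-type reciprocity sketched after it, using that the Podle\`s coaction decomposes as $\oplus_{l\in\mathbb{N}}U_l$ with no half-integer spins. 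The paper instead treats all $r$ simultaneously and avoids induction: it bounds the number of irreducible summands of $\mathcal{G}\times U_r$ by $2r+1$ via the same multiplicity count, and then exhibits each label $2i-2r$ directly by pairing the border coefficients $L_{0+}a^ib^{2r-i}=v^iL_{0+}(q^{-2i}b^*b;q^2)_i\,b^{2r-i}$ against $\mathcal{G}^{(2i-2r)}_{0,2r-i}\propto v^iL_{0+}b^{2r-i}$; since only border coefficients enter (those with $t=0$, where $p_0=1$), no Wall-polynomial identities are ever needed.

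The genuine gap is your step (iii), and the third-leg grading cannot be repaired into a proof of it: the block decomposition $\mathscr{L}^2(N)\cong\oplus_{n}(l^2(\mathbb{N})\otimes l^2(\mathbb{N}))$ of Proposition \ref{PropFormV2} is not homogeneous for any grading by leg indices. Concretely, $\mathcal{G}^{(n)}_{t,t+j}\propto v^{n+j+2t}L_{0+}b^{j}p_t(b^*b;q^{2j},0\mid q^2)$, so a vector of the form $v^mL_{0+}b^{j}q(b^*b)\xi_M$ with $\deg q=d$ spreads over the blocks $m-j-2t$ for $0\le t\le d$ (expand $q$ in the Wall basis with parameter $q^{2j}$), even though all three leg degrees are fixed; thus leg-degree bookkeeping sees $j$ and $m$ but not the block label. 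Right multiplication by $b$, say, turns $p_t(\,\cdot\,;q^{2j},0\mid q^2)$ into a polynomial that must be re-expanded in the shifted family $p_{t'}(\,\cdot\,;q^{2j+2},0\mid q^2)$, and your bound $|k|\le 1$ is exactly the assertion that only $t'\in\{t-1,t\}$ occur --- a two-term contiguous relation for Wall polynomials, with analogous relations for $a,a^*,b^*$. So (iii) is equivalent to the $q$-hypergeometric identities you defer to the end, not a consequence of the grading. A cheap repair inside your own scheme is to drop (iii) altogether: by (i) and (ii) you have $\mathcal{G}^{(n)}\times U_{1/2}\cong\mathcal{G}^{(n+c_1)}\oplus\mathcal{G}^{(n+c_2)}$ with $c_1\neq c_2$ nonzero constants independent of $n$ (a repeated label would make the self-intertwiner algebra $M_2(\mathbb{C})$, of dimension $4\neq 2$); Frobenius reciprocity together with $U_{1/2}^{c}\cong U_{1/2}$ gives $N_{1/2}(k)=N_{1/2}(-k)$, hence $c_2=-c_1$; and the single border computation $\mathcal{G}^{(n)}_{00}b=v^nL_{0+}b\propto\mathcal{G}^{(n-1)}_{01}$, which needs no polynomial identity since $p_0=1$, shows the label $n-1$ occurs, forcing $c_1=-c_2=1$. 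With that substitution your induction closes, at a cost comparable to the paper's direct argument.
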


\begin{proof} By multiplying to the left with powers of $v$, it is easy to see that the fusion rules are invariant under translation of $n$. We may therefore restrict to the case $n=0$.\\

\noindent It is then well-known that if we consider $\mathscr{L}^{\infty}(S_{q0}^2)$ as an $\mathscr{L}^{\infty}(SU_q(2))$-comodule, the coaction $\alpha$ splits as $\oplus_{s\in \frac{1}{2}\mathbb{N}} U_{2s}$. From the proof of Proposition \ref{PropFus} and the remark preceding it, we obtain that $\mathcal{G}\times U_r$ will then split as a direct sum of less than $2r+1$ corepresentations. By the orthogonality relations between the $\mathcal{G}^{(n)}_{ij}$, it then suffices to find in each $\mathcal{G}^{(2i-2r)}$, with $0\leq i\leq 2r$, a component which has non-trivial scalar product with a matrix element of $\mathcal{G}\times U_r$.\\

\noindent By looking at the border of $U_r$, and by using $G_{00}^{(0)} = L_{0+}$, we find in $\mathcal{G}\times U_r$ the elements $L_{0+}a^{i}b^{2r-i}$ (up to a non-zero scalar), where $i\in \mathbb{N}$ with $i\leq 2r$. It is then enough to find inside $\mathcal{G}^{(2i-2r)}$ some element which has non-trivial scalar product with $L_{0+} a^{i}b^{2r-i}$. But by an easy computation, we have $L_{0+}a = vL_{0+}(1-b^*b)$, and then by induction $L_{0+}a^ib^{2r-i} = v^iL_{0+}(q^{-2i}b^*b;q^2)_{i}b^{2r-i}$. On the other hand, $\mathcal{G}^{(2i-2r)}_{0,2r-i}$ equals $v^iL_{0+}b^{2r-i}$, up to a scalar. As $ (q^{-2i}b^*b;q^2)_{i}(b^*b)^{2r-i}L_{0+}^*L_{0+}$ is a non-zero positive operator, we find that indeed $\varphi_+((\mathcal{G}^{(2i-2r)}_{0,2r-i})^* L_{0+}a^{i}b^{2r-i})\neq 0$, which then finishes the proof.

\end{proof}
\newpage
\noindent \emph{Remarks:} \begin{enumerate} \item By the discussion following Proposition \ref{PropFus}, we could also have deduced these fusion rules directly from \cite{Tom1}, as the multiplicity diagram of the ergodic coaction of $SU_q(2)$ on the standard Podle\`s sphere is explictly computed there.
\item In \cite{DeC5}, we discussed the `reflection technique' (cf.~ section 4) with respect to another action of $SU_q(2)$ on a type $I$ factor, namely the von Neumann algebraic completion of its action on the `quantum projective plane' (see e.g.~ \cite{Haj1}). We showed that the reflected quantum group in this case was the extended $\widetilde{SU}_q(1,1)$ quantum group of Koelink and Kustermans (\cite{Koe3}). This shows in particular that the Galois co-object $(\widetilde{N},\Delta_{\widetilde{N}})$ constructed from the quantum projective plane action is different from the one we considered in this section. In fact, as the multiplicity diagram of this action was explicitly computed in \cite{Tom1}, we see that the $(\widetilde{N},\Delta_{\widetilde{N}})$-projective representations of $SU_q(2)$ are labeled by the forked half-line $\{0_+,0_-\}\cup \mathbb{N}_0$ (again by the discussion following Proposition \ref{PropFus}). Now it can be shown that the quantum group $\widetilde{SU}_q(1,1)$ contains only two group-like unitaries. By Proposition 3.5 of \cite{DeC1}, this implies that the associated $\lbrack (\widetilde{N},\Delta_{\widetilde{N}})\rbrack$-projective corepresentations still form a (countably) infinite family. We do not know if this family of `ergodic actions on type $I$ factors' already occurs somewhere in the literature. At the moment, we have not even succeeded in explicitly describing these ergodic coactions, except for two cases. One case is the action on the quantum projective plane itself, which we will denote as $\alpha$. The other coaction $\widetilde{\alpha}$ is obtained by amplifying with the spin 1/2-representation $U_{1/2}$ of $SU_q(2)$: \[\widetilde{\alpha}: B(\mathscr{H}_{\alpha})\otimes M_2(\mathbb{C})\rightarrow M\bar{\otimes} B(\mathscr{H}_{\alpha})\otimes M_2(\mathbb{C}): x\rightarrow (U_{1/2})_{13}^* (\alpha\otimes \iota)(x)(U_{1/2})_{13}.\] Note that this is still irreducible, as the spectral decomposition of $\alpha$ only contains corepresentations corresponding to even integer spin by \cite{Tom1} (see the proof of Proposition \ref{PropFus}). Remark that the ergodic coaction $\widetilde{\alpha}$ is not isomorphic to a coideal of $SU_q(2)$ (again by \cite{Tom1}).

\end{enumerate}

\end{document}